\documentclass[a4paper,10pt]{amsart}

\usepackage{dsfont}
\usepackage{graphicx,color,subfig}
\usepackage{mathrsfs}
\usepackage{amscd}
\usepackage{amssymb,amsmath,amsfonts}
\usepackage[mac]{inputenc}
 \usepackage[T1]{fontenc}
 \usepackage[normalem]{ulem}
 \usepackage[english]{babel}
  \usepackage{amsthm}
 \usepackage{bbm}
 \usepackage{verbatim}
 \usepackage{epsfig}
 \usepackage[only,llbracket,rrbracket]{stmaryrd}
 \usepackage{enumerate}
\usepackage{fullpage}
\usepackage{mathrsfs}
\usepackage[colorlinks=true]{hyper ref}
\usepackage{float}

%\topmargin     -1.4cm
%\textheight     25cm
%\textwidth      15.5cm
%\addtolength{\hoffset}{-1cm}
%\addtolength{\textwidth}{1.8cm} 

%\setcounter{secnumdepth}{4}
%\setcounter{tocdepth}{2} %pour ne pas mettre les subsections dans la table des matieres ...

%\oddsidemargin=1in
%\evensidemargin=1in
%\topmargin=1in
%\bottommargin=1in

\newtheorem{lem}{Lemma}[section]
\newtheorem{thm}{Theorem}[section]
\newtheorem{deft}{Definition}[section]

\newtheorem{prop}{Proposition}[section]
\newtheorem{coro}{Corollary}[section]
\numberwithin{equation}{section}
\newtheorem{rques}{\textbf{Remarks}}[section]{\vskip 0.5cm}

\newtheorem{rque}{\textbf{Remark}}[section]{\vskip 0.5cm} 
{\vskip 0.5cm}

\newcommand{\T}{{\mathbb T}}
\newcommand{\N}{{\mathbb N}}
\newcommand{\Z}{{\mathbb Z}}

\newcommand{\R}{{\mathbb R}}
\newcommand{\W}{W}%{{\mathbb W}}
\renewcommand{\S}{{\mathbb S}}

\newcommand{\pa}{{\partial}}
\newcommand{\na}{{\nabla}}

\newcommand{\eps}{{\varepsilon}}

\newcommand{\Ac}{\mathcal{A}}

\newcommand{\CC}{\mathcal{C}\mathcal{C}}
\newcommand{\LL}{\mathcal{L}}
\newcommand{\Nc}{\mathcal{N}}
\newcommand{\Rc}{\, \mathcal{R}  }
\renewcommand{\Mc}{\mathcal{M}}
\newcommand{\Lb}{\mathbb{L}}

 \newcommand{\geqsim}{\,\raisebox{-0.6ex}{$\buildrel > \over \sim$}\,}
%%%%%%%%%%%%%%%%%%%%%%%%%%%%%%%%%%%%%%%%%%%%%%%%%%%%%%%%%%%
% Short-hand commands for greek alphabet          %
%%%%%%%%%%%%%%%%%%%%%%%%%%%%%%%%%%%%%%%%%%%%%%%%%%%%%%%%%%%

\newcommand{\om}{\omega}

\newcommand{\e}{\varepsilon}

\newcommand{\supp}{{\rm supp}\,}

\def\epsilon{\varepsilon}
%\def\dis{\displaystyle}
%\numberwithin{equation}{section}

\DeclareMathOperator{\Leb}{Leb}

\DeclareMathOperator{\V}{Vol}
\let \ker \relax
\DeclareMathOperator{\ker}{Ker}

\newcommand{\Black}{\color{black}}

 \title{Geometric analysis of the linear Boltzmann equation I. \\
Trend to equilibrium
}
\author{Daniel Han-Kwan}\address{CNRS and \'Ecole Polytechnique, Centre de
Math\'ematiques Laurent Schwartz UMR7640,  91128 Palaiseau cedex France}\email{daniel.han-kwan@math.polytechnique.fr} 
\author{Matthieu L\'eautaud}\address{Université Paris Diderot, Institut de Mathématiques de Jussieu UMR7586, Paris Rive Gauche Bâtiment Sophie Germain, 75205 Paris Cedex 13 France} \email{leautaud@math.univ-paris-diderot.fr}
 
\keywords{Kinetic theory, control theory, linear Boltzmann equation, large time behaviour, hypocoercivity, geometric control conditions}
\subjclass[2010]{Primary: 35B40, 76P05, 82C40, 82C70.  Secondary: 93C20}

\begin{document}

 \begin{abstract}
This work is devoted to the analysis of the linear Boltzmann equation in a bounded domain, in the presence of a force deriving from a potential. The collision operator is allowed to be degenerate in the following two senses: (1) the associated collision kernel may vanish in a large subset of the phase space;  (2) we do not assume that it is bounded below by a Maxwellian at infinity in velocity.

We study how the association of transport and collision phenomena can lead to convergence to equilibrium, using concepts and ideas from control theory. We prove two main classes of results. On the one hand, we show that convergence towards an equilibrium is equivalent to an almost everywhere geometric control condition. The equilibria (which are not necessarily Maxwellians with our general assumptions on the collision kernel) are described in terms of the equivalence classes of an appropriate equivalence relation.
On the other hand, we characterize the exponential convergence to equilibrium in terms of the Lebeau constant, which involves some averages of the collision frequency along the flow of the transport. We handle several cases of phase spaces, including those associated to specular reflection in a bounded domain, or to a compact Riemannian manifold. 

 \end{abstract}
 
 \maketitle
 
\setcounter{tocdepth}{4}
\tableofcontents

 \section{Introduction}
  
This paper is concerned with the study of the linear Boltzmann equation 
\begin{equation}
\label{B}
\partial_t f + v \cdot \nabla_x f - \nabla_x V \cdot \nabla_v f= \int_{\R^d} \left[k(x,v' ,  v) f(v') - k(x,v ,  v') f(v)\right] \, dv', 
\end{equation}
for $x \in \Omega, \, v \in \R^d$,  $d \in \N^*$, where $\Omega$ is either the flat torus $\T^d := \R^d/\Z^d$ or an open bounded subset of $\R^d$, in which case we add some appropriate boundary conditions to the equation.  The linear Boltzmann equation is a classical model of statistical physics, allowing to describe the interaction between particles and a fixed background \cite{Cer-book,DL1english,DL}. Among many possible applications, we mention the modeling of semi-conductors, cometary flows, or neutron transport. We refer the reader interested by further physical considerations or by a discussion of the validity of \eqref{B} in these contexts to~\cite[Chapter IV, \textsection 3]{Cer-book} or \cite[Chapter I, \textsection 5]{DL1english}. We also point out that this equation can be derived in various settings: see for instance \cite{EY} in the context of quantum scattering, or \cite{BGSR} in the context of a gas of interacting particles.

In \eqref{B}, the unknown function $f = f(t,x,v)$ is the so-called distribution function; the quantity $f(t,x,v) \, dv dx$ can be understood as the (non-negative) density at time $t$ of particles whose position is close to $x$ and velocity close to $v$.
The function $V$ is a potential which drives the dynamics of particles; we shall assume throughout this work that $V$ is smooth, more precisely that $ V \in W^{2,\infty}(\Omega)$. 

The linear Boltzmann equation \eqref{B} is a typical example of a so-called hypocoercive equation, in the sense of Villani \cite{V-Hypo}. It is made of a conservative part, namely the kinetic transport operator $v \cdot \nabla_x - \nabla_x V \cdot \nabla_v$ associated to the hamiltonian $H(x,v)= \frac{1}{2} |v|^2 + V(x)$, and a degenerate dissipative part which is the collision operator (i.e. the right hand-side of \eqref{B}). According to the hypocoercivity mechanism of \cite{V-Hypo}, only the interaction between the two parts can lead to convergence to some global equilibrium.

\bigskip
The function $k$ is the so-called collision kernel, which describes the interaction between the particles and the background.
In the following, we shall denote by $C(x,v,f)$ the collision operator, which can be split as
$$
C(x,v,f) = C^+(x,v,f) + C^-(x,v,f),
$$
where 
$$
C^+(x,v,f)= \int_{\R^d}k(x,v' ,  v) f(v') \, dv' ,\quad C^-(x,v,f)=- \left( \int_{\R^d} k(x,v ,  v') \, dv' \right)  f(v)
$$
are respectively the \emph{gain} and the \emph{loss} term.  A first property of this operator is that, due to symmetry reasons, the formal identity holds:
\begin{equation}
\label{ConvMass}
\text{for all }  x \in \Omega, \quad \int_{\R^d}  \int_{\R^d} \left[k(x,v' ,  v) f(v') - k(x,v ,  v') f(v)\right] \, dv' \, dv  = 0.
\end{equation}
This, together with the fact that the vector field $v \cdot \nabla_x  - \nabla_x V \cdot \nabla_v$ is divergence free, implies that the mass is conserved: any solution $f$ of \eqref{B} satisfies
\begin{equation}
\text{ for all } t \geq 0, \quad \frac{d}{dt} \int_{\Omega \times \R^d} f(t,x,v) \, dv dx =0.
\end{equation}

We shall now list the assumptions we make on the collision kernel $k$.

\noindent {\bf A1.} The collision kernel  $k$ belong to the class $C^0(\overline{\Omega} \times \R^d \times \R^d)$ and is nonnegative.

\noindent {\bf A2.} Introducing the Maxwellian distribution:
$$
 \Mc(v) := \frac{1}{(2\pi)^{d/2} }e^{-\frac{|v|^2}{2}},
$$
we assume that $\Mc$ cancels the collision operator, that is
\begin{equation}
\label{Mannule}
\text{for all }  (x,v) \in \Omega \times \R^d, \quad \int_{\R^d} \left[k(x,v' ,  v) \Mc(v') - k(x,v ,  v') \Mc(v)\right] \, dv'  = 0.
\end{equation}

\noindent {\bf A3.} We assume that  
\begin{equation*}
x \mapsto \int_{\R^d \times \R^d} k^2(x,v',v) \frac{\Mc(v')}{\Mc(v)} \, dv' dv \in L^\infty(\Omega). 
\end{equation*} 
It will sometimes be convenient to work with the function
\begin{equation}
\label{bornek}
\tilde{k}(x,v',v) := \frac{k(x,v' ,  v) }{\Mc(v)} .   
\end{equation}
With this notation, Assumptions {\bf A2} and {\bf A3} may be rephrased in a more symmetric way as 
$$
\text{for all }(x,v) \in \Omega \times \R^d ,\quad 
\int_{\R^d}\tilde{k}(x,v',v) \Mc(v')\, dv' = \int_{\R^d}\tilde{k}(x,v,v') \Mc(v')\, dv' ; 
$$ 
\begin{equation*}
x \mapsto \int_{\R^d \times \R^d} \tilde{k}^2(x,v',v) \Mc(v') \Mc(v)\, dv' dv \in L^\infty(\Omega). 
\end{equation*} 
Assumption {\bf A3} is in particular satisfied if $\tilde{k}$ is bounded or has a polynomial growth in the variables $v$ and $v'$.

Note that with assumption {\bf A2}, the function $(x,v)\mapsto \Mc(v)e^{-V}=\frac{1}{(2\pi)^{d/2} }e^{-H}$, which we shall call the \emph{Maxwellian equilibrium}, cancels both the transport operator and the collision operator and thus is a stationary solution of \eqref{B}.

\bigskip
Before going further, let us present usual classes of examples of collision kernels covered by Assumptions {\bf A1}--{\bf A3} and addressed in the present article.

\noindent {\bf E1. ``Symmetric'' collision kernels.} Let $k$ be a collision kernel verifying {\bf A1} and {\bf A3}. We moreover require $\tilde{k}$ to be symmetric with respect to $v$ and $v'$, i.e. $\tilde{k}(x,v,v') = \tilde{k}(x,v',v)$ for all $(x,v,v') \in \Omega \times \R^d \times \R^d$. Notice that for these kernels, {\bf A2} is automatically satisfied.

A classical example of such a kernel is the following.

\noindent{\bf E1'. Linear relaxation kernel.} Taking ${k}(x,v,v')=\sigma(x)\Mc(v')$, with $\sigma \geq 0,  \, \sigma \neq 0$ and $\sigma \in C^0(\overline{\Omega})$ \Black provides the simplest example of kernel in the class {\bf E1}.
This corresponds to the following equation (often called linearized BGK):
$$
\partial_t f + v \cdot \nabla_x f - \nabla_x V \cdot \nabla_v f= \sigma(x) \left( \left(\int_{\R^d} f \, dv\right) \Mc(v) - f  \right) .
$$
This example also belongs to the following class.

\noindent {\bf E2. ``Factorized'' collision kernels} Let $k$ be a collision kernel verifying {\bf A1}--{\bf A3}. We require $k$ to be of the form
$$
k(x,v,v')= \sigma(x) {k}^*(x,v,v'),
$$
with $\sigma \in C^0(\overline{\Omega})$, $\sigma \geq 0,  \, \sigma \neq 0$ and $k^* \in C^0(\overline{\Omega}\times \R^d \times \R^d)$, satisfying for some $\lambda>0$, for all $x \in \Omega$, $v,v' \in \R^d$,
$$
\frac{{k}^*(x,v',v) }{\Mc(v)}+ \frac{{k}^*(x,v,v') }{\Mc(v')} \geq \lambda.
$$
The sub-class of {\bf E2} which is the most studied in the literature (see e.g. \cite{DMS}) consists in the following non-degenerate case.

\noindent {\bf E2'. Non-degenerate collision kernels.} Let $k$ be a collision kernel verifying {\bf A1}--{\bf A3}.
The classical \emph{non-degeneracy} condition consists in assuming that there exists $\lambda>0$ such that for all $x \in \Omega$, $v,v' \in \R^d$
$$
\frac{{k}(x,v',v) }{\Mc(v)}+ \frac{{k}(x,v,v') }{\Mc(v')}= \tilde{k}(x,v',v) + \tilde{k}(x,v,v') \geq \lambda .
$$

Later in the paper (see Section~\ref{secexamples}), we will introduce other classes of collision kernels, that are interesting for our purposes.

\bigskip

Under assumptions {\bf A1}-{\bf A3}, the linear Boltzmann equation~\eqref{B} is well-posed in appropriate Lebesgue spaces, and the weighted $L^2$ norm of its solutions, that is $\int_{\Omega \times \R^d}|f(t,x,v)|^2 \frac{e^{V(x)}}{\Mc(v)} \, dv \, dx $, is dissipated (i.e. decreasing with respect to time, see Lemma~\ref{lemdissip}).

This work aims at describing the large time behavior of solutions of \eqref{B}, under assumptions {\bf A1}--{\bf A3}. The main feature is that we thus allow the collision operator $k$ to be degenerate in the following two senses:

\noindent  $\bullet$ the collision kernel $k$ may vanish in a large subset of the phase space $\Omega \times \R^d$;

\noindent  $\bullet$ we do not assume that $\tilde{k}$ is bounded below by a fixed positive constant at infinity in velocity.

\Black 
However, still in the spirit of Villani's hypocercivity, one may hope that the transport term in~\eqref{B} compensates for this strong degeneracy.
Our goal is to find geometric criteria (on the hamiltonian $H$ and the collision kernel $k$) to characterize:

\noindent $\bullet$ {\bf P1} convergence to a global equilibrium,

\noindent $\bullet$ {\bf P1'} exponential convergence to this global equilibrium.

The study of these questions naturally leads to another problem:

\noindent $\bullet$ {\bf P2} describe the structure and the localization properties of the spectrum of the underlying linear Boltzmann operator.

\bigskip

In recent works \cite{DS, BS1,BS2,BS3}, Bernard, Desvillettes and Salvarani investigated {\bf P1} and {\bf P1'} in a framework close to that of {\bf E2}. In particular, in \cite{BS1}, the authors have shown that in the case where $V=0$,  $(x, v) \in \T^d\times \S^{d-1}$, and $k^*(x,v,v') = k^*(v,v')$ (where $k^*$ is defined in {\bf E2}), the exponential convergence to equilibrium (in the Lebesgue space $L^1$) was equivalent to a \emph{geometric control condition} (similar to that of Bardos-Lebeau-Rauch-Taylor in control theory \cite{RT:74,BLR:92}). 
\Black

Previous works on this topic, for the non-degenerate class of collision kernels {\bf E2'} include \cite{Vid,Vid70,U74,UPG,MK} (spectral approach), \cite{DVcpam,V-Hypo,CCG,MN,DMS} (hypocoercivity methods), \cite{Her} (Lie techniques), and references therein. There are also several related works which concern the \emph{non-linear} Boltzmann equation, but we do not mention them since that equation is not studied in this paper.

\bigskip
In this article, we introduce another point of view on these questions (in particular different from \cite{DS, BS1,BS2,BS3}), by implementing in this context different methods coming from control theory. 
We borrow several ideas from the seminal paper of Lebeau \cite{Leb}, which concerns the decay rates for the damped wave equation.

The goal of this paper is to give necessary and sufficient \emph{geometric} conditions ensuring {\bf P1} and {\bf P1'}, in several settings: we mostly focus on the torus case $\Omega = \T^d$ and on the case of specular reflection in bounded domains. We also show that the methods we develop here are sufficiently robust to handle a general Riemannian setting. 
The related question {\bf P2} is studied in the companion paper \cite{HKL2}.

\bigskip

We now give a more detailed overview of the main results of this work.

\section{Overview of the paper}
\label{main}

In this Section, we give an overview of the results contained in this paper. For readability, we focus on the torus case, i.e. when the phase space is $\T^d \times \R^d$.
Several generalizations (bounded domains with specular reflection, Riemannian manifolds) are actually provided in the following.

  \subsection{Some definitions}

In this section, we introduce the notions needed to characterize convergence and exponential convergence to equilibrium. 
Given a collision kernel $k$ satisfying Assumptions~{\bf A1}--{\bf A3}, we first introduce the set $\omega$ where the collisions are effective.
 
 \begin{deft}
 \label{def-om}
 Define the open set of $\T^d \times \R^d$
 \begin{equation}
\label{omega}
\omega : =  \left\{(x,v) \in \T^d \times \R^d, \, \int_{\R^d} k(x,v,v') \, dv' >0 \right\}.
\end{equation}
 \end{deft}

Note that because of~{\bf A1}--{\bf A2}, we also have
 \begin{equation}
\label{omegabis}
\omega =  \left\{(x,v) \in \T^d \times \R^d, \, \exists v' \in \R^d, k(x,v,v') >0 \right\}= \left\{(x,v) \in \T^d \times \R^d, \, \exists v' \in \R^d, k(x,v',v) >0  \right\}.
\end{equation}

Let us recall the definition of the hamiltonian flow associated to $H$, and associated characteristic curves (or characteristics) in the present setting.
 
  \begin{deft}
  \label{def-carac}
 The hamiltonian flow $(\phi_t)_{t\in\R}$ associated to $H(x,v) = \frac{|v|^2}{2} + V(x)$ is the one parameter family of diffeomorphisms on $\T^d \times \R^d$ defined by $\phi_t(x,v) := (X_t (x,v), \, \Xi_t(x,v))$ with $(x,v) \in \T^d \times \R^d$ and
  \begin{equation}
  \label{hamilflow}
 \left\{
 \begin{aligned}
 &\frac{dX_t(x,v)}{dt} = \Xi_t(x,v), \\
 &\frac{d\Xi_t(x,v)}{dt} = - \nabla_x V(X_t(x,v)), \\
  &X_{t=0}=x,  \quad \Xi_{t=0}=v.
 \end{aligned}
 \right.
 \end{equation}
 The characteristic curve stemming from $(x,v) \in \T^d \times \R^d$ is the curve $\{\phi_t(x,v), t \in \R^+\}$. 
 \end{deft}
Recall that throughout the paper, we assume that $V \in  W^{2,\infty} (\T^d)$, so that the Cauchy-Lipshitz theorem ensures the local existence and uniqueness of the solutions of~\eqref{hamilflow}. Global existence follows from the fact that $H$ is preserved along any characteristic curve. {Note in particular that each energy level $\{H = R\}$ is compact ($V$ being continuous on $\T^d$, it is bounded from below). Hence, each characteristic curve is contained in a compact set of $\T^d \times \R^d$.}

The notions needed to understand the interaction between collisions and transport are of two different nature. We start by expressing purely geometric definitions. Then, we formulate structural-geometric definitions. We finally introduce the weighted Lebesgue spaces used in this paper, as well as a definition of a ``unique continuation type'' property.

  \subsubsection{Geometric definitions}
 \label{defgeo}
We start by introducing the following definitions:
\begin{itemize}
\item The Geometric Control Condition of ~\cite{BLR:92,RT:74}, in Definition~\ref{def: GCC},
\item The Lebeau constants of ~\cite{Leb}, $C^-(\infty)$ and $C^+(\infty)$, in Definition~\ref{definitionCinfini},
\item The almost everywhere in infinite time Geometric Control Condition, in Definition \ref{defaeitgcc}.
\end{itemize}
 
  \bigskip

  Let us first recall the Geometric Control Condition, which is a classical notion in the context of control theory. It is due to Rauch-Taylor~\cite{RT:74} and Bardos-Lebeau-Rauch~\cite{BLR:92}.

 \begin{deft}
 \label{def: GCC}
 Let $U$ be an open subset of $\T^d \times \R^d$ and $T>0$.
 We say that $(U,T)$ satisfies the Geometric Control Condition (GCC) with respect to the hamiltonian $H(x,v) = \frac{|v|^2}{2} + V(x)$ if for any $(x,v) \in \T^d \times \R^d$, there exists $t \in [0,T]$ such that $\phi_t(x,v) = (X_t(x,v),\Xi_t(x,v)) \in U$.
 
 We shall say that $U$ satisfies the Geometric Control Condition with respect to the hamiltonian $H(x,v) = \frac{|v|^2}{2} + V(x)$ if there exists $T>0$ such that the couple $(U,T)$ does.
 \end{deft}
 We now define two important constants in view of the study of the large time behavior of the linear Boltzmann equation, which involve averages of the \emph{damping function} (usually called \emph{collision frequency} in kinetic theory) $b(x,v):= \int_{\R^d} k(x,v,v') \, dv'$ along the flow $\phi_t$. 
 
  \begin{deft}
 \label{definitionCinfini}
 Define the Lebeau constants (\cite{Leb}) in $\R^+ \cup \{+\infty\}$ by 
  \begin{align}C^-(\infty) &:= \sup_{T\in \R^+}  C^-(T), \qquad C^-(T) = \inf_{(x,v) \in \T^d \times \R^d} \frac{1}{T} \int_0^T \left( \int_{\R^d} k(\phi_t (x,v), v')\, dv'\right)\, dt, \\
 C^+(\infty) &:= \inf_{T\in \R^+}  C^+(T), \qquad C^+(T) = \sup_{(x,v) \in \T^d \times \R^d} \frac{1}{T} \int_0^T \left( \int_{\R^d} k(\phi_t (x,v), v')\, dv'\right)\, dt,  
  \end{align}
  where $\phi_t$ denotes the hamiltonian flow of Definition \ref{def-carac}.
\end{deft}
 It is not clear at first sight that $C^-(\infty)$ and $C^+(\infty)$ are well defined: see \cite{Leb} and the beginning of Section~\ref{expocon} for a short explanation. It turns out that only $C^-(\infty)$ will be useful in this paper (but $C^+(\infty)$ will be interesting in the companion paper \cite{HKL2}).

Finally, we introduce a weaker version of the Geometric Control Condition, which will also play an important role in this work.

 \begin{deft}
 \label{defaeitgcc}
 Let $U$ be an open subset of $\T^d \times \R^d$.
 We say that $U$ satisfies the almost everywhere infinite time (a.e.i.t.) Geometric Control Condition  with respect to the hamiltonian $H(x,v) = \frac{|v|^2}{2} + V(x)$ if for almost any $(x,v) \in \T^d \times \R^d$, there exists $s\geq 0$ such that the characteristics $(X_t(x,v), \, \Xi_t(x,v))_{t \geq 0}$ associated to $H$ satisfy $(X_{t=s},\Xi_{t=s}) \in U$.
 \end{deft}
 
 Using this terminology, the usual Geometric Control Condition of Definition~\ref{def: GCC} could be called ``everywhere finite time'' GCC.

 \begin{rque}
 \label{rem-reecriture}
We have the following characterization of the different geometric properties introduced here.
 \begin{itemize}
 
 \item The couple $(U, T)$ satisfies GCC if and only if $\bigcup_{s  \in (0,T)}\phi_{-s}(U)= \T^d \times \R^d$. 
  
 \item The set $U$ satisfies the a.e.i.t. GCC if and only if there exists $\Nc \subset \T^d \times \R^d$ with zero Lebesgue measure such that $\bigcup_{s  \in \R^+}\phi_{-s}(U)\cup \Nc= \T^d \times \R^d$. Note that this implies in particular that $\Nc$ is a closed subset of $\T^d \times \R^d$ satisfying $\phi_s(\Nc) \subset \Nc$ for all $s\geq0$.
\end{itemize}

\end{rque}

   \subsubsection{Structural-geometric definitions}
   \label{defstrugeo}
  
In the sequel, the above geometric definitions are used with $U = \omega$. They hence involve joint properties of the flow $\phi_t$ together with the open set $\omega$, i.e. of the damping function $b =\int_{\R^d} k( \cdot , \cdot, v')\, dv'$. As such, they do not take into account the fine structure of the Boltzmann operator, and in particular the non-local property with respect to the velocity variable of the gain operator $f \mapsto \Big( (x,v) \mapsto \int_{\R^d} k( x ,  v', v) f(x,v')\, dv' \Big)$. 

The next definitions aim at describing how the information may travel between the different connected component of $\omega$. Let us first define two basic binary relations on the open sets of $\T^d \times \R^d$. 

  \begin{deft}
Let $U_1$ and $U_2$ be two open subsets of $\T^d \times \R^d$. We say that $U_1 \Rc_{\phi} \, U_2$ if there exist $s \in \R$ such that $\phi_s(U_1) \cap U_2 \neq \emptyset$.
  \end{deft}
  \begin{deft}
Let $U_1$ and $U_2$ be two open subsets of $\T^d \times \R^d$. We say that $U_1 \Rc_k \, U_2$ if there exist $(x,v_1,v_2) \in \T^d \times \R^d \times \R^d$ with $(x,v_1) \in U_1$, $(x,v_2) \in U_2$ such that $k(x,v_1,v_2)>0$ or $k(x,v_2,v_1)>0$.
  \end{deft}
  Both relations are symmetric and $\Rc_{\phi}$ is moreover reflexive. When restricted to open sets intersecting $\omega$, the relation $\Rc_k$ also becomes reflexive.
 The relation $\Rc_{\phi}$ expresses the fact that the open sets are ``connected through'' the flow $\phi_s$, whereas the relation $\Rc_{k}$ means that the open sets are ``connected through'' a collision.
 
 We also define another convenient $x$-dependent binary relation.
   \begin{deft}
    Let $x \in \T^d$ and $O_1$ and $O_2$ be two open subsets of $\R^d$. We say that $O_1 \Rc_k^x \, O_2$ if there exists $(v_1,v_2) \in \R^d \times \R^d$ with $v_1 \in O_1$, $v_2\in O_2$ such that $k(x,v_1,v_2)>0$ or $k(x,v_2,v_1)>0$. 
  \end{deft}
 Given now an open subset $U$ of $\T^d \times \R^d$, we define $U(x) = \{v \in \R^d, (x,v) \in U\}$.
 With this notation, notice that $U_1 \Rc_k \, U_2$ if and only if there exists $x \in \T^d$ such that $U_1(x) \Rc_k^x \, U_2(x)$.

 \bigskip
 Given $U$ an open set of $\T^d \times \R^d$, we denote by $\CC(U)$ the set of connected components of $U$.
Note that from the separability of $\T^d \times \R^d$, it follows that for any open set $U \subset \T^d \times \R^d$, the cardinality of the set $\CC(U)$ is at most countable.

In the sequel, the main open sets $U$ we are interested in are $\omega$ and $\bigcup_{s\in \R^+}\phi_{-s}(\omega)$.
We now define the key equivalence relation on $\CC(\omega)$.

 \begin{deft}
 \label{def-sim-o}
 Given $\omega_1$ and $\omega_2$ two connected components of $\omega$, we say that $\omega_1 \Bumpeq  \omega_2$ if  there is $N \in \N$ and $N$ connected components $(\omega^{(i)})_{1\leq i \leq N}$ of $\omega$ such that 
 \begin{itemize}
 \item we have $\omega_1 \Rc_{\phi} \, \omega^{(1)}$ or $\omega_1 \Rc_k \, \omega^{(1)}$,
 \item for all $1\leq i \leq N-1$, we have $\omega^{(i)}  \Rc_{\phi} \, \omega^{(i+1)}$ or $\omega^{(i)}  \Rc_k \, \omega^{(i+1)}$,
  \item we have $\omega^{(N)}  \Rc_{\phi} \, \omega_2$ or $\omega^{(N)}  \Rc_k \, \omega_2$.
\end{itemize}
  The relation $\Bumpeq$ is an equivalence relation on the set $\CC(\omega)$ of connected components of $\omega$. For $\omega_1\in \CC(\omega)$, we denote its equivalence class for $\Bumpeq$ by $[\omega_1]$.
 \end{deft}
 This definition means that the two connected components $\omega_1$ and $\omega_2$ are linked by $\Rc_{\phi}$ or $\Rc_k$ through a chain of connected components of $\omega$.

\begin{rque}
We will introduce later in Section \ref{autre-rel-equiv} a related equivalence relation on $\CC\left( \bigcup_{t \geq 0} \phi_{-t} \omega\right)$.
\end{rque}

\subsubsection{Weighted Lebesgue spaces and a unique continuation type property}
 
Let us now introduce the weighted Lebesgue spaces that will be used throughout this paper.
  
    \begin{deft}[Weighted $L^p$ spaces]
    \label{weightedLp}
We define the Banach spaces $\LL^p(\T^d \times \R^d)$ (for $p \in [1,+\infty)$) and $\LL^\infty(\T^d \times \R^d)$  by 
 \begin{equation*}
 \begin{aligned}
 &\LL^p(\T^d \times \R^d) := \Big\{f \in L^1_{loc}(\T^d \times \R^d), \, \int_{\T^d \times \R^d} | f|^p \frac{e^V}{\Mc(v)} \, dv \, dx < + \infty \Big\}, \\
 &\qquad\qquad\qquad\qquad\qquad\qquad\qquad\qquad \|f\|_{\LL^p} = \left(\int_{\T^d \times \R^d} | f|^p \frac{e^V}{\Mc(v)} \, dv \, dx \right)^{1/p}.
 \\
 & \LL^\infty(\T^d \times \R^d) : =  \Big\{f \in L^1_{loc}(\T^d \times \R^d), \, \sup_{\T^d \times \R^d} | f| \frac{e^V}{\Mc(v)}  < + \infty \Big\},  
 \quad \|f\|_{\LL^\infty} = \sup_{\T^d \times \R^d} | f| \frac{e^V}{\Mc(v)} .
  \end{aligned}
  \end{equation*}
 The space $\LL^2$ is a (real) Hilbert space endowed with the inner product
 $$
 \langle f, g \rangle_{\LL^2} := \int_{\T^d \times \R^d} e^{V} \frac{f \, g}{\Mc(v)} \, dv \, dx.
 $$
 \end{deft}

We finally define a Unique Continuation Property for~\eqref{B}. 
 \begin{deft}
 \label{def:UCP}
 We say that the set $\omega$ satisfies the Unique Continuation Property if the only solution $f \in C^0_t(\LL^2)$ to
 \begin{equation}
 \label{eq:UCP}
 \left\{
 \begin{array}{l}
\partial_t f + v \cdot \nabla_x f - \nabla_x V \cdot \nabla_v f= 0, \\
 C(f)=0,
\end{array}
\right.
\end{equation}
is $f= \left(\int_{\T^d \times \R^d} f  \, dv \,dx\right) \frac{e^{-V}}{\int_{\T^d} e^{-V} \, dx} \Mc$.
 \end{deft}
  
  It is actually possible to reformulate in a more explicit form the second equation in~\eqref{eq:UCP}, involving the value of $f$ on connected components of $\omega$ (see Remark~\ref{def:UCP-expli}).

  \subsection{Convergence to equilibrium}
  
Recall that the main goal of this paper is to provide necessary and sufficient geometric conditions to ensure {\bf P1} and {\bf P1'}.
In the case of the torus,  these results can be formulated as follows. 

We first give a general characterization of convergence to some equilibrium.

\begin{thm}
\label{thmconvgene-intro}
The following statements are equivalent.
\begin{enumerate}
\item  The set $\omega$ satisfies the a.e.i.t. GCC with respect to $H$.
\item For all $f_0 \in \LL^2$, there exists a stationary solution $Pf_0$ of~\eqref{B} such that
$$
\| f(t)- Pf_0 \|_{\LL^2} \to_{t \to + \infty} 0,
$$
where $f(t)$ is the solution of~\eqref{B} with initial datum $f_0$.
\end{enumerate}
\end{thm}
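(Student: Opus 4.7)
The plan is to prove the two implications separately.

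For $(2)\Rightarrow(1)$, I argue by contraposition. If the a.e.i.t.\ GCC fails, Remark~\ref{rem-reecriture} provides a set $\Nc\subset\T^d\times\R^d$ of positive Lebesgue measure, forward-invariant under $\phi_t$ and disjoint from $\omega$. For any $f_0\in\LL^2$ supported in $\Nc$, the support of $f(t)$ stays in $\phi_t(\Nc)\subset\Nc$; since $k(x,v,v')$ (resp.\ $k(x,v',v)$) vanishes whenever $(x,v)\notin\omega$ (resp.\ $(x,v')\notin\omega$), both the gain and the loss parts of $C(f(t))$ vanish, so the equation reduces to the pure transport $f(t) = f_0\circ\phi_{-t}$. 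The weight $e^V/\Mc = (2\pi)^{d/2}e^{H}$ is flow-invariant, so the Koopman operator $U_t g := g\circ\phi_{-t}$ is unitary on the weighted $\LL^2$-subspace of functions supported in $\Nc$; a short argument from unitarity and the Cauchy--Schwarz equality case shows that strong convergence of $U_tf_0$ forces $f_0$ itself to be $\phi_t$-invariant. Since $\phi_t$-invariant functions form a proper closed subspace, choosing $f_0$ outside it contradicts~(2).

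For $(1)\Rightarrow(2)$, let $\mathsf{A}$ denote the generator of the semigroup $(T_t)_{t\geq 0}$ on $\LL^2$. Writing $\mathsf{A} = -\Tc + C$, the transport $\Tc = v\cdot\nabla_x - \nabla_x V\cdot\nabla_v$ is skew-adjoint (the weight $e^V/\Mc$ being flow-invariant) while $C$ is bounded, self-adjoint and non-positive thanks to~{\bf A2}, yielding the dissipation identity
\begin{equation*}
\tfrac{1}{2}\tfrac{d}{dt}\|f(t)\|_{\LL^2}^2 = \langle Cf(t),f(t)\rangle_{\LL^2} =: -D(f(t)) \leq 0.
\end{equation*}
Let $P$ be the orthogonal projection onto $\ker\mathsf{A}$; since $\ker\mathsf{A}$ is $(T_t)$-invariant, so is its orthogonal complement, and it suffices to show that $T_tg_0\to 0$ strongly for every $g_0\in(\ker\mathsf{A})^\perp$.

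The heart of the argument is a unique continuation statement: any solution $h\in C^0_t(\LL^2)$ of~\eqref{eq:UCP} is stationary, i.e.\ $h(t)\in\ker\mathsf{A}$ is independent of $t$. The constraint $Ch(t)=0$ forces $h(t)/\Mc$ to be constant in $v$ on each $x$-dependent cluster where $k$ is effective; transporting this constraint along the hamiltonian flow, together with the fact that almost every characteristic meets $\omega$ (by a.e.i.t.\ GCC) and the connectivity of the $\Bumpeq$-equivalence classes of Definition~\ref{def-sim-o}, propagates this constancy everywhere and pins down $h(t)$.

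To close the proof, set $\ell := \lim_{t\to\infty}\|T_tg_0\|_{\LL^2}\geq 0$ and extract, by weak compactness in $\LL^2$, a subsequence $T_{t_n}g_0\rightharpoonup h\in(\ker\mathsf{A})^\perp$. Since $\int_0^\infty D(T_sg_0)\,ds = \|g_0\|^2-\ell^2<\infty$, a diagonal argument over $s\geq 0$ produces weak limits $T_{t_n+s}g_0\rightharpoonup T_sh$ with $D(T_sh) = 0$, so $C(T_sh) = 0$ for every $s\geq 0$. Thus $t\mapsto T_th$ solves~\eqref{eq:UCP} and by the unique continuation step, $h\in\ker\mathsf{A}$, hence $h = 0$. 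The main obstacle is then to promote this ``every weak limit is $0$'' statement into $\ell = 0$, i.e.\ into strong convergence of the orbit: this requires $\{T_tg_0\}_{t\geq 0}$ to be strongly relatively compact in $\LL^2$, which I would obtain by combining the tightness in $v$ provided by the Maxwellian weight with an averaging-in-velocity estimate on the gain operator $C^+$. Once $\ell = 0$, $T_tg_0 \to 0$ strongly and consequently $T_tf_0 \to Pf_0$ in~$\LL^2$.
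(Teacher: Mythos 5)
Your $(2)\Rightarrow(1)$ direction is correct and in fact a little slicker than the paper's: the paper builds an explicit non-stationary datum $\Psi(x)\mathds{1}_{\Ac}e^{-V}\Mc$ (with $\Psi$ a Morse function precisely to guarantee non-stationarity), whereas you deduce $f_0=Pf_0$ abstractly from the isometry of the Koopman group and the flow-invariance of the weight. The one point you should make explicit is why there exists an $f_0\in\LL^2$ supported in $\Ac$ that is \emph{not} flow-invariant: this holds because $\Ac$ has positive measure, hence meets $\{|v|\geq\delta\}$ in a set of positive measure, and $\phi_t$ is not the identity there for small $t>0$ — this is exactly the role of the Morse function in the paper.

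The genuine gap is in the endgame of $(1)\Rightarrow(2)$. You correctly reduce to showing that "every weak limit of $T_{t_n}g_0$ is zero" implies $\ell=0$, and you propose to do so via strong relative compactness of the orbit, obtained from "tightness in $v$ provided by the Maxwellian weight" plus averaging. Neither ingredient delivers. First, a uniform bound on $\int|f|^2\,e^V/\Mc$ does \emph{not} prevent the $\LL^2$ mass from escaping to $|v|=\infty$ (consider bumps of the form $c\,\Mc^{1/2}\mathds{1}_{|v-v_n|\le1}$ with $|v_n|\to\infty$); the paper obtains tightness only from the $\LL^\infty$ maximum principle applied to data in $\LL^2\cap\LL^\infty$, followed by a density argument. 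Second, averaging lemmas compactify only the velocity averages appearing in the gain term $C^+$; the Duhamel formula contains the purely transported-and-damped term $e^{-tA_0}f_0$, which is an isometry times a multiplier and is never compact, and under a.e.i.t.\ GCC alone (no uniform lower bound on the accumulated damping) it does not tend to zero in norm either. So the orbit $\{T_tg_0\}$ need not be strongly relatively compact, and this is precisely why the paper replaces your compactness claim by the defect-measure argument: one shows that $\nu=\mathrm{w}^*\text{-}\lim|h_n|^2$ solves a damped transport equation (the gain contribution dying by averaging), vanishes on $\R^+\times\omega$, hence vanishes identically by unique continuation, and then the $\LL^\infty$-based tightness converts $\nu\equiv0$ into $\|h_n(0)\|_{\LL^2}\to0$, contradicting $\ell>0$. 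Without this (or an equivalent mechanism), your proof does not close.
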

Theorem~\ref{thmconvgene-intro} is actually a weak version of Theorem~\ref{thmconv-general}, which is our main result in this direction.
If $(1)$ or $(2)$ holds, we can actually describe precisely the stationary solution $Pf_0$. This involves the equivalence classes of another equivalence relation, which is related to $\Bumpeq$ (see Definition~\ref{def-sim} and Lemma~\ref{equiv-equiv}): we refer to the statement of Theorem~\ref{thmconv-general}. In particular, in several cases, the stationary solution $Pf_0$ is not the Maxwellian equilibrium
\begin{equation}
\label{glomax}
\left(\int_{\T^d \times \R^d} f_0  \, dv \,dx\right) \frac{e^{-V(x)}}{\int_{\T^d} e^{-V} \, dx} \Mc(v) .
\end{equation}
As a matter of fact, we will see that the dimension of the vector space of stationary solutions is equal to the number of equivalence classes for $\Bumpeq$. An explicit example of collision kernel for which this is relevant is given in Section~\ref{secE3'}.

\bigskip

Among all possible stationary solutions of the linear Boltzmann equation, the Maxwellian equilibrium~\eqref{glomax} of course particularly stands out. 
In the next theorem (which is actually a particular case of Theorem~\ref{thmconv-general}), we characterize the situation for which the stationary solution ultimately reached is precisely the projection to the Maxwellian.

\begin{thm}
\label{thmconv-intro}
The following statements are equivalent.
\begin{enumerate}[(i.)]
\item  The set $\omega$ satisfies the Unique Continuation Property.

\item  The set $\omega$ satisfies the a.e.i.t. GCC and there exists one and only one equivalence class for $\Bumpeq$.
\item  For all $f_0 \in \LL^2(\T^d \times \R^d)$, denote by $f(t)$ the unique solution to \eqref{B} with initial datum $f_0$. We have
\begin{equation}
 \label{convergeto0}
 \left\|f(t)- \left(\int_{\T^d \times \R^d} f_0 \, dv \,dx \right)\frac{e^{-V}}{\int_{\T^d} e^{-V} \, dx} \Mc\right\|_{\LL^2} \to_{t \to +\infty} 0.
 \end{equation}

\end{enumerate}

\end{thm}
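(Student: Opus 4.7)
The plan is to prove the cycle (iii) $\Rightarrow$ (i) $\Rightarrow$ (ii) $\Rightarrow$ (iii), relying on Theorem~\ref{thmconvgene-intro} to transfer between convergence and a.e.i.t.~GCC, and constructing explicit stationary solutions to violate UCP whenever (ii) fails. Throughout, denote the Maxwellian projection by $P_\Mc(f_0) := \big(\int_{\T^d\times\R^d} f_0\, dv \, dx\big)\frac{e^{-V}}{\int_{\T^d} e^{-V}dx}\Mc$.

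For (iii) $\Rightarrow$ (i), let $f \in C^0_t(\LL^2)$ solve \eqref{eq:UCP}; both sides of \eqref{B} then vanish, so $f$ is a Boltzmann solution. The flow $\phi_t$ preserves $H$ by construction and Lebesgue measure by Liouville's theorem, hence preserves the weighted $\LL^2$ norm (since $e^{V}/\Mc \propto e^H$): $\|f(t)\|_{\LL^2} = \|f_0\|_{\LL^2}$ for all $t\geq 0$. Combining with the strong convergence $f(t) \to P_\Mc(f_0)$ from (iii) yields $\|f_0\|_{\LL^2} = \|P_\Mc(f_0)\|_{\LL^2}$; since $P_\Mc$ is an orthogonal projection in $\LL^2$, Pythagoras forces $f_0 = P_\Mc(f_0)$, and flow-invariance of $P_\Mc(f_0)$ (as a multiple of $e^{-H}$) then gives $f(t) \equiv P_\Mc(f_0)$, which is UCP.

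For (i) $\Rightarrow$ (ii), we produce counterexamples to UCP whenever (ii) fails. If a.e.i.t.~GCC fails, the set $\mathcal{F} := \{(x,v) : \phi_s(x,v) \notin \omega \text{ for all } s\geq 0\}$ has positive measure, is forward-invariant, hence essentially flow-invariant by measure preservation, and satisfies $\mathcal{F}\cap\omega=\emptyset$. The function $f := \mathbf{1}_{\mathcal{F}}\Mc e^{-V}$ is then flow-invariant and satisfies $C(f)=0$: on $\mathcal{F}$, characterization \eqref{omegabis} gives $k(x,v,\cdot)=k(x,\cdot,v)=0$; on $\mathcal{F}^c$, the loss term vanishes since $f(x,v)=0$, while for the gain term $k(x,v',v)>0$ forces $(x,v')\in\omega$ hence $(x,v')\notin\mathcal{F}$, so $f(x,v')=0$. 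If instead a.e.i.t.~GCC holds but there are at least two $\Bumpeq$-classes, pick one class $[\omega_1]$ and set $A := \bigcup_{\omega'\in [\omega_1]}\bigcup_{t\in\R}\phi_t(\omega')$, a flow-invariant set with positive-measure complement. Whenever $k(x,v',v)>0$, both $(x,v)$ and $(x,v')$ lie in $\omega$ and their connected components are $\Rc_k$-linked, hence belong to the same $\Bumpeq$-class; moreover, membership in $A$ for a point of $\omega$ is equivalent to its component lying in $[\omega_1]$ (traced along the flow via $\Rc_\phi$). Thus $\mathbf{1}_A(x,v)=\mathbf{1}_A(x,v')$ whenever $k(x,v',v)>0$, and combined with Assumption~{\bf A2} this gives $C(\mathbf{1}_A\Mc e^{-V})=0$, producing the desired counterexample.

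For (ii) $\Rightarrow$ (iii), Theorem~\ref{thmconvgene-intro} provides $f(t) \to Pf_0$ in $\LL^2$ with $Pf_0$ stationary. Applying the dissipation identity (Lemma~\ref{lemdissip}) to $Pf_0$ forces the dissipation functional to vanish, which under~{\bf A1}--{\bf A3} yields $C(Pf_0)=0$ and, via stationarity, the transport equation $v\cdot\nabla_x Pf_0 - \nabla_x V\cdot\nabla_v Pf_0 = 0$: so $Pf_0$ solves \eqref{eq:UCP}. For any $g$ solving \eqref{eq:UCP}, $g/(\Mc e^{-V})$ is constant along characteristics (from transport) and constant across $\Rc_k$-linked points in $\omega$ (from $C(g)=0$ together with~{\bf A2}), hence constant on each $\Bumpeq$-class together with its flow-orbit. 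Under (ii) this forces $g/(\Mc e^{-V})$ to be essentially a single constant, determined by mass conservation; applied to $Pf_0$ this yields $Pf_0 = P_\Mc(f_0)$, giving~\eqref{convergeto0}. The main obstacle is the multi-class construction in (i) $\Rightarrow$ (ii): one must verify that the combinatorial definition of $\Bumpeq$ via chains of $\Rc_\phi$ and $\Rc_k$ precisely captures the symmetry needed for $C(\mathbf{1}_A\Mc e^{-V})=0$ through Assumption~{\bf A2}; extracting $C(Pf_0)=0$ from stationarity via the dissipation identity in the presence of possibly degenerate $k$ (Assumptions~{\bf A1}--{\bf A3} only) is also delicate.
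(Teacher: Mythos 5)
Your cycle (iii)\,$\Rightarrow$\,(i)\,$\Rightarrow$\,(ii)\,$\Rightarrow$\,(iii) has one implication that is not actually proved. For (ii)\,$\Rightarrow$\,(iii) you invoke Theorem~\ref{thmconvgene-intro} to get convergence to \emph{some} stationary state and then only identify the limit. But in the paper Theorem~\ref{thmconvgene-intro} is not an independent prior result: it is deduced from Theorem~\ref{thmconv-general}, whose proof of the convergence direction is stated to be ``exactly as for Theorem~\ref{thmconv-intro}, with only minor adaptations.'' The entire analytic content of the theorem --- the uniqueness--compactness argument with time-shifted solutions $h_n(t)=g(t+t_n)$, the vanishing of the weak limit via the dissipation identity, weak coercivity (Lemma~\ref{collannule}) and the Unique Continuation Property, the defect measures $\nu_n=|h_n|^2$ handled with the averaging lemma (Corollary~\ref{lemmoyenne}), and the exclusion of mass escaping to infinity via the $\LL^\infty$ maximum principle --- lives precisely in this implication, and your proposal never supplies it. As written, the argument is circular in substance: you derive the hard direction from a statement whose only available proof is the hard direction. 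A secondary, fixable point: in Case~1 of (i)\,$\Rightarrow$\,(ii) you claim $\mathcal F$ is ``essentially flow-invariant by measure preservation''; since $\mathcal F$ may have infinite Lebesgue measure in $\T^d\times\R^d$, you must restrict to the compact, finite-measure, flow-invariant energy shells $\{H\le R\}$ before concluding $\phi_t(\mathcal F)=\mathcal F$ a.e.\ from $\phi_t(\mathcal F)\subset\mathcal F$ (or simply use the time-dependent solution $\mathds{1}_{\phi_t(\mathcal F)}e^{-V}\Mc$ as the paper does, since UCP does not require a stationary counterexample).

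The parts you do prove are correct and in places genuinely different from the paper. Your (iii)\,$\Rightarrow$\,(i) --- conservation of the $\LL^2$ norm along the characteristic flow (since $e^V/\Mc\propto e^{H}$) combined with Pythagoras for the orthogonal projection $P_\Mc$ --- is a clean direct argument, whereas the paper reaches (i) through (ii) by a transport-along-characteristics analysis. Your multi-class construction $A=\bigcup_{\omega'\in[\omega_1]}\bigcup_{t\in\R}\phi_t(\omega')$ is exactly flow-invariant by construction and correctly yields $C(\mathds{1}_A e^{-V}\Mc)=0$ via \eqref{omegabis}, the $\Rc_k$-compatibility of $\Bumpeq$, and {\bf A2}; this parallels the paper's construction with the relation $\sim$ on $\CC(\bigcup_{s\ge0}\phi_{-s}(\omega))$ and Lemma~\ref{equiv-equiv}. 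None of this, however, compensates for the missing proof that the geometric condition actually forces convergence.
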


 As already mentioned in the introduction, our proofs are inspired by ideas which originate from control theory.
For the sake of brevity, we shall not give a detailed explanation of the proof of these results in this introduction. Nevertheless, we would like to comment on an important aspect of the proof of (i.) implies (iii.) in Theorem~\ref{thmconv-intro}. Our approach is based on the fact that the square of the $\LL^2$ norm of a solution $f(t)$ of \eqref{B}, which we shall sometimes refer to as the \emph{energy}, is damped via an explicit dissipation identity, see Lemma \ref{lemdissip}:
  \begin{equation}
  \label{dissip-intro}
\text{ for all } t \geq 0, \quad \frac{d}{dt} \|f(t)\|_{\LL^2}^2 = -D(f),
 \end{equation}
with $D(f) \geq0$, which we shall call the \emph{dissipation} term. 

The idea of the proof is to assume by contradiction that there exists an initial condition $g_0$ in $\LL^2$, with zero mean, such that the associated solution $g(t)$ to~\eqref{B} does not decay to $0$. This yields the existence of $\eps>0$ and of a sequence of times $(t_n)_{n \geq 0}$ going to $+\infty$ such that $\| g(t_n)\|_{\LL^2} \geq \eps$.

We  then study the sequence of shifted functions $h_n(t):= g(t+t_n)$. This is the core of our analysis, which basically consists in a \emph{uniqueness-compactness} argument. We study the weak limit of $h_n$ and show, using the identity~\eqref{dissip-intro} and the \emph{unique continuation property}, that it is necessarily  trivial. Then, we consider the associated sequence of \emph{defect measures} and prove that it is also necessarily trivial, yielding a contradiction.

A difficulty in the analysis comes from the fact that in general, the dissipation term does not control neither the $\LL^2$ distance to the projection on the set of stationary solutions, nor the $\LL^2$ norm of the collision operator. However, what holds true is the \emph{weak coercivity} property
  \begin{equation}
  \label{wc-intro}
\text{for } f \in \LL^2, \quad D(f)=0 \implies C(f)=0,
 \end{equation}
see Lemma~\ref{collannule}. This turns out to be sufficient for our needs. Denoting by 
\begin{equation}
\label{boltzop}
A := T + C , 
\end{equation}
the linear Boltzmann operator, where $T f =  (v\cdot \na_x - \na_x V\cdot \na_v) f$ and $C$ is the collision operator, the property \eqref{wc-intro} together with the skew-adjointness of $T$ then implies that $\ker (A) = \ker(T) \cap \ker (C)$.
This precise structure, together with the equivalence relation $\Bumpeq$, allows to identify $\ker (A)$, i.e. the space of stationary solutions of~\eqref{B}.

Besides, when studying defect measures, the analysis relies on another peculiar structure of \eqref{B}, which is, loosely speaking, made of a propagative and dissipative part (transport and the loss term) and a \emph{relatively compact} part (the gain term). That the gain term is relatively compact is proved via \emph{averaging lemmas} (see Appendix \ref{section-AL} and the references therein).

  \subsection{Exponential convergence to equilibrium}

For what concerns exponential convergence, we need to introduce a technical assumption, which is slightly stronger than {\bf A3}:

\noindent{\bf A3'.}
Assume that there exists a continuous function $\varphi(x,v):= \Theta \circ H(x,v)$ with $\Theta : \R \to [ 1, +\infty)$, such that for all $(x,v) \in \T^d \times \R^d$, we have
$$
\int_{\R^d} k(x,v,v’) \, dv’ \leq \varphi(x,v) ,
$$
and 
$$
 \sup_{x \in \T^d} \int_{\R^d \times \R^d} k^2(x,v’,v) \frac{\Mc(v’)}{\Mc(v)} \left(\frac{\varphi(x,v)}{\varphi(x,v')}-1\right)^2 \, dv dv’ <+\infty .
$$

This assumption is for instance satisfied in the standard case where $\tilde{k}$ has a polynomial growth in the variables $v$ and $v'$ (taking for example $\Theta(t)= \lambda \exp(\frac{1}{4} t)$ and $\lambda>0$ large enough).

We have the following criterion, assuming that {\bf A3'} is satisfied in addition to {\bf A1--A3}.

\begin{thm}[Exponential convergence to equilibrium]
\label{thmexpo-intro} Assume that the collision kernel satisfies {\bf A3'}. 
The following statements are equivalent:
\begin{enumerate}[(a.)]
\item $C^-(\infty) > 0$. 
\item There exist $C>0, \gamma>0$ such that for any $f_0 \in \LL^2(\T^d \times \R^d)$, the unique solution to \eqref{B} with initial datum $f_0$ satisfies for all $t \geq 0$,
\begin{multline}
\label{decexpo-intro} 
\left\|f(t)-\left(\int_{\T^d \times \R^d} f_0 \, dv \,dx \right) \frac{e^{-V}}{\int_{\T^d} e^{-V} \, dx} \Mc \right\|_{\LL^2} \\
\leq C e^{-\gamma t} \left\|f_0-\left(\int_{\T^d \times \R^d} f_0 \, dv \,dx\right)\frac{e^{-V}}{\int_{\T^d} e^{-V} \, dx} \Mc\right\|_{\LL^2}.
\end{multline}

\item There exists $C>0, \gamma>0$ such that for any $f_0 \in \LL^2(\T^d \times \R^d)$, there exists a stationary solution $Pf_0$ of ~\eqref{B} such that the unique solution to \eqref{B} with initial datum $f_0$ satisfies for all $t \geq 0$,
\begin{equation}
\label{decexpogene-intro} 
\left\|f(t)-Pf_0\right\|_{\LL^2} 
\leq C e^{-\gamma t} \left\|f_0-Pf_0\right\|_{\LL^2}.
\end{equation}
\end{enumerate}
\end{thm}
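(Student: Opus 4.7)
The plan is to prove the three-way equivalence in the cycle $(b) \Rightarrow (c) \Rightarrow (a) \Rightarrow (b)$. The implication $(b) \Rightarrow (c)$ is immediate: simply take $Pf_0$ to be the Maxwellian projection $\left(\int f_0\right)\frac{e^{-V}}{\int e^{-V}}\Mc$. The substantive work lies in $(a) \Rightarrow (b)$ and in the contrapositive of $(c) \Rightarrow (a)$.

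For $(a)\Rightarrow(b)$, I would first argue that $C^-(\infty)>0$ implies the classical GCC, hence the a.e.i.t.\ GCC, and then check (this is a separate small lemma) that GCC forces a unique equivalence class for $\Bumpeq$, so that Theorem~\ref{thmconv-intro} identifies the limit as the Maxwellian equilibrium. It then suffices to prove \emph{exponential} decay. Writing $g(t) := f(t) - Pf_0$ (which has zero mass and $\|g(t)\|_{\LL^2}^2$ is decreasing by \eqref{dissip-intro}), iteration of the dissipation identity reduces matters to the observability estimate: there exist $T,c>0$ such that, for every zero-mass $g_0\in\LL^2$,
\begin{equation*}
\int_0^T D(g(s))\,ds \geq c\,\|g_0\|_{\LL^2}^2.
\end{equation*}
I would prove this by a contradiction-compactness (``uniqueness--compactness'') argument à la Lebeau: suppose $\|g_{0,n}\|_{\LL^2}=1$, $\int g_{0,n}=0$, and $\int_0^T D(g_n)\,ds\to 0$; extract a weak limit $g_\infty$ in $L^2_t\LL^2$. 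Weak lower semicontinuity of $D$ and the weak coercivity \eqref{wc-intro} give $C(g_\infty)=0$, while $g_\infty$ solves the transport equation with zero mass, so the Unique Continuation Property of Definition~\ref{def:UCP} yields $g_\infty=0$.

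The crux of the proof---and the step I expect to be the main obstacle---is upgrading this weak convergence to strong convergence in $L^2_t\LL^2$ so as to reach a contradiction with $\|g_{0,n}\|_{\LL^2}=1$. This is where both assumption {\bf A3'} and the hypothesis $C^-(\infty)>0$ enter crucially. Solving \eqref{B} along characteristics by Duhamel,
\begin{equation*}
g_n(t,\phi_t(x,v)) = e^{-\int_0^t b\circ\phi_s\,ds}\,g_{0,n}(x,v) + \int_0^t e^{-\int_\sigma^t b\circ\phi_s\,ds}\,C^+(g_n)(\sigma,\phi_\sigma(x,v))\,d\sigma,
\end{equation*}
the hypothesis $C^-(\infty)>0$ means that for $T$ large enough, the free exponential factor is uniformly small, so the first term goes to zero in $\LL^2$. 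The gain contribution $C^+(g_n)$, which is a velocity average of $g_n$ weighted by $k$, is handled by the averaging lemmas of Appendix~\ref{section-AL}; Assumption {\bf A3'} provides the polynomial-weight bounds needed to turn the $v$-compactness produced by averaging into genuine compactness in $\LL^2$ after propagation along the flow. Thus $g_n\to 0$ strongly, contradicting $\|g_{0,n}\|_{\LL^2}=1$.

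For the contrapositive $(c)\Rightarrow(a)$, assuming $C^-(\infty)=0$, I would construct a family of counterexamples concentrated on characteristics that barely feel the damping. For each large $T$, choose $(x_T,v_T)$ with $\int_0^T b(\phi_s(x_T,v_T))\,ds = o(T)$, and take $f_{0,T}$ a normalized $\LL^2$ bump supported in a thin tube around $\{\phi_s(x_T,v_T)\}_{s\in[0,T]}$, corrected by subtracting its Maxwellian projection so it has zero mass (and $\|Pf_{0,T}\|_{\LL^2}\ll \|f_{0,T}\|_{\LL^2}$). The dissipation identity combined with the pointwise bound $D(f)\lesssim \int b\,|f|^2\,\tfrac{e^V}{\Mc}$ and propagation along the flow give $\|f_T(T)-Pf_{0,T}\|_{\LL^2}^2 \geq (1-o(1))\,\|f_{0,T}-Pf_{0,T}\|_{\LL^2}^2$, incompatible with any uniform exponential decay rate \eqref{decexpogene-intro} as $T\to\infty$. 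This construction is conceptually straightforward once the tube and the bound on $D$ are set up carefully; the only subtlety is to control the gain contribution inside the tube, which is again done via the explicit Duhamel formula along characteristics.
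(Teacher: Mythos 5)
Your overall architecture matches the paper's: reduce exponential decay to the observability inequality (Lemma~\ref{lemfondamental}), prove $(a.)\Rightarrow(b.)$ by a uniqueness–compactness argument that combines weak coercivity, the unique continuation property from Proposition~\ref{remUCP}, the Duhamel formula and averaging lemmas, and prove $(b.)\Rightarrow(a.)$ (hence $(c.)\Rightarrow(a.)$) by concentrating a zero-mass bump near a ``trapped'' characteristic where $\int_0^T b\circ\phi_s\,ds$ is small. Two specific points are worth flagging.

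First, the place where assumption {\bf A3'} is used is mislocated. You invoke {\bf A3'} in the proof of $(a.)\Rightarrow(b.)$, saying it ``provides the polynomial-weight bounds needed to turn the $v$-compactness produced by averaging into genuine compactness in $\LL^2$'' — but that upgrade is handled by {\bf A3} alone (this is Corollary~\ref{lemmoyenne}), and the paper explicitly notes that {\bf A3'} is \emph{not} used for $(a.)\Rightarrow(b.)$. Where {\bf A3'} is genuinely needed is in the \emph{converse} direction $(b.)\Rightarrow(a.)$ (and $(c.)\Rightarrow(a.)$): the concentrated test functions $g_{0,n}$ are compactly supported in $v$, but the gain operator spreads mass to high velocities, and one must control $\int_0^T D(g_n)\,dt$, i.e.\ $\int_0^T \int\nu_n\,b\,\frac{e^V}{\Mc}\,dt$. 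Since $b$ is only bounded by the weight $\varphi=\Theta\circ H$, you need a propagated a priori bound on the \emph{weighted} norm $\|g_n\|_{\Lb^2}$; this is exactly Lemma~\ref{lemtech}, whose Gronwall estimate relies on {\bf A3'}. Your last paragraph dismisses this as a ``subtlety'' handled by Duhamel, but the point is that without {\bf A3'} the gain contribution to the dissipation is not controllable.

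Second, your treatment of $(c.)\Rightarrow(a.)$ is too loose about what $Pf_0$ is. Under $(c.)$, convergence holds, so Theorem~\ref{thmconvgene-intro} forces a.e.i.t.\ GCC, and Theorem~\ref{thmconv-general} identifies $Pf_0$ uniquely as the projection~\eqref{def-equimulti} onto the $(f_j)_{j\in J}$; it is not a free parameter. The clean way to reduce to observability is to work in the subspace $\LL^2_{00}$ (annihilated by all the $f_j$'s), as in the paper, and then check that the concentrated bumps $g_{0,n}$ actually land in $\LL^2_{00}$ for $n$ large (because they eventually sit inside a single $U_j$). Your statement ``$\|Pf_{0,T}\|\ll\|f_{0,T}\|$'' is not an adequate substitute; you need $Pf_{0,T}=0$, and that follows once you check $g_{0,n}\in\LL^2_{00}$.

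A minor stylistic difference: you apply Duhamel directly to $g_n$ and bound the free term via $e^{-TC^-(\infty)}$ in $\LL^2$ (plus Minkowski and averaging for the gain term), while the paper applies Duhamel to the defect measure $\nu_n=|g_n|^2$ and works in $\LL^1$. Both are valid, though be careful that the correct conclusion is a strict drop of $\|g_n(T)\|_{\LL^2}$ at a fixed large $T$ (contradicting $\|g_n(T)\|_{\LL^2}^2 = 1-\int_0^T D(g_n)\,dt\to1$), not a full $L^2_t\LL^2$-strong convergence of $g_n$ to zero, which is false near $t=0$.
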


 \begin{rque}
  If we do not assume that {\bf A3'} is satisfied, then we still have that (a.) implies (b.) and (c.). 
 \end{rque}

 If $C^-(\infty)>0$, note in particular that the Geometric Control Condition of Definition~\ref{def: GCC} is satisfied.

One interesting consequence of Theorem~\ref{thmexpo-intro} is a rigidity property of the Maxwellian equilibrium with respect to exponential convergence: loosely speaking, given a linear Boltzmann equation with a collision kernel satisfying {\bf A1-A2-A3-A3'}, if (c.) holds, then the stationary solution ultimately reached is necessarily the projection of the initial datum on the Maxwellian equilibrium.

As an immediate consequence of Theorem~\ref{thmexpo-intro}, we deduce the following result.
\begin{coro}
\label{coroexpo-gen}
Assume that there is $x \in \T^d$ such that $\na V(x)=0$ and $\int k(x,0,v') \, dv' =0$. Then $C^-(\infty)=0$ and  there is no uniform exponential rate of convergence to equilibrium.
\end{coro}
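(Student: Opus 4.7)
The plan is to verify that the hypotheses force $C^-(\infty)=0$ by exhibiting a fixed point of the Hamiltonian flow at which the collision frequency vanishes, and then to invoke Theorem~\ref{thmexpo-intro} to rule out uniform exponential convergence.

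First, I would observe that the assumption $\nabla V(x)=0$ makes the point $(x,0)\in\T^d\times\R^d$ a stationary point of the Hamiltonian flow: the system~\eqref{hamilflow} reads $\dot X_t=\Xi_t$ and $\dot \Xi_t=-\nabla V(X_t)$, so at $(x,0)$ both derivatives vanish and hence $\phi_t(x,0)=(x,0)$ for every $t\in\R$. Consequently, evaluating the infimum defining $C^-(T)$ at this particular point gives, for every $T>0$,
\begin{equation*}
0\le C^-(T)\le \frac{1}{T}\int_0^T\!\int_{\R^d} k\bigl(\phi_t(x,0),v'\bigr)\,dv'\,dt=\int_{\R^d} k(x,0,v')\,dv'=0,
\end{equation*}
where the lower bound uses the nonnegativity of $k$ from~{\bf A1} and the last equality is the second hypothesis. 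Hence $C^-(T)=0$ for all $T>0$, which yields $C^-(\infty)=\sup_{T\in\R^+} C^-(T)=0$.

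Once $C^-(\infty)=0$ is established, the second claim is the contrapositive of the implication (b.)$\Rightarrow$(a.) in Theorem~\ref{thmexpo-intro}: if some uniform exponential decay estimate of the form~\eqref{decexpo-intro} held, then $C^-(\infty)$ would have to be strictly positive, contradicting what we just proved. This concludes the corollary.

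I do not expect any real obstacle here: the entire argument reduces to spotting the fixed point of $\phi_t$ and plugging it into the definition of the Lebeau constant, after which Theorem~\ref{thmexpo-intro} is applied as a black box. The only mild subtlety worth flagging is that the characterization of uniform exponential decay in Theorem~\ref{thmexpo-intro} is stated under assumption~{\bf A3'}; if one wants a statement valid without~{\bf A3'}, one can still produce a direct quasimode-type argument by concentrating initial data near the fixed point $(x,0)$ in phase space, where both the transport and the loss term are arbitrarily small, and check that the associated dissipation~\eqref{dissip-intro} decays too slowly to be compatible with any uniform exponential rate.
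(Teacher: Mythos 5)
Your proof is correct and is exactly the argument the paper intends by calling the statement ``an immediate consequence of Theorem~\ref{thmexpo-intro}'': identify $(x,0)$ as a fixed point of the Hamiltonian flow (since $\dot X=\Xi$ and $\dot\Xi=-\nabla V$ both vanish there), plug it into the definition of $C^-(T)$ to force $C^-(T)=0$ for all $T$ and hence $C^-(\infty)=0$, and then invoke the contrapositive of (b.)\,$\Rightarrow$\,(a.). Your remark about {\bf A3'} is also well taken — the corollary inherits that hypothesis from Theorem~\ref{thmexpo-intro} since (b.)\,$\Rightarrow$\,(a.) is precisely the direction that requires it — so no gap remains.
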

In particular we get  in the free transport case:
\begin{coro}
\label{coroexpo-libre}
Assume that $V=0$ and that $p_x(\omega) \neq \T^d$, where
\begin{equation}
\label{defpx}
p_x(\omega) = \{x \in \T^d , \text{ there exists }v \in \R^d \text{ such that }(x,v) \in \omega\} 
\end{equation}
denotes the projection on the space of positions. Then there is no uniform exponential rate of convergence to equilibrium.
\end{coro}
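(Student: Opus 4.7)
The plan is to reduce the statement directly to Corollary \ref{coroexpo-gen}, which provides a simple sufficient criterion for the failure of uniform exponential decay, namely the existence of a point $x \in \T^d$ where both $\nabla V(x) = 0$ and $\int_{\R^d} k(x, 0, v') \, dv' = 0$.

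First, I would observe that the hypothesis $V = 0$ makes the condition $\nabla V(x) = 0$ automatic at every $x \in \T^d$. So the only thing to check is that there exists some point $x_0 \in \T^d$ at which $\int_{\R^d} k(x_0, 0, v') \, dv' = 0$.

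Next, I would use the assumption $p_x(\omega) \neq \T^d$ to pick a point $x_0 \in \T^d \setminus p_x(\omega)$. By the definition of $p_x(\omega)$ in \eqref{defpx}, this means that $(x_0, v) \notin \omega$ for every $v \in \R^d$; in particular $(x_0, 0) \notin \omega$. Recalling Definition \ref{def-om} of $\omega$, this is exactly the statement that $\int_{\R^d} k(x_0, 0, v') \, dv' = 0$. Applying Corollary \ref{coroexpo-gen} at the point $x_0$ then yields $C^-(\infty) = 0$ and the absence of a uniform exponential rate of convergence.

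There is no serious obstacle here: the only minor point to keep in mind is that one must carefully match the definition of $\omega$ (given through the outgoing collision frequency $\int k(x, v, v') \, dv'$) with the vanishing condition in Corollary \ref{coroexpo-gen}, so that the choice $v = 0$ inside $\{x_0\} \times \R^d \subset \T^d \times \R^d \setminus \omega$ produces exactly the integral that Corollary \ref{coroexpo-gen} requires to vanish. Everything else is immediate.
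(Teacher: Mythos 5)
Your proof is correct and is exactly the reduction the paper intends: Corollary \ref{coroexpo-libre} is presented as a particular case of Corollary \ref{coroexpo-gen}, obtained by noting that $V=0$ makes $\nabla V$ vanish everywhere and that any $x_0\notin p_x(\omega)$ satisfies $(x_0,0)\notin\omega$, i.e. $\int_{\R^d}k(x_0,0,v')\,dv'=0$ by Definition \ref{def-om}. No gaps.
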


The proof of Theorem~\ref{thmexpo-intro} is as well inspired by ideas coming from control theory.
The proof of $(a.)\implies (b.)$ relies on the following facts.
\begin{itemize}
\item By~\eqref{dissip-intro}, the exponential decay (i.e. $(ii.)$ in Theorem \ref{thmexpo-intro}) can be rephrased as a certain \emph{observability} inequality relating the dissipation and the energy at time $0$, see Lemma \ref{lemfondamental}: there exist $K,T>0$ such that for all $f_0 \in \LL^2(\T^d\times \R^d)$ with zero mean, 
$$
K\int_0^T D(f) \, dt \geq \| f_0 \|_{\LL^2}^2,
$$
where $f$ is the solution of \eqref{B} with initial datum $f_0$.
\item This inequality is proved using a contradiction argument, following Lebeau \cite{Leb}, which also consists in a uniqueness-compactness argument. The analysis follows the same lines as those of $(i.)$ implies $(iii.)$ in the proof of Theorem~\ref{thmconv-intro}. In particular, it also relies on the weak coercivity property~\eqref{wc-intro}. The main difference is that we need to use here the fact that the Lebeau constant is positive in order to show that the sequence of defect measures  becomes trivial at the limit, yielding a contradiction. 
\end{itemize}
 
For what concerns $(b.)\implies (a.)$, the idea is to contradict the observability condition: we construct a sequence of initial conditions in $\LL^2(\T^d\times \R^d)$ for \eqref{B}, which concentrate to a \emph{trapped} ray (whose existence is guaranteed by the cancellation of the Lebeau constant). Loosely speaking, this corresponds to a geometric optics type construction. In order to justify this procedure, we need that the collision kernel satisfies {\bf A3'}. Finally, we mention that the proof of $(c.)\implies (a.)$ is similar but relies on an additional argument based on the precise version of Theorem~\ref{thmconvgene-intro}.

\begin{rques}\begin{enumerate}
\item As for the damped wave equation~\cite{RT:74, BLR:92, Leb, KoTa}, the study of asymptotic decay rates relies on ``phase space'' analysis. However, as opposed to the wave equation, the Boltzmann equation is directly set on the phase space. As a consequence, the study of associated propagation and damping phenomena only uses ``local'' analysis, whereas that of the wave equation (see~\cite{RT:74, BLR:92, Leb, KoTa}) requires the use of microlocal analysis.

\item One technical difficulty here is to handle the lack of compactness of the phase space $\T^d \times \R^d$ in the variable $v$. It is also possible to consider the equations on a compact phase space. In this case, all our proofs apply, sometimes with significant simplifications. We refer to Section \ref{compactPS}.

\end{enumerate}

\end{rques}

   \subsection{Organization of the paper}{Part \ref{LTB}} is mainly dedicated to the proof of Theorems~\ref{thmconvgene-intro}, \ref{thmconv-intro} and \ref{thmexpo-intro}. In Section \ref{preliminaries}, we give some preliminaries in the analysis; in Section \ref{sec:wp}, we start by proving the dissipation identity and recalling the classical well-posedness result for \eqref{B}, while Section \ref{sec:wc} is dedicated to a detailed study of the kernel of the collision operator, which leads to the weak coercivity property~\eqref{wc-intro}.  
Section \ref{subsectiondecroissance} is mainly devoted to the proofs of Theorems \ref{thmconv-intro} and \ref{thmconvgene-intro};  
in Section \ref{sec:conv}, we start by proving Theorem~\ref{thmconv-intro}. Then, in Section \ref{sec:conv2}, we state and prove Theorem \ref{thmconv-general}, which is the precise version of Theorem~\ref{thmconvgene-intro}. 
Section \ref{secexamples} is dedicated to the application of these results to some particular classes of collision kernels.
In Section \ref{expocon}, we prove Theorem \ref{thmexpo-intro}. Finally in Section \ref{lowerbounds}, we briefly revisit the recent work of Bernard and Salvarani \cite{BS2} in our framework, in order to give some abstract lower bounds on the convergence rate when $C^-(\infty)=0$.

\bigskip

  Part \ref{Boundary} is dedicated to the case of specular reflection in a bounded and piecewise $C^1$ domain of $\R^d$. In Section \ref{prelimbound}, we state preliminary definitions and results in this setting (including the geometric definitions and well-posedness). Then in Section \ref{convboun}, we state and sketch the proof of the exact analogues of Theorems \ref{thmconv-intro} and \ref{thmconv-general}, see Theorems \ref{thm-convboun} and \ref{thmconv-general-bord}.
  In Section \ref{expoboun}, we study exponential convergence to equilibrium. For a more restrictive class of collision kernels (namely {\bf E2} with an additional $L^\infty$ bound)  and under a technical regularity assumption on $p_x(\omega)$ near $ \partial \Omega$ (here $p_x$ denotes the projection on $\Omega$ defined in~\eqref{defpx}; the technical assumption is automatically satisfied when $\overline{p_x({\omega})} \subset  \Omega$), we prove in Theorem \ref{thmexpo-specular} the analogue of Theorem \ref{thmexpo-intro} $(a.)\implies (b.)$. This technical assumption, as well as the fact that we do not prove the analogue of Theorem \ref{thmexpo-intro}, $(b.)\implies (a.)$,  is due to the lack of compactness up to the boundary of $\Omega$ in averaging lemmas. We are nevertheless able to overcome this difficulty for proving $(a.)\implies (b.)$, by adapting an argument of Guo \cite{Guo}. The fact that the collision kernel belongs to the class {\bf E2} allows us to obtain a control of the distance of a solution to its projection on Maxwellians, see Lemma~\ref{cerci2}. This is a quantitative coercivity estimate which is much stronger than the weak coercivity property~\eqref{wc-intro} we use in torus case. Finally, in Section \ref{otherboun}, we give some remarks on some other possible boundary conditions.
 
 \bigskip 
   In Part \ref{part3}, we adapt our analysis in order to handle other geometric situations. In Section \ref{sec:manifold}, we deal with the case of a general compact Riemannian manifold (without boundary): we first explain how to express the linear Boltzmann equation in this setting and generalize Theorems \ref{thmconv-general} and  \ref{thmexpo-intro} to this context. We provide to this end a version of averaging lemmas for kinetic transport equations on a manifold, see Lemma \ref{lem-moyenne-variete}. Finally, in Section \ref{compactPS}, we explain very shortly how to adapt all these results to the case of compact phase spaces.

 \bigskip
   This paper ends with five appendices. In Appendix~\ref{stabob}, we give the equivalence between exponential decay and the observability inequality, used crucially in the proof of Theorem~\ref{thmexpo-intro}. In Appendix~\ref{section-AL}, we give a reminder about classical averaging Lemmas and adapt them to our purposes. In Appendix~\ref{GCCother}, we provide reformulations of some geometric properties. In Appendix~\ref{proofCEXucp}, we provide the proof of Proposition~\ref{CEXucp}, which relates the two equivalence relations, which are key notions in our analysis. In Appendix~\ref{Other}, to stress the robustness of our methods, we explain how the results of Part~\ref{LTB} of this paper concerning large time behavior can be adapted to other Boltzmann-like equations (e.g. relativistic Boltzmann equation or general linearized BGK equation).

  \section{Remarks and Examples}
  
  In this section, we provide several comments on the different geometric definitions introduced in Sections \ref{defgeo} and \ref{defstrugeo}.

\subsection{About a.e.i.t. GCC in the torus}
A first natural question is to understand the a.e.i.t. GCC in the usual situation of free transport (i.e. $V=0$), when $\omega$, i.e. the set where collisions are effective, is of the simple form $\omega_x \times \R^d$. We prove that this condition is satisfied for {\em any} nonempty $\omega_x \subset \T^d$. We also prove that this situation is very particular, and unstable with respect to small perturbations of the potential.

  \begin{prop}\label{coroUCP} Suppose that $V = 0$ and that $ \omega = \omega_x \times \R^d$, where $\omega_x$ is a non-empty open subset of $\T^d$.
Then $(i.)-(ii.)-(iii.)$ in Theorem~\ref{thmconv-intro} hold.
 \end{prop}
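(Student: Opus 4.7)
The plan is to verify statement $(ii.)$ of Theorem~\ref{thmconv-intro}: that $\omega$ satisfies the a.e.i.t. GCC and that there is a single equivalence class for $\Bumpeq$. By the equivalence in that theorem, this will immediately yield $(i.)$--$(iii.)$.

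For the a.e.i.t. GCC, I would argue as follows. Since $V = 0$, the hamiltonian flow reduces to free transport $\phi_t(x,v) = (x+tv, v)$, so the characteristic through $(x,v)$ meets $\omega = \omega_x \times \R^d$ exactly when the projected trajectory $s \mapsto x + sv$ enters $\omega_x$ in $\T^d$ at some $s \geq 0$. Let $\mathcal{V} \subset \R^d$ denote the set of velocities $v = (v_1, \ldots, v_d)$ whose components are linearly independent over $\Q$; its complement is a countable union of hyperplanes, hence Lebesgue-negligible in $\R^d$. By Kronecker's density theorem, for every $v \in \mathcal{V}$ and every $x \in \T^d$ the forward orbit $\{x + sv \mod \Z^d : s \geq 0\}$ is dense in $\T^d$ and therefore meets the non-empty open set $\omega_x$. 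Fubini then shows that the set of $(x,v) \in \T^d \times \R^d$ for which no such $s$ exists has zero Lebesgue measure, which is exactly the a.e.i.t. GCC.

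For the uniqueness of the $\Bumpeq$-class, I would first identify the connected components of $\omega$: since $\R^d$ is connected, they are precisely the sets $\omega^{(i)} := \omega_x^{(i)} \times \R^d$, indexed by the (at most countable) connected components $\omega_x^{(i)}$ of $\omega_x$. Given two such components, I would show $\omega^{(i)} \Rc_\phi \, \omega^{(j)}$ directly: picking $x_i \in \omega_x^{(i)}$, $x_j \in \omega_x^{(j)}$, and any lift $w \in \R^d$ of $x_j - x_i$, the point $(x_i, w)$ belongs to $\omega^{(i)}$ and satisfies $\phi_1(x_i, w) = (x_j, w) \in \omega^{(j)}$. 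Every pair of components is thus mutually $\Rc_\phi$-related, so $\CC(\omega)$ carries a single $\Bumpeq$-class.

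Combining these two facts with Theorem~\ref{thmconv-intro} yields $(i.)$--$(iii.)$. I do not anticipate any serious obstacle: the only nontrivial ingredient is the classical Kronecker density theorem on $\T^d$, and the linking of the connected components is immediate because in $\omega = \omega_x \times \R^d$ every velocity direction is available above every base point.
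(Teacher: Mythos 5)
Your proof is correct, and the first half (a.e.i.t.\ GCC via Kronecker's density theorem for free transport with rationally independent velocity components, plus Fubini) is essentially the paper's own argument, which is encapsulated in Lemma~\ref{lemUCP1}. Where you diverge is in how you obtain a single equivalence class: the paper restricts to one connected component $\omega_x^0\times\R^d$ of $\omega$, notes that this \emph{connected} subset already satisfies a.e.i.t.\ GCC, and then invokes Proposition~\ref{aeitgccconnected} $(i)\Rightarrow(iii)$ to conclude that $\bigcup_{s\ge 0}\phi_{-s}(\omega)$ is connected, hence carries a single $\sim$-class, whence $(ii.)$ of Theorem~\ref{thmconv-intro} via Lemma~\ref{equiv-equiv}. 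You instead work directly with $\Bumpeq$ on $\CC(\omega)$: you identify the connected components of $\omega$ as $\omega_x^{(i)}\times\R^d$ and link any two of them by $\Rc_\phi$ using the explicit point $(x_i,w)$ with $w$ a lift of $x_j-x_i$, so that $\phi_1(x_i,w)=(x_j,w)$. This is valid precisely because every velocity is available over every base point in a set of the form $\omega_x\times\R^d$, and it buys you a more self-contained argument that bypasses both Proposition~\ref{aeitgccconnected} and the passage between $\Bumpeq$ and $\sim$; the paper's route is shorter given its existing machinery and also yields the slightly stronger conclusion that $\bigcup_{s\ge 0}\phi_{-s}(\omega)$ is connected. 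Both arguments are sound.
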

 
 Such a result is in particular relevant for the study of the linearized BGK equation (class {\bf E1'}).
Proposition \ref{coroUCP} shows that there is convergence to the Maxwellian~\eqref{glomax} equilibrium as soon as $\sigma \neq 0$.

\medskip
On the other hand, $\omega$ being fixed, we give an example of dynamics (i.e. exhibit a potential $V$) for which the a.e.i.t. GCC fails. More precisely, we prove that this property is very unstable with respect to small perturbations of the potential: for $\omega = \omega_x \times \R^d \neq \T^d \times \R^d$ there exist arbitrary small potentials (in any $C^k$-norm) such that $\omega$ does not satisfy a.e.i.t. GCC for the associated Hamiltonian.
 This illustrates the fact that the free transport on the torus is a very uncommon situation.

 \begin{prop}
 \label{CEXucp}
Assume that $\overline{p_x(\omega)} \neq \T^d$, where $p_x(\omega)$ 
denotes the projection of $\omega$ on $\T^d$ defined in~\eqref{defpx}. Then there exists a potential $V \in C^\infty(\T^d)$ such that for any $\eps>0$, $\omega$ does not satisfy a.e.i.t. GCC for the Hamiltonian $H_\eps(x,v) = \frac{|v|^2}{2}+ \eps V(x)$.
 \end{prop}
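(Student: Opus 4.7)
The plan is a classical trapping-in-a-potential-well argument. The key geometric input is that, by assumption, there exists a point $x_0$ and a radius $r>0$ such that $B(x_0,r) \subset \T^d \setminus \overline{p_x(\omega)}$, i.e.\ no point with $x$-coordinate in $B(x_0,r)$ can belong to $\omega$. The goal is to build a single smooth potential $V$ whose associated Hamiltonian admits, for any coupling strength $\eps>0$, a positive-measure set of trapped orbits staying inside $B(x_0,r) \times \R^d$.

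First I would construct the potential. Pick $V \in C^\infty(\T^d)$ such that $V \geq 0$, $V(x_0) = 0$ (so $x_0$ is a global minimum), and such that there exists $\delta > 0$ with $V(x) \geq \delta$ for every $x \in \T^d \setminus B(x_0, r/2)$. This is easy to produce, e.g.\ by smoothing a function of the geodesic distance to $x_0$ that is zero at $x_0$ and constant equal to $\delta$ outside $B(x_0,r/2)$. Crucially, this $V$ does not depend on $\eps$.

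Next I would fix $\eps > 0$ and use energy conservation for $H_\eps(x,v) = \tfrac{|v|^2}{2} + \eps V(x)$ along the Hamiltonian flow $\phi_t^\eps$. Consider the open set
\begin{equation*}
\Omega_\eps := \left\{ (x,v) \in \T^d \times \R^d \,:\, H_\eps(x,v) < \eps \delta \right\}.
\end{equation*}
Since $H_\eps(x_0, 0) = 0 < \eps \delta$, the set $\Omega_\eps$ is a nonempty open neighborhood of $(x_0,0)$, hence has positive Lebesgue measure. For any $(x,v) \in \Omega_\eps$ and any time $t \in \R$, conservation of energy gives
\begin{equation*}
\eps V(X_t(x,v)) \leq H_\eps(\phi_t^\eps(x,v)) = H_\eps(x,v) < \eps \delta,
\end{equation*}
so $V(X_t(x,v)) < \delta$, which by construction forces $X_t(x,v) \in B(x_0, r/2) \subset \T^d \setminus \overline{p_x(\omega)}$. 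In particular $\phi_t^\eps(x,v) \notin \omega$ for every $t \geq 0$.

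Thus $\Omega_\eps$ is a positive-measure set of initial conditions whose entire forward orbit under $\phi_t^\eps$ avoids $\omega$; equivalently, $\Omega_\eps \cap \bigcup_{s \geq 0} \phi_{-s}^\eps(\omega) = \emptyset$. Using the characterization of a.e.i.t.\ GCC in Remark~\ref{rem-reecriture}, this contradicts the a.e.i.t.\ GCC for $H_\eps$. Since $\eps > 0$ was arbitrary and $V$ was chosen independently of $\eps$, the conclusion follows. No step here is technically delicate; the only point requiring mild care is the existence of a smooth $V$ with the required quantitative bounds $V(x_0) = 0$ and $V \geq \delta$ off $B(x_0, r/2)$, but this is a standard bump construction on the torus. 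The argument also makes clear why this is a genuine obstruction: the freedom to rescale $\eps$ does not help, because the depth of the well in $H_\eps$ scales precisely as $\eps\delta$ and the trapping threshold scales the same way.
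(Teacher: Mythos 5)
Your proof is correct. The overall strategy is the same as the paper's --- build a potential well centered at a point $x_0\in\T^d\setminus\overline{p_x(\omega)}$ and trap a positive-measure set of low-energy orbits there --- but the mechanism by which you establish trapping is different. The paper takes $V(x)=\frac{|x-x_0|^2}{2}\Psi(x)$ with a smooth cutoff $\Psi\equiv 1$ near $x_0$, solves the resulting harmonic-oscillator flow explicitly on $B(x_0,2\eta)\times\R^d$, and checks by hand that initial data in $B(x_0,\eta)\times B(0,\sqrt{\eps}\eta)$ never leave $B(x_0,2\eta)\times\R^d$. You instead impose only the qualitative barrier condition $V(x_0)=0$, $V\ge\delta$ off $B(x_0,r/2)$, and invoke conservation of $H_\eps$ to confine the sublevel set $\{H_\eps<\eps\delta\}$; the key observation that the well depth and the trapping threshold both scale like $\eps$ is exactly what makes the single potential work for every $\eps>0$, just as in the paper. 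Your energy-barrier route is more robust (no explicit integration of the flow, no special form of $V$ required) and slightly more transparent about why the rescaling in $\eps$ is harmless; the paper's explicit computation buys a concrete description of the trapped set (positions in $B(x_0,2\eta)$, velocities of size $O(\sqrt{\eps})$), which is then reused in a remark following the proof. All the details you would need to check --- openness and nonemptiness of $\Omega_\eps$, its inclusion in $B(x_0,r/2)\times\R^d$, and the translation into the failure of a.e.i.t.\ GCC via Remark~\ref{rem-reecriture} --- go through as you describe.
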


The proof of Proposition~\ref{coroUCP} and Proposition~\ref{CEXucp} are given respectively in Section~\ref{secE3pp} and Appendix~\ref{proofCEXucp}.

  \subsection{The equivalence relation on $\CC \left(\bigcup_{t \geq 0} \phi_{-t} (\omega)\right)$}
  \label{autre-rel-equiv}
 We define here another key equivalence relation $\sim$ on the set of connected components of $\bigcup_{s\in \R^+}\phi_{-s}(\omega)$. We then explain the link between the two equivalence relations $\sim$ on $\CC \left(\bigcup_{t \geq 0} \phi_{-t} (\omega)\right)$ and $\Bumpeq$ on $\CC(\omega)$.
 \begin{deft}
 \label{def-sim}
Given $\Omega_1, \Omega_2$ two connected components of $\bigcup_{s\in \R^+}\phi_{-s}(\omega)$, we say that $\Omega_1 \sim \Omega_2$ if  there is $N \in \N$ and $N$ connected components $(\Omega^{i})_{1\leq i \leq N}$ of $\bigcup_{s\in \R^+}\phi_{-s}(\omega)$ such that 
 \begin{itemize}
 \item we have $\Omega_1 \Rc_{\phi} \, \Omega^{(1)}$  or $\Omega_1 \Rc_k \, \Omega^{(1)}$,
 \item for all $1\leq i \leq N-1$, we have $\Omega^{(i)}  \Rc_{\phi} \, \Omega^{(i+1)}$ or $\Omega^{(i)}  \Rc_k \, \Omega^{(i+1)}$,
  \item we have $\Omega^{(N)} \Rc_{\phi} \, \Omega_2$ or $\Omega^{(N)}  \Rc_k \, \Omega_2$.
\end{itemize}
  The relation $\sim$ is an equivalence relation on the set of $\CC\left(\bigcup_{s\in \R^+}\phi_{-s}(\omega) \right)$ of connected components of $\bigcup_{s\in \R^+}\phi_{-s}(\omega)$.
For $\Omega_1 \in \CC\left(\bigcup_{s\in \R^+}\phi_{-s}(\omega) \right)$,  we denote its equivalence class for $\sim$ by $[\Omega_1]$.
 \end{deft}
 
\begin{rque}
Observe that the two equivalence relations $\Bumpeq$ and $\sim$ are defined the same way, except that $\Bumpeq$ is considered as a relation on $\CC(\omega)$ and $\sim$ on $\CC(\bigcup_{s\in \R^+}\phi_{-s}(\omega))$. Note also that  if the phase space is without boundary, in the definition of $\sim$ above, one can omit that $\Omega^{(i)}  \Rc_{\phi} \, \Omega^{(i+1)}$ and require only 
$$
\Omega_1 \Rc_k \, \Omega^{(1)}, \quad \text{for all } 1\leq i \leq N-1, \, \Omega^{(i)}  \Rc_k \, \Omega^{(i+1)},\quad  \text{and } \Omega^{(N)}  \Rc_k \, \Omega_2 .
$$
However, this precise Definition~\ref{def-sim} will be useful when considering the case of a boundary value problem for which the associated flow $\phi_t$ is no longer continuous in time.
\end{rque}

The following lemma gives the link between the two equivalence relations.
We define the function
\begin{equation*}
\begin{array}{rrcl}
\Psi : & \CC(\omega) & \to &\CC\Big(\bigcup_{s \in \R^+}\phi_{-s}(\omega)\Big) \\
& \omega_0 & \mapsto &  \Omega_0 , \text{ such that } \omega_0 \subset \Omega_0.
\end{array}
\end{equation*}
The application $\Psi$ maps $\omega_0 \in \CC(\omega)$ to the connected component $\Omega_0$ of $\bigcup_{s \in \R^+}\phi_{-s}(\omega)$ containing $\omega_0$. 

\begin{lem}
\label{equiv-equiv}
Given $\omega_1, \omega_2 \in \CC(\omega)$, we have $\omega_1 \Bumpeq \omega_2$ if and only if $\Psi(\omega_1) \sim \Psi(\omega_2)$. 
As a consequence, $\Psi$ goes to the quotient  defining a {\em bijection} $\tilde\Psi$ between the equivalence classes of $\Bumpeq$ and $\sim$:
\begin{equation*}
\begin{array}{rrcl}
\tilde\Psi  : & \CC(\omega) / \Bumpeq & \to &\CC\Big(\bigcup_{s \in \R^+}\phi_{-s}(\omega)\Big)/\sim. 
\end{array}
\end{equation*}
In particular, the number of equivalence classes for $\Bumpeq$ in $\CC(\omega)$ and for $\sim$ in $\CC\Big(\bigcup_{s \in \R^+}\phi_{-s}(\omega)\Big)$ are equal.
\end{lem}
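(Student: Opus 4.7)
The plan will rest on one key \textbf{saturation property}: for any connected component $\Omega \in \CC\bigl(\bigcup_{s \in \R^+}\phi_{-s}(\omega)\bigr)$, all connected components of $\omega$ contained in $\Omega$ lie in a single $\Bumpeq$-equivalence class. Granting this, the stated equivalence falls out by a routine transfer of chains, and the bijection $\tilde\Psi$ is automatic.

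To establish the saturation property, I first verify that for each $\omega_0 \in \CC(\omega)$, the forward saturation $A_{\omega_0} := \bigcup_{s \geq 0} \phi_{-s}(\omega_0)$ is open (union of open sets) and connected (each slice $\phi_{-s}(\omega_0)$ is connected, and they are all threaded by the continuous curve $s \mapsto \phi_{-s}(x_0,v_0)$ for a fixed $(x_0,v_0) \in \omega_0$); hence $A_{\omega_0} \subset \Omega$ whenever $\omega_0 \subset \Omega$. The crucial observation is that $A_{\omega_e} \cap A_{\omega_f} \neq \emptyset$ forces $\omega_e \Bumpeq \omega_f$: indeed, a common point $(x,v)$ satisfies $\phi_{s_e}(x,v) \in \omega_e$ and $\phi_{s_f}(x,v) \in \omega_f$ for some $s_e,s_f \geq 0$, which gives $\phi_{s_f-s_e}(\omega_e) \cap \omega_f \neq \emptyset$, hence $\omega_e \,\Rc_{\phi}\, \omega_f$. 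Grouping the components $\omega_0 \subset \Omega$ by $\Bumpeq$-class and taking the corresponding unions of the $A_{\omega_0}$ produces a partition of $\Omega$ into open, pairwise disjoint pieces; I will also need that every point of $\Omega$ belongs to some $A_{\omega_0}$, which follows by flowing forward along the characteristic until it enters $\omega$ (an argument showing that the whole forward trajectory stays inside $\bigcup_{s \geq 0}\phi_{-s}(\omega)$, hence inside $\Omega$). Connectedness of $\Omega$ then forces only one class to appear, proving saturation.

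Once saturation is secured, the forward implication $\omega_1 \Bumpeq \omega_2 \Rightarrow \Psi(\omega_1) \sim \Psi(\omega_2)$ is immediate: any $\Bumpeq$-chain $\omega_1 = \omega^{(0)},\dots,\omega^{(N+1)} = \omega_2$ in $\CC(\omega)$ lifts to a $\sim$-chain $\Psi(\omega^{(0)}),\dots,\Psi(\omega^{(N+1)})$ in $\CC\bigl(\bigcup_{s \geq 0}\phi_{-s}(\omega)\bigr)$, because the inclusion $\omega^{(i)} \subset \Psi(\omega^{(i)})$ transfers each $\Rc_{\phi}$ or $\Rc_{k}$ link upward. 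Conversely, starting from a $\sim$-chain $\Omega^{(0)}=\Psi(\omega_1),\dots,\Omega^{(N+1)}=\Psi(\omega_2)$, I produce, for each consecutive link, actual components of $\omega$ sitting inside $\Omega^{(i)}$ and $\Omega^{(i+1)}$ witnessing the relation: for $\Rc_{k}$ this is immediate from the equivalent characterization of $\omega$ in~\eqref{omegabis}; for $\Rc_{\phi}$ I take a common point $(x,v) \in \phi_s(\Omega^{(i)}) \cap \Omega^{(i+1)}$ and flow forward into $\omega$ on both sides, noting that the forward trajectory never leaves the respective connected component. The saturation property then replaces each such component by a fixed chosen representative of its $\Bumpeq$-class in $\Omega^{(i)}$, and transitivity of $\Bumpeq$ assembles the full chain linking $\omega_1$ to $\omega_2$.

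Passing to the quotient, the equivalence "$\Leftrightarrow$" means precisely that $\Psi$ descends to a well-defined injection $\tilde\Psi$ on equivalence classes; surjectivity of $\tilde\Psi$ follows because each $\Omega$ meets $\omega$ (by the same forward-flow argument used in the proof of saturation), providing at least one preimage. The equality of the number of equivalence classes is then automatic. I expect the main obstacle to be the saturation property, and more precisely the verification that the natural decomposition of $\Omega$ by $\Bumpeq$-classes consists of open, pairwise disjoint pieces that collectively cover $\Omega$; once this geometric fact is in hand, all that remains is bookkeeping on chains.
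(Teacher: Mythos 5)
Your proof is correct and takes essentially the same approach as the paper's: the ``saturation property'' is exactly the paper's Lemma~\ref{lem-classeq} (stated there for an auxiliary relation $\bumpeq$ built from $\Rc_\phi$ alone, which is an even stronger conclusion), proved by the same connectedness argument that decomposes $\Omega_0$ into open, disjoint pieces indexed by equivalence classes; the per-link transfer in both directions is also the same. One small remark: your careful handling of the $\Rc_\phi$ case in the reverse direction is actually vacuous, since two \emph{distinct} connected components $\Omega^{(i)}$, $\Omega^{(i+1)}$ of $\bigcup_{s\geq 0}\phi_{-s}(\omega)$ can never satisfy $\Omega^{(i)}\,\Rc_\phi\,\Omega^{(i+1)}$ (by the forward invariance $\phi_{-t}(\Omega_i)\subset\Omega_i$ of Lemma~\ref{leminvarflow}), which is why the paper only needs to treat the $\Rc_k$ links there.
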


The proof of Lemma~\ref{equiv-equiv} is given in Appendix~\ref{prooflemequiv}. As a consequence of this lemma, all the results of this paper (together with their proofs) can be formulated with $\Bumpeq$ or with $\sim$ equivalently.

\subsection{Comparing $C^-(\infty)>0$ and GCC} 

The statement that $C^-(\infty)>0$ is in general stronger to the fact that $\omega$ satisfies the Geometric Control Condition with respect to the hamiltonian $H(x,v) = \frac{|v|^2}{2} + V(x)$.  This is due to the the non-compactness of the phase space $\T^d \times \R^d$.

Assume for instance that $V = 0$ so that $\phi_t(x,v) = ( x+tv ,v )$: the characteristic curves are straight lines and the velocity component $v$ is preserved by the flow. Take $k(x,v',v)>0$ on the whole $\T^d \times \R^d \times \R^d$. In this situation, $\omega =\T^d \times \R^d$ and so, it satisfies automatically GCC in any positive time. Assume further that $k$ does not depend on the space variable, i.e. $k(x,v',v) = k(v',v)$ and that there exists a sequence $(v_n)$ such that $\int_{\R^d} k(v_n,v') dv' \to 0$. Then, we have $\left(\int_{\R^d} k( \cdot ,v') dv' \right) \circ \phi_t (x,v) = \int_{\R^d} k( v ,v') dv'$ (as the flow preserves $v$) and hence, for any $n \in \N$, we have $C^-(\infty) \leq \int_{\R^d} k(v_n,v') dv'$. This yields $C^-(\infty)= 0$ although $\omega$ satisfies GCC. As an explicit example, one can take $k(x, v', v ) = \Mc(v') \Mc(v)^2$.

\medskip
Note finally that if $\int_{\R^d} k( x,v ,v') dv'$ is uniformly bounded from below at infinity (i.e. there exists $C,R>0$ such that $\int_{\R^d} k( x,v ,v') dv' \geq C$ for all $(x,v) \in \T^d \times B(0,R)^c$), then $C^-(\infty)>0$ and GCC become equivalent.

The next paragraph shows that our result is indeed more general.

%%%%%%%
\subsection{Example of exponential convergence without a bound from below at infinity}
Here, we produce a simple example of dynamics and collision kernel such that $C^-(\infty)>0$, but neither $\tilde{k}$ nor $\int_{\R^d} k( x,v ,v') dv'$ are uniformly bounded from below at infinity.

For this, assume $(x,v) \in \T \times \R$ and take $V=0$, so that $\phi_t(x,v) = ( x+tv ,v )$. We identify $\T$ to $[-1/2,1/2)$ with periodic boundary conditions.
Define $\alpha \in C^0(\T; \R^+)$ with support contained in $(-1/3,1/3)$ and satisfying $\alpha = 1$ on $[-1/4,1/4]$ and $\psi \in  C^0(\R ; \R^+)$ such that $\psi(v) \to_{|v| \to +\infty} 0$ and $\psi = 1$ on $[-2,2]$.
Consider the collision kernel in the class {\bf E1}
$$
k(x,v,v') = \tilde{k}(x,v,v')\Mc(v') , \qquad \tilde{k}(x,v,v') = \left[  \alpha(x) + \psi(v) \psi(v')\right].
$$
We first readily check that $\tilde{k}>0$ on $\T^d \times \R^d \times \R^d$ and hence $\omega=\T \times \R$. We also remark that for any $R>0$,
$$
\inf_{(x,v,v') \in \T^d \times B(0,R)^c \times B(0,R)^c} \tilde{k}(x,v,v') =0, \quad \text{and} \quad \inf_{(x,v) \in \T^d \times B(0,R)^c}  \int_{\R^d} k( x,v ,v') dv' = 0 .
$$
Nevertheless, we can prove that $C^-(\infty) \geq C^-(1)>0$, and thus, by Theorem~\ref{thmexpo-intro}, there is exponential convergence to the Maxwellian equilibrium. We set $\beta := \int_{\R^d} \psi(v') \Mc(v') \, dv'>0$ and take $(x,v) \in \T \times \R$.
\begin{itemize}
\item If $v\in [-2,2]$, then we have
$$
\int_0^1 \int_{\R^d} k(\phi_t(x,v),v') \, dv' \, dt \geq \beta \int_0^1 \psi(v) \,dt = \beta>0.
$$
\item If $v\notin [-2,2]$, then, denoting by $\lfloor v \rfloor$ the integer part of $v$, we have
\begin{align*}
\int_0^1 \int_{\R^d} k(\phi_t(x,v),v') \, dv' \, dt  &\geq \int_0^1 \alpha(x+tv) \, dt 
\geq \int_0^{\frac{\lfloor v \rfloor}{|v|}} \alpha(x+tv) \, dt  
\geq \frac{1}{2} \frac{\lfloor v \rfloor}{|v|} \geq \frac{1}{4}.
\end{align*}
 \end{itemize}
 This proves that $C^-(1)>0$ and thus $C^-(\infty)>0$.

%%%%%%%

 \subsection*{Acknowledgments} 
We wish to thank Diogo Ars\'enio for several interesting and stimulating discussions related to this work.

  \part{Large time behavior of the linear Boltzmann equation on the torus}
  \label{LTB}

  Throughout this section, $\Omega = \T^d$.
  
    \section{Preliminary results}
 \label{preliminaries}
 
    \subsection{Well-posedness and dissipation}
\label{sec:wp}
  For readability, we shall sometimes denote 
  $$C(f):=C(x,v,f) = \int_{\R^d} \left[k(x,v' ,  v) f(v') - k(x,v ,  v') f(v)\right] \, dv'.$$

The following dissipation identity holds for solutions to \eqref{B}.
  \begin{lem}
  Let $k$ be collision kernel satisfying {\bf A1}--{\bf A3}.
 \label{lemdissip}Let $f \in C^0(\R ;\LL^2)$ be a solution to \eqref{B}. The following identity holds, for all $t\in \R$:
 \begin{equation}
 \label{eqdissipation}
 \frac{d}{dt}  \|f(t)\|_{\LL^2}^2 = - D(f),
 \end{equation}
 where
 \begin{align}
D(f) &= - 2\langle C(f), f \rangle_{\LL^2} \nonumber \\ 
 &=  \frac{1}{2} \int_{\T^d} e^{V} \int_{\R^d} \int_{\R^d} \left( \frac{k(x,v' ,  v)}{\Mc(v)} + \frac{k(x,v ,  v')}{\Mc(v')} \right) \Mc(v) \Mc(v') \left(\frac{f(t,x,v)}{\Mc(v)}- \frac{f(t,x,v')}{\Mc(v')}\right)^2 \, dv' \, dv \, dx .  \label{defD}
 \end{align}
\end{lem}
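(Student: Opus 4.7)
My plan is to obtain \eqref{eqdissipation} by splitting the linear Boltzmann operator as $\partial_t f = -Tf + C(f)$ with $Tf = v\cdot\nabla_x f - \nabla_x V\cdot \nabla_v f$, differentiating the weighted $\LL^2$ norm, and handling the transport and collision contributions separately. So I first write
\[
\frac{d}{dt}\|f(t)\|_{\LL^2}^2 = 2\langle \partial_t f, f\rangle_{\LL^2} = -2\langle Tf, f\rangle_{\LL^2} + 2\langle C(f), f\rangle_{\LL^2},
\]
and then show that the transport term vanishes and that the collision term rewrites as the claimed quadratic form.

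For the skew-adjointness of $T$, I would note that the weight $w(x,v) := \frac{e^{V(x)}}{\Mc(v)} = (2\pi)^{d/2}e^{H(x,v)}$ satisfies $Tw = w\, TH = 0$ since $H$ is a first integral of the Hamiltonian flow. An integration by parts in $x$ and $v$ (justified by, say, a density argument from $C^1_c$ followed by passage to the limit, using the energy identity to control tails at $|v|\to\infty$) gives
\[
\langle Tf, f\rangle_{\LL^2} = \frac{1}{2}\int f^2 \bigl(-v\cdot\nabla_x w + \nabla_x V\cdot\nabla_v w\bigr)\, dv\, dx = 0,
\]
because $\nabla_x w = w\nabla_x V$ and $\nabla_v w = wv$. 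Hence $\frac{d}{dt}\|f\|_{\LL^2}^2 = 2\langle C(f), f\rangle_{\LL^2}$.

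It remains to identify $-2\langle C(f),f\rangle_{\LL^2}$ with the positive symmetric expression \eqref{defD}. Setting $g(v):=f(v)/\Mc(v)$, $A := k(x,v',v)\Mc(v')$, $B := k(x,v,v')\Mc(v)$, I would compute
\[
-\langle C(f),f\rangle_{\LL^2} = -\int_{\T^d} e^V \iint A\, g(v)g(v')\, dv'\, dv\, dx + \int_{\T^d}e^V \iint B\, g(v)^2\, dv'\, dv\, dx,
\]
and then invoke Assumption {\bf A2}, which says exactly that the $v'$-marginals agree: $\int A\, dv' = \int B\, dv'$. Renaming $v\leftrightarrow v'$ in the whole expression and averaging the two representations, the four quadratic terms $\iint A g(v)^2$, $\iint B g(v)^2$, $\iint A g(v')^2$, $\iint B g(v')^2$ collapse pairwise (the first two are equal, the last two are equal), producing the fully symmetric combination $\frac{1}{2}\iint(A+B)[g(v)^2+g(v')^2]$. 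Combining with the cross term yields
\[
-2\langle C(f),f\rangle_{\LL^2} = \frac{1}{2}\int_{\T^d}e^V \iint (A+B)\, \bigl(g(v) - g(v')\bigr)^2\, dv'\, dv\, dx,
\]
which is precisely \eqref{defD} after recognizing $(A+B) = \bigl(\tilde k(x,v',v) + \tilde k(x,v,v')\bigr)\Mc(v)\Mc(v')$.

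The main (modest) obstacle is justifying the integration by parts for the transport part and the use of Fubini in the symmetrization: both require sufficient integrability, which is ensured by assumption {\bf A3} combined with the hypothesis $f\in C^0(\R;\LL^2)$; a standard approximation by $C^\infty_c$ functions in $v$ allows one to pass to the limit without difficulty. Once this is in place, the skew-adjointness of $T$ and the symmetrization trick using {\bf A2} immediately give the identity and the nonnegativity of $D(f)$.
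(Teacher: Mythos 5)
Your proof is correct and follows essentially the same route as the paper: multiply the equation by $f\,e^{V}/\Mc(v)$, observe that the transport term vanishes because $v\cdot\nabla_x - \nabla_x V\cdot\nabla_v$ annihilates $e^{V}/\Mc(v)$, and then symmetrize the collision contribution under $v\leftrightarrow v'$ together with Assumption {\bf A2} to complete the square. The only cosmetic difference is bookkeeping — the paper applies {\bf A2} to the loss term before symmetrizing, while you symmetrize the full bilinear form first and then use {\bf A2} on the diagonal pieces — but the algebraic content and the resulting weight $k(x,v',v)\Mc(v')+k(x,v,v')\Mc(v)$ are identical.
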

 The term $D(f)$ will often be referred to as the dissipation term in the following. The proof is rather classical and follows~\cite{DGP}.
 
 \begin{proof}[Proof of Lemma \ref{lemdissip}] Multiply \eqref{B} by $f \,  \frac{e^V}{\Mc(v)}$ and integrate with respect to $x$ and $v$. This yields
 \begin{align*}
\frac{1}{2}   \frac{d}{dt}  \|f(t)\|_{\LL^2}^2 + \int_{\T^d \times \R^d} \left(v \cdot \nabla_x f - \nabla_x V \cdot \nabla_v f\right) f \,  \frac{e^V}{\Mc(v)} \, dv \, dx 
= \int_{\T^d} e^V \int_{\R^d} C(f) \frac{f}{\Mc} \, dv  \, dx.
\end{align*}
On the one hand, the contribution of the transport term vanishes
  \begin{align*}
 \int_{\T^d \times \R^d} \left(v \cdot \nabla_x f - \nabla_x V \cdot \nabla_v f\right) f \,  \frac{e^V}{\Mc(v)} \, dv \, dx 
& =  \frac{1}{2}\int_{\T^d \times \R^d} \left(v \cdot \nabla_x  - \nabla_x V \cdot \nabla_v \right) |f|^2  \,  \frac{e^V}{\Mc(v)} \, dv \, dx \\
 &= -  \frac{1}{2}\int_{\T^d \times \R^d}  |f|^2   \left(v \cdot \nabla_x  - \nabla_x V \cdot \nabla_v \right)  \frac{e^V}{\Mc(v)} \, dv \, dx \\
 & =0,
 \end{align*}
 since $\left(v \cdot \nabla_x  - \nabla_x V \cdot \nabla_v \right)  \frac{e^V}{\Mc(v)}  =0$.
 On the other hand, following \cite{DGP}, we have for any $x \in \T^d$ the identity 
\begin{align}
\label{eq: DGP1}
\int_{\R^d} C(x,v,f) \frac{f}{\Mc} \, dv &= \int_{\R^d}  \int_{\R^d} \left[k(x,v' ,  v) f(v') - k(x,v ,  v') f(v)\right] \, dv' \frac{f(v)}{\Mc(v)} \, dv  \nonumber \\
&= \int_{\R^d}  \int_{\R^d} k(x,v' ,  v) f(v')\frac{f(v)}{\Mc(v)} \, dv' \, dv -  \int_{\R^d}  \int_{\R^d}  k(x,v ,  v') \frac{|f(v)|^2}{\Mc(v)} \, dv' \, dv .
\end{align}
Symmetrizing the first term in the right hand-side of \eqref{eq: DGP1} yields
\begin{multline*}
 \int_{\R^d}  \int_{\R^d} k(x,v' ,  v) f(v')\frac{f(v)}{\Mc(v)} \, dv' \, dv =\\
  \frac{1}{2}  \int_{\R^d}  \int_{\R^d} k(x,v' ,  v) f(v')\frac{f(v)}{\Mc(v)} \, dv' \, dv  
 +\frac{1}{2}   \int_{\R^d}  \int_{\R^d} k(x,v ,  v') f(v)\frac{f(v')}{\Mc(v')} \, dv' \, dv .
\end{multline*}
Concerning the second term in the right hand-side of \eqref{eq: DGP1}, we use \eqref{Mannule} to obtain
\begin{align*}
& -  \int_{\R^d}  \int_{\R^d}  k(x,v ,  v') \frac{|f(v)|^2}{\Mc(v)} \, dv' \, dv \\
&=-\frac{1}{2}  \int_{\R^d}  \int_{\R^d}  k(x,v ,  v') \frac{|f(v)|^2}{\Mc(v)} \, dv' \, dv  -\frac{1}{2}  \int_{\R^d}  \int_{\R^d}  k(x,v ,  v')\Mc(v)  \frac{|f(v)|^2}{\Mc(v)^2} \, dv' \, dv \\
&=-\frac{1}{2}  \int_{\R^d}  \int_{\R^d}  k(x,v ,  v') \frac{|f(v)|^2}{\Mc(v)} \, dv' \, dv  -\frac{1}{2}  \int_{\R^d}  \int_{\R^d}  k(x,v',  v)\Mc(v')  \frac{|f(v)|^2}{\Mc(v)^2} \, dv' \, dv \\
&=- \frac{1}{4} \int_{\R^d}  \int_{\R^d}  k(x,v ,  v') \frac{|f(v)|^2}{\Mc(v)} \, dv' \, dv  - \frac{1}{4} \int_{\R^d}  \int_{\R^d}  k(x,v',  v) \frac{|f(v')|^2}{\Mc(v')} \, dv' \, dv\\
& \quad - \frac{1}{4} \int_{\R^d}  \int_{\R^d}  k(x,v' ,  v)\Mc(v') \frac{|f(v)|^2}{\Mc(v)^2} \, dv' \, dv -  \frac{1}{4} \int_{\R^d}  \int_{\R^d}  k(x,v,  v')\Mc(v) \frac{|f(v')|^2}{\Mc(v')^2} \, dv' \, dv .
\end{align*}
Combining the last two identities, we can now collect together the terms with $k(x,v',v)$ (resp. $k(x,v, v')$) and rewrite the right hand-side of~\eqref{eq: DGP1} as a sum of two squares. Namely, this provides
\begin{align*}
\int_{\R^d} C(x,v,f) \frac{f}{\Mc} \, dv = - \frac{1}{4} \int_{\R^d} \int_{\R^d} \left( \frac{k(x,v' ,  v)}{\Mc(v)} + \frac{k(x,v ,  v')}{\Mc(v')} \right) \Mc(v) \Mc(v') \left(\frac{f(v)}{\Mc(v)}- \frac{f(v')}{\Mc(v')}\right)^2 \, dv' \, dv .
\end{align*}
This yields~\eqref{eqdissipation} and concludes the proof of the Lemma.
 \end{proof}

 We have the following standard well-posedness result for the linear Boltzmann equation \eqref{B}. 
 
  \begin{prop}[Well-posedness of the linear Boltzmann equation]
 \label{prop:WP}
Assume that $f_0 \in \LL^2$. Then there exists a unique $f\in C^0(\R ;\LL^2)$ solution of~\eqref{B} satisfying $f|_{t = 0} =f_0$, and we have
\begin{equation}
\text{for all } t \geq 0, \quad  \frac{d}{dt} \| f(t)\|_{\LL^2}^2 = - D(f(t)),
\end{equation}
 where $D(f)$ is defined in \eqref{defD}. If moreover $f_0 \geq 0$ a.e., then for all $t \in \R$ we have $f(t, \cdot,\cdot)\geq 0$ a.e. (Maximum principle).

 \end{prop}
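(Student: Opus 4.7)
The plan is to recast \eqref{B} as an abstract Cauchy problem on $\LL^2$ and apply standard semigroup theory, then derive the dissipation identity from Lemma~\ref{lemdissip} and the maximum principle from a Duhamel formula. Write the generator as $A = T + C^- + C^+$, where $T = -v\cdot\nabla_x + \nabla_x V\cdot \nabla_v$ is the kinetic transport operator, $C^-(f)(x,v) = -b(x,v)f(x,v)$ with damping function $b(x,v) = \int_{\R^d} k(x,v,v')\,dv'$ is the loss part, and $C^+(f)(x,v) = \int_{\R^d} k(x,v',v) f(x,v')\,dv'$ is the gain part.

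First I would check that $T$, endowed with its natural domain $D(T) = \{f \in \LL^2 : Tf \in \LL^2\}$ (distributionally), is skew-adjoint on $\LL^2$. The key identity is that the weight $e^V/\Mc(v)$ is annihilated by $T$, so the integration by parts performed in the proof of Lemma~\ref{lemdissip} gives $\langle Tf , f\rangle_{\LL^2}=0$ for $f \in D(T)$; skew-adjointness follows by standard density arguments. Next I would verify that $C^+$ is bounded on $\LL^2$. By Cauchy--Schwarz,
\begin{equation*}
|C^+(f)(x,v)|^2 \leq \left(\int_{\R^d} k^2(x,v',v)\, \Mc(v')\,dv'\right)\left(\int_{\R^d} \frac{|f(x,v')|^2}{\Mc(v')}\,dv'\right),
\end{equation*}
so multiplying by $e^V/\Mc(v)$ and integrating in $(x,v)$ yields $\|C^+ f\|_{\LL^2}^2 \leq M \|f\|_{\LL^2}^2$, where $M$ is the $L^\infty_x$ bound provided by Assumption {\bf A3}.

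The third step is to show that $T + C^-$ generates a $C^0$ semigroup of contractions $(S(t))_{t\geq 0}$ on $\LL^2$. Since $b \geq 0$, $C^-$ is dissipative, hence so is $T + C^-$ in view of the skew-adjointness of $T$. The range condition of Lumer--Phillips is verified by solving $(\lambda - T - C^-)g = h$ explicitly along the characteristics of Definition~\ref{def-carac}, which gives the formula
\begin{equation*}
(S(t)f_0)(x,v) = \exp\!\left(-\int_0^t b(\phi_{-s}(x,v))\,ds\right) f_0(\phi_{-t}(x,v)).
\end{equation*}
Adding the bounded perturbation $C^+$, the Phillips perturbation theorem provides a $C^0$ semigroup $e^{tA}$ on $\LL^2$, and reversibility in time (replacing $v \mapsto -v$, $t \mapsto -t$) extends the solution to all $t \in \R$. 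This yields existence and uniqueness of $f \in C^0(\R;\LL^2)$. The dissipation identity then reduces to Lemma~\ref{lemdissip}: for $f_0 \in D(A)$ it is the chain rule, and a density argument propagates it to general $f_0 \in \LL^2$.

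For the maximum principle, I would use the mild formulation
\begin{equation*}
f(t) = S(t)f_0 + \int_0^t S(t-s) \, C^+(f(s))\,ds.
\end{equation*}
The explicit formula above shows that $S(t)$ preserves nonnegativity, and $C^+$ does so too since $k \geq 0$. Either running the Picard iteration for this fixed-point equation starting from $f_0 \geq 0$, or iterating the Duhamel identity and passing to the limit, yields $f(t) \geq 0$ a.e.\ for all $t \geq 0$. The main obstacle is that $b(x,v)$ is not assumed bounded in $v$, so $C^-$ is genuinely unbounded on $\LL^2$; the cleanest way around this is to treat $T + C^-$ as a single maximal dissipative operator defined through the characteristic formula, rather than trying to add $C^-$ as a perturbation of $T$. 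The bounded perturbation step for $C^+$ is then routine thanks to the quantitative bound coming from {\bf A3}.
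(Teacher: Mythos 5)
Your proposal follows essentially the same route as the paper: the paper groups transport and loss into a single operator $A_0$ whose semigroup is given explicitly along the characteristics (exactly your formula for $S(t)$, which sidesteps the unboundedness of the loss term), treats the gain term as a bounded perturbation via the Cauchy--Schwarz estimate under {\bf A3}, and derives positivity from the Duhamel series $f=\sum_n \mathcal{K}^n(e^{-tA_0}f_0)$, with the dissipation identity coming from Lemma~\ref{lemdissip}. The argument is correct and matches the paper's proof in both decomposition and key steps.
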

 
 \begin{proof}[Proof of Proposition \ref{prop:WP}] 
 We denote
\begin{align*}
 (A_0 f)(x, v)  &=  (v\cdot \na_x - \na_x V\cdot \na_v) f(x,v) +
\left(\int_{\R^d}  k(x,v,v') \, dv'\right) f(x,v), \\
(K f)(x, v) &= - \int_{ \R^d} k(x,v',v) f(x,v') \, dv', \\
Af &= A_0 f + Kf
\end{align*}
with domain
$$
 D(A) = D(A_0) = \left\{f\in \LL^2 , (v\cdot \na_x - \na_x V\cdot \na_v) f \in \LL^2, \, \left(\int_{\R^d}  k(\cdot,v') \, dv'\right) f \in \LL^2 \right\}.
$$
The operator $A_0$ generates a strongly continuous group on $\LL^2$ given by 
\begin{equation}
\label{semigpA0}
e^{-tA_0} u = \exp\left(- \int_0^t \int_{\R^d}  k(\phi_{-(t-s)}(x,v),v') \, dv' \, ds \right) u \circ \phi_{-t},
\end{equation}
where $\phi_s(x,v) = (X_s(x,v),\Xi_s(x,v))$ denotes the hamiltonian flow of Definition~\ref{def-carac}.

On the other hand, the Cauchy-Schwarz inequality yields
\begin{align*}
\|Kf\|_{\LL^2}^2 & \leq \int_{\T^d \times \R^d} \frac{e^{V(x)}}{\Mc(v)}  \left(\int_{\R^d}k(x,v',v)^2\Mc(v') dv' \right)   \left(\int_{\R^d}\frac{f(x,v')^2}{\Mc(v')} dv' \right)  dx \, dv \\
& \leq\left(\sup_{x \in \T^d}\int_{\R^d}\int_{\R^d} k^2(x, v',v)\frac{\Mc(v')}{\Mc(v)}\, dv'\, dv \right) \|f\|_{\LL^2}^2
\end{align*}
The operator $K$ is hence bounded in $\LL^2$, with
$$
\|K\|_{\LL^2 \to \LL^2}\leq\left(\sup_{x \in \T^d}\int_{\R^d}\int_{\R^d} k^2(x, v',v)\frac{\Mc(v')}{\Mc(v)}\, dv'\, dv \right)^\frac12,
$$
 which is finite by {\bf A3}.

 According to~\cite[Chapter 3, Theorem 1.1]{Paz}, this implies the well-posedness of the Cauchy problem associated to~\eqref{B}.

For the maximum principle,  we recall that we have the classical representation formula:
$$
f = \sum_{n=0}^{+\infty} \mathcal{K}^n ( e^{-t A_0} f_0),
$$
where
\begin{multline*}
\mathcal{K} g = \int_0^t   \left(\int_{\R^d} [k(X_{-(t-s)} (x,v) ,v' ,  \Xi_{-(t-s)} (x,v)) g(s,X_{-(t-s)} (x,v),v') \, dv'\right)\\ 
\times \exp \left( - \int_s^t \left( \int_{\R^d} k(\phi_{-(t-u)}(x,v),v') \, dv' \right)   \, du\right) \, ds.
\end{multline*}
Recall that $k\geq 0$. If $f_0 \geq 0$ a.e., one can observe from~\eqref{semigpA0} that for all $t\geq 0$, $ e^{-t A_0} f_0 \geq 0$ a.e., and for all $n \in \N$, $\mathcal{K}^n (  e^{-t A_0}f_0) \geq 0$ a.e..
This concludes the proof.

 \end{proof}
 
 A useful consequence of the maximum principle, of the linearity of the equation, and of Assumption {\bf A2} is the following statement. If  $f_0 \in \LL^2 \cap \LL^\infty$, then  the unique solution  of~\eqref{B} starting from $f|_{t = 0} =f_0$, satisfies
\begin{equation}
\sup_{t\geq 0} \| f(t)\|_{\LL^\infty} \leq  \| f_0\|_{\LL^\infty}  .
\end{equation}

\subsection{Weak coercivity}
\label{sec:wc} 
In this section, we describe some properties of the collision kernel $C$ and associated dissipation $D$. 
In several proofs of the paper, we shall need to exploit some local coercivity properties of the dissipation. In particular, we would like to have the \emph{weak coercivity} property
  \begin{equation}
  \label{wc}
\forall f \in \LL^2, \quad D(f)=0 \implies C(f)=0
 \end{equation}
(and thus,  $D(f)=0$ is equivalent to $C(f)=0$).
A difficulty comes from the fact that in general the dissipation term does not control neither the $\LL^2$ distance to the projection on the set of stationary solutions, nor the $\LL^2$ norm of the collision operator.

The main result is the following lemma.
\begin{lem}
\label{collannule}
Let $k$ be a collision kernel satisfying {\bf A1}--{\bf A3}. 
Let $T \in (0, +\infty]$ and denote $\omega = \cup_{i \in I} \omega_i$ the partition of $\omega$ in connected components.
Then, the following three properties are equivalent 
\begin{enumerate}
\item \label{toto1}  $f \in C^0(0,T; \LL^2)$ satisfies ${C}(f(t))=0$ for all $t \in [0,T]$. 
\item  $f \in C^0(0,T; \LL^2)$ satisfies ${D}(f(t))=0$ for all $t \in [0,T]$. 
\item 
\begin{itemize}
\item for all $i \in I$,  we have $f(t,x,v)=\rho_i (t,x)\Mc(v)$ on $[0,T] \times \omega_i$;
\item for $i,j \in I$ and $x \in \T^d$, we have: $\omega_i(x) \Rc_k^x \, \omega_j(x) \Longrightarrow \rho_i(t,x) = \rho_j(t,x)$ for all $t \in [0,T]$.
\end{itemize}
\end{enumerate}
\end{lem}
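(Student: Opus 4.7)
The plan is to prove the cycle $(1)\Rightarrow(2)\Rightarrow(3)\Rightarrow(1)$. Set $u(t,x,v):=f(t,x,v)/\Mc(v)$ and $K(x,v,v'):=\frac{k(x,v',v)}{\Mc(v)}+\frac{k(x,v,v')}{\Mc(v')}$; then \eqref{defD} rewrites as
\begin{equation*}
D(f(t))=\tfrac12 \int_{\T^d\times\R^d\times\R^d} e^{V(x)} K(x,v,v')\Mc(v)\Mc(v')\big(u(t,x,v)-u(t,x,v')\big)^2\,dx\,dv\,dv'.
\end{equation*}
Since $K\geq 0$ is continuous and the open set $\{K>0\}$ equals $\{(x,v,v'):k(x,v,v')>0\text{ or }k(x,v',v)>0\}$, the condition $D(f(t))=0$ is equivalent to the identity $u(t,x,v)=u(t,x,v')$ holding for almost every $(x,v,v')\in\{K>0\}$, a characterization I use throughout.

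The implication $(1)\Rightarrow(2)$ is immediate from Lemma~\ref{lemdissip}, which gives $D(f)=-2\langle C(f),f\rangle_{\LL^2}$. For $(2)\Rightarrow(1)$, by Fubini's theorem the above characterization yields, for a.e.\ $(x,v)$, the equality $u(t,x,v')=u(t,x,v)$ for a.e.\ $v'$ with $k(x,v',v)>0$; plugging this into the gain part of $C(f)$ and applying Assumption~{\bf A2} in the form $\int k(x,v',v)\Mc(v')\,dv'=\Mc(v)\int k(x,v,v')\,dv'$ then gives $C(f)(t,x,v)=0$ a.e. For $(3)\Rightarrow(1)$ I would compute $C(f)$ pointwise: when $(x,v)\notin\omega$, \eqref{omegabis} forces $k(x,v,\cdot)\equiv0$ and $k(x,\cdot,v)\equiv0$, so $C(f)(x,v)=0$ trivially; when $(x,v)\in\omega_i$, partitioning the $v'$-integration over $\{\omega_j(x)\}_j$ (the contribution from $v'\notin\omega(x)$ vanishes) shows that only the indices $j$ with $\omega_i(x)\Rc_k^x\omega_j(x)$ contribute, for which $\rho_j(t,x)=\rho_i(t,x)$ by the second bullet, so factoring the common value out of the gain integral and using~{\bf A2} again yields the cancellation.

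The main work is $(2)\Rightarrow(3)$, which decomposes into a local constancy step and a gluing step. For the local step, pick any $(x_0,v_0)\in\omega_i$; by definition of $\omega$ there is $v'_0$ with $K(x_0,v_0,v'_0)>0$, and continuity of $k$ then supplies a product open neighborhood $O_x\times O_v\times O_{v'}$ of $(x_0,v_0,v'_0)$ on which $K>0$, with $O_x\times O_v\subset\omega_i$. Combining the $\{D=0\}$ characterization with Fubini produces a measurable function $\bar u(t,\cdot)$ on $O_x$ such that $u(t,x,v)=\bar u(t,x)$ for a.e.\ $(x,v)\in O_x\times O_v$; in particular $u$ is locally $v$-independent in the $L^2$ sense on every $\omega_i$. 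To globalize, I use path-connectedness of the open connected set $\omega_i$: any two points of $\omega_i$ are joined by a compact path covered by finitely many such product patches, and on the overlap of two consecutive patches the associated local functions of $x$ must coincide a.e.\ on the common $x$-projection. A standard gluing argument then produces a single function $\rho_i(t,x)$ on $p_x(\omega_i)$ with $u(t,x,v)=\rho_i(t,x)$ a.e.\ on $\omega_i$, yielding the first bullet. The second bullet follows at once: if $\omega_i(x)\Rc_k^x\omega_j(x)$, there exist $v_1\in\omega_i(x),v_2\in\omega_j(x)$ with $K(x,v_1,v_2)>0$, and the local argument applied at $(x,v_1,v_2)$ identifies $\rho_i(t,\cdot)$ with $\rho_j(t,\cdot)$ on an open neighborhood of $x$, giving equality at $x$.

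The hard part is the gluing step: a connected component $\omega_i$ of $\omega$ may be geometrically intricate and its slice $\omega_i(x)$ need not be connected in $\R^d$, so the direct relation $\Rc_k^x$ alone does not link all pieces of $\omega_i(x)$; one genuinely uses the connectedness of $\omega_i$ in the ambient $\T^d\times\R^d$ to transport local identifications along paths, and must verify that the resulting chain of almost-everywhere equalities of local representatives defines a single measurable function $\rho_i$ on $p_x(\omega_i)$.
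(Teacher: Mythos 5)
Your architecture matches the paper's: $(1)\Rightarrow(2)$ from the dissipation identity, $(2)\Rightarrow(3)$ by combining the vanishing of the integrand of $D$ on $\{K>0\}$ with the continuity of $k$ to obtain local $v$-independence of $f/\Mc$, and $(3)\Rightarrow(1)$ by splitting the gain integral over the slices $\omega_j(x)$ and invoking {\bf A2}. These implications, as well as your direct (and logically redundant, but correct and slightly cleaner) proof of $(2)\Rightarrow(1)$, which bypasses $(3)$ altogether, are fine.

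The gap is in the globalization step of $(2)\Rightarrow(3)$ --- exactly the step you flag as ``the hard part''. Two \emph{consecutive} product patches along a path in $\omega_i$ agree on the $x$-projection of their overlap \emph{in phase space}; but two patches sitting over the same base point $x$ need not be linked by a chain of patches whose successive overlaps all project onto a neighborhood of $x$. A path that leaves the fiber over $x$ and returns to it only propagates the identification along the successive overlaps, and this does not force the two local representatives over $x$ to coincide. Concretely, take $d=1$ and a horseshoe-shaped component $\omega_i=\big((0,1)\times(0,3)\big)\cup\big((0,3)\times(0,1)\big)\cup\big((0,3)\times(2,3)\big)$, realized by a symmetric kernel $\tilde k(x,v,v')=\phi(x,v)\,\phi(x,v')\,\chi(|v-v'|)$ with $\{\phi>0\}=\omega_i$ and $\chi$ supported in $[0,1/2)$ (this satisfies {\bf A1}--{\bf A3} and belongs to {\bf E1}). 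The function $u=f/\Mc$ equal to $0$ on $\{v<3/2\}\cap\omega_i$ and to $\psi(x)$ on $\{v>3/2\}\cap\omega_i$, with $\psi$ continuous, vanishing on $(0,1]$ and positive on $(2,3)$, is locally a function of $x$ on $\omega_i$ and makes the integrand of $D$ vanish identically, because $K(x,v,v')>0$ forces $|v-v'|<1/2$ and hence never links the two arms over $x\in[1,3)$; yet at $x=5/2$ it takes the two distinct values $0$ and $\psi(5/2)$, so it is not of the form $\rho_i(t,x)$ on $\omega_i$. Thus the conclusion of your gluing --- a single $\rho_i$ defined on $p_x(\omega_i)$ --- does not follow from path-connectedness of $\omega_i$; what the patch argument actually delivers is only that $f/\Mc$ is \emph{locally} of the form $\rho(t,x)$ on $\omega_i$.

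For context, the paper's own proof dispatches this point in a single word (``Therefore, \dots there is a function $\rho_i$ \dots''), so your local analysis already recovers everything that argument establishes; but the upgrade from local to global $x$-dependence on a connected component is not a formality, and the path argument as you have written it does not supply it.
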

This lemma only states properties of the collision kernel. As such, it is not concerned with the time dependence, that we shall drop in the proof. 
\Black
\begin{proof}[Proof of Lemma \ref{collannule}]

By definition, we have $D(f)=\langle C(f) , f\rangle_{\LL^2}$ so that $(1) \Longrightarrow (2)$. 

Then, from ${D}(f)=0$, Equation \eqref{defD} implies that 
\begin{equation}
\label{rho-i-j}
\frac{f(x,v)}{\Mc(v)} = \frac{f(x,v')}{\Mc(v')} \quad \text{almost everywhere in } S:=\{(x,v, v'), \, {k}(x,v',v) + k(x,v',v)>0\} ,
\end{equation}
Let $(x,v) \in \omega$. Thus, there exists $v' \in \R^d$ such that $(x,v,v') \in S$. By continuity of ${k}$, there exists a neighborhood $U$ of $(x,v)$ such that for all $(y,w) \in U$, we have $(y,w,v') \in  S$. Thus, for all  $(y,w) \in U$, we have
$$
\frac{f(y,w)}{\Mc(w)} = \frac{f(y,v')}{\Mc(v')}
$$
that is to say that locally, $(y,w) \mapsto \frac{f(y,w)}{\Mc(w)}$ is function of $y$ only. Therefore, for all $i \in I$, there is a function $\rho_i$ such that $\frac{f(x,v)}{\Mc(v)} = \rho_i (x)$ on $ \omega_i$. 

Furthermore, take $i,j \in I$ and $x \in \T^d$ such that $\omega_i(x)\Rc_k^x \, \omega_j(x)$. There exists $v_i,v_j \in \R^d \times \R^d$ such that $(x,v_i) \in \omega_i$,  $(x,v_j) \in \omega_j$, and $(x,v_i,v_j) \in S$. It then follows from~\eqref{rho-i-j} and $\frac{f(x,v_i)}{\Mc(v_i)} = \rho_i (x)$, $\frac{f(x,v_j)}{\Mc(v_j)} = \rho_j (x)$ that $\rho_i(x) = \rho_j(x)$. 
This concludes the proof of $(2)\Longrightarrow (3)$.

Finally, let us check that a function satisfying the two assumptions of $(3)$ cancels the collision operator, i.e. that for all $x,v$, we have $C(x,v,f)=0$.

Let $(x,v) \in \omega$ (if $(x,v) \notin \omega$, then $C(x,v,f)=0$). Let $i \in I$ such that $(x,v) \in \omega_i$. We have
\begin{align*}
C(x,v,f) &= \int \tilde{k}(x,v',v) f(x,v') \, dv' \Mc(v) - \int \tilde{k}(x,v,v') \Mc(v') \, dv' f(x,v) \\
&=  \int \tilde{k}(x,v',v) f(x,v') \, dv' \Mc(v) - \int \tilde{k}(x,v,v') \Mc(v') \, dv' \rho_i(x) \Mc(v) \\
&=  \sum_{j \in J_i} \int \tilde{k}(x,v',v) \mathds{1}_{\omega_j}(x,v') f(x,v') \, dv' \Mc(v) - \int \tilde{k}(x,v,v') \Mc(v') \, dv' \rho_i(x) \Mc(v) \\
&=  \sum_{j \in J_i} \int \tilde{k}(x,v',v) \mathds{1}_{\omega_j}(x,v') \Mc(v') \, dv' \rho_j(x) \Mc(v) - \int \tilde{k}(x,v,v') \Mc(v') \, dv' \rho_i(x) \Mc(v),
\end{align*}
where $J_i$ is the largest subset of $I$ such that for all $j \in J_i$, there exists $v' \in \R^d$ such that $(x,v') \in\omega_j$ and $\tilde{k}(x,v',v)>0$.
According to the second property satisfied by $f$, for all $j \in J_i$, 
$$\rho_j(x) =\rho_i(x),$$
and thus we deduce
\begin{align*}
C(x,v,f) &=  \sum_{j \in J_i} \int \tilde{k}(x,v',v) \mathds{1}_{\omega_j}(x,v') \Mc(v') \, dv' \rho_i(x) \Mc(v) - \int \tilde{k}(x,v,v') \Mc(v') \, dv' \rho_i(x) \Mc(v) \\
&= \left(\int \tilde{k}(x,v',v)  \Mc(v') \, dv'  - \int \tilde{k}(x,v,v') \Mc(v') \, dv' \right) \rho_i(x) \Mc(v)\\
&= 0.
\end{align*}
The last line comes from the fact that $k$ satisfies {\bf A2}.
This concludes the proof of $(3) \Longrightarrow (1)$.
\end{proof}

\begin{rque}
\label{def:UCP-expli}
Another benefit of Lemma~\ref{collannule} is that it allows to rephrase the Unique Continuation Property, in a slightly more explicit way.

Denote $\omega = \cup_{i \in I} \omega_i$ the partition of $\omega$ in connected components. The set $\omega$ satisfies the Unique Continuation Property if and only if the following holds. The only solution  $f \in C^0(\R ; \LL^2)$ to
 \begin{equation*}
 \label{eq:UCP-expli}
\partial_t f + v \cdot \nabla_x f - \nabla_x V \cdot \nabla_v f= 0,
\end{equation*}
satisfying the following properties
\begin{itemize}
\item for all $i \in I$, $f(t,x,v)=\rho_i (t,x)\Mc(v)$ on $[0,T] \times \omega_i$;
\item for $i,j \in I$ and $x \in \T^d$, $\omega_i(x) \Rc_k^x \, \omega_j(x) \Longrightarrow \rho_i(t,x) = \rho_j(t,x)$ for all $t \in [0,T]$,
\end{itemize}
is $f= \left(\int_{\T^d \times \R^d} f  \, dv \,dx\right) \frac{e^{-V}}{\int_{\T^d} e^{-V} \, dx} \Mc(v)$.
 \end{rque}

\begin{rque}
If $\tilde{k}$ is bounded in $L^\infty$ (but only in this case), we can actually prove that the dissipation controls the norm of a ``symmetrized'' collision operator. To state and prove such a result, we introduce the symmetrized collision kernel
  \begin{equation}
  \label{barrekstar}
  \overline{k}(x,v',v) := \frac{\tilde{k}(x,v',v)+ \tilde{k}(x,v,v')}{2}, \quad k^*(x,v',v) :=   \overline{k}(x,v',v)\Mc(v) .
   \end{equation}
Note in particular that $ \overline{k}(x,v',v) \in L^\infty (\T^d \times \R^d \times \R^d)$  if $\tilde{k}  \in L^\infty (\T^d \times \R^d \times \R^d)$. 

We also introduce the associated collision operator 
  \begin{equation}
  \label{mathcolop}
 \mathcal{C} (f) :=  \int_{\R^d} \left[{k^*}(x,v' ,  v) f(v') - {k^*}(x,v ,  v')) f(v)\right] \, dv'.
 \end{equation}

 Note that we have $ \mathcal{C} (f) = C(f)$ if and only if $\overline{k}(x,v',v)=\tilde{k}(x,v',v)$, i.e. if $\tilde{k}(x,v',v)$ is symmetric with respect to $v$ and $v'$ (this corresponds to the class {\bf E1}).  
 
 \begin{lem}
 \label{cerci1}
Let $k$ be a collision kernel satisfying {\bf A1}--{\bf A2}, and such that $\tilde{k} \in L^\infty$. For any $f \in \LL^2$, we have
  \begin{equation}
 \|\overline{k}\|_{L^\infty} D(f) \geq  \left\|\mathcal{C} (f) \right\|_{\LL^2}^2,
 \end{equation} 
   \end{lem}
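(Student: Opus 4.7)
The strategy is to recast both sides in a common form built around the symmetrized difference quotient $\frac{f(v')}{\Mc(v')}-\frac{f(v)}{\Mc(v)}$, and then invoke the Cauchy--Schwarz inequality pointwise in $(x,v)$.

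First I would rewrite $D(f)$ in the form already available from Lemma~\ref{lemdissip}: observing that $\frac{k(x,v',v)}{\Mc(v)}+\frac{k(x,v,v')}{\Mc(v')} = \tilde k(x,v',v)+\tilde k(x,v,v') = 2\overline{k}(x,v',v)$, the dissipation~\eqref{defD} becomes
\begin{equation*}
D(f) = \int_{\T^d} e^V \int_{\R^d}\!\!\int_{\R^d} \overline{k}(x,v',v)\,\Mc(v)\Mc(v')\left(\frac{f(x,v)}{\Mc(v)}-\frac{f(x,v')}{\Mc(v')}\right)^2 dv'\,dv\,dx.
\end{equation*}

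Next I would rewrite $\mathcal{C}(f)$. By the very definition $k^*(x,v',v)=\overline{k}(x,v',v)\Mc(v)$ and using the symmetry $\overline{k}(x,v,v')=\overline{k}(x,v',v)$, the loss and gain terms combine as
\begin{equation*}
\mathcal{C}(x,v,f) = \int_{\R^d} \overline{k}(x,v',v)\,\Mc(v)\Mc(v')\left(\frac{f(x,v')}{\Mc(v')}-\frac{f(x,v)}{\Mc(v)}\right) dv'.
\end{equation*}
Applying Cauchy--Schwarz pointwise in $(x,v)$ and using the crude bound $\int_{\R^d}\overline{k}(x,v',v)\Mc(v)\Mc(v')\,dv' \leq \|\overline{k}\|_{L^\infty}\Mc(v)$ (since $\int\Mc\,dv'=1$), I get
\begin{equation*}
|\mathcal{C}(x,v,f)|^2 \leq \|\overline{k}\|_{L^\infty}\Mc(v) \int_{\R^d}\overline{k}(x,v',v)\Mc(v)\Mc(v')\left(\frac{f(x,v')}{\Mc(v')}-\frac{f(x,v)}{\Mc(v)}\right)^2 dv'.
\end{equation*}

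The conclusion then follows by integrating this inequality against $\frac{e^V}{\Mc(v)}$ in $(x,v)$: the weight $\frac{e^V}{\Mc(v)}\cdot\Mc(v)=e^V$ exactly matches the measure appearing in the reformulated $D(f)$, yielding $\|\mathcal{C}(f)\|_{\LL^2}^2 \leq \|\overline{k}\|_{L^\infty}\,D(f)$. No real obstacle is expected; the only point that requires care is the bookkeeping of weights (Maxwellians and $e^V$) so that the Cauchy--Schwarz bound on $|\mathcal{C}|^2$ reproduces precisely the integrand of $D(f)$ — this is exactly where the choice of symmetrizing $\tilde k$ via $\overline{k}$ and of defining $k^*=\overline{k}\Mc(v)$ pays off.
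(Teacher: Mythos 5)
Your proof is correct and follows essentially the same route as the paper: both express $\mathcal{C}(f)$ as the integral of the difference quotient $\frac{f(v')}{\Mc(v')}-\frac{f(v)}{\Mc(v)}$ against $\overline{k}\,\Mc(v)\Mc(v')\,dv'$ and then square it via a convexity inequality. The only (immaterial) difference is that the paper first bounds $\overline{k}\le\|\overline{k}\|_{L^\infty}$ to put $\overline{k}^2$ inside the dissipation and then applies Jensen with respect to the probability measure $\Mc(v')\,dv'$, whereas you apply Cauchy--Schwarz with respect to the measure $\overline{k}\,\Mc(v')\,dv'$ and bound its total mass by $\|\overline{k}\|_{L^\infty}$ afterwards.
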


 \begin{proof}[Proof of Lemma \ref{cerci1}]
According to the definition of the dissipation~\eqref{defD} and the symmetry of $\overline{k}$, we have   
\begin{align*}
 D(f) &\geq   \int_{\T^d} e^{V} \int_{\R^d} \int_{\R^d}  \overline{k}(x,v',v)   \Mc \Mc' \left(\frac{f}{\Mc}- \frac{f'}{\Mc'}\right)^2 \, dv' \, dv \, dx ,
 \end{align*}
and hence 
\begin{align*}
 \|\overline{k}\|_{L^\infty} D(f) \geq  \int_{\T^d} e^{V} \int_{\R^d} \int_{\R^d}  \overline{k}^2(x,v',v)   \Mc \Mc' \left(\frac{f}{\Mc}- \frac{f'}{\Mc'}\right)^2 \, dv' \, dv \, dx.
 \end{align*}
By Jensen's inequality it follows that
  \begin{align*}
 \|\overline{k}\|_{L^\infty} D(f) &\geq  \int_{\T^d} e^{V} \int_{\R^d} \Mc(v)\left(\int_{\R^d}  \overline{k}(x,v',v) \Mc(v') \left(\frac{f(v)}{\Mc(v)}- \frac{f(v')}{\Mc(v')}\right) \, dv'\right)^2 dv \, dx \\
 &=   \int_{\T^d} e^{V} \int_{\R^d}    \frac{1}{\Mc(v)}   \mathcal{C} (f)^2 \,  dv \, dx\\
 &=   \left\| \mathcal{C} (f)  \right\|_{\LL^2}^2,
 \end{align*}
 where we again used the symmetry of $\overline{k}$. This concludes the proof of the lemma.
 \end{proof}

\end{rque}

%%%%%%%%%%%%%%%%%%%%%%%%
% \section{Characterization of weak stability}
 
%%%%%%%%%%%%%%%%%%%%%%%%
 \section{Characterization of convergence to equilibrium}
 \label{subsectiondecroissance}

 In this Section, we shall first give a proof of Theorem \ref{thmconv-intro}; then we will provide a proof of Theorem~\ref{thmconvgene-intro}, which will be a consequence of our main result in this direction, namely Theorem~\ref{thmconv-general}.

We start with a technical lemma concerning the evolution under the flow of the connected components of $\bigcup_{s\in \R^+}\phi_{-s}(\omega)$.

\begin{lem}
\label{leminvarflow}
Set $\tilde\Omega = \bigcup_{s\in \R^+}\phi_{-s}(\omega)$ and denote by $(\Omega_i)_{i\in I}$ the partition of $\tilde\Omega$ in connected components, and $\Ac = \T^d \times \R^d \setminus \tilde\Omega$. Then we have for all $t \geq 0$, for all $i \in I$,
\begin{equation}
\label{inclusionsphi}
 \phi_{-t}(\tilde\Omega) \subset \tilde\Omega ,  \quad  \phi_{t}(\Ac) \subset \Ac, \quad \phi_{-t}(\Omega_i) \subset \Omega_i .
\end{equation}
If moreover $\omega$ satisfies a.e.i.t. GCC (i.e. $\Ac$ has zero Lebesgue measure), then for all $i \in I$, for all $t \in \R$,
\begin{equation}
\label{eq-propconn}
\phi_{t}(\Omega_i) = \Omega_i  \quad \text{almost everywhere}. 
\end{equation}
\end{lem}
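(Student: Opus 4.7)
The plan is to prove the three inclusions in \eqref{inclusionsphi} in sequence, and then deduce the a.e. equality \eqref{eq-propconn} by combining them with the preservation of Lebesgue measure under the Hamiltonian flow $\phi_t$ (Liouville's theorem). First, I would establish $\phi_{-t}(\tilde\Omega) \subset \tilde\Omega$ for $t \geq 0$ by directly unfolding the definition: any $(x,v) \in \tilde\Omega$ can be written as $\phi_{-s}(y,w)$ for some $(y,w) \in \omega$ and $s \geq 0$, hence $\phi_{-t}(x,v) = \phi_{-(s+t)}(y,w) \in \phi_{-(s+t)}(\omega) \subset \tilde\Omega$, since $s+t \geq 0$. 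The second inclusion $\phi_t(\Ac) \subset \Ac$ then follows by complementation: applying $\phi_t$ to the first gives $\tilde\Omega \subset \phi_t(\tilde\Omega)$, whose complement reads $\phi_t(\Ac) \subset \Ac$, using that $\phi_t$ is a bijection on $\T^d \times \R^d$ and hence commutes with complementation.

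For $\phi_{-t}(\Omega_i) \subset \Omega_i$ with $t \geq 0$, I would use a continuity and connectedness argument: for fixed $(x,v) \in \Omega_i$, the curve $s \in [0,t] \mapsto \phi_{-s}(x,v)$ is continuous, starts at $(x,v) \in \Omega_i$, and remains in $\tilde\Omega$ by the first inclusion; as a connected subset of $\tilde\Omega$ that meets the connected component $\Omega_i$, its image is entirely contained in $\Omega_i$. For the a.e. equality \eqref{eq-propconn} under a.e.i.t. GCC, the key observation is that this backward invariance forces the $\Omega_i$'s to remain mutually disjoint under the forward flow: if $(x,v) \in \phi_t(\Omega_i) \cap \Omega_j$ with $i \neq j$ and $t \geq 0$, then $\phi_{-t}(x,v) \in \Omega_i$ on the one hand, and $\phi_{-t}(x,v) \in \phi_{-t}(\Omega_j) \subset \Omega_j$ on the other, contradicting $\Omega_i \cap \Omega_j = \emptyset$. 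Therefore $\phi_t(\Omega_i) \subset \Omega_i \cup \Ac$ for every $t \geq 0$. Since $\phi_t$ preserves Lebesgue measure on $\T^d \times \R^d$ and $\Ac$ has measure zero under a.e.i.t. GCC, one concludes $\phi_t(\Omega_i) = \Omega_i$ up to a null set. The case $t < 0$ follows by applying $\phi_t$ (which is measure-preserving, hence sends null sets to null sets) to the already-proved a.e. equality $\phi_{-t}(\Omega_i) = \Omega_i$ with $-t > 0$.

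The main subtlety, rather than a real obstacle, is to keep track of the forward/backward asymmetry built into the definition of $\tilde\Omega$ and to notice that it gets exactly absorbed into the negligible set $\Ac$ under a.e.i.t. GCC; this is what makes the final equality \eqref{eq-propconn} symmetric in $t$. The argument crucially uses both the backward-invariance of each individual $\Omega_i$ (which, combined with the disjointness of distinct components, produces forward disjointness) and the volume preservation of the Hamiltonian flow.
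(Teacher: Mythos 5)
Your proof is correct and follows essentially the same route as the paper's: unfold the definition for $\phi_{-t}(\tilde\Omega)\subset\tilde\Omega$, take complements for $\Ac$, use connectedness of the backward orbit segment for $\phi_{-t}(\Omega_i)\subset\Omega_i$, and absorb the discrepancy into the null set $\Ac$ (together with measure preservation of $\phi_t$) to get the a.e.\ equality. The only cosmetic difference is that the paper decomposes the whole phase space as $\bigcup_i\phi_{-t}(\Omega_i)\cup\phi_{-t}(\Ac)$ whereas you argue via $\phi_t(\Omega_i)\subset\Omega_i\cup\Ac$; note that the reverse inclusion $\Omega_i\subset\phi_t(\Omega_i)$ is in fact already exact from the third inclusion of \eqref{inclusionsphi}, so the measure-preservation step is only needed to transfer the a.e.\ identity to negative times.
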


\begin{proof}[Proof of Lemma~\ref{leminvarflow}]
First, we just remark that for all $t \geq 0$, we have $\phi_{-t}(\tilde\Omega) = \bigcup_{s\in \R^+}\phi_{-t-s}(\omega) \subset \tilde\Omega$. Taking the complement of this inclusion yields for $t \geq 0$, $\phi_{t}(\Ac) \subset \Ac$.

Let us now fix $i \in I$ and prove that 
\begin{equation}
\label{omegaiphit}
\text{for all } t \geq 0 , \quad 
\phi_{-t}(\Omega_i)\subset \Omega_i.
\end{equation}
Take $(x,v) \in \Omega_i$ and $t> 0$. If $\phi_{-t}(x, v) \notin \Omega_i$, then there exists $t_0 \in (0,t]$ such that $\phi_{-t_0}(x, v) \notin \tilde\Omega$ since $\Omega_i$ is a connected component of this set. This is in contradiction with $\phi_{-t_0}(\tilde\Omega) \subset \tilde\Omega$. This implies \eqref{omegaiphit}.

Finally, if $\Ac$ has zero Lebesgue measure, then $|\phi_{-t}(\Ac)| = 0$ as well. As a consequence, the identity
$$
\T^d \times \R^d = \phi_{-t} (\T^d \times \R^d) = \phi_{-t} \left(\bigcup_{i \in I} \Omega_i  \cup \Ac \right) 
= \bigcup_{i \in I}  \phi_{-t} (\Omega_i ) \cup \phi_{-t}(\Ac) 
$$
yields $\T^d \times \R^d = \bigcup_{i \in I}  \phi_{-t} (\Omega_i )$ almost everywhere. Since for all $i \in I$ and $t\geq 0$, $\phi_{-t} (\Omega_i ) \subset \Omega_i$, we obtain (still for $t \geq 0$) $\phi_{-t} (\Omega_i ) = \Omega_i$ almost everywhere, from which the conclusion of the lemma follows.

\end{proof}

\begin{rque}
Note that if $V=0$ and $\omega$ satisfies the following property: $(x,v) \in \omega \Leftrightarrow (x,-v) \in \omega$, then the inclusions in~\eqref{inclusionsphi} become equalities. Hence, all sets considered in~\eqref{inclusionsphi} are then invariant by $\phi_t$ for all $t\in \R$.
\end{rque}

\subsection{Proof of Theorem \ref{thmconv-intro}}
\label{sec:conv}

We shall prove that $(i.) \implies (iii.)$, that $(iii.) \implies (ii.)$ and finally that $(ii.) \implies (i.)$.

\bigskip

\noindent $(i.) \implies (iii.)$  We prove that the Unique Continuation Property (of Definition~\ref{def:UCP}) implies the decay.  

We first prove the expected convergence for data enjoying more regularity, i.e. we prove
 \begin{equation}
 \label{convergeto0'}
\text{for all } f_0 \in \LL^2\cap \LL^\infty, \quad \left\|f(t)-\int_{\T^d \times \R^d} f_0 \, dv \,dx \frac{e^{-V}}{\int_{\T^d} e^{-V} \, dx} \Mc(v)\right\|_{\LL^2} \to_{t \to +\infty} 0 .
 \end{equation}

Since \eqref{B} is linear, $g(t):= f(t)-\int_{\T^d \times \R^d} f_0 \, dv \,dx \frac{e^{-V}}{\int_{\T^d} e^{-V} \, dx} \Mc(v)$ is a solution to $\eqref{B}$ with initial datum $g(0) = f(0)-\int_{\T^d \times \R^d} f_0 \, dv \,dx \frac{e^{-V}}{\int_{\T^d} e^{-V} \, dx} \Mc(v) \in \LL^2\cap \LL^\infty$, satisfying $\int g(0) \, dv \, dx =0$.  
Therefore proving \eqref{convergeto0'} is equivalent to proving that $\|g(t)\|_{\LL^2} \to 0$. 

We argue by contradiction. Assume that there exists an initial datum $g_{0}$ in $\LL^2 \cap \LL^\infty$ (with $\int g_0 \, dv \,dx = 0 $), $\eps >0$ and an increasing sequence $(t_n)_{n \in \N}$ such that:
\begin{equation}
\label{borne}
 t_n \geq e^n , \quad \text{and} \quad \left\|g_0\right\|_{\LL^2} \geq \left\|g(t_n)\right\|_{\LL^2} > \eps.
\end{equation}
From this sequence, we may extract a subsequence (still denoted $(t_n)$) satisfying 
\begin{equation}
\label{tn+1 tn}
t_{n+1}-t_n \to + \infty.
\end{equation}
According to the Maximum Principle of Proposition \ref{prop:WP},  we have
\begin{equation}
   \label{eqequi}
\text{for all } t \geq 0, \quad \|g(t) \|_{\LL^\infty} \leq \|g_0 \|_{\LL^\infty}.
 \end{equation}

 We introduce the shifted function $h_n(t,x,v) := g(t_n + t, x,v)$. By the time translation invariance of \eqref{B},  $h_n$ is still a solution to \eqref{B}, with initial datum $h_n(0)= g(t_n)$. Using \eqref{borne}, up to some extraction, we can assume that there is $\alpha \in [{\eps},  \left\|g_0\right\|_{\LL^2}]$ such that 
    \begin{equation}
    \label{limithn}
    \| h_n(0) \|_{\LL^2} \to_{n \rightarrow + \infty}  \alpha.
 \end{equation}
 Note also that by conservation of the mass, for all $n \in \N$ and all $t \geq 0$,
     \begin{equation}
    \label{eqconsmas}
 \int h_n(t) \, dv dx = \int g_0 \, dv dx =0.
 \end{equation} 
 Using the dissipation identity of Lemma \ref{lemdissip} for $g$, we have:
     \begin{equation*}
   \| g(t_{n+1}) \|_{\LL^2}^2 -    \| g(t_{n}) \|_{\LL^2}^2 = - \int_{t_n}^{t_{n+1}} D(g) \, dt,
    \end{equation*}
    that is (using the time translation invariance):
         \begin{equation*}
   \| h_{n+1}(0) \|_{\LL^2}^2 -    \| h_{n}(0) \|_{\LL^2}^2 = - \int_{0}^{t_{n+1}-t_n} D(h_n) \, dt .
    \end{equation*}
    This, together with \eqref{tn+1 tn} and \eqref{limithn}, implies that for any $T>0$,
             \begin{equation}
             \label{Dzero}
\int_{0}^{T} D(h_n) \, dt \to 0.
    \end{equation}
    Now, up to another extraction, since for any $n \in \N$,
\begin{equation} 
\label{hng0}
\text{for all } t \geq 0, \quad   \| h_n (t) \|_{\LL^2} \leq \| g_0 \|_{\LL^2},
   \end{equation}
    we can assume that $h_n \rightharpoonup h$ weakly in $L^2_{t, loc}\LL^2$. Let us now prove that $h=0$.  First, since $h_n$ is a solution to~\eqref{B}, \Black by linearity, $h$ also satisfies~\eqref{B}.

We denote now $d\lambda := \mathds{1}_{[0,T]}(t)\left( \frac{k(x,v' ,  v)}{\Mc(v)} + \frac{k(x,v ,  v')}{\Mc(v')} \right) \Mc(v) \Mc(v') \, dv dv' dx dt$. Then, introducing 
$$\tilde{h}_n(t,x,v,v') := \frac{h_n(t,x,v)}{\Mc(v)}- \frac{h_n(t,x,v')}{\Mc(v')},$$ 
we observe that $\| \tilde{h}_n \|_{L^2(d\lambda)}^2= \int_0^TD(h_n) \, dt$ and thus, by \eqref{Dzero}, we deduce that $\tilde{h}_n(t,x,v,v')$ is uniformly bounded in $L^2(d\lambda)$. Consequently, up to a another extraction, $\tilde{h}_n$ weakly converges in $L^2(d\lambda)$ to some $\tilde{h}$. By uniqueness of the limit in the sense of distributions, we have $\tilde{h}= \frac{h(t,x,v)}{\Mc(v)}- \frac{h(t,x,v')}{\Mc(v')}$.

Then, by \eqref{Dzero} and weak lower semi-continuity, we deduce that for any $T>0$, we have
$$
    \left\|{D}(h) \right\|_{L^1(0,T)} = \| \tilde{h} \|_{L^2(d\lambda)}^2\leq \liminf_{n\to +\infty}  \| \tilde{h}_n \|_{L^2(d\lambda)}^2 =  \liminf_{n\to +\infty}   \left\|{D}(h_n) \right\|_{L^1(0,T)}= 0.
$$
 Thus, by weak coercivity (see Lemma~\ref{collannule}),  we infer that ${C}(h)=0$ on $[0,T]$, for any $T>0$,  and therefore $h$ satisfies the kinetic transport equation \eqref{eq:UCP}.
Using the Unique Continuation Property (see Definition~\ref{def:UCP}), we deduce that
$$h=\left(\int h \, dv dx \right) \frac{e^V}{\Mc}.$$
 Since $h_{n} \rightharpoonup h$ weakly in $L^2_t \LL^2$, using \eqref{eqconsmas}, we obtain in particular  that  for any $T>0$
$$
\int_0^{T} \left(\int h \, dv dx \right) dt =0. 
$$
Since $\int_0^{T} \left(\int h \, dv dx \right) dt = T  \left(\int h(0) \, dv dx \right)$, we deduce that $\int h \, dv dx=0$ so that $h=0$.

    \bigskip
    
   Let us now consider the sequence of defect measures $\nu_n := |h_n|^2$, which, according to~\eqref{hng0} and~\eqref{eqequi} satisfies, for all $n \in \N$,
   $$
\text{for all } t \geq 0, \quad  \| \nu_n(t) \|_{\LL^{1}} \leq \| g(0)\|_{\LL^2}^2, \quad \| \nu_n(t) \|_{\LL^{\infty}} \leq C_0\| g(0)\|_{\LL^\infty}^2,
   $$
   for $C_0 = \max_{(x,v) \in \T^d \times \R^d} e^{-V(x)}\Mc(v)$.
We have that, up to another subsequence  $\nu_n \rightharpoonup \nu$ weakly-$\star$ in $L^{\infty}_{t,loc} \LL^\infty$. Let us compute the equation satisfied by $\nu$: to this purpose, we consider \eqref{B} satisfied by $h_n$ and multiply it by $h_n$. We obtain:
\begin{equation}
\label{eq:nunnun}
        \begin{aligned}
        \partial_t \nu_n &+ v \cdot \nabla_x \nu_n - \nabla_x V \cdot \nabla_v \nu_n \\
        &=  2 \left[ \int_{\R^d} \left[k(x,v' ,  v) h_n(v') - k(x,v ,  v') h_n(v)\right] \, dv' \right] 
        h_n\\
        &= 2 \left(\int_{\R^d} k(x,v' ,  v) h_n(v') \, dv'\right) h_n - 2 \left( \int_{\R^d} k(x,v ,  v')\, dv'  \right)  \nu_n .
            \end{aligned}
            \end{equation}
               Using the averaging lemma of Corollary \ref{lemmoyenne} and the fact that $h_n$ weakly converges to $0$, we deduce that
        \begin{equation}
        \label{termecompacite}
        \int_{\R^d} k(x,v' , v) h_n(t,x,v') \, dv' \rightarrow 0
        \end{equation}
        strongly in $L^2_{t,loc}\LL^2$. On the other hand, according to~\eqref{hng0}, the sequence $(h_n)$ is uniformly bounded in $L^2_{t,loc}\LL^2$. We hence obtain
          \begin{equation}
          \label{compacitemoyenne}
         \left(\int_{\R^d} k(x,v' , v) h_n(t,x,v') dv'\right) h_n \to 0 \quad \text{strongly in } L^1_{t,loc}\LL^1.  
        \end{equation}
        The second term, by definition of $\nu$, weakly converges to $- 2 \left( \int_{\R^d} k(x,v ,  v')\, dv'  \right)  \nu$ in the sense of distributions, so that $\nu$ satisfies the equation
    \begin{equation}
    \label{transnu}
  \partial_t \nu + v \cdot \nabla_x \nu - \nabla_x V \cdot \nabla_v \nu = - 2 \left( \int_{\R^d} k(x,v ,  v')\, dv'  \right)  \nu.
    \end{equation}
Moreover, writing
            \begin{equation}
    \label{buternu-pre}
    \left( \int_{\R^d} k(x,v ,  v')\, dv'  \right)  |h_n|^2 = -C(h_n) h_n   +    \left(\int_{\R^d} k(x,v' , v) h_n(t,x,v') dv'\right) h_n,
    \end{equation}
and using~\eqref{Dzero} together with~\eqref{compacitemoyenne}, we deduce  that
\begin{equation}
    \label{buternu}
    \left( \int_{\R^d} k(x,v ,  v')\, dv'  \right)  \nu =0.
   \end{equation}
Thus,  \eqref{transnu} combined with \eqref{buternu} entails that $\nu$ satisfies the kinetic transport equation
   $$
    \partial_t \nu + v \cdot \nabla_x \nu - \nabla_x V \cdot \nabla_v \nu =0,
    $$
which also shows that $\nu \in C^0_t(\LL^{2})$. \Black
This, combined with the fact that $\nu=0$ on $\R^+ \times \omega$ (again coming from~\eqref{buternu}) and the Unique Continuation Property, implies 
$$\nu = \left(\int \nu(0) \, dv \,dx\right) \frac{e^{-V}}{\int_{\T^d} e^{-V} \, dx} \Mc(v).$$
According to \eqref{buternu}, this means that $\nu=0$. 
    
We now prove that there is no loss of mass at infinity. Let $\delta >0$ and $R>0$.
   We have:
\begin{align*}
   \int_{\T^d \times \R^d} \nu_n(0) \frac{e^{V}}{\Mc} \mathds{1}_{v \in \R^d \setminus B(0,R)} \, dv \,dx &\leq     \| g(0)\|_{\LL^\infty}^2 \int_{\T^d \times \R^d} \frac{\Mc}{e^{V}}\mathds{1}_{v \in \R^d \setminus B(0,R)} \, dv \,dx \\
   &\leq  \| g(0)\|_{\LL^\infty}^2 \int_{\T^d } {e^{-V}}  \,dx  \int_{|v| \geq R} \Mc(v) \, dv,
\end{align*}
   which is exponentially converging to zero as $R\to \infty$. This yields
   \begin{equation}
   \label{nolossofmass}
 \lim_{R \to \infty} \int_{\T^d \times \R^d} \nu_n(0) \frac{e^{V}}{\Mc} \mathds{1}_{v \in \R^d \setminus B(0,R)} \, dv \,dx = 0 .
   \end{equation}
   Therefore, on the one hand, up to a subsequence, we can assume that $\nu_n(0)\frac{e^{V}}{\Mc} \rightharpoonup \nu(0)\frac{e^{V}}{\Mc}$ tightly in $ \mathcal{M}^+_{x,v}$ and thus
      $$
\int_{\T^d \times \R^d} \nu_n(0) \frac{e^{V}}{\Mc} dv \, dx \rightarrow  \int_{\T^d \times \R^d} \nu(0) \frac{e^{V}}{\Mc} dv \, dx =0.
   $$ 
    On the other hand, using \eqref{limithn}, we have
   $$
\int_{\T^d \times \R^d} \nu_n(0) \frac{e^{V}}{\Mc} dv \, dx  \rightarrow \alpha  \, >0.
   $$   
 This yields a contradiction, and concludes the proof of \eqref{convergeto0'}.

   \bigskip
   We finally deduce \eqref{convergeto0} by an approximation argument. Let $f_0 \in \LL^2$ and $f(t)$ be the solution associated to $f_0$. Let $\eps>0$. There exists $f_{0,\eps} \in \LL^2 \cap \LL^\infty$ such that $\|f_0 - f_{0,\eps}\|_{\LL^2} \leq \eps$. Let $f_\eps(t)$ be the solution associated to $f_{0,\eps}$. Since $f- f_\eps$ is a solution of \eqref{B} with initial datum $f_0 -f_{0,\eps}$ we also have for any $t\geq 0$, $\|f(t) - f_\eps(t)\|_{\LL^2} \leq \eps$.
   
   By \eqref{convergeto0'}, there exists $t_0 \geq 0$ such that for all $t\geq t_0$,
$$
\left\|f_\eps(t)-\int_{\T^d \times \R^d} f_{0,\eps} \, dv \,dx \frac{e^{-V}}{\int_{\T^d} e^{-V} \, dx} \Mc(v)\right\|_{\LL^2} \leq \eps.
$$
   Thus, it follows that for all $t\geq t_0$,
 \begin{align*}
&\left\|f(t)-\int_{\T^d \times \R^d} f_{0} \, dv \,dx \frac{e^{-V}}{\int_{\T^d} e^{-V} \, dx} \Mc(v)\right\|_{\LL^2} \\
&\leq  \| f(t)- f_\eps(t) \|_{\LL^2}  +  \left\|f_\eps(t)-\int_{\T^d \times \R^d} f_{0,\eps} \, dv \,dx \frac{e^{-V}}{\int_{\T^d} e^{-V} \, dx} \Mc(v)\right\|_{\LL^2} \\
& \quad+ 
 \left\| \int_{\T^d \times \R^d} ( f_{0, \eps}- f_{0})   dv \,dx \frac{e^{-V}}{\int_{\T^d} e^{-V} \, dx} \Mc(v)\right\|_{\LL^2} \\
& \leq 2 \eps + \int_{\T^d \times \R^d} |f_{0}-f_{0,\eps}| \, dv \,dx .
\end{align*}
Besides, we have 
$$
\left(\int_{\T^d \times \R^d} |f_{0}-f_{0,\eps}| \, dv \,dx \right)^2 \leq \|f_{0}-f_{0,\eps}\|_{\LL^2}^2 \left( \int_{\T^d \times \R^d}e^{-V}\Mc(v) \, dx \, dv \right).
$$
The last two inequalities together yield, for all $t \geq t_0$,
 \begin{align*}
\left\|f(t)-\int_{\T^d \times \R^d} f_{0} \, dv \,dx \frac{e^{-V}}{\int_{\T^d} e^{-V} \, dx} \Mc(v)\right\|_{\LL^2}
\leq \left(2+ \left(\int_{\T^d}e^{-V} \, dx\right)^\frac12 \right)\eps ,
\end{align*}
which concludes the proof of \eqref{convergeto0}.

   \bigskip

 \noindent $(iii) \Rightarrow (ii.)$ Assume that $(ii.)$ does not hold. 
 Either $\T^d \times \R^d \setminus \bigcup_{s  \in \R^+}\phi_{-s}(\omega)$ has positive Lebesgue measure, or the equivalence relation $\Bumpeq$ has two or more equivalence classes (or equivalently, by Lemma~\ref{equiv-equiv}, the binary relation  $\sim$ has two or more equivalence classes).

Suppose first that  $\Ac:= \T^d \times \R^d \setminus \bigcup_{s  \in \R^+}\phi_{-s}(\omega)$ has positive Lebesgue measure. We set 
$$
 f_0 (x,v) = \mathds{1}_{\Ac}(x,v)\, e^{-V(x)} \Mc(v). 
 $$
 Note that $f_0$ satisfies $\int_{\T^d \times \R^d}f_0(x,v) dx \, dv > 0$ as $\Ac$ is of positive Lebesgue measure.
We consider $f(t, \cdot)$ the solution to \eqref{eqhypo1} with initial datum $f_0$, given by
$$
f(t,x,v) = f_0 \circ \phi_{-t}(x,v) = \mathds{1}_{\phi_{t}(\Ac)}(x,v)\, e^{-V(x)} \Mc(v)
$$
Moreover, for all $t\geq0$, we have $\phi_{t}(\Ac) \cap \omega = \emptyset$ since  
$\Ac\cap \phi_{-t}(\omega) = \emptyset$. Therefore, $f =0$ on $\R^+\times \omega$ so that, according to the characterization of $\omega$ in~\eqref{omegabis}, $C(f) = 0$ and $f$ is also a solution of~\eqref{B}. Moreover, this implies that 
$$
\left\| f(t) - \left(\int_{\T^d \times \R^d} f_0 \, dv dx\right) \frac{e^{-V}}{\int_{\T^d} e^{-V} \, dx} \Mc(v) \right\|_{\LL^2} \not\to_{t \to +\infty} 0
$$
Thus, $(iii.)$ does not hold.

\bigskip

Suppose now that $\T^d \times \R^d \setminus \bigcup_{s  \in \R^+}\phi_{-s}(\omega)$ has zero Lebesgue measure and that the equivalence relation  $\sim$ has (at least) two distinct equivalence classes, say $[\Omega_1]$ and $[\Omega_2]$.

We define now a function $f(x,v)$ as follows
\begin{equation*}
f(x,v) = \sum_{\Omega' \in [\Omega_1]} \mathds{1}_{\Omega'} (x,v) e^{-V(x)}\Mc(v).
\end{equation*}
Using~\eqref{eq-propconn} in Lemma~\ref{leminvarflow}, we deduce that for all $t \geq 0$, 
$$
f \circ \phi_t (x,v) =\sum_{\Omega' \in [\Omega_1]} \mathds{1}_{\Omega'} \big(\phi_t (x,v)\big) e^{-V(x)}\Mc(v) =
\sum_{\Omega' \in [\Omega_1]} \mathds{1}_{\Omega'} (x,v) e^{-V(x)}\Mc(v) = f  (x,v) ,
$$  
so that $f$ is a stationary solution of $\partial_t f + v \cdot \nabla_x f - \nabla_x V \cdot \nabla_v f= 0$.

There remains to prove that $f$ cancels the collision operator. Denote $\omega = \cup_{i \in I} \omega_i$ the partition of $\omega$ in connected components.

By Lemma~\ref{collannule}, $f$ cancels the collision operator if and only if $f$ satisfies the following two properties. 
\begin{enumerate}
\item For all $i \in I$,
$$
f=\rho_i (x)\Mc(v) \quad \text{on  } \omega_i,
$$
\item For $i,j \in I$, and $x \in \T^d$ $\omega_i(x), \Rc_k^x \, \omega_j(x) \Longrightarrow \rho_i(x) = \rho_j(x)$.
\end{enumerate}

We check now that our function $f$ satisfies these two properties.

Let $i \in I$. If for all $\Omega' \in [\Omega_1]$, $\omega_i \cap \Omega' =\emptyset$, then $f=0$ on $\omega_i$ (and hence satisfies $(1)$ with $\rho_i=0$). If there is $\Omega' \in [\Omega_1]$ such that $\omega_i \cap \Omega' \neq \emptyset$, then since $\Omega'$ is a connected component of $\bigcup_{s\in \R^+}\phi_{-s}(\omega)$, we have $\omega_i \subset \Omega'$. Thus we have $f=e^{-V}\Mc $ on $\omega_i$ (and hence $f$ satisfies $(1)$ with $\rho_i=e^{-V}$). We deduce that for all $i \in I$, $f$ is of the form $\rho_i (x)\Mc(v)$ on $\omega_i$.

Take now $i,j \in I$ and $x\in \T^d$ such that $\omega_i(x) \Rc_k^x \, \omega_j(x)$. 
 Let $\Omega^{(i)}$ (resp. $\Omega^{(j)}$) be the connected component of $\bigcup_{s\in \R^+}\phi_{-s}(\omega)$ which contains $\omega_i$ (resp. $\omega_j$). By definition of the relations $\Rc_k$ and $\Rc_k^x$, this directly yields $\Omega^{(i)} \Rc_k \, \Omega^{(j)}$, and {\em a fortiori} we deduce that $\Omega^{(i)}\sim \Omega^{(j)}$: in other words, these are in the same equivalence class for $\sim$. According to the definition of $f$, this implies $\rho_i(x) = \rho_j(x)$ (which is equal to $ e^{-V(x)}$ if $\Omega^{(i,j)} \in [\Omega_1]$ and to $0$ if not).
 
 Therefore, the function $f$ satisfies the two properties and, by Lemma~\ref{collannule}, cancels the collision operator.

However, we have
$$
\cup_{\Omega' \in [\Omega_2]} \Omega' \subset \T^d \times \R^d \setminus \cup_{\Omega' \in [\Omega_1]} \Omega'
$$
and $\cup_{\Omega' \in [\Omega_2]} \Omega' $ has a positive Lebesgue measure.
Consequently, the measure of $\T^d \times \R^d \setminus \cup_{\Omega' \in [\Omega_1]} \Omega'$ is positive, so that $f$ is a stationary solution of~\eqref{B} which is not a uniform Maxwellian. As a consequence, $(iii.)$ does not hold.

\bigskip
   
\noindent $(ii.) \Rightarrow (i.)$ Assume that $(ii.)$ holds. Let $f \in C^0_t(\LL^2)$ be a solution to
 \begin{align}
\label{eqhypo1} \partial_t f + v \cdot \nabla_x f - \nabla_x V \cdot \nabla_v f= 0, \\
\label{eqhypo2} C(f)=0.
\end{align}
As usual, without loss of generality, we can assume that $\int_{\T^d \times \R^d} f \, dv \,dx=0$. The goal is to show that $f=0$.

  Since $f$ cancels the collision operator, by Lemma~\ref{collannule}, the restriction of $f$ to $\omega$ is necessarily of the form $f_{|\omega}(t,x,v) = \sum_{i \in I}\mathds{1}_{\omega_i}(x,v) \rho_i(t,x) \Mc(v)$, where $\omega = \bigcup_{i \in I} \omega_i$ is the partition of $\omega$ in connected components.  Furthermore, for $i,j \in I$, if there is $x \in \T^d$ such that $\omega_i(x) \Rc_k^x \, \omega_j(x)$, then $\rho_i(t,x) = \rho_j(t,x)$.

 Consider now $\tilde{\omega}$ a connected component of $\omega$. Consider for $(t,x, v) \in \R^+ \times \tilde\omega$,  $g(t,x) := \frac{e^V}{\Mc(v)} \, f$. 
  We have, in the sense of distributions in $\R^+ \times \tilde\omega$,
 $$
 \partial_t g + v \cdot \nabla_x g =  \frac{e^V}{\Mc(v)} \left[\partial_t f + v \cdot \nabla_x f + (v\cdot  \nabla_x V ) f\right].
 $$
 Since $f$ satisfies \eqref{eqhypo1}, this implies that $g$ satisfies the free transport equation 
 \begin{equation}
 \label{transg}
   \partial_t g + v \cdot \nabla_x g =0   ,
 \end{equation}
in the sense of distributions in $\R^+ \times \tilde\omega$.

Let $(x, v) \in \tilde\omega$. Since $\tilde\omega$ is open, there exists $\delta>0$ such that $B(x,\delta)\times B(v,\delta) \subset \tilde\omega$ and $\eta>0$ such that for all $t \in (-\eta,\eta)$, for all $(x', v')\in B(x,\delta)\times B(v,\delta)$, $(x'+tv' ,v') \in \tilde\omega$. Integrating~\eqref{transg} along characteristics we obtain 
\begin{equation}
\label{gpropcons}
g(t,x') = g(0,x'+tv') , \quad \text{for } (t,x',v') \in (-\eta,\eta) \times B(x,\delta) \times B(v,\delta).
\end{equation}
Setting $U_x := \{ x - \frac{\eta}{2} v + \frac{\eta}{2} v', v' \in B(v, \delta)\}$, we remark that $U_x$ is an open set containing $x$. Moreover, for all $y \in U_x$, we have $y =  x - \frac{\eta}{2} v + \frac{\eta}{2} v'$ for some $v' \in B(v, \delta)$ so that, using~\eqref{gpropcons}, we have 
$$
g(0, y) = g\left(0,x - \frac{\eta}{2} v + \frac{\eta}{2} v'\right) = g\left(\eta/2, x- \frac{\eta}{2} v\right).
$$
Hence, $g(0, \cdot)$ is constant on $U_x$, and therefore constant on $\tilde\omega$ (since $\tilde\omega$ is connected).

Using the time translation invariance of~\eqref{transg}, we also have that for all $t\geq 0$, $g(t, \cdot)$ is locally constant on $\omega$. 
As a consequence, for all $t \geq 0$, $\frac{e^V}{\Mc} \, f(t,\cdot)$ is locally constant on $\omega$, which means that $\rho_i(t,x) = \kappa_i (t)$, i.e. $f_{|\omega}(t,x,v) = e^{-V(x)} \Mc(v) \sum_{i \in I} \kappa_i(t) \mathds{1}_{\omega_i}(x,v)$. Since $f$ satisfies the transport equation \eqref{eqhypo1} on $\R^+ \times \omega_i$, we infer that $\kappa_i$ is constant, so that $f_{|\omega}(t,x,v) =f_{|\omega}(0,x,v)= e^{-V(x)} \Mc(v) \sum_{i \in I} \kappa_i \mathds{1}_{\omega_i}(x,v)$. 
Using again the transport equation \eqref{eqhypo1}, we deduce that $\frac{e^V}{\Mc} \, f(t,x,v) = \frac{e^V}{\Mc} \, f(0,\cdot) \circ \phi_{-t}(x,v)$. Since there is only one equivalence class for $\Bumpeq$, we first  deduce that $f= \kappa  e^{-V(x)} \Mc(v) $ on $\omega$ and then that $f= \kappa  e^{-V(x)} \Mc(v) $ on $\bigcup_{s\in \R^+}\phi_{-s}(\omega)$. Since $\omega$ satisfies a.e.i.t. GCC, this is a full measure set and we deduce that $f =\kappa e^{-V} \Mc(v)$. Since  $\int_{\T^d \times \R^d} f \, dv \,dx=0$, necessarily $f=0$.

\bigskip

This concludes the proof of Theorem~\ref{thmconv-intro}.

 \begin{rque}
Note that the proof of $(i.) \implies (iii.)$ relies on the maximum principle for the linear Boltzmann equation~\eqref{B} (i.e. the $\LL^\infty$ bound). This was in particular useful to prevent loss of mass at infinity for the sequence of solutions under study. This will turn out to be also very useful to overcome another issue in the proof of the analogous theorem in the case of a bounded domain of $\R^d$ with specular reflection, see Section~\ref{convboun}. 
 \end{rque}

 \bigskip
 
 If $\omega$ satisfies the Geometric Control Condition of Defintion~\ref{def: GCC}, we can actually show a slightly stronger property than the Unique Continuation Property (with the same proof as $(ii.)$ implies $(i.)$ above, up to some slight modifications), that we state for convenience as a Proposition.
 
 \begin{prop}
  \label{remUCP}
 Assume that $(\omega,T)$ satisfies the Geometric Control Condition. 
 
If $f \in C^0_t(\LL^2)$ is a solution to
 \begin{equation*}
 \left\{
 \begin{aligned}
&\partial_t f + v \cdot \nabla_x f - \nabla_x V \cdot \nabla_v f= 0, \\
&C(f) =0 \text{  on  }  I \times \omega,
\end{aligned}
\right.
\end{equation*}
where $I$ is an interval of time of length larger than $T$,
then $f= \left(\int_{\T^d \times \R^d} f \, dv \,dx\right) \frac{e^{-V}}{\int_{\T^d} e^{-V} \, dx} \Mc(v)$.

 \end{prop}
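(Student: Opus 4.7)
The plan is to follow the proof of $(ii.) \Rightarrow (i.)$ of Theorem~\ref{thmconv-intro} essentially verbatim, adapting it to the finite time window $I$ and to the weaker hypothesis that the collision operator vanishes only on $\omega$. The new input provided by GCC (in place of a.e.i.t. GCC) is the uniform bound $T$ on the time required for any characteristic to visit $\omega$, which is exactly what allows the argument to close on a finite interval.

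First I observe that by the characterization~\eqref{omegabis}, the collision operator $C(f)$ vanishes identically on $\omega^c$, so the hypothesis ``$C(f) = 0$ on $I \times \omega$'' is in fact equivalent to ``$C(f) = 0$ on $I \times (\T^d \times \R^d)$''. Writing $\omega = \bigcup_i \omega_i$ in connected components, Lemma~\ref{collannule} applied on the interval $I$ yields $f(t,x,v) = \rho_i(t,x) \Mc(v)$ on $I \times \omega_i$, with the usual compatibility $\rho_i(t,x) = \rho_j(t,x)$ whenever $\omega_i(x) \Rc_k^x \omega_j(x)$. Inserting this into the transport equation $\partial_t f + v \cdot \nabla_x f - \nabla_x V \cdot \nabla_v f = 0$ on $I \times \omega_i$ and running the same local-constancy argument as in the paper (using that $\omega_i$ is open in both $x$ and $v$, so that the identity $v \cdot \nabla_x(e^V \rho_i) = -\partial_t(e^V \rho_i)$ holds for a full open set of velocities at each $x$), I deduce $f|_{I \times \omega_i}(t,x,v) = \kappa_i e^{-V(x)} \Mc(v)$ for a constant $\kappa_i \in \R$.

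The second step exploits GCC to propagate this structure to the full phase space within the window $I$. Since $|I| > T$, I may fix $t_0 \in I$ with $[t_0, t_0 + T] \subset I$. For arbitrary $(x_0, v_0) \in \T^d \times \R^d$, GCC produces $s = s(x_0, v_0) \in [0, T]$ with $\phi_s(x_0, v_0) \in \omega_{i(x_0,v_0)}$. Integration of the transport equation along the characteristic on $[t_0, t_0 + s] \subset I$ together with conservation of the Hamiltonian $H = |v|^2/2 + V$ along $\phi_t$ yields
\begin{equation*}
f(t_0, x_0, v_0) = f(t_0 + s, \phi_s(x_0, v_0)) = \kappa_{i(x_0,v_0)} e^{-V(X_s)} \Mc(\Xi_s) = \kappa_{i(x_0,v_0)} e^{-V(x_0)} \Mc(v_0).
\end{equation*}

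Exactly as in the paper, the $\Rc_k$-identifications from Lemma~\ref{collannule} together with the $\Rc_\phi$-identifications coming from the transport propagation force the $\kappa_i$ to coincide across each $\Bumpeq$-equivalence class, so that under the single-class hypothesis (implicit in the appeal to UCP announced in the sentence preceding the statement), all $\kappa_i$ collapse to a common value $\kappa$, giving $f(t_0,\cdot) = \kappa e^{-V} \Mc$. Since $\kappa e^{-V}\Mc$ is a stationary solution of the pure transport equation and $f$ solves the pure transport equation on all of $\R$, uniqueness of the Cauchy problem in $\LL^2$ propagates this identity to every $t \in \R$; the constant $\kappa$ is then pinned down by integrating against $dv\, dx$. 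The main obstacle is the bookkeeping in the propagation step — selecting $t_0$ so that $[t_0, t_0 + T] \subset I$ and invoking GCC inside $I$ rather than on $\R^+$ as in the original proof; the algebraic mechanics of gluing components through $\Rc_k$ and $\Rc_\phi$ are unchanged.
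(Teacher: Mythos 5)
Your overall strategy is the paper's intended one (the paper itself announces that the proposition is proved ``with the same proof as $(ii.)$ implies $(i.)$, up to some slight modifications''), and the first steps are sound: reducing ``$C(f)=0$ on $I\times\omega$'' to ``$C(f)=0$ on $I\times\T^d\times\R^d$'' via \eqref{omegabis}, applying Lemma~\ref{collannule} on $I$, running the local-constancy argument to get $f=\kappa_i e^{-V}\Mc$ on $I\times\omega_i$, and propagating the Maxwellian form to every $(x_0,v_0)$ using a segment $[t_0,t_0+T]\subset I$. The gap is in the step where you collapse all the $\kappa_i$ to a single constant. First, the single-equivalence-class property is not a hypothesis of the proposition and cannot be borrowed as ``implicit'' from the surrounding prose: in a self-contained proof it must be derived from GCC. (It does follow in one line: GCC means $\bigcup_{s\in(0,T)}\phi_{-s}(\omega)=\T^d\times\R^d$, which is connected, so $\sim$, and hence $\Bumpeq$ by Lemma~\ref{equiv-equiv}, has a single class --- but you need to say this.) Second, and more substantively, the claim that ``the algebraic mechanics of gluing components through $\Rc_k$ and $\Rc_\phi$ are unchanged'' is precisely where the finite window bites: an $\Rc_\phi$-connection $\phi_s(\omega_1)\cap\omega_2\neq\emptyset$ can only be converted into $\kappa_1=\kappa_2$ if both endpoints of the connecting characteristic segment can be placed at times inside $I$, i.e.\ if $|s|$ is smaller than the length of $I$; the relation $\Bumpeq$ allows arbitrary connecting times, so a single $\Bumpeq$-class does not by itself yield the identification within the window. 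This is exactly the ``slight modification'' the paper alludes to, and it is missing from your write-up.

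The fix is short and replaces the equivalence-class bookkeeping by a connectedness argument using only times in $[0,T]$. For each $s\in[0,T]$ and each component $\omega_i$, your own propagation step shows that $g(t_0,\cdot):=\frac{e^V}{\Mc}f(t_0,\cdot)$ equals $\kappa_i$ almost everywhere on the open set $\phi_{-s}(\omega_i)$. By GCC these open sets cover $\T^d\times\R^d$; if two of them overlap, the overlap is open, hence of positive Lebesgue measure, so the corresponding constants coincide; connectedness of $\T^d\times\R^d$ then forces all constants appearing in the cover to be equal, so $g(t_0,\cdot)$ is constant. From there your conclusion (propagation in time by uniqueness for the transport equation, and normalization of $\kappa$ by integration in $dv\,dx$) is correct.
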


 This will be useful for the proof of Theorem \ref{thmexpo-intro}.

 \subsection{Proof of Theorem~\ref{thmconvgene-intro}}
 \label{sec:conv2}

We start by describing the vector space of stationary solutions of the linear Boltzmann equation \eqref{B},
when the associated set $\omega$ satisfies the a.e.i.t. GCC.

For the sake of readability, we set here 
$$\tilde\Omega:= \bigcup_{s\in \R^+}\phi_{-s}(\omega).$$ 
We denote $\tilde\Omega= \cup_{i \in I} \Omega_i$ the partition of $\tilde\Omega$ in connected components. 
We write $([\Omega_j])_{j\in J}$ the equivalence classes for the equivalence relation $\sim$. We denote for all $j \in J$
\begin{equation}
\label{def-Uj}
U_j := \bigcup_{\Omega' \in [\Omega_j]} \Omega'.
\end{equation}

We have the following description of the vector space of stationary solutions of \eqref{B}.
    \begin{prop}
    \label{cardinal}
Assume that $\omega$ satisfies the a.e.i.t. GCC. Then a Hilbert basis of the subspace of stationary solutions to the linear Boltzmann equation~\eqref{B} (or, equivalently of $\ker (A)$, where $A$ is the linear Boltzmann operator defined in~\eqref{boltzop}) is given by the family $(f_j)_{j \in J}$, with
\begin{equation}
\label{def-fj}
f_j = \frac{\mathds{1}_{U_j} e^{-V}\Mc}{\| \mathds{1}_{U_j} e^{-V} \Mc \|_{\LL^2}}.
\end{equation}
In particular, the cardinality of the set of equivalence classes for $\sim$ is equal to the dimension of the vector space of stationary solutions to the linear Boltzmann equation \eqref{B}, i.e.
$$
\dim (\ker (A)) = \sharp(\CC(\tilde\Omega) / \sim) =  \sharp(\CC(\omega) / \Bumpeq).
$$
    \end{prop}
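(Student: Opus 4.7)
The overall strategy is to exploit the decomposition $\ker(A) = \ker(T) \cap \ker(C)$, which follows from the weak coercivity property. Indeed, if $Af = 0$ for $f \in \LL^2$, then $\langle Af, f \rangle_{\LL^2} = 0$; since $T$ is skew-adjoint on $\LL^2$ (as shown in the proof of Lemma~\ref{lemdissip}), this reduces to $\langle Cf, f\rangle_{\LL^2} = -\frac12 D(f) = 0$, whence $D(f) = 0$, and Lemma~\ref{collannule} then yields $Cf = 0$, so $Tf = 0$ as well. I will characterize each of $\ker(C)$ and $\ker(T)$ in turn and combine them using the equivalence relation $\sim$ (or equivalently $\Bumpeq$, via Lemma~\ref{equiv-equiv}).

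The first step is to verify that each candidate $f_j$ defined in~\eqref{def-fj} lies in $\ker(A)$ and that the family is orthonormal. Orthonormality is immediate: distinct equivalence classes $[\Omega_j]$ yield disjoint unions $U_j$, hence the $f_j$ have pairwise disjoint supports, and the normalization is built in. For $Tf_j = 0$, I use that $T(e^{-V}\Mc) = 0$ together with Lemma~\ref{leminvarflow}: the a.e.i.t. GCC assumption gives $\phi_t(\Omega_i) = \Omega_i$ a.e.\ for every $\Omega_i \in \CC(\tilde\Omega)$, and since $U_j$ is a union of such components this yields $\mathds{1}_{U_j}\circ \phi_t = \mathds{1}_{U_j}$ a.e., whence $Tf_j = 0$. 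For $Cf_j = 0$, I invoke Lemma~\ref{collannule}~(3): on each connected component $\omega_i$ of $\omega$, the set $\omega_i$ is contained in a single $\Omega\in\CC(\tilde\Omega)$, so $f_j$ equals either $c_j\, e^{-V}\Mc$ (if $\Omega \in [\Omega_j]$) or $0$ (otherwise), giving $f_j = \rho_i(x)\Mc(v)$ on $\omega_i$. Moreover, if $\omega_i \Rc_k^x \omega_{i'}$ for some $x$, then $\omega_i \Rc_k \omega_{i'}$, so $\Psi(\omega_i) \Rc_k \Psi(\omega_{i'})$ are $\sim$-equivalent, and $\rho_i(x) = \rho_{i'}(x)$ follows.

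The second step is to show that the family $(f_j)_{j\in J}$ generates $\ker(A)$. Let $f \in \ker(A)$. By the above, $Cf = 0$, so Lemma~\ref{collannule} gives $f|_{\omega_i}(x,v) = \rho_i(x)\Mc(v)$ with the $\Rc_k^x$-compatibility. Arguing as in the proof of $(ii)\Rightarrow(i)$ of Theorem~\ref{thmconv-intro}, the relation $Tf = 0$ combined with the local representation forces $\rho_i(x) e^{V(x)}$ to be constant in $x$ (by integrating the transport equation satisfied by $g = fe^V/\Mc$ on the open set $\omega_i$), so that $f = \kappa_i e^{-V}\Mc$ on each $\omega_i$. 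Propagation along the flow (using that $e^{-V}\Mc$ is flow-invariant and $\phi_{-t}(\Omega) \subset \Omega$ for every $\Omega \in \CC(\tilde\Omega)$) extends this identity to $f = \kappa_\Omega e^{-V}\Mc$ on each $\Omega \in \CC(\tilde\Omega)$. Finally, the $\Rc_k$-compatibility condition combined with the chain structure of $\sim$ yields $\kappa_\Omega = \kappa_{\Omega'}$ whenever $\Omega \sim \Omega'$, so there is a single constant $\kappa_j$ per equivalence class $[\Omega_j]$. Since a.e.i.t.\ GCC makes $\tilde\Omega$ a full-measure set, this gives $f = \sum_{j\in J} \kappa_j \mathds{1}_{U_j} e^{-V}\Mc$ a.e., and the $\LL^2$-orthogonality of the summands together with $f \in \LL^2$ ensures the appropriate square-summability, hence convergence in $\LL^2$. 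Writing the $\kappa_j$ in terms of the normalization constants produces the desired decomposition on the Hilbert basis.

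The main obstacle I anticipate is the rigorous propagation of the constants $\kappa_i$ from a single connected component $\omega_i$ of $\omega$ to the whole equivalence class $U_j$: one must combine, in a possibly countable chain, propagation by $\Rc_\phi$ (which uses flow invariance of $e^{-V}\Mc$ and Lemma~\ref{leminvarflow}) with identifications across $\Rc_k$ (which uses the compatibility in Lemma~\ref{collannule}~(3)), while keeping track of the a.e.\ equalities. The equivalence of the chain definitions of $\Bumpeq$ and $\sim$ provided by Lemma~\ref{equiv-equiv} is the technical ingredient that makes this work uniformly, and the remaining dimension count follows by bijection of equivalence classes.
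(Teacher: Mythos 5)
Your proof is correct and follows essentially the same route as the paper: you verify that each $f_j$ cancels both the transport and the collision operator (via Lemma~\ref{leminvarflow} and Lemma~\ref{collannule}), observe orthonormality from the disjoint supports, and then for the spanning step you use skew-adjointness of $T$ to reduce $\ker(A)$ to $\ker(T)\cap\ker(C)$ and propagate the local Maxwellian structure along chains of $\Rc_\phi$ and $\Rc_k$ exactly as in the paper's invocation of the argument from Theorem~\ref{thmconv-intro}, $(ii.)\Rightarrow(i.)$. The only difference is cosmetic: you state the decomposition $\ker(A)=\ker(T)\cap\ker(C)$ explicitly up front, whereas the paper derives $D(\varphi)=0\Rightarrow C(\varphi)=0$ in passing for a stationary $\varphi$; the content is identical.
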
     
  
 We can introduce a {\bf generalized} Unique Continuation Property, as follows.
  \begin{deft}
 \label{def:UCPgene}
We say that the set $\omega$ satisfies the {\bf generalized} Unique Continuation Property if the only solutions $f \in C^0_t(\LL^2)$ to
 \begin{equation}
 \label{eq:UCPgene}
 \left\{
 \begin{array}{l}
\partial_t f + v \cdot \nabla_x f - \nabla_x V \cdot \nabla_v f= 0, \\
C(f) = 0,
\end{array}
\right.
\end{equation}
are of the form  $f=\sum_{j\in J}  \langle f, f_j \rangle_{\LL^2} \,  f_j =\sum_{j \in J} \frac{1}{\| \mathds{1}_{U_j} e^{-V} \Mc \|_{\LL^2}}  \left( \int_{U_j} f \, dv dx\right) f_j$.
 \end{deft}

  We can now state the precise version of Theorem~\ref{thmconvgene-intro}:  
   \begin{thm}
\label{thmconv-general}
  We keep the notations of Proposition~\ref{cardinal}. \Black The following statements are equivalent.
\begin{enumerate}[(i.)]

\item The set $\omega$ satisfies the {\bf generalized} Unique Continuation Property.

\item The set $\omega$ satisfies the a.e.i.t. GCC.

\item  For all $f_0 \in \LL^2(\T^d \times \R^d)$, denote by $f(t)$ the unique solution to \eqref{B} with initial datum $f_0$. We have
\begin{equation}
 \label{convergeto0-general}
 \left\|f(t)-Pf_0 \right\|_{\LL^2} \to_{t \to +\infty} 0,
 \end{equation}
where
\begin{equation}
\label{def-equimulti}
P f_0 (x,v) = \sum_{i\in J}  \frac{1}{\| \mathds{1}_{U_j} e^{-V} \Mc \|_{\LL^2}} \left( \int_{U_i} f_0 \, dv dx\right)f_j  ,
\end{equation}
with $(U_j)_{j \in J}$ defined in~\eqref{def-Uj} and $(f_j)_{j \in J}$ defined in~\eqref{def-fj}.

\item  For all $f_0 \in \LL^2(\T^d \times \R^d)$, there exists a stationary solution $Pf_0$ of~\eqref{B} such that we have
\begin{equation}
 \label{convergeto0-general'}
 \left\|f(t)-Pf_0 \right\|_{\LL^2} \to_{t \to +\infty} 0,
 \end{equation}
where $f(t)$ the unique solution to \eqref{B} with initial datum $f_0$. .
 \end{enumerate}
 \end{thm}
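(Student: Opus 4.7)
The plan is to establish the cycle $(ii.) \Rightarrow (i.) \Rightarrow (iii.) \Rightarrow (iv.) \Rightarrow (ii.)$, adapting the tools from the proof of Theorem \ref{thmconv-intro} and combining them with Proposition \ref{cardinal}.

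For $(ii.) \Rightarrow (i.)$, I follow the transport-equation analysis used in $(ii.) \Rightarrow (i.)$ of Theorem \ref{thmconv-intro}. Given $f \in C^0_t(\LL^2)$ solving \eqref{eq:UCPgene}, Lemma \ref{collannule} forces $f = \rho_i(t, x) \Mc(v)$ on each connected component $\omega_i$ of $\omega$, with $\rho_i = \rho_j$ whenever $\omega_i(x) \Rc_k^x \omega_j(x)$. The transport-equation argument of Theorem \ref{thmconv-intro} then yields $\rho_i(t,x) = \kappa_i e^{-V(x)}$ with $\kappa_i$ constant, i.e. $f = \kappa_i \, e^{-V}\Mc$ on $\omega_i$. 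Since $e^V f/\Mc$ is conserved along the hamiltonian flow (as $e^V/\Mc$ is), this value propagates across components linked by $\Rc_\phi$; combining with the $\Rc_k$ matching, $e^V f/\Mc$ takes a single constant $c_j$ on each equivalence class $U_j$. Under a.e.i.t. GCC, Lemma \ref{leminvarflow} gives both that $\bigcup_j U_j$ has full measure and that each $U_j$ is flow-invariant, so $f = \sum_j c_j \mathds{1}_{U_j} e^{-V}\Mc$ a.e.; a direct computation identifies $c_j \|\mathds{1}_{U_j} e^{-V}\Mc\|_{\LL^2} = \langle f, f_j\rangle_{\LL^2}$, matching Definition \ref{def:UCPgene}.

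For $(i.) \Rightarrow (iii.)$, I reproduce the contradiction-and-compactness scheme of $(i.) \Rightarrow (iii.)$ in Theorem \ref{thmconv-intro}, applied now to $g_0 := f_0 - Pf_0$. By orthogonality, $\int_{U_j} g_0 \, dv \, dx = 0$ for every $j$. After reducing to $f_0 \in \LL^2 \cap \LL^\infty$ by approximation, I extract a weak limit $h$ of the time shifts $h_n(t, \cdot) = g(t + t_n, \cdot)$. The dissipation identity and weak coercivity (Lemma \ref{collannule}) imply that $h$ satisfies both the transport equation and $C(h)=0$. The new ingredient is mass conservation on each $U_j$: the identity $\int_{U_j} T g \, dv \, dx = 0$ follows from $T(\mathds{1}_{U_j})=0$ in the distributional sense (by flow-invariance, Lemma \ref{leminvarflow}), while $\int_{U_j} C(g) \, dv \, dx = 0$ is proved by symmetrizing in $(v,v')$ and observing that $k(x,v,v')>0$ forces $(x,v)$ and $(x,v')$ to lie in the same equivalence class via $\Rc_k$, so collisions cannot leak mass across the $U_j$'s. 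Thus $\int_{U_j} h \, dv \, dx = 0$ in the limit and the generalized Unique Continuation Property gives $h=0$. The analysis of the defect measures $\nu_n := |h_n|^2$ is carried out exactly as in Theorem \ref{thmconv-intro}, via averaging lemmas and the same mass conservation on the $U_j$'s, ruling out loss of mass at infinity.

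The implication $(iii.) \Rightarrow (iv.)$ is trivial since Proposition \ref{cardinal} ensures that $Pf_0$ given by \eqref{def-equimulti} is a stationary solution. For $(iv.) \Rightarrow (ii.)$, I argue by contraposition. If $\Ac := \T^d \times \R^d \setminus \bigcup_{s \geq 0} \phi_{-s}(\omega)$ has positive Lebesgue measure, I pick $f_0 \in \LL^2 \cap \LL^\infty$ supported in $\Ac$ such that $f_0 \neq f_0 \circ \phi_{-\tau}$ in $\LL^2$ for some $\tau>0$; this is possible because the set of fixed points of $\phi_t$ has zero measure, so $\Ac$ admits a subset $B$ with $\phi_\tau(B) \neq B$ up to positive measure, and I take $f_0 = \mathds{1}_{\Ac \cap B} e^{-V}\Mc$. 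Since $\Ac \cap \omega = \emptyset$ and $\Ac$ is forward invariant (Lemma \ref{leminvarflow}), the collision term vanishes on the support of the solution and $f(t) = f_0 \circ \phi_{-t}$. The transport semigroup $U_t : g \mapsto g \circ \phi_{-t}$ is unitary on $\LL^2$ (the measure $e^V/\Mc\, dv \, dx = (2\pi)^{d/2} e^H \, dv \, dx$ is flow-invariant). By isometry, $\|f(s+\tau) - f(s)\|_{\LL^2} = \|U_\tau f_0 - f_0\|_{\LL^2}$ is independent of $s$, so strong convergence $f(t) \to Pf_0$ as $t \to \infty$ would force $\|U_\tau f_0 - f_0\|_{\LL^2} = 0$ for every $\tau>0$, contradicting our choice of $f_0$. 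The main obstacle I anticipate is in $(i.) \Rightarrow (iii.)$, namely establishing mass conservation on each individual $U_j$ for solutions of \eqref{B}: this requires careful use of the structural relations defining $\sim$ to show that neither transport nor collisions can leak mass across equivalence classes, and must then be deployed both for the weak limit $h$ and for the defect measure $\nu$ to close the compactness argument.
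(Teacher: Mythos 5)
Your proposal is correct and, for the core of the argument, coincides with the paper's: the decisive new ingredient relative to Theorem \ref{thmconv-intro} is the conservation of mass on each $U_j$, which the paper isolates as Lemma \ref{lem-proj} and proves by pairing the equation with $f_j$ and using $(v\cdot\na_x-\na_x V\cdot\na_v)f_j=0$ together with $C^*(f_j)=0$; your symmetrization argument showing that collisions cannot connect distinct equivalence classes is the same computation in slightly different clothing, and the compactness/defect-measure scheme is then deployed exactly as in the paper. Two remarks. First, in $(iv.)\Rightarrow(ii.)$ the paper takes $f_0=\Psi\,\mathds{1}_{\Ac}e^{-V}\Mc$ with $\Psi$ a Morse function to force non-stationarity and then shows $\|f(t)-Pf_0\|_{\LL^2}$ is constant in time; your variant --- choosing $B\subset\Ac$ around a non-fixed density point so that $\phi_\tau(B)\cap B=\emptyset$ and invoking the isometry of the transport group to contradict the Cauchy criterion --- is equally valid and has the advantage of never having to analyze the putative limit $Pf_0$. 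Second, a small ordering point: in your cycle $(ii.)\Rightarrow(i.)\Rightarrow(iii.)\Rightarrow(iv.)\Rightarrow(ii.)$, the step $(i.)\Rightarrow(iii.)$ invokes the a.e. flow-invariance of the $U_j$ from Lemma \ref{leminvarflow}, which is only available under a.e.i.t. GCC; you should first record that the generalized Unique Continuation Property forces $\Ac$ to have zero measure (otherwise $(t,x,v)\mapsto\mathds{1}_{\phi_t(\Ac)}(x,v)e^{-V}\Mc$ is a nonzero solution of \eqref{eq:UCPgene} orthogonal to every $f_j$), so that $(i.)$ yields $(ii.)$ immediately and the invariance of the $U_j$ can then be used in the compactness argument.
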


Note that Theorem~\ref{thmconv-intro} is a particular case of Theorem~\ref{thmconv-general}, when there is only one equivalence class for $\sim$ (or equivalently for $\Bumpeq$).

This section is organized as follows: in Paragraph~\ref{para1}, we prove Proposition \ref{cardinal}. Then, in Paragraph~\ref{para1}, we prove that (iv.) implies (ii.). Finally, in Paragraph~\ref{para3}, we show that (i.)--(ii.)--(iii.) are equivalent. Since (iii) implies (iv.) is straightforward, this will conclude the proof of Theorem~\ref{thmconv-general}.

\subsubsection{Proof of Proposition \ref{cardinal}}
\label{para1}

 We start by checking that for all $i \in J$, $f_j$ is a stationary solution of~\eqref{B}. From Lemma~\ref{leminvarflow}, we know that for any connected component $\Omega'$ of $\tilde\Omega$ and any $t\geq0$, $\phi_{-t} (\Omega') = \Omega'$ almost everywhere. Thus for all $t\geq0$, $\phi_{-t} (U_j) = U_j$ almost everywhere. The function $f_j$ hence cancels the kinetic transport part.
 
We now check that $f_j$ cancels the collision operator, i.e. $C(\mathds{1}_{U_i} e^{-V}\Mc)=0$. We use for this Lemma~\ref{collannule}.

Denote $\omega = \cup_{i \in I} \omega_i$ the partition of $\omega$ in connected components. Let $i \in I$. If $\omega_i \cap U_j =\emptyset$, then $f_j=0$ on $\omega_i$. If $\omega_i \cap U_j \neq \emptyset$, then there exists $\Omega' \in [\Omega_j]$ such that $\omega_i \cap \Omega' \neq \emptyset$.
Since $\Omega'$ is a connected component of $\bigcup_{s\in \R^+}\phi_{-s}(\omega)$, we have $\omega_i \subset \Omega'$ and thus $f_j=\frac{ e^{-V(x)}\Mc(v)}{\| \mathds{1}_{U_j} e^{-V} \Mc \|_{\LL^2}} = \rho_j(x) \Mc(v)$ on $\omega_i$, with $\rho_j(x)=\frac{ e^{-V(x)}}{\| \mathds{1}_{U_j} e^{-V} \Mc \|_{\LL^2}}$. 

Assume now that there exist $k,l \in I$ and $x\in \T^d$ such that $\omega_k(x) \Rc_k^x \, \omega_l(x)$. Denote by $\Omega' \in  [\Omega_j]$, the connected component of $\tilde\Omega$ such that $\omega_l \subset \Omega'$, and $\Omega''$ the connected component of $\tilde\Omega$ such that $\omega_k \subset \Omega''$. Note then that $\omega_k(x) \Rc_k^x \, \omega_l(x)$ implies $\Omega' \sim \Omega''$. By definition of $f_j$, this implies that $\rho_k(x)= \rho_l(x)$.
Therefore, by Lemma~\ref{collannule}, we infer that the function $f_j$ cancels the collision operator. We deduce that $f_j$ is a stationary solution of~\eqref{B}.   
 
 Furthermore, since the supports of the $(f_j)_{j \in J}$ are disjoint,  $(f_j)_{j \in J}$ is an orthonormal family of $\LL^2$.
 
Finally, let $\varphi$ be a stationary solution of \eqref{B}. Then $\varphi$ satisfies
 $$
 v\cdot \na_x \varphi - \na_x V \cdot \na_v \varphi = C(\varphi).
 $$
 Taking the $\LL^2$ scalar product with $\varphi$, we deduce that $D(\varphi)= \langle C(\varphi), \varphi \rangle_{\LL^2}=0$, so that by Lemma \ref{collannule}, ${C}(\varphi)=0$.
Then, with the same analysis as the proof of $(ii.) \implies (i.)$ in Theorem \ref{thmconv-intro}, we deduce that $\frac{e^{V}}{\Mc}\varphi$ is  constant on each $U_j$.
Using the fact that $\omega$ satisfies a.e.i.t. GCC, we deduce that we can write
$$
 \varphi= \sum_{j\in J} \lambda_j \mathds{1}_{U_j} e^{-V} \Mc(v), \quad \lambda_j \in \R,
 $$
 that is
 $$
 \varphi= \sum_{j\in J}  \langle \varphi, f_j \rangle_{\LL^2} \,  f_j,
 $$
and this concludes the proof.

\subsubsection{Necessity of the a.e.i.t. Geometric Control Condition} 
\label{para2}
We prove here that (iv.) implies (ii.).

We argue by contradiction. Assume that the a.e.i.t. Geometric Control Condition  does not hold. Then  $\Ac:= \T^d \times \R^d \setminus \bigcup_{s  \in \R^+}\phi_{-s}(\omega)$ has positive Lebesgue measure. 

We set 
$$
 f_0 (x,v) = \Psi(x) \mathds{1}_{\Ac} (x,v) \, e^{-V(x)} \Mc(v),
 $$
 with $\Psi$ to be determined later.
We define
\begin{equation}
\label{f0psi}
f(t,x,v) := f_0 \circ \phi_{-t}(x,v) = \Psi \circ \phi_{-t}(x,v)  \mathds{1}_{\phi_t(\Ac)}(x,v) \, e^{-V(x)} \Mc(v)
\end{equation}
which satisfies, by construction,
\begin{equation}
\label{f0psieq}
\pa_t f + v \cdot \na_x f - \na_x V \cdot \na_v f =0.
\end{equation}
Note that for all $t\geq0$, we have $\phi_{t}(\Ac) \cap \omega = \emptyset$ since  
$\Ac\cap \phi_{-t}(\omega) = \emptyset$, which yields $C(f(t)) = 0$ for all $t \geq 0$ and thus $f$ is also a solution of~\eqref{B}. 
We now fix $\Psi$ in order to ensure that $f(t)$ is not stationary.
\begin{itemize}
\item If $(v \cdot \na_x - \na_x V\cdot \na_v)(\mathds{1}_{\Ac}) \neq 0$, then we take $\Psi =1$.
\item If $(v \cdot \na_x  - \na_x V\cdot \na_v )(\mathds{1}_{\Ac})  = 0$, we take for $\Psi$ a Morse function on $\T^d$, so that, in particular, $\Psi$ is smooth and $\nabla \Psi(x) \neq 0$ for almost every $x \in \T^d$.
Note that with such a function $\Psi$, we have $f_0 \in \LL^2$. We compute
$$
\left[v \cdot \na_x \Psi - \na_x V \cdot \na_v \Psi\right] \mathds{1}_{\Ac} = (v \cdot \na_x \Psi(x) )\mathds{1}_{\Ac}(x,v).
$$
Therefore for almost all $(x,v) \in \Ac$, this is not null, which shows that $f(t)$ is not stationary.
\end{itemize}

Finally if there existed a stationary solution $Pf_0$ of~\eqref{B} such that
$$
\| f(t)- Pf_0 \|_{\LL^2} \to_{t \to + \infty} 0,
$$
then since for all $t\geq 0$, $f(t)$ is supported in $\Ac$, we also have $P f_0$ supported in $\Ac$. Thus $P f_0$ cancels the collision operator, i.e. $C(Pf_0)=0$. We deduce that $f(t)- Pf_0$ satisfies 
$$
\pa_t (f -Pf_0)+ v \cdot \na_x (f -Pf_0) - \na_x V \cdot \na_v (f -Pf_0) =0 .
$$
This yields for all $t\geq 0$,
$$
\| f(t) -Pf_0\|_{\LL^2} = \| f_0 -Pf_0\|_{\LL^2} . 
$$
Moreover,  the solution $f$ defined in~\eqref{f0psi} is a non-stationary solution of~\eqref{f0psieq} according to the definition of $\Psi$. In conclusion, we have $ f_0 \neq Pf_0$. This yields $\| f_0 -Pf_0\|_{\LL^2} >0$ so that we cannot have $\| f(t)- Pf_0 \|_{\LL^2} \to_{t \to + \infty} 0$. This concludes the proof.

%%%%%%%%%%%%%%%%%%%%%%%%%%
\subsubsection{End of the proof of Theorem~\ref{thmconv-general}}
\label{para3} 
  
 We have the following key lemma.
 
 \begin{lem} 
 \label{lem-proj}
 Let $f$ be a solution in $C^0_t(\LL^2)$ of \eqref{B}. Then for all $j \in J$, 
 $$
 \frac{d}{dt} \langle f, f_j\rangle_{\LL^2} =\frac{1}{\| \mathds{1}_{U_j}e^{-V} \Mc \|_{\LL^2}} \frac{d}{dt} \int_{U_j} f \, dv dx =0.
 $$
 \end{lem}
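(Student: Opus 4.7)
The plan has two steps. The equality between the two expressions whose time derivative is claimed to vanish is immediate from the definition of $f_j$ and the weight $e^V/\Mc$ in the $\LL^2$ inner product:
$$
\langle f, f_j\rangle_{\LL^2} = \int_{\T^d\times\R^d} f\cdot \frac{\mathds{1}_{U_j} e^{-V}\Mc}{\|\mathds{1}_{U_j} e^{-V}\Mc\|_{\LL^2}}\cdot \frac{e^V}{\Mc}\,dv\,dx = \frac{1}{\|\mathds{1}_{U_j} e^{-V}\Mc\|_{\LL^2}}\int_{U_j} f\,dv\,dx,
$$
so the two time derivatives coincide and it suffices to prove one of them vanishes.

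To show the derivative is zero, I would differentiate using~\eqref{B}, split the Boltzmann operator as $A=T+C$ with $Tf = v\cdot\nabla_x f - \nabla_x V\cdot\nabla_v f$, and obtain
$$
\frac{d}{dt}\langle f, f_j\rangle_{\LL^2} = -\langle Tf, f_j\rangle_{\LL^2} + \langle Cf, f_j\rangle_{\LL^2}.
$$
Both terms are then dispatched using the key fact, already established in Proposition~\ref{cardinal}, that $f_j$ is a stationary solution for which $Tf_j = 0$ and $Cf_j = 0$ \emph{separately}. For the transport term, since the vector field $(v,-\nabla_x V)$ is divergence free and $T(e^V/\Mc)=0$, the operator $T$ is formally skew-adjoint on $\LL^2$; a density/mollification argument then gives $\langle -Tf, f_j\rangle_{\LL^2} = \langle f, Tf_j\rangle_{\LL^2} = 0$, where the identity $Tf_j = 0$ is understood distributionally and comes from the a.e.\ invariance $\phi_{-t}(U_j)=U_j$ provided by Lemma~\ref{leminvarflow}. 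For the collision term, I would polarize the symmetrization carried out in the proof of Lemma~\ref{lemdissip} to obtain, for any $f,g\in\LL^2$,
$$
\langle Cf, g\rangle_{\LL^2} = -\frac14\int_{\T^d} e^V\!\iint \left(\frac{k(x,v',v)}{\Mc(v)}+\frac{k(x,v,v')}{\Mc(v')}\right)\Mc(v)\Mc(v')\left(\frac{f(v)}{\Mc(v)}-\frac{f(v')}{\Mc(v')}\right)\left(\frac{g(v)}{\Mc(v)}-\frac{g(v')}{\Mc(v')}\right)dv'\,dv\,dx,
$$
which is manifestly symmetric in $f$ and $g$. Thus $C$ is self-adjoint on $\LL^2$, so $\langle Cf, f_j\rangle_{\LL^2} = \langle f, Cf_j\rangle_{\LL^2} = 0$, and combining the two contributions yields the claim.

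The only delicate point is the skew-adjointness identity for $T$ against the non-smooth function $f_j$. A cleaner path that avoids any mollification is to work directly with $\int_{U_j} f\,dv\,dx$: integration by parts on the transport contribution gives $\int_{\T^d\times\R^d} (Tf)\,\mathds{1}_{U_j}\,dv\,dx = -\int_{\T^d\times\R^d} f\, T(\mathds{1}_{U_j})\,dv\,dx$, which vanishes because $\mathds{1}_{U_j}\circ \phi_t = \mathds{1}_{U_j}$ a.e.\ by Lemma~\ref{leminvarflow}, and the collision contribution rewrites as $\langle Cf, e^{-V}\Mc\,\mathds{1}_{U_j}\rangle_{\LL^2}$ which vanishes by the symmetric formula above together with the identity $C(e^{-V}\Mc\,\mathds{1}_{U_j}) = 0$ recorded in Proposition~\ref{cardinal}.
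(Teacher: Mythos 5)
Your overall strategy is the same as the paper's: differentiate, split the generator into transport and collision parts, and kill each term using the fact that $f_j$ is annihilated by $T$ and by the collision operator separately. The identity $\langle f, f_j\rangle_{\LL^2} = \frac{1}{\|\mathds{1}_{U_j}e^{-V}\Mc\|_{\LL^2}}\int_{U_j} f\,dv\,dx$ and the treatment of the transport term (skew-adjointness of $T$ plus the a.e.\ invariance $\phi_{-t}(U_j)=U_j$ from Lemma~\ref{leminvarflow}) are both correct and match the paper.

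The collision term, however, contains a genuine error: under {\bf A1}--{\bf A3} the operator $C$ is \emph{not} self-adjoint on $\LL^2$. Polarizing the quadratic form of Lemma~\ref{lemdissip} only yields the \emph{symmetric part} of the bilinear form, i.e.
$$
\tfrac12\bigl(\langle Cf,g\rangle_{\LL^2}+\langle Cg,f\rangle_{\LL^2}\bigr)
= -\tfrac14\int_{\T^d} e^V\!\iint \Bigl(\tfrac{k(x,v',v)}{\Mc(v)}+\tfrac{k(x,v,v')}{\Mc(v')}\Bigr)\Mc(v)\Mc(v')\Bigl(\tfrac{f(v)}{\Mc(v)}-\tfrac{f(v')}{\Mc(v')}\Bigr)\Bigl(\tfrac{g(v)}{\Mc(v)}-\tfrac{g(v')}{\Mc(v')}\Bigr)dv'\,dv\,dx,
$$
and the symmetry of the right-hand side does not imply $\langle Cf,g\rangle=\langle f,Cg\rangle$; a direct computation of the gain term shows $\langle C^+f,g\rangle_{\LL^2}=\langle f,C^+g\rangle_{\LL^2}$ forces $\tilde k(x,v',v)=\tilde k(x,v,v')$, i.e.\ the class {\bf E1} only. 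The paper avoids this by introducing the true adjoint $C^*$, the collision operator with the transposed kernel $\tilde k(x,v,v')\Mc(v)$, and then checks $C^*(f_j)=0$ via {\bf A2} and Lemma~\ref{collannule} (which applies to $C^*$ as well, since the set $\{k(x,v,v')+k(x,v',v)>0\}$ is unchanged under transposition). Your conclusion $\langle Cf,f_j\rangle_{\LL^2}=0$ is still reachable from your own ingredients — e.g.\ the polarized form $B(f,f_j)$ vanishes because $f_j/\Mc$ takes the same value at $(x,v)$ and $(x,v')$ on the support of the kernel (components related by $\Rc_k^x$ lie in the same equivalence class), and then $\langle Cf,f_j\rangle = 2B(f,f_j)-\langle f, Cf_j\rangle = 0$ — but as written the step "thus $C$ is self-adjoint" is false and must be replaced by an argument involving $C^*$ or the symmetrized form.
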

 
 \begin{proof}[Proof of Lemma~\ref{lem-proj}]  Let $f$ be a solution in $C^0_t(\LL^2)$ of \eqref{B}; denote by $f_0$ its initial datum. Let $j \in J$. We take the $\LL^2$ scalar product with $f_j$ in~\eqref{B} to obtain
\begin{align*}
\frac{d}{dt} \langle f, f_j\rangle_{\LL^2} &= - \langle (v\cdot \na_x -\na_x V\cdot \na_v) f, f_j\rangle_{\LL^2} 
+ \langle C(f), f_j\rangle_{\LL^2}  \\
 &= \langle  f, (v\cdot \na_x -\na_x V\cdot \na_v) f_j\rangle_{\LL^2} 
+ \langle f, C^*(f_j) \rangle_{\LL^2}
\end{align*}
where $C^*$ is the collision operator of collision kernel defined by
$$
C^*(g) = \int_{\R^d} \left[\tilde{k}(x,v,v') \Mc(v) g(v') - \tilde{k}(x,v',v) \Mc(v')  g(v)\right] \, dv' 
$$
with $\tilde{k}(x,v,v') = \frac{k(x,v,v')}{\Mc(v')}$. This follows from Property {\bf A2} satisfied by $k$.

We then use the following two facts.

 \noindent {\bf 1.} We have $(v\cdot \na_x -\na_x V\cdot \na_v) f_j =0$ (see the proof of Proposition~\ref{cardinal}).

\noindent {\bf 2.} We have $C^*(f_j)=0$. This follows from Property {\bf A2}, the fact that $C(f_j)= 0$ (see again the proof of Proposition~\ref{cardinal}) and Lemma~\ref{collannule}.

We conclude that $\frac{d}{dt} \langle f, f_j\rangle_{\LL^2}=0$. 

\end{proof}
 
  We therefore infer that if $f(t)$ satisfies~\eqref{B} with an initial datum $f_0$, then for all $t\geq 0$,
 $$
 \int_{U_j} f(t) \, dv \, dx =  \int_{U_j} f_0 \, dv \,  dx
 $$
 Equipped with this result, we prove the equivalence between (i.)--(ii.)--(iii.) exactly as for Theorem~\ref{thmconv-intro}, with only minor adaptations.

 \section{Application to particular classes of collision kernels}
 \label{secexamples}
 
 In this section, we introduce the following classes of collision kernels to illustrate the main results of the previous sections. We then draw consequences of the additional assumptions made in these examples. 

\bigskip
  \noindent {\bf E3.}  
   Let $k$ be a collision kernel verifying {\bf A1}--{\bf A3}. Let $\omega$ be the set where collisions are effective, defined in \eqref{omega}. We moreover require that for all $(x,v), (x,v') \in \omega$, there exist $N \in \N^*$ and a ``chain'' $v_1,\cdots, v_N \in \R^d$ such that the following hold.
   \begin{itemize}
   \item For all $i$, $1\leq i \leq N$, $(x,v_i) \in \omega$.
   \item The points $(x,v)$ and $(x,v_1)$ belong to the same connected component of $\omega$.
   \item The points $(x,v')$ and $(x,v_N)$ belong to the same connected component of $\omega$.
   \item For all $i$, $1\leq i \leq N-1$, we have
   $$
   k(x,v_i,v_{i+1}) > 0 \text{ or }    k(x,v_{i+1},v_i) > 0.
   $$
   \end{itemize}

 As a subclass of {\bf E3}, we have
 
   \noindent {\bf E3'.}  
   Let $k$ be a collision kernel verifying {\bf A1}--{\bf A3}. We require that for all $y \in p_x(\omega)$ (where $p_x(\omega)$ is the projection of $\omega$ on $\T^d$), the set $p_x^{-1}( \{y\})$ is included in one single connected component of $\omega$.
 
 A trivial subclass of {\bf E3'} is the case where $\omega$ is connected. Another subclass of {\bf E3'} is given in the following example.
 
    \noindent {\bf E3''.}  
   Let $k$ be a collision kernel verifying {\bf A1}--{\bf A3}.
 We require that 
 $$
 \omega = \omega_x \times \R^d,
 $$
 where $\omega_x$ is an open subset of $\T^d$.

 Remark that {\bf E2} is a subclass of {\bf E3''}. 
 
 \bigskip

In what follows, we explain the interest of these classes of collision kernels regarding the geometric definitions introduced before.

\subsection{The case of collision kernels in the class {\bf E3}}

The interest of {\bf E3} lies in the simple description of the kernel of the associated collision operator $C$.

Using  Lemma~\ref{collannule} and  the ``chain'' in the definition of a collision kernel in {\bf E3}, we deduce the following result.
\begin{lem}
\label{lemE3}
Let $k$ be a collision kernel in the class {\bf E3}. Let $T \in (0, +\infty]$ and assume that $f \in L^2(0,T; \LL^2)$ satisfies ${C}(f(t))=0$ for almost every $t \in [0,T]$. 
Then there is a function $\rho \in L^2(0,T ;L^2(\T^d))$ such that
$$
f=\rho (t,x)\Mc(v) \quad \text{on  }[0,T] \times \omega.
$$
Reciprocally, any function $f$ satisfying this property satisfies $C(f) =0$.
\end{lem}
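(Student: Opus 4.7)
The proof of Lemma~\ref{lemE3} is essentially a straightforward corollary of Lemma~\ref{collannule} combined with the ``chain'' structure built into the class \textbf{E3}. Here is the plan.

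First, since $C(f(t))=0$ for a.e.\ $t$, I would apply Lemma~\ref{collannule} (equivalence $(1)\Leftrightarrow(3)$) to obtain, on each connected component $\omega_i$ of the partition $\omega=\bigcup_{i\in I}\omega_i$, a function $\rho_i(t,x)$ such that $f(t,x,v)=\rho_i(t,x)\Mc(v)$ on $[0,T]\times\omega_i$, together with the compatibility condition: for all $i,j\in I$ and $x\in \T^d$, whenever $\omega_i(x)\,\Rc_k^x\,\omega_j(x)$, one has $\rho_i(t,x)=\rho_j(t,x)$ for a.e.\ $t$.

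The key step is to upgrade this ``local'' compatibility into the statement that $\rho_i(t,x)$ depends only on $(t,x)$ and not on the component index $i$, for every $x\in p_x(\omega)$. Fix such an $x$ and two indices $i,j\in I$ with $(x,v)\in\omega_i$ and $(x,v')\in\omega_j$. The defining property of class \textbf{E3} produces a chain $v_1,\dots,v_N\in\R^d$ with all $(x,v_\ell)\in\omega$, with $(x,v_1)$ in $\omega_i$, $(x,v_N)$ in $\omega_j$, and such that consecutive points satisfy $k(x,v_\ell,v_{\ell+1})>0$ or $k(x,v_{\ell+1},v_\ell)>0$. Denoting by $\omega_{i_\ell}$ the connected component containing $(x,v_\ell)$, this exactly says $\omega_{i_\ell}(x)\,\Rc_k^x\,\omega_{i_{\ell+1}}(x)$ for every $\ell$. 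The compatibility condition from Lemma~\ref{collannule} then yields $\rho_i(t,x)=\rho_{i_1}(t,x)=\cdots=\rho_{i_N}(t,x)=\rho_j(t,x)$. I can therefore define unambiguously $\rho(t,x):=\rho_i(t,x)$ for any $i$ such that $(x,\cdot)$ meets $\omega_i$, extend $\rho$ by zero outside $p_x(\omega)$, and get $f=\rho(t,x)\Mc(v)$ on $[0,T]\times\omega$. The required integrability $\rho\in L^2(0,T;L^2(\T^d))$ follows from a measurable selection $x\mapsto v(x)$ with $v(x)\in\omega(x)$ and $\Mc(v(x))$ locally bounded below, combined with the bound $\|f\|_{L^2(0,T;\LL^2)}<\infty$.

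For the converse, assume $f=\rho(t,x)\Mc(v)$ on $[0,T]\times\omega$ and let us check $C(f)(t,x,v)=0$ pointwise. The crucial observation is the reformulation~\eqref{omegabis} of $\omega$: if $(x,v')\notin\omega$, then $k(x,v',v)=0$ for every $v$, so the gain integral $\int_{\R^d}k(x,v',v)f(v')\,dv'$ reduces to an integral over $\omega(x)$ where $f(v')=\rho(t,x)\Mc(v')$ is explicit. A direct computation using Assumption~\textbf{A2}, namely $\int_{\R^d}k(x,v',v)\Mc(v')\,dv'=\Mc(v)\int_{\R^d}k(x,v,v')\,dv'$, then shows that both terms of $C(f)(t,x,v)$ coincide when $(x,v)\in\omega$, and both vanish when $(x,v)\notin\omega$ (again by~\eqref{omegabis} applied to the loss term).

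The only real subtlety is the measurable-selection step needed to justify that $\rho$ is a bona fide element of $L^2(0,T;L^2(\T^d))$; everything else is a direct application of Lemma~\ref{collannule} and the definition of class \textbf{E3}.
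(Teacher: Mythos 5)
Your argument is correct and is exactly the route the paper intends: it invokes Lemma~\ref{collannule} to get component-wise densities $\rho_i$ with the $\Rc_k^x$-compatibility, and then uses the chain in the definition of \textbf{E3} to identify all the $\rho_i(t,x)$ above a given $x$, the converse being the direct computation already carried out in the proof of $(3)\Rightarrow(1)$ of Lemma~\ref{collannule}. The only caveat is the final integrability step: rather than a pointwise selection $v(x)$ (pointwise values of an $L^2$ function are not defined), one should integrate $|f|^2 e^V/\Mc$ over $\omega(x)$ in $v$ to bound $\rho(t,x)^2\int_{\omega(x)}\Mc\,dv$ by the $\LL^2$ norm of $f$ — a detail the paper itself leaves implicit.
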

In other words, the kernel of the associated collision operator is equal to the set of functions which are Maxwellians on $\omega$:
\begin{equation}
 \label{eqkerC}
 \ker(C) = \{ f \in \LL^2 , f_{|\omega} = \rho(x)\Mc(v) \} .
 \end{equation}
We recall that this property, which is usual in the non degenerate case $\omega= \T^d \times \R^d$, is not true in general for collision kernels satisfying merely {\bf A1}, {\bf A2} and {\bf A3}.

This allows us to reformulate in a very simple way the Unique Continuation Property for collision kernels in {\bf E3}.
 \begin{lem}
 \label{def:UCP-E3}
Let $k$ be a collision kernel in the class {\bf E3}. Then the set $\omega$ satisfies the Unique Continuation Property if and only if the only solution $f \in C^0_t(\LL^2)$ to
 \begin{equation}
 \label{eq:UCP-E3}
 \left\{
 \begin{array}{l}
\partial_t f + v \cdot \nabla_x f - \nabla_x V \cdot \nabla_v f= 0, \\
f = \rho(t,x) \Mc(v) \text{  on  } \R^+ \times \omega ,
\end{array}
\right.
\end{equation}
is $f= \left(\int_{\T^d \times \R^d} f \, dv \,dx\right) \frac{e^{-V}}{\int_{\T^d} e^{-V} \, dx} \Mc(v)$.
 \end{lem}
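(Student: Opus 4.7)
The plan is to show that Lemma \ref{def:UCP-E3} is essentially a direct corollary of Lemma \ref{lemE3}, which characterizes the kernel of the collision operator for kernels in the class \textbf{E3}. Since the Unique Continuation Property of Definition~\ref{def:UCP} concerns solutions $f \in C^0_t(\LL^2)$ of the transport equation subject to the additional constraint $C(f) = 0$, the core task reduces to replacing the condition $C(f)=0$ by the structurally more explicit condition $f = \rho(t,x)\Mc(v)$ on $\R^+ \times \omega$, which is exactly the content of~\eqref{eqkerC}.

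More precisely, I would argue as follows. For the forward implication, assume $\omega$ satisfies the UCP and let $f \in C^0_t(\LL^2)$ solve the transport equation with $f = \rho(t,x)\Mc(v)$ on $\R^+\times \omega$. Apply the reciprocal part of Lemma~\ref{lemE3}: the assumed form of $f$ on $\omega$ gives $C(f(t)) = 0$ for all $t\geq 0$. Hence $f$ solves system \eqref{eq:UCP}, and the UCP yields that $f$ is the Maxwellian equilibrium. For the reverse implication, assume the stated property of \eqref{eq:UCP-E3} holds, and let $f$ solve \eqref{eq:UCP}. Since $C(f(t)) = 0$ for every $t$, Lemma~\ref{lemE3} applied pointwise in $t$ (viewed through the chain structure of \textbf{E3}) produces a function $\rho(t,x)$ such that $f = \rho(t,x)\Mc(v)$ on $\R^+ \times \omega$. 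Thus $f$ satisfies the hypotheses of \eqref{eq:UCP-E3}, and the assumed property gives the desired conclusion.

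The key technical point — and the only place where the \textbf{E3} hypothesis is used in an essential way — is the application of Lemma~\ref{lemE3}, whose proof relies on Lemma~\ref{collannule} together with the existence of the ``chain'' in the definition of \textbf{E3} to merge the distinct functions $\rho_i$ (initially one per connected component of $\omega$) into a single global function $\rho(t,x)$. For a general kernel satisfying only \textbf{A1}--\textbf{A3}, the density could a priori depend on the connected component, so the reformulation in Lemma~\ref{def:UCP-E3} would fail; it is precisely the class \textbf{E3} that allows this simplification. Since Lemma~\ref{lemE3} is already available, I do not expect any genuine obstacle here: the proof is essentially a one-line translation, and the only care required is to check measurability/regularity of $\rho$ in $t$ so that the resulting condition is consistent with $f \in C^0_t(\LL^2)$, which follows from the explicit formula $\rho(t,x) = f(t,x,v)/\Mc(v)$ on $\omega$.
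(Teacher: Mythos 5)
Your proposal is correct and matches the paper's (implicit) argument: the lemma is stated as an immediate consequence of Lemma~\ref{lemE3}, which gives exactly the two-way translation between $C(f)=0$ and $f=\rho(t,x)\Mc(v)$ on $\omega$ that you use. Both directions of Lemma~\ref{lemE3} are invoked exactly as you describe, so there is nothing to add.
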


Conversely, using again Lemma~\ref{collannule}, we have the following result.
\begin{lem}
Let $k$ be a collision kernel satisfying {\bf A1}--{\bf A3}. If any function $f \in \LL^2$ canceling the collision operator has its restriction to $\omega$ satisfying
$$
f_{\vert \omega} = \mathds{1}_\omega \rho(x) \Mc(v)
$$
for some $\rho \in L^2(\T^d)$, then $k$ necessarily belongs to the class {\bf E3}.
\end{lem}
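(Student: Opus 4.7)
My plan is to prove the contrapositive: assuming $k \notin \textbf{E3}$, I will exhibit a function $f \in \LL^2 \cap \ker C$ whose restriction to $\omega$ is not of the form $\mathds{1}_\omega \rho(x)\Mc(v)$. The starting point is the time-independent case of Lemma~\ref{collannule}: any such $f$ is characterized by profiles $\rho_i$ on each connected component $\omega_i$ of $\omega$, namely $f|_{\omega_i} = \rho_i(x) \Mc(v)$, subject to the compatibilities $\omega_i(x) \Rc_k^x \omega_j(x) \Longrightarrow \rho_i(x) = \rho_j(x)$, to be read in the a.e.\ sense. The hypothesis of the lemma amounts to these compatibilities forcing all $\rho_i(x)$ with $\omega_i(x) \neq \emptyset$ to agree almost everywhere in $x$, and the goal is to violate this.

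Failure of \textbf{E3} provides $x_0 \in \T^d$ together with two components $\omega_a, \omega_b \in \CC(\omega)$ such that $\omega_a(x_0), \omega_b(x_0) \neq \emptyset$ and no admissible chain at $x_0$ links them. Let $\equiv_{x_0}$ be the equivalence relation on $\{\omega_i \in \CC(\omega) : \omega_i(x_0)\neq\emptyset\}$ generated by the direct $\Rc_k^{x_0}$-links, and let $S$ be the $\equiv_{x_0}$-class of $\omega_a$, so $\omega_b \notin S$. Introduce the obstruction set
$$
X := \bigl\{x \in \T^d : \exists\, i \in S,\, j \notin S \text{ with } \omega_i(x) \Rc_k^x \omega_j(x)\bigr\};
$$
continuity of $k$ combined with openness of each $\omega_i$ makes $X$ open, while $x_0 \notin X$ by the very definition of $S$. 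Next, pick a continuous cutoff $h \in C^0_c(\T^d \setminus \overline X)$ which is nonzero in a neighborhood of $x_0$, and set
$$
f(x,v) := h(x)\, \mathds{1}_{\bigcup_{i \in S} \omega_i}(x,v)\, \Mc(v).
$$
Then $f \in \LL^2$; its profile on $\omega_i$ is $\rho_i(x) = h(x)\mathds{1}_{i\in S}$; and the compatibility condition is satisfied, since the only potentially problematic case $i\in S$, $j\notin S$ with $\omega_i(x) \Rc_k^x \omega_j(x)$ forces $x\in X$, where $h$ vanishes. Hence $f\in\ker C$ by Lemma~\ref{collannule}. However, for $x$ close to $x_0$ with $h(x)\neq 0$ and both $\omega_a(x), \omega_b(x)$ non-empty (by openness of $\omega_a,\omega_b$), the function $f/\Mc$ takes the value $h(x)\neq 0$ on $\omega_a(x)$ and $0$ on $\omega_b(x)$, so $f|_\omega$ cannot be written as $\mathds{1}_\omega \rho(x)\Mc(v)$.

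The main obstacle I foresee is ensuring that $x_0$ belongs to the \emph{interior} of $\T^d \setminus X$, so that $h$ can be chosen genuinely non-trivial near $x_0$. Indeed, continuity of $k$ prevents the $\Rc_k^x$-edges present at $x_0$ from disappearing nearby, but does not prevent new cross-edges between $S$ and $S^c$ from appearing at points arbitrarily close to $x_0$; in principle $X$ could accumulate at $x_0$. Since the hypothesis on $f|_\omega$ is only an $\LL^2$ (a.e.) identity, it is enough to produce a violation on a set of positive Lebesgue measure, which I intend to achieve by replacing $x_0$, if needed, by a Lebesgue density point of $\T^d \setminus X$ near $x_0$ at which the relevant components still meet the fiber. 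This measure-theoretic adjustment, rather than the algebraic content of Lemma~\ref{collannule}, is the delicate point of the argument.
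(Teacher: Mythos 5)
Your skeleton is sound and matches the only route available: the contrapositive via Lemma~\ref{collannule}, the obstruction set $X$, its openness (which you justify correctly from the continuity of $k$ and the openness of the connected components), the fact that $x_0\notin X$, and the verification that the compatibility conditions of Lemma~\ref{collannule}~(3) hold for your $f$ because the only dangerous cross-links occur on $X$, where $h$ vanishes. The gap is exactly where you locate it, but it is worse than you suggest, and your proposed repair does not close it. Choosing $h\in C^0_c(\T^d\setminus\overline X)$ nonzero near $x_0$ requires $x_0\notin\overline X$, which you never establish. Passing to a Lebesgue density point of $\T^d\setminus X$ does not help: a density point gives no open set on which to support a continuous cutoff, and if you instead drop continuity and take $h=\mathds{1}_G$ with $G=(\T^d\setminus X)\cap V$ (which is the right move — Lemma~\ref{collannule}~(3) only requires the pointwise compatibilities, not continuity of the $\rho_i$), you still need $G$ to have \emph{positive measure}, and this does not follow from $x_0\in\T^d\setminus X$: the single point $x_0$ is Lebesgue-null, so it cannot by itself contradict the a.e.\ identity $f_{|\omega}=\rho(x)\Mc(v)$ in $\LL^2$.

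The obstruction is not merely technical. Take $d=1$, $\psi_\pm\in C^0_c(\R)$ nonnegative with $\psi_+>0$ exactly on $(1,2)$ and $\psi_->0$ exactly on $(-2,-1)$, $g\in C^0(\T)$ nonnegative vanishing exactly at one point $x_0$, and
$$
k(x,v,v')=\Big[\psi_+(v)\psi_+(v')+\psi_-(v)\psi_-(v')+g(x)\big(\psi_+(v)\psi_-(v')+\psi_-(v)\psi_+(v')\big)\Big]\Mc(v').
$$
Then $\tilde k$ is symmetric, so {\bf A1}--{\bf A3} hold; $\omega=\T\times\big((1,2)\cup(-2,-1)\big)$ has two components $\omega_a,\omega_b$, and $\omega_a(x)\Rc_k^x\,\omega_b(x)$ holds precisely when $g(x)>0$. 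Hence {\bf E3} fails (at $x_0$ no chain joins the two components), and in your notation $X=\{g>0\}=\T\setminus\{x_0\}$, so $\T\setminus X$ is a null set. Yet by Lemma~\ref{collannule} every $f\in\ker C$ satisfies $\rho_a(x)=\rho_b(x)$ for all $x\neq x_0$, hence almost everywhere, so $f_{|\omega}=\rho(x)\Mc(v)$ as an element of $\LL^2$: the hypothesis of the lemma holds while its conclusion fails. So the contrapositive you are trying to prove is false under the a.e.\ reading of the hypothesis, and no adjustment of the construction can save it. What your argument actually proves (with $h=\mathds{1}_{(\T^d\setminus X)\cap V}$) is the correct statement under the strengthened assumption that the chain condition fails for a set of base points $x$ of positive Lebesgue measure — equivalently $\big|(\T^d\setminus X)\cap V\big|>0$ for your fixed class $S$ — and you should either add that hypothesis or flag that the lemma requires a pointwise (rather than $\LL^2$) reading of its conclusion.
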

Therefore, {\bf E3} is the largest class of collision kernels such that the kernel of the associated collision operator is equal to the set of functions which are Maxwellians on $\omega$, i.e. for which~\eqref{eqkerC} holds.

 \subsection{The case of collision kernels in the class {\bf E3'}}
 \label{secE3'}

To explain the interest of {\bf E3'}, let us introduce now another geometric condition:

\begin{enumerate}
\item[(iv.)] The set $\omega$ satisfies the a.e.i.t. GCC and $\bigcup_{s\in \R^+}\phi_{-s}(\omega)$ is connected.
\end{enumerate}

This condition is compared to other geometric conditions in Appendix~\ref{appaeitgccconnected}.
It has to be confronted to the geometric condition of Theorem~\ref{thmconv-intro} (rephrased using Lemma~\ref{equiv-equiv}):
\begin{enumerate}
\item[(ii.)] The set $\omega$ satisfies the a.e.i.t. GCC and there is only one equivalence class for $\sim$.
\end{enumerate}

In what follows, we shall adopt the notations of Theorem~\ref{thmconv-intro}.
It is clear that $(iv.)$ implies $(ii.)$, as $(iv.)$ means that there is a single equivalence class for an equivalence relation defined as in Definition~\ref{def-sim-o} with $\Rc_\phi$ only. However, it is false in general that $(i.)$--$(iii.)$ and $(iv.)$ are equivalent; see Proposition~\ref{prop-cex-connected} below for an example of collision kernel such that $(i.)$ is satisfied, but not $(iv.)$.

\begin{prop}
\label{prop-cex-connected}
For $V=0$, there exists a collision kernel $k$ in the class {\bf E1} and {\bf E3}, for which $(ii)$ holds, but not $(iv)$.
\end{prop}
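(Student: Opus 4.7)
The plan is to exploit the following tension: when $V = 0$, the hamiltonian flow $\phi_t(x,v) = (x+tv, v)$ preserves the velocity coordinate, so flow-based closures cannot merge two open subsets of $\T \times \R$ whose velocity coordinates lie in different connected components of $\R$; on the other hand, the collision relation $\Rc_k$ is not constrained in this way and can connect them. Taking $\omega$ with two velocity-disjoint components will therefore force $\bigcup_{s\in\R^+}\phi_{-s}(\omega) = \omega$ to remain disconnected, contradicting $(iv)$, while choosing $k$ positive between these components will keep $\Bumpeq$ reduced to a single equivalence class, establishing $(ii)$.

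Concretely, I set $d = 1$, $V = 0$, and pick a bounded continuous $a : \R \to \R^+$ with $a(0)=0$ and $a(v) > 0$ for $v \neq 0$, and define
$$
\tilde{k}(x, v, v') := a(v)\, a(v'), \qquad k(x,v,v') := \tilde{k}(x,v,v')\, \Mc(v').
$$
The symmetry $\tilde{k}(x,v,v') = \tilde{k}(x,v',v)$ places $k$ in the class \textbf{E1}, so \textbf{A2} is automatic; \textbf{A1} is clear, and \textbf{A3} reduces to the finiteness of $\int_\R a(v)^2 \Mc(v)\, dv$, which follows from the boundedness of $a$ and the Gaussian decay of $\Mc$. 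A direct computation gives $\int_\R k(x,v,v')\, dv' = a(v)\, C$ with $C := \int a\, \Mc > 0$, so $\omega = \T \times \R^*$, whose connected components are exactly $\omega^+ := \T \times (0,+\infty)$ and $\omega^- := \T \times (-\infty, 0)$. The kernel lies in \textbf{E3}: for any $(x, v_+) \in \omega^+$ and $(x, v_-) \in \omega^-$, the length-two chain $v_1 = v_+$, $v_2 = v_-$ works, since $k(x, v_+, v_-) = a(v_+)\, a(v_-)\, \Mc(v_-) > 0$.

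It remains to check $(ii)$ and the failure of $(iv)$. Almost every $(x,v) \in \T \times \R$ has $v \neq 0$, hence $(x,v) \in \omega$, so a.e.i.t. GCC holds trivially with $s = 0$. The same positivity as above yields $\omega^+ \Rc_k \omega^-$, so the two components of $\omega$ lie in a single equivalence class for $\Bumpeq$; combined with a.e.i.t. GCC this gives $(ii)$. Conversely, since $\phi_{-s}$ preserves the velocity coordinate we have $\phi_{-s}(\omega) = \omega$ for every $s \geq 0$, so $\bigcup_{s \in \R^+}\phi_{-s}(\omega) = \omega = \omega^+ \sqcup \omega^-$ is disconnected, contradicting $(iv)$. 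The only step requiring genuine care is the verification of \textbf{A3}, but it is immediate from the boundedness of $a$; the conceptual point is simply that collisions and flow generate genuinely different equivalence relations, and only the free-transport case makes the flow-relation blind to velocity partitions.
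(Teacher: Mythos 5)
Your construction is essentially the paper's: the same kernel $k(x,v,v')=a(v)a(v')\Mc(v')$ with $a$ vanishing only at $v=0$, the same identification $\omega=\T\times\R^*$, and the same observation that the free flow preserves $v$ so that $\bigcup_{s\geq 0}\phi_{-s}(\omega)=\omega$ stays disconnected, and all the verifications (\textbf{A1}--\textbf{A3}, \textbf{E1}, \textbf{E3}) are correct. The only (harmless) divergence is that you establish $(ii)$ directly by exhibiting a.e.i.t.\ GCC and a single $\Bumpeq$-class via $\omega^+\,\Rc_k\,\omega^-$, whereas the paper verifies the Unique Continuation Property and invokes Theorem~\ref{thmconv-intro}; your route is if anything more direct.
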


\begin{proof}[Proof of Proposition~\ref{prop-cex-connected}]
Consider $(x,v) \in \T \times \R$ (a similar construction in higher dimension could also be performed). We take a function $\varphi \in C^0(\R)$, such that $\varphi(0)=0$ and $\varphi(v) >0$ for all $v \in \R \setminus \{0\}$.
Define
$$
k(x,v',v)= \Mc(v) \varphi(v) \varphi(v').
$$
By construction, {\bf A1} and {\bf A3} are satisfied and we notice that $\tilde{k}$ is  symmetric (so that ${k}$ is in {\bf E1}).
We can also readily check that $k$ is in {\bf E3} (with $N=2$).

We have
$$
\omega := \{\T \times \R^-_*\}  \cup  \{\T \times \R^+_*\}
$$
and so 
$$
\bigcup_{t\geq 0} \phi_{-t}(\omega) =\{ \T \times \R^-_*\} \cup  \{\T \times \R^+_*\},
$$
from which we deduce that a.e.i.t. GCC is satisfied, but $\cup_{t\geq 0} \phi_{-t}(\omega)$ is not connected.

On the other hand, the set $\omega$ satisfies the unique continuation property. Indeed, let $f$ satisfying
\begin{equation*}
\label{f-trans-eq}
\pa_t f + v \cdot \na_x f =0, \quad \forall (x,v) \in \T \times \R
\end{equation*}
and $C(f) =0$. Using Lemma~\ref{lemE3} and the definition of $k$, we deduce that $f= \rho(t,x) \Mc(v)$ on $\R^+ \times \T \times \R\setminus \{0\}$, and thus almost everywhere in $\R^+ \times \T \times \R$. 

As $f$ satisfies the transport equation~\eqref{f-trans-eq}, this implies that $f = C \Mc(v)$ for some $C>0$ and we conclude that the unique continuation property holds.

Therefore, by Theorem~\ref{thmconv-intro}, we deduce that $(ii.)$ holds.

\end{proof}

However, when restricting to collision kernels in the class  {\bf E3'}, (ii.) and (iv.) become equivalent.
\begin{prop}
\label{prop-E3'}
Let $k$ be a collision kernel in the class {\bf E3'}.
Then $(i.)$--$(iv.)$ are equivalent.

\end{prop}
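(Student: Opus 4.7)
The plan is to prove $(ii.) \Leftrightarrow (iv.)$ under E3', since Theorem~\ref{thmconv-intro} already gives $(i.)\Leftrightarrow(ii.)\Leftrightarrow(iii.)$, and the implication $(iv.)\Rightarrow(ii.)$ is explicitly noted in the discussion preceding the statement. The key idea is that under assumption E3', both binary relations $\Rc_\phi$ and $\Rc_k$ on $\CC(\tilde\Omega)$ (where $\tilde\Omega := \bigcup_{s\geq 0}\phi_{-s}(\omega)$) collapse to the diagonal, hence $\sim$ becomes the identity relation on $\CC(\tilde\Omega)$. Consequently the number of $\sim$-equivalence classes equals the number of connected components of $\tilde\Omega$, and "a single equivalence class for $\sim$" is the same as "$\tilde\Omega$ is connected".

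The first step is to show triviality of $\Rc_k$ on $\CC(\tilde\Omega)$. Suppose $\Omega_1 \Rc_k \Omega_2$ with witnesses $(x,v_1)\in \Omega_1$ and $(x,v_2)\in\Omega_2$ for which $k(x,v_1,v_2)>0$ or $k(x,v_2,v_1)>0$. By the characterisation~\eqref{omegabis}, both points lie in $\omega$. The defining property of E3' states that $p_x^{-1}(\{x\})\cap\omega$ is contained in a single connected component $\omega_0$ of $\omega$, so $(x,v_1)$ and $(x,v_2)$ belong to the same $\omega_0$. Since $\omega_0$ is connected and $\omega_0\subset \omega\subset\tilde\Omega$, it lies in a unique connected component of $\tilde\Omega$, which forces $\Omega_1=\Omega_2$.

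The second step is to show triviality of $\Rc_\phi$ on $\CC(\tilde\Omega)$, using Lemma~\ref{leminvarflow}. Assume $\Omega_1 \Rc_\phi \Omega_2$, that is, there exist $s\in\R$ and $(x,v)\in\Omega_1$ with $\phi_s(x,v)\in\Omega_2$. If $s\geq 0$, then $\phi_{-s}(\Omega_2)\subset\Omega_2$ by Lemma~\ref{leminvarflow}, hence $(x,v)=\phi_{-s}(\phi_s(x,v))\in\Omega_2$, forcing $\Omega_1=\Omega_2$. If $s<0$, apply the same lemma with parameter $-s>0$ to $\Omega_1$: $\phi_s(\Omega_1)\subset\Omega_1$, so $\phi_s(x,v)\in\Omega_1\cap\Omega_2$ and again $\Omega_1=\Omega_2$.

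Once both relations on $\CC(\tilde\Omega)$ are trivial, the chain characterisation of $\sim$ in Definition~\ref{def-sim} immediately yields $\sim = \text{id}_{\CC(\tilde\Omega)}$, and the conclusion $(ii.)\Leftrightarrow(iv.)$ follows. I expect no real obstacle beyond carefully handling both signs of $s$ in the invariance argument for $\Rc_\phi$; everything else is a direct application of the structural assumption E3' and of~\eqref{omegabis}. Note that the argument does not even use the a.e.i.t. GCC hypothesis, so the conclusion is in fact slightly stronger: under E3', a single $\sim$-class is equivalent to connectedness of $\tilde\Omega$ unconditionally.
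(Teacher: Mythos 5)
Your proof is correct and takes essentially the same route as the paper: the paper's argument for $(ii.)\Rightarrow(iv.)$ is precisely your observation that an $\Rc_k$ link between two distinct connected components of $\bigcup_{s\geq 0}\phi_{-s}(\omega)$ is impossible under {\bf E3'}, with the triviality of $\Rc_\phi$ on these components (your second step, via Lemma~\ref{leminvarflow}) left implicit in the paper through the remark following Definition~\ref{def-sim}. Your version merely makes that step explicit, which is a harmless (and welcome) completion of the same argument.
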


\begin{proof}[Proof of Proposition~\ref{prop-E3'}]
We consider $k$ a collision kernel in the class {\bf E3'}.
Assume that $(ii.)$ holds. The aim is to prove that $(iv.)$ holds. By contradiction, assume that there are at least two connected components $\Omega_1$, $\Omega_2$ of $\bigcup_{t\geq 0} \phi_{-t}(\omega)$.

By $(ii.)$, $\Omega_1$ and $\Omega_2$ belong to the same equivalence class for $\sim$. Thus, there exist $x,v_1,v_2$ with $(x,v_1) \in \Omega_1$, $(x,v_2) \in \Omega_2$ and
$$
k(x,v_1,v_2)>0 \text{ or } k(x,v_2,v_1)>0.
$$
But since $k$ is in the class {\bf E3'}, the set $p_x^{-1}( \{x\})$ is included in one connected component of $\omega$. Thus we can not have $(x,v_1) \in \Omega_1$ and $(x,v_2) \in \Omega_2$. This is a contradiction and this concludes the proof.

\end{proof}
More generally, we observe that for collision kernels in {\bf E3'}, the equivalence classes for $\sim$ are exactly the connected components of $\bigcup_{t\geq 0} \phi_{-t}(\omega)$. Thus Theorem~\ref{thmconv-general} can be reformulated as follows.

 \begin{coro}
\label{thmconv-general-E3'}
 Let $k$ be a collision kernel in the class {\bf E3'}. The following statements are equivalent.
\begin{enumerate}

\item The set $\omega$ satisfies the {generalized} Unique Continuation Property.

\item The set $\omega$ satisfies the a.e.i.t. GCC.

\item Let $(\Omega_i)_{i \in I}$ be the connected components of $\cup_{t\geq 0} \phi_{-t}(\omega)$. For all $f_0 \in \LL^2(\T^d \times \R^d)$, denote by $f(t)$ the unique solution to \eqref{B} with initial datum $f_0$. We have
\begin{equation}
 \label{convergeto0-general-E3'}
 \left\|f(t)-P f_0\right\|_{\LL^2} \to_{t \to +\infty} 0,
 \end{equation}
where
$$
P f_0 = \sum_{i\in I}  \frac{1}{\| \mathds{1}_{\Omega_i} e^{-V} \Mc \|_{\LL^2}} \left( \int_{\Omega_i} f_0 \, dv dx\right)g_j,
$$
with for all $i \in I$,
$$
g_i = \frac{\mathds{1}_{\Omega_i} e^{-V}\Mc}{\| \mathds{1}_{\Omega_j} e^{-V} \Mc \|_{\LL^2}}.
$$

\end{enumerate}

\end{coro}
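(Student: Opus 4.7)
The plan is to derive Corollary~\ref{thmconv-general-E3'} as a direct specialization of Theorem~\ref{thmconv-general}, by identifying the equivalence classes for the relation $\sim$ explicitly in the case of a collision kernel in the class {\bf E3'}. More precisely, I will show that in this class each equivalence class for $\sim$ consists of exactly one connected component of $\tilde\Omega := \bigcup_{s \in \R^+}\phi_{-s}(\omega)$, so that the sets $U_j$ appearing in~\eqref{def-Uj} are exactly the connected components $(\Omega_i)_{i\in I}$ of $\tilde\Omega$, and the formula~\eqref{def-equimulti} reduces to the one stated in the corollary.

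First I would set up the identification. Let $\Omega_1$, $\Omega_2$ be two connected components of $\tilde\Omega$ and suppose $\Omega_1 \sim \Omega_2$. By definition of $\sim$ it suffices to treat a single elementary step: either $\Omega_1 \Rc_\phi \Omega_2$ or $\Omega_1 \Rc_k \Omega_2$. In the first case there is $s \in \R$ with $\phi_s(\Omega_1) \cap \Omega_2 \neq \emptyset$; swapping the roles of $\Omega_1$ and $\Omega_2$ if needed, I may assume $s \geq 0$. Then $\Omega_1 \cap \phi_{-s}(\Omega_2) \neq \emptyset$, and Lemma~\ref{leminvarflow} gives $\phi_{-s}(\Omega_2) \subset \Omega_2$, hence $\Omega_1 \cap \Omega_2 \neq \emptyset$ and therefore $\Omega_1 = \Omega_2$. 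In the second case, there exists $x \in \T^d$ and $v_1,v_2 \in \R^d$ such that $(x,v_1) \in \Omega_1 \cap \omega$, $(x,v_2) \in \Omega_2 \cap \omega$, and $k(x,v_1,v_2)>0$ or $k(x,v_2,v_1)>0$. The defining property of {\bf E3'} forces $p_x^{-1}(\{x\}) \cap \omega$ to lie in a single connected component $\omega_0$ of $\omega$, hence $(x,v_1)$ and $(x,v_2)$ both belong to $\omega_0$, and therefore to the same connected component $\Omega_0$ of $\tilde\Omega$. Since $\Omega_0 \cap \Omega_1 \neq \emptyset$ and $\Omega_0 \cap \Omega_2 \neq \emptyset$, I conclude $\Omega_1 = \Omega_0 = \Omega_2$. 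Iterating through the chain in the definition of $\sim$, every equivalence class is thus a singleton.

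With this identification in hand, I would apply Theorem~\ref{thmconv-general}. Indexing the equivalence classes for $\sim$ by $I$, one has $U_i = \Omega_i$ for each $i$, and the family $(f_j)_{j\in J}$ of Proposition~\ref{cardinal} becomes exactly the family $(g_i)_{i\in I}$ appearing in the statement of the corollary. The equivalences $(1) \Leftrightarrow (2) \Leftrightarrow (3)$ of the corollary then follow from the corresponding equivalences $(i.) \Leftrightarrow (ii.) \Leftrightarrow (iii.)$ in Theorem~\ref{thmconv-general}, with the projector $P f_0$ in~\eqref{def-equimulti} transcribed verbatim as the sum over connected components $\Omega_i$.

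There is no real obstacle: the substantive content is already contained in Theorem~\ref{thmconv-general} and in the elementary observation, essentially carried out in the proof of Proposition~\ref{prop-E3'}, that $\Rc_\phi$ and $\Rc_k$ cannot cross connected components of $\tilde\Omega$ when the kernel is in {\bf E3'}. The only minor point to double-check is the treatment of the sign of $s$ in the analysis of $\Rc_\phi$, since Lemma~\ref{leminvarflow} is only stated for non-negative times; this is handled by the swap described above.
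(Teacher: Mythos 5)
Your proof is correct and follows exactly the approach the paper uses: reduce Corollary~\ref{thmconv-general-E3'} to Theorem~\ref{thmconv-general} by identifying the equivalence classes for $\sim$ as singletons when $k$ is in {\bf E3'}, so that $U_j = \Omega_j$ and the projector formula transcribes directly. Your verification that single $\Rc_\phi$ and $\Rc_k$ steps cannot cross connected components of $\tilde\Omega$ (using Lemma~\ref{leminvarflow} for the $\Rc_\phi$ case and the {\bf E3'} fiber condition for the $\Rc_k$ case, then iterating through the chain) is the same observation the paper asserts but states in a more abbreviated form in the proof of Proposition~\ref{prop-E3'}; you make the argument slightly more complete by treating the $\Rc_\phi$ step and the chain explicitly.
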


 \bigskip
 
We close this section by exhibiting an example of collision kernel in {\bf E3'}, for which Corollary~\ref{thmconv-general-E3'} (and thus Theorem~\ref{thmconv-general}) are relevant.

We restrict ourselves to the case $\T \times \R$ (this can be easily adapted to higher dimensions). We consider the free transport case, i.e. $V=0$. We identify $\T$ to $[-1/2,1/2)$. Consider $\alpha \in C^0(\T)$ supported in $[-1/2,0)$ and $\beta \in C^0(\T)$ supported in $[0,1/2)$.

Let $\varphi \in L^\infty\cap C^0(\R)$ such that $\varphi>0$ on $\R^-_*$ and $\varphi =0$ on $\R^+$. Likewise, let  $\Psi \in L^\infty\cap C^0(\R)$ such that $\Psi>0$ on $\R^+_*$ and $\Psi =0$ on $\R^-$.
We define the collision kernel
$$
k(x,v,v') :=  \left[ \alpha(x) \varphi(v) \varphi(v') + \beta(x) \Psi(v) \Psi(v') \right] \Mc(v').
$$
Note that $\tilde{k}(x,v,v') = \alpha(x) \varphi(v) \varphi(v') + \beta (x)\Psi(v) \Psi(v')$ is symmetric in $v$ and $v'$, and belongs to $L^\infty$. Thus $k$ is in the class {\bf E1}. Furthermore, we readily check $k$ is in the class {\bf E3'}.

Moreover, we have $\omega = \{\{\alpha>0\} \times \R^-_*\}  \cup \{\{\beta>0\} \times \R^+_*\}$, and 
$$\bigcup_{s\in \R^+}\phi_{-s}(\omega) = \{\T\times \R^-_*\}  \cup \{\T \times \R^+_*\}.$$ 
Thus $\omega$ satisfies the a.e.i.t. GCC but $\bigcup_{s\in \R^+}\phi_{-s}(\omega)$ is not connected. 

The basis of the subspace of stationary solutions of~\eqref{B} is given by $(f_j)_{j=1,2}$ with
$$
f_1 = \frac{\mathds{1}_{\T\times \R^-_*} e^{-V}\Mc}{\| \mathds{1}_{\T\times \R^-_*} e^{-V} \Mc \|_{\LL^2}}, \quad 
f_2 = \frac{\mathds{1}_{\T\times \R^+_*} e^{-V}\Mc}{\| \mathds{1}_{\T\times \R^+_*} e^{-V} \Mc \|_{\LL^2}}.
$$

\subsection{The case of collision kernels in the class {\bf E3''}}
\label{secE3pp}
We finally study collision kernels in the class {\bf E3''}.  Because of the remarkable properties of the geodesic flow on the torus $\T^d$, the following holds.

 \begin{lem}
 \label{lemUCP1} 
 Suppose that $V = 0$ and that  the collision kernel belongs to the class {\bf E3''}. 
 Then $\omega$ satisfies a.e.i.t. GCC.
 \end{lem}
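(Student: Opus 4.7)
The plan is to exploit the very special structure of the geodesic flow on $\T^d$ when $V=0$: it is given explicitly by $\phi_t(x,v) = (x + tv, v)$, so the velocity is preserved and the position moves along a straight line on the torus. Since $\omega = \omega_x \times \R^d$ with $\omega_x$ a non-empty open subset of $\T^d$, the condition $\phi_s(x,v) \in \omega$ reduces to $x+sv \in \omega_x$. So it suffices to prove that for almost every $(x,v) \in \T^d \times \R^d$, the positive half-orbit $\{x + sv : s \geq 0\}$ enters $\omega_x$.

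The key ingredient is the Kronecker--Weyl equidistribution theorem. Define the set of resonant velocities
\begin{equation*}
\mathcal{N}_\Q := \Big\{ v \in \R^d : \exists \, (n_1,\dots,n_d) \in \Z^d \setminus \{0\}, \ n_1 v_1 + \cdots + n_d v_d = 0\Big\}.
\end{equation*}
This is a countable union of hyperplanes in $\R^d$, and therefore has zero Lebesgue measure. I would first recall the classical fact that for every $v \in \R^d \setminus \mathcal{N}_\Q$ and every $x \in \T^d$, the forward orbit $\{x + sv \mod \Z^d : s \geq 0\}$ is dense (in fact equidistributed) in $\T^d$.

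From there I would conclude as follows. Since $\omega_x$ is a non-empty open subset of $\T^d$, density of the forward orbit implies that for every $v \in \R^d \setminus \mathcal{N}_\Q$ and every $x \in \T^d$, there exists $s \geq 0$ with $x + sv \in \omega_x$, i.e.\ $\phi_s(x,v) \in \omega$. Consequently, the complement in $\T^d \times \R^d$ of the set of points $(x,v)$ for which the a.e.i.t.\ GCC condition $\{\exists s \geq 0,\,\phi_s(x,v) \in \omega\}$ holds is contained in $\T^d \times \mathcal{N}_\Q$, which has zero Lebesgue measure. This is exactly the a.e.i.t.\ GCC from Definition~\ref{defaeitgcc}.

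There is no real obstacle here: the whole argument rests on the fact that the Liouville measure projects onto Lebesgue measure in $v$, and that the resonant set is negligible. The only point to double-check is that Kronecker--Weyl indeed gives density of the \emph{forward} (not just the full) orbit, but this follows from the stronger equidistribution statement. Because the argument uses only the simple structure of $\phi_t$ and the rationality of $\Z^d$, it would fail as soon as a non-trivial potential $V$ is added, which is precisely the content of the companion Proposition~\ref{CEXucp}.
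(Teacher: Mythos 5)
Your proof is correct and is essentially the paper's own argument: the paper's (very terse) proof simply invokes the fact that $(x+tv)_{t\geq 0}$ is dense in $\T^d$ for almost every $(x,v)$, which is exactly the Kronecker--Weyl statement you spell out via the null resonant set $\mathcal{N}_\Q$. Your version just makes the exceptional set explicit; there is no difference in substance.
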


 \begin{proof}[Proof of Lemma \ref{lemUCP1}]

 Define $T_v^t : \, x \mapsto x + t \, v$; then $(T_v^t \, x)_{t \geq 0}$ is dense in $\T^d$ for almost every $(x,v) \in \T^d \times \R^d$ (with respect to the Lebesgue measure). 
 This proves the lemma.
 \end{proof}

 We deduce a proof of Proposition~\ref{coroUCP}.
  \begin{proof}[Proof of Proposition \ref{coroUCP}]Take $\omega_x^0$ a connected component of $\omega_x$. According to Lemma~\ref{lemUCP1}, $\omega_x^0 \times \R^d$ satisfies a.e.i.t. GCC. The result then follows from Proposition~\ref{aeitgccconnected}, $(i.) \Rightarrow (iii.)$, and Theorem \ref{thmconv-intro}.
  \end{proof}

  \section{Characterization of exponential convergence}
\label{expocon}
Let us first briefly recall why $C^-(\infty)$ is well-defined (see~\cite{Leb}). We can first define for all $T>0$,
 $$
 C^-(T) :=  \text{inf}_{(x,v) \in \T^d \times \R^d} \frac{1}{T} \int_0^T \left( \int_{\R^d} k(\phi_t (x,v), v')\, dv'\right)\, dt,
 $$
 which is well-defined (and nonnegative) since $k$ is non-negative. We then remark that the function $T \mapsto - T C^-(T)$ is subadditive. This entails that $C^-(\infty) = \lim_{T \to +\infty} C^-(T)$ exists (one proves similarly that $C^+(\infty)$ is well defined).

In this section, we assume that $k$ satisfies {\bf A3'} and provide the proof of Theorem \ref{thmexpo-intro}.
To this end, we first prove that (a.) and (b.) are equivalent, and finally that (c.) implies (a.). This will conclude the proof, noticing that (b.) implies (c.) is straightforward. One can also readily check that in the proof of (a.) implies (b.), the assumption {\bf A3'} is not used.

 \subsection{Proof of the equivalence between (a.) and (b.)}
  
Since the equation \eqref{B} is linear, if $f(t)$ satisfies~\eqref{B} then
 $$
 g(t) := f(t)-\int_{\T^d \times \R^d} f(0) \, dv \,dx \frac{e^{-V}}{\int_{\T^d} e^{-V} \, dx} \Mc(v),
 $$
 is still a solution to \eqref{B}. Thus we can deal only with initial data which have zero mean. 

By conservation of the mass, the Boltzmann equation~\eqref{B} is well-posed in the space 
$$\LL^2_0:= \left\{f \in \LL^2, \, \int f \, dv dx =0\right\}$$ 
and we can use Lemma~\ref{lemfondamental} for solutions in $\LL^2_0$, which yields that $(b.)$ is equivalent to

\begin{enumerate}
\item[(b'.)] There exists $T>0$ and $K>0$ such that for all $f_0 \in \LL^2_0$, the associated solution $f$ to \eqref{B} satisfies
$$
K \int_0^T D(f(t)) \, dt \geq  \| f_0\|_{\LL^2}^2.
$$
\end{enumerate}

 \bigskip

 We first prove that $(a.)$ implies $(b'.)$, then that $(b'.)$ implies $(a.)$

 \noindent ${(a.) \implies (b'.)}$ Assume that $(a.)$ holds. 
  
  We argue by contradiction. Denying $(b'.)$ is equivalent to assume for all $T>0$ and all $C>0$, the existence of $g_0^{C,T} \in \LL^2_0$, such that
 $$
 C \int_0^T D(g^{C,T}(t)) \, dt <  \| g_0^{C,T}\|_{\LL^2}^2,
 $$
 where $g^{C,T}(t)$ is the unique solution to \eqref{B} with initial datum $g_0^{C,T}$.
 For all $n \in \N^*$, there exists $g_{0,n} \in \LL^2_0$, such that
  \begin{equation}
  \label{dissipgn}
  \int_0^n D(g_n(t)) \, dt <  \frac{1}{n} \| g_{0,n}\|_{\LL^2}^2.
 \end{equation}
where $g_n(t)$ is the unique solution to \eqref{B} with initial datum $g_{0,n}$. Furthermore, by linearity of \eqref{B}, we can normalize the initial data so that for all $n \in \N^*$, 
  \begin{equation}
  \label{normalization}
 \| g_{0,n}\|_{\LL^2} =1.
  \end{equation}
 Recall that by Lemma \ref{lemdissip}, we have, for all $t\geq0$,
 \begin{equation}
 \label{eqdiss}
 \| g_n(t) \|^2_{\LL^2} -  \| g_{0,n} \|^2_{\LL^2} = - \int_0^t D(g_n(s)) \, ds.
 \end{equation}
 In particular, the sequence $(g_n)_{n \in \N^*}$ is uniformly bounded in $L^\infty_t \LL^2$; thus, up to some extraction, we can assume that $g_n \rightharpoonup g$ weakly in $L^2_t \LL^2$. Let us prove that $g=0$. By linearity of \eqref{B}, $g$ still satisfies \eqref{B} since $g_n$ does.

 Note also that by conservation of the mass, for all $n \in \N$ and all $t \geq 0$,
     \begin{equation}
    \label{eqconsmas2}
 \int g_n(t) \, dv dx = \int g_{0,n} \, dv dx =0.
 \end{equation}

 Take $T'>0$ such that $(\omega,T')$ satisfies GCC. By \eqref{dissipgn}, we have $\int_0^{T'} D(g_n(t)) \, dt  \to 0$ and therefore by 
 weak lower semi-continuity, we deduce 
$$
    \left\|{D}(g) \right\|_{L^1(0,T')} \leq \liminf_{n\to +\infty}     \left\|{D}(g_n) \right\|_{L^1(0,T')}= 0.
$$
     As a consequence, by weak coercivity (see Lemma~\ref{collannule}),  we infer that ${C}(h)=0$ on $[0,T']$, and therefore $h$ satisfies the kinetic transport equation \eqref{eq:UCP}.
The Unique Continuation Property of Proposition~\ref{remUCP} then implies that
 \begin{equation}
 g= \int_{\T^d \times \R^d} g \, dv \,dx \frac{e^{-V}}{\int_{\T^d} e^{-V} \, dx} \Mc(v).
 \end{equation}
 Since $g_{n} \rightharpoonup g$ weakly in $L^2_t \LL^2$, using \eqref{eqconsmas2}, we obtain in particular  that
$$
\int_0^{T'} \left(\int g \, dv dx \right) dt =0.
$$
Since $\int_0^{T'} \left(\int g \, dv dx \right) dt = T'  \left(\int g(0) \, dv dx \right)$, we deduce that $\int g \, dv dx=0$, so that $g=0$.
 Therefore,  this leads to $g=0$.
 \Black

 Now, let us study the sequence of defect measures $\nu_n := |g_n|^2$ and $\nu_{0,n} := |g_{0,n}|^2$. Consider the equation \eqref{B} satisfied by $g_n$ and multiply it by $g_n$ to get:
        \begin{align}
        \label{eq:nun}
        \partial_t \nu_n &+ v \cdot \nabla_x \nu_n - \nabla_x V \cdot \nabla_v \nu_n \\ \nonumber
        &= 2 \left(\int_{\R^d} k(x,v' ,  v) g_n(v') \, dv'\right) g_n - 2 \left( \int_{\R^d} k(x,v ,  v')\, dv'  \right)  \nu_n
            \end{align}
            By Duhamel's formula, we infer
            \begin{multline*}
            \nu_n(t,x,v)= e^{-2 \int_0^t \int_{\R^d} k(\phi_{s-t}(x,v) ,  v')\, dv' \, ds   } \nu_{0,n} (\phi_{-t} (x,v)) \\
            + \int_0^t  2 \left(\int_{\R^d} k(X_{s-t}(x,v),v' ,  \Xi_{s-t}(x,v)) g_n(s,X_{s-t}(x,v),v') \, dv'\right) g_n(s, \phi_{s-t}(x,v)) \\
            \times e^{-2 \int_s^t \int_{\R^d} k(\phi_{\tau-t}(x,v) ,  v')\, dv' \, d\tau   } \, ds ,
               \end{multline*}
               and thus for all $t\geq 0$,
             \begin{multline}
             \label{duhamelnun}
           \| \nu_n(t)\|_{\LL^1} \leq \int_{\T^d \times \R^d}  e^{-2 \int_0^t \int_{\R^d} k(\phi_{s-t}(x,v) ,  v')\, dv' \, ds   } \nu_{0,n} (\phi_{-t}(x,v)) \frac{e^{V(x)}}{\Mc(v)} \, dv \, dx  \\
            +\int_{\T^d \times \R^d} \Bigg( \int_0^t  2 \left|\int_{\R^d} k(X_{s-t}(x,v),v' ,  \Xi_{s-t}(x,v)) g_n(s,X_{s-t}(x,v),v') \, dv'\right|   |g_n(s, \phi_{s-t}(x,v))|    \\
            \times e^{-2 \int_s^t \int_{\R^d} k(\phi_{\tau-t}(x,v) ,  v')\, dv' \, d\tau   } \, ds \Bigg)\frac{e^{V(x)}}{\Mc(v)} \, dv \, dx
               \end{multline}

By definition of $C^-(\infty)$, there exists $T_0>0$ large enough such that for all $t\geq T_0, C^-(t)\geq C^-(\infty)/2>0$.
We have, after the change of variables $\phi_{-t}(x,v) \mapsto (x,v)$, which has unit Jacobian (recall also that the hamiltonian is left invariant by this transform), for all $t\geq T_0$,
  \begin{align*}
&\int_{\T^d \times \R^d}  e^{-2 \int_0^t \int_{\R^d} k(\phi_{s-t}(x,v) ,  v')\, dv' \, ds   } \nu_{0,n} (\phi_{-t} (x,v)) \frac{e^{V(x)}}{\Mc(v)} \, dv \, dx \\
&=\int_{\T^d \times \R^d}  e^{-2 \int_0^t \int_{\R^d} k(\phi_{s}(x,v) ,  v')\, dv' \, ds   } \nu_{0,n} (x,v) \frac{e^{V(x)}}{\Mc(v)} \, dv \, dx\\
    &  \leq e^{-t C^-(t)}  \| \nu_{0,n} \|_{\LL^1}  \leq e^{-t C^-(\infty)/2}  \| \nu_{0,n} \|_{\LL^1}
      \end{align*}  
 and thus we can choose  $T_1 \geq T_0$ large enough such that the left-handside is less than $\frac{1}{4}   \| \nu_{0,n} \|_{\LL^1}$ for $t=T_1$.

       On the other hand, since $g_n \rightharpoonup 0$, by the averaging lemma of Corollary \ref{lemmoyenne}, we deduce that in $L^2(0,T_1;\LL^2)$,
\begin{equation}
\label{eq-compact}
 \left(\int_{\R^d} k(x,v' ,  v) g_n(v') \, dv'\right) \to 0 .
\end{equation}
 Hence, by the weak/strong convergence principle,
   $$
 \left(\int_{\R^d} k(x,v' ,  v) g_n(v') \, dv'\right) g_n \to 0,
 $$
in $L^1(0,T_1;\LL^1)$.
Therefore, using the change of variables $\phi_{s-t}(x,v) \mapsto (x,v)$, which has unit Jacobian, we infer that
\begin{multline*}
 \int_{\T^d \times \R^d} \Bigg( \int_0^t  2 \left|\int_{\R^d} k(X_{s-t}(x,v),v' ,  \Xi_{s-t}(x,v)) g_n(s,X_{s-t}(x,v),v') \, dv'\right|   |g_n(s, \phi_{s-t}(x,v))|    \\
            \times e^{-2 \int_s^t \int_{\R^d} k(\phi_{\tau-t}(x,v) ,  v')\, dv' \, d\tau   } \, ds \Bigg)\frac{e^{V(x)}}{\Mc(v)} \, dv \, dx \to 0
\end{multline*}
and thus, coming back to~\eqref{duhamelnun}, for $n$ large enough, we finally obtain (we recall that $ \| \nu_{0,n} \|_{\LL^1}^2 =1$)
\begin{equation}
\label{eq:beforecontract}
           \| \nu_n(T_1)\|_{\LL^1}^2 \leq \frac{1}{2} .
\end{equation}

 But integrating with respect to time \eqref{eqdiss} and using \eqref{normalization}-\eqref{dissipgn}, we also have
 \begin{align*}
\| \nu_n({T_1} )\|_{\LL^1}^2  &= 1 -   \int_0^{T_1}  D(g_n(s)) \, ds \\
&\geq \frac{3}{4} \text{  for  } n \text{  large enough},
  \end{align*} 
 which is a contradiction with \eqref{eq:beforecontract}. 
  
 \bigskip
 
  \noindent  ${(b'.) \implies (a.)}$  We show that if $(a.)$ does not hold (i.e. $C^-(\infty)=0$), then $(b'.)$ does not either. Assume that $(a.)$ does not hold.  The goal is to show that for all $T>0$, for all $\eps >0$, there exists $g_{0,\eps} \in \LL^2_0$ such that 
  \begin{equation}
  \label{nonii}
 \| g_{0,\eps} \|_{\LL^2} =1, \quad \int_0^T {D}(g_\eps)(t) \, dt  < \eps,
 \end{equation}
 where $g_\eps$ is the solution to \eqref{B} with initial datum $g_{0,\eps}$. 
  
Fix $T>0$ and $\eps>0$. Since $C^-(\infty)=0$, there exists $(x_0,v_0) \in \T^d \times \R^d$, such that 
 \begin{equation}
 \label{hyp}
 \int_0^T \int_{\R^d} k(\phi_t(x_0,v_0), v') \, dv' \,dt < \eps/2.
 \end{equation}

Let $\chi$ be a smooth compact cutoff function defined from $\R^+$ to $\R$ such that $\chi \equiv 1$ on $[0,1]$ and $\chi \equiv 0$ on $[2,\infty)$ and such that $\int_{\R^+} \chi(r) r^{d-1} \, dr =0$. Consider 
$$
\tilde{g}_{0,n} = \chi(n|x-x_0|) \chi(n|v-v_0|).
$$
Then notice that there is $\alpha>0$ such that
$$
\|\tilde{g}_{0,n} \|_{\LL^2}^2 = n^{-2d} \alpha.
$$
There, in order to normalize, we take $g_{0,n} := \frac{n^{d}}{\alpha} \tilde{g}_{0,n}$. Note that by construction,
$$
\int g_{0,n} \, dv dx = \frac{n^{d}}{\alpha} \left(\int \chi(n|x-x_0|) \, dx\right) \left(\int \chi(n|v-v_0|) \, dv\right)  =0,
$$
and thus $g_{0,n} \in \LL^2_0$.

We call $g_n$ the solution to \eqref{B} with initial datum $g_{0,n}$. By construction, we observe that $g_{0,n} \rightharpoonup 0$ weakly in $\LL^2$ and we deduce that $g_n\rightharpoonup 0$ weakly in $L^2_{t,loc} \LL^2$. As in the previous proofs, by the averaging lemma of Corollary \ref{lemmoyenne}, this implies that
\begin{equation}
\label{convforte}
\int k(x,v',v) g_n(t,x,v') \, dv' \to 0, \text{  strongly in  } L^2_{t,loc} \LL^2.
\end{equation}

Now, consider $\nu_n := |g_n|^2$. By construction, we have:
\begin{equation}
\label{eq:nu0n}
\nu_n(0) \rightharpoonup \delta_{x=x_0,v=v_0},
\end{equation}
where $\delta$ denotes as usual the Dirac measure.
 As in \eqref{eq:nun}, we get the Duhamel's formula
    \begin{multline*}
            \nu_n(t,x,v)= e^{-2 \int_0^t \int_{\R^d} k(\phi_{s-t}(x,v) ,  v')\, dv' \, ds   } \nu_{0,n} (\phi_{-t} (x,v)) \\
            + \int_0^t  2 \left(\int_{\R^d} k(X_{s-t}(x,v),v' ,  \Xi_{s-t}(x,v)) g_n(s,X_{s-t}(x,v),v') \, dv'\right) g_n(s, \phi_{s-t}(x,v)) \\
            \times e^{-2 \int_s^t \int_{\R^d} k(\phi_{\tau-t}(x,v) ,  v')\, dv' \, d\tau   } \, ds.
               \end{multline*}

                Define now the weighted $L^2$ norm as follows
               $$
               \| f \|_{\Lb^2}^2: = \int_{\T^d \times \R^d} |f|^2  \varphi^2(x,v) \frac{e^{V}}{\Mc} \, dvdx .
               $$
               We have the following $\LL^2$ estimate for the Boltzmann equation~\eqref{B}.
               \begin{lem} 
               \label{lemtech}
              For any function $f_0 \in \LL^2$ with $\| f_0 \|_{\Lb^2}<+\infty$, the solution $f(t)$ to the Boltzmann equation~\eqref{B} with initial datum $f_0$ satisfies, for all $t\geq 0$,
           \begin{equation}
           \label{tech}
\| f(t) \|_{\Lb^2}^2 \leq \| f_0 \|_{\Lb^2}^2 +   (1+ \Gamma) \int_0^t \| f(s) \|_{\Lb^2}^2 \, ds,
\end{equation}
where
               $$
               \Gamma:= \sup_{x \in \T^d} \int_{\R^d \times \R^d} k^2(x,v’,v) \frac{\Mc(v’)}{\Mc(v)} \left(\frac{\varphi(x,v)}{\varphi(x,v')}-1\right)^2\, dv dv’ ,
               $$
               which is finite by {\bf A3'}.
 \end{lem}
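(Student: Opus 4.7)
The strategy is to eliminate the weight $\varphi^2$ from the norm via the substitution $u := \varphi f$. Since $\varphi = \Theta \circ H \geq 1$, this substitution is well-defined and satisfies $\|u\|_{\LL^2} = \|f\|_{\Lb^2}$. The key observation is that $\varphi$ is constant along the characteristics of the free transport operator $T := v\cdot \na_x - \na_x V \cdot \na_v$: indeed, $T\varphi = \Theta'(H)\, TH = 0$ since the Hamiltonian is preserved by the flow. A direct computation then shows that $u$ solves
$$\partial_t u + T u = \widetilde{C} u, \qquad \widetilde{C}u(v) := \int_{\R^d} k(x,v',v) \frac{\varphi(v)}{\varphi(v')} u(v')\, dv' - \int_{\R^d} k(x,v,v')\, dv'\, u(v).$$
Decomposing $\widetilde{C}u = C(u) + R u$ with remainder
$$Ru(v) := \int_{\R^d} k(x,v',v) \left[\frac{\varphi(v)}{\varphi(v')}-1\right] u(v')\, dv',$$
and reproducing the argument of Lemma~\ref{lemdissip} for $u$ (which uses only $T(e^V/\Mc) = 0$), one obtains the energy identity
$$\frac{d}{dt} \|u\|_{\LL^2}^2 = -D(u) + 2\langle R u, u\rangle_{\LL^2}.$$

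The heart of the proof is then to show that the remainder is controlled by the quantity $\Gamma$ from Assumption {\bf A3'}. Applying Cauchy--Schwarz in $v'$ with weights $\sqrt{\Mc(v')}$ and $1/\sqrt{\Mc(v')}$ gives, pointwise in $(x,v)$,
$$|Ru(v)| \leq \left(\int_{\R^d} k^2(x,v',v)\left[\frac{\varphi(v)}{\varphi(v')}-1\right]^2 \Mc(v')\, dv'\right)^{1/2} \left(\int_{\R^d} \frac{u(v')^2}{\Mc(v')}\, dv'\right)^{1/2}.$$
Multiplying by $|u(v)|/\Mc(v)$, integrating in $v$, and applying Cauchy--Schwarz once more in $v$ (with weights $\sqrt{\Mc(v)}$ and $1/\sqrt{\Mc(v)}$) produces exactly the integrand defining the local constant $\Gamma(x) := \int\int k^2(\varphi(v)/\varphi(v')-1)^2 \Mc(v')/\Mc(v)\, dv\, dv'$. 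Using $\sup_x \Gamma(x) = \Gamma < +\infty$ from {\bf A3'}, this chain yields
$$|\langle Ru, u\rangle_{\LL^2}| \leq \Gamma^{1/2}\, \|u\|_{\LL^2}^2.$$

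Combining the two previous steps and dropping the nonpositive term $-D(u)$, we obtain $\frac{d}{dt}\|u\|_{\LL^2}^2 \leq 2\Gamma^{1/2}\|u\|_{\LL^2}^2$, and the elementary inequality $2\Gamma^{1/2} \leq 1+\Gamma$ gives the precise coefficient stated. Integration in time between $0$ and $t$, together with the identity $\|u\|_{\LL^2} = \|f\|_{\Lb^2}$, concludes. The main technical obstacle is the double Cauchy--Schwarz argument: the weights $\Mc(v')$ and $1/\Mc(v)$ must be split and distributed with care across the two applications so that the quantity $\int\int k^2(\varphi/\varphi'-1)^2 \Mc(v')/\Mc(v)\, dv\, dv'$ arises exactly as in {\bf A3'}, rather than a closely related integral on which no bound is available.
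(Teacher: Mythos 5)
Your proof is correct and follows essentially the same route as the paper: the substitution $u=\varphi f$ and the split $\widetilde C u = C(u)+Ru$ are exactly the paper's trick of multiplying~\eqref{B} by $f\varphi^2 e^V/\Mc$ and isolating $\langle C(f\varphi),f\varphi\rangle_{\LL^2}$ from the commutator term, and the two successive Cauchy--Schwarz applications produce the same integral as in the paper. The only cosmetic difference is at the very end: the paper first applies Young's inequality to the commutator pairing (yielding $\tfrac12 \Gamma + \tfrac12$ directly), whereas you first obtain $2\sqrt\Gamma$ and then invoke $2\sqrt\Gamma\le 1+\Gamma$ --- both give the stated constant.
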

               
               \begin{proof}[Proof of Lemma~\ref{lemtech}] The proof follows from an energy estimate for~\eqref{B}. We first multiply~\eqref{B} by $f  \, \varphi^2(x,v) \frac{e^{V}}{\Mc}$. Recalling that $\varphi$ is a function of the hamiltonian, we can integrate and argue as in Lemma~\ref{lemdissip} to treat the terms coming from the collision operator to obtain:
               \begin{align*}
               \frac{1}{2} \frac{d}{dt} \| f(t) \|_{\Lb^2}^2 + 0  &= \int_{\T^d\times \R^d} \left( \int_{ \R^d} k(x,v',v) \left(1-\frac{\varphi(x,v')}{\varphi(x,v)}\right) f(t,x,v') \, dv' \right) f  \, \varphi^2(x,v) \frac{e^{V}}{\Mc} \, dv dx \\
              & \qquad + \langle C(f\varphi), f\varphi\rangle_{\LL^2} \\
               &= \int_{\T^d\times \R^d} \left( \int_{ \R^d} k(x,v',v) \left(1-\frac{\varphi(x,v')}{\varphi(x,v)}\right) f(t,x,v') \, dv' \right) f  \, \varphi^2(x,v) \frac{e^{V}}{\Mc} \, dv dx - \frac{1}{2} D(f \varphi) \\
               &\leq \frac{1}{2} \int_{\T^d\times \R^d} \left( \int_{ \R^d} k(x,v',v)  \left(1-\frac{\varphi(x,v')}{\varphi(x,v)}\right) f(t,x,v') \, dv' \right)^2 \, \varphi^2(x,v) \frac{e^{V}}{\Mc} \, dv dx \\
               & \qquad + \frac{1}{2} \| f(t) \|_{\Lb^2}^2 .
               \end{align*}
              Above we have used that the dissipation term $D(f \varphi)$ is non-negative. Using the Cauchy-Schwarz inequality, we thus deduce
              \begin{align*}
               \frac{1}{2} \frac{d}{dt} \| f(t) \|_{\Lb^2}^2 \leq \frac{1}{2}\Gamma \| f(t) \|_{\Lb^2}^2  + \frac{1}{2} \| f(t) \|_{\Lb^2}^2,
               \end{align*}
 which yields~\eqref{tech}.         
               \end{proof}
               
 By construction of the sequence $(g_{0,n})$, it is uniformly compactly supported and we observe that there exists $C_0>0$ such that for all $n \in \N$,
 $$
 \| g_{0,n}\|_{\Lb^2} \leq C_0.
 $$
 We thus use Lemma~\ref{lemtech} to infer that there exists $C_T>0$, such that for all $n \in \N$, for all $t \in [0,T]$, 
\begin{equation}
\label{amelior}
 \| g_n(t)\|_{\Lb^2} \leq C_T.
 \end{equation}
 We now study the term
\begin{multline*}
A_n:= \int_{\T^d \times \R^d}  \int_0^t  2 \left(\int_{\R^d} k(X_{s-t}(x,v),v' ,  \Xi_{s-t}(x,v)) g_n(s,X_{s-t}(x,v),v') \, dv'\right) \\
\times e^{-2 \int_s^t \int_{\R^d} k(\phi_{\tau-t}(x,v) ,  v')\, dv' \, d\tau} g_n(s, \phi_{s-t}(x,v))
              \int_{\R^d} k(x,v, v') \, dv' \frac{e^{V}}{\Mc} \, ds \, dvdx
\end{multline*}
We notice that since $\varphi$ is a function of the hamiltonian, we have
$$
  \int_{\R^d} k(x,v, v') \, dv'  \leq \varphi(x,v)= \varphi(\phi_{s-t}(x,v)).
 $$
 Therefore, using the change of variables $\phi_{s-t}(x,v) \mapsto (x,v)$ and the Cauchy-Schwarz inequality, we obtain
 $$
 A_n \leq C \left\|\int_{\R^d} k(x,v' ,v) g_n(s,x,v') \, dv'\right\|_{L^1([0,t]; \LL^2)} \sup_{[0,t]} \|g_n \|_{\Lb^2}.
 $$
By~\eqref{convforte} and~\eqref{amelior}, we deduce that $\|A_n \|_{L^1[0,T]} \to 0$ as $n \to + \infty$.

Consequently, by~\eqref{eq:nu0n}, we get
\begin{align*}
& \int_0^T  \int_{\T^d \times \R^d}  \nu_n(t,x,v) \int_{\R^d} k(x,v, v') \, dv' \frac{e^{V(x)}}{\Mc(v)}dx \, dv \, dt\\
&=   \int_0^T \exp\left( -2 \int_0^t \int_{\R^d}  k(\phi_s(x_0,v_0),v') \, dv' \, ds\right) \int_{\R^d} k(\phi_t(x_0,v_0), v') \, dv' \, dt + \|A_n \|_{L^1[0,T]} \\
&<\eps/2,
\end{align*}
for $n$ large enough. 
The last inequality follows by definition of $(x_0,v_0)$ and by the assumption \eqref{hyp}. But by definition of $D(g_n)$, and using again \eqref{convforte},  for $n$ large enough, we thus have
$$
 \int_0^T {D}(g_n)(t) \, dt = 2 \int_0^T  \int_{\T^d \times \R^d}  \nu_n(t,x,v) \int_{\R^d} k(x,v, v') \, dv' \frac{e^{V(x)}}{\Mc(v)}dx \, dv \, dt + o_{n\to +\infty}(1)<\eps,
$$
We finally take $g_{0,\eps} := g_{0,n}$ (with $n$ large enough), which satisfies \eqref{nonii}. This concludes the proof.

\subsection{About the rigidity with respect to exponential convergence of the Maxwellian}

We prove here that (c.) implies (a.) in Theorem~\ref{thmexpo-intro}.

Assume that (c.) holds. By (1) implies (2) in Theorem~\ref{thmconvgene-intro}, $\omega$ satisfies the a.e.i.t. GCC.
Therefore, by Theorem~\ref{thmconv-general}, this means that $Pf_0$ is of the form defined in~\eqref{def-equimulti}. We use these notations again.

Recall by Lemma~\ref{lem-proj} that given an equivalence class $[\Omega_j]$ for $\sim$, denoting as usual $U_j=  \bigcup_{\Omega' \in [\Omega_j]} \Omega'$, we have for all $t\geq 0$,
 $$
 \int_{U_j} f(t) \, dv \, dx =  \int_{U_j} f_0 \, dv \,  dx
 $$
where $f(t)$ is the solution of~\eqref{B} with initial condition $f_0$.

Thus, the linear Boltzmann equation~\eqref{B} is well-posed in the space
$$\LL^2_{00}:= \left\{f \in \LL^2, \, \forall j \in J, \,  \int_{U_j} f \, dv dx =0\right\},
$$ 
and we can use Lemma~\ref{lemfondamental} for solutions in $\LL^2_{00}$, which yields that the exponential convergence property is equivalent to
\begin{enumerate}
\item[(c'.)] There exists $T>0$ and $K>0$ such that for all $f_0 \in \LL^2_{00}$, the associated solution $f$ to \eqref{B} satisfies
$$
K \int_0^T D(f(t)) \, dt \geq  \| f_0\|_{\LL^2}^2.
$$
\end{enumerate}
We can then make the same proof as $(b'.) \implies (a.)$ in Theorem~\ref{thmexpo-intro} in order to conclude that $C^-(\infty)=0$.
We keep the notations of that proof. The only thing to check is that $g_{0,n}$ defined there belongs to $\LL^2_{00}$ for $n$ large enough. Let $j \in J$ such that $(x_0,v_0) \in U_j$. Then for $n$ large enough, $\supp g_{0,n} \subset U_j$.
Thus for all $i \neq j$, we have $ \int_{U_i} g_{0,n}\, dv \,  dx =0$ and
$$
 \int_{U_j} g_{0,n} \, dv \,  dx =  \int_{\T^d \times \R^d} g_{0,n} \, dv \,  dx =0,
$$
by definition of $g_{0,n}$. Thus $g_{0,n} \in \LL^2_{00}$ for $n$ large enough.

\section{Remarks on lower bounds for convergence when $C^-(\infty)=0$}
\label{lowerbounds}

In the situation where $\omega$ satisfies a.e.i.t. GCC but $C^-(\infty)=0$, we know by Theorem~\ref{thmconv-general} that for all data in $\LL^2$ there is convergence to some $Pf_0$ (defined in \eqref{def-equimulti}). It is natural to wonder if there is a uniform decay rate for smoother data (e.g. in the domain of the generator of the semigroup). If so, then the question of the convergence rate one can obtain becomes particularly interesting.

Let us provide here some {\em a priori} results in this direction. The following is nothing but a rephrasing in a general framework of a result of Bernard and Salvarani \cite{BS2}.
Note that they consider in their work free transport ($V=0$) and velocities on the sphere $\S^{d-1}$, but one can readily check that their methods are relevant for \eqref{B}. In their computations, one should add the weight $e^{V}/\Mc$ in the integrals. 
\begin{thm}
\label{thmabstract}
Denote $\tau(x,v) := \inf\{ t \geq 0, \phi_{-t}(x,v) \in \omega)\}$. Assume that there is a function of time $\varphi(t)$ such that
$$
\Leb \{ (x,v) \in \T^d\times \R^d, \, \tau(x,v) >t\} \geqsim \varphi(t).
$$
Then, there exists a non-negative initial datum $f_0$ of $C^\infty$ class and $C>0$ such that for any $t \geq 0$, denoting by $f(t)$ the solution of the linear Boltzmann equation \eqref{B} with initial datum $f_0$,
$$
\left\|f(t) - P f_0\right\|_{\LL^2} \geq C \varphi(t),
$$ 
where $Pf_0$ is defined in \eqref{def-equimulti}.
\end{thm}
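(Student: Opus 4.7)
Following the strategy of Bernard and Salvarani~\cite{BS2}, the plan is to construct an explicit nonnegative smooth initial datum $f_0$ and show that $f(t)$ retains too much mass on the set $\{\tau > t\}$ (where damping has not yet acted) for $\|f(t) - Pf_0\|_{\LL^2}$ to be smaller than $\varphi(t)$. The main tool is the positivity of the Boltzmann semigroup combined with the fact that $\phi_t$ is volume- and Hamiltonian-preserving.

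First I would exploit positivity. For any $f_0 \geq 0$, the Duhamel series $f(t) = \sum_{n\geq 0}\mathcal{K}^n(e^{-tA_0}f_0)$ used in the proof of Proposition~\ref{prop:WP} has only nonnegative terms, since the gain operator $\mathcal K$ has nonnegative kernel and $e^{-tA_0}$ preserves positivity, cf.~\eqref{semigpA0}. Keeping just the $n=0$ term gives
$$
f(t,x,v) \,\geq\, \exp\!\Bigl(-\int_0^t b(\phi_{-u}(x,v))\,du\Bigr)\,f_0(\phi_{-t}(x,v)),
$$
where $b(y,w) := \int k(y,w,v')\,dv'$. Since $\omega = \{b>0\}$, the damping integral vanishes on $\{\tau > t\}$ by the very definition of $\tau$, whence $f(t,x,v) \geq f_0(\phi_{-t}(x,v))$ on $\{\tau(x,v) > t\}$. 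I would then take $f_0 = \alpha \chi$ with $\alpha > 0$ small and $\chi \in C^\infty(\T^d\times\R^d;[0,1])$ equal to $1$ on an open set $W$ of positive Lebesgue measure contained in a bounded-energy region $\{H \leq R_0\}$. By Proposition~\ref{cardinal}, $Pf_0 = e^{-V}\Mc\sum_j c_j\mathbf{1}_{U_j}$ with $c_j \leq C_\chi\, \alpha$, so $\|Pf_0\|_{\LL^\infty}$ can be made arbitrarily small compared to $\alpha$ by choosing $|\supp\chi|$ small enough. On $\phi_t(W)\cap\{\tau>t\}$ the two bounds combine to give the pointwise estimate $f(t) - Pf_0 \geq c\alpha > 0$.

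Because $\phi_t$ preserves both Lebesgue measure and the Hamiltonian $H$, one has $\phi_t(W) \subset \{H\leq R_0\}$ and $|\phi_t(W)| = |W|$. Setting $\Omega_t := \phi_t(W) \cap \{\tau > t\} \subset \{H\leq R_0\}$, the weight $e^V/\Mc$ is bounded below by a positive constant on $\Omega_t$, and hence
$$
\|f(t) - Pf_0\|_{\LL^2}^2 \,\geqsim\, \alpha^2\,|\Omega_t|.
$$
Rewriting $\Omega_t = \phi_t(W \cap \mathcal F_t)$ with $\mathcal F_t := \phi_{-t}(\{\tau>t\}) = \{(y,w) : \phi_s(y,w) \notin \omega,\ \forall s \in [0,t]\}$, I get $|\Omega_t| = |W \cap \mathcal F_t|$, while $|\mathcal F_t| = |\{\tau > t\}| \geqsim \varphi(t)$ by volume preservation of $\phi_t$. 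The main obstacle is to establish $|W \cap \mathcal F_t| \geqsim \varphi(t)$, which is the nontrivial geometric content imported from~\cite{BS2}: in their compact-velocity framework this is automatic, and here it can be arranged by choosing $W$ to cover the bounded-energy part of $\mathcal F_t$ (which, thanks to $H$-preservation by the flow, carries essentially all its mass for $R_0$ large enough). Putting things together yields $\|f(t) - Pf_0\|_{\LL^2} \geqsim \alpha\sqrt{\varphi(t)} \geq \alpha\,\varphi(t)$ for $t$ large (since $\varphi(t) \to 0$), which is the claimed bound; finally, smoothing $\chi$ if needed and renormalizing gives the $C^\infty$ initial datum stated in the theorem.
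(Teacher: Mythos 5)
Your overall strategy — keep only the $n=0$ term of the Duhamel series to get $f(t,x,v)\geq f_0(\phi_{-t}(x,v))$ on $\{\tau>t\}$, choose $f_0$ a small smooth bump, push forward by the measure-preserving and $H$-preserving flow, and lower bound the $\LL^2$ norm by the Lebesgue measure of the surviving set — is the right one and is precisely the Bernard--Salvarani mechanism that the paper invokes. Your bookkeeping with $Pf_0$ (making $\sup_j c_j$ small by shrinking the support of $\chi$) and the use of the invariant density $e^{-V}\Mc=(2\pi)^{-d/2}e^{-H}$ are also fine.

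The genuine gap is exactly where you flag it, and your proposed fix does not close it. You reduce everything to $|W\cap\mathcal F_t|\geqsim\varphi(t)$ for a \emph{fixed} compact $W$, and then argue that this follows because ``the bounded-energy part of $\mathcal F_t$ carries essentially all its mass for $R_0$ large enough.'' This is false in general. If, say, $V=0$, $\omega=\omega_x\times\R^d$ and $\omega_x$ misses a closed geodesic $\gamma\subset\T^d$, then $\mathcal F_t$ contains a thin tube around $\gamma\times\{\text{directions parallel to }\gamma\}$ with the modulus $|v|$ unconstrained; in particular $\Leb(\mathcal F_t)=+\infty$ for every $t$, and for any fixed $R_0$ the complement $\mathcal F_t\cap\{H>R_0\}$ also has infinite measure. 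So neither is the global quantity $\Leb\{\tau>t\}$ finite in general, nor does a lower bound on it give any control on $\Leb(W\cap\mathcal F_t)$ for fixed $W$. The preservation of $H$ by $\phi_t$ shows that each $\mathcal F_t$ is a union of its traces on energy shells, but it does not force the mass to concentrate at bounded $H$.

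What the argument actually proves (and what is used in Corollary~\ref{coro-lower-free}, where the localized estimate on $B(x_0,\delta/2)\times B(0,1)$ is produced directly) is: \emph{if} there is a fixed compact $W\subset\T^d\times\R^d$ with $\Leb\bigl(W\cap\{\tau>t\}\bigr)\geqsim\varphi(t)$, then the conclusion holds (your chain gives $\|f(t)-Pf_0\|_{\LL^2}\geqsim\sqrt{\varphi(t)}\geq\varphi(t)$ once $\varphi\leq1$, which as you note is even a bit stronger than stated). So the hypothesis you need is the localized one, and the passage from the global bound in the theorem's statement to that localized bound is the missing step; it cannot be supplied in full generality. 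Reformulating the hypothesis as the localized one (as the corollary implicitly does) is the honest fix, and then your proof goes through.
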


In particular, we obtain
\begin{coro}
\label{coro-lower-free}
Assume that $V=0$ and that $\overline{p_x(\omega)} \neq \T^d$, where $p_x$ denotes the projection on the space of positions. Then there exists a non-negative initial datum $f_0$ of $C^\infty$ class and $C>0$ such that for any $t \geq 0$, denoting by $f(t)$ the solution of the linear Boltzmann equation with initial datum $f_0$,
$$
\left\|f(t) - Pf_0\right\|_{\LL^2} \geq C/(1+t)^{d/2},
$$ 
where $Pf_0$ is defined in \eqref{def-equimulti}.
\end{coro}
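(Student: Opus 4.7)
The plan is to apply Theorem~\ref{thmabstract}, so the main task is to produce an explicit lower bound on $\Leb\{(x,v) \in \T^d \times \R^d : \tau(x,v) > t\}$ under the hypotheses of the corollary.

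First, since $\T^d \setminus \overline{p_x(\omega)}$ is open and nonempty, I pick $x_0 \in \T^d$ and $r>0$ with $B(x_0,r) \subset \T^d \setminus \overline{p_x(\omega)}$. Because $V=0$, the hamiltonian flow is explicit: $\phi_t(x,v)=(x+tv,v)$. If the whole segment $\{x-sv : s\in [0,t]\}$ lies in $B(x_0,r)$, then $x-sv \notin p_x(\omega)$ and hence $(x-sv,v) \notin \omega$ for every $s \in [0,t]$, so that $\tau(x,v) > t$.

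Next, I use the convexity of the ball: a segment is contained in $B(x_0,r)$ if and only if its two endpoints are. Setting
\begin{equation*}
E_t := \{(x,v) \in \T^d \times \R^d \,:\, x \in B(x_0,r) \text{ and } x-tv \in B(x_0,r)\} \subset \{\tau > t\},
\end{equation*}
Fubini and the substitution $w=tv$ give
\begin{equation*}
\Leb(E_t) \;=\; \int_{\R^d} |B(x_0,r) \cap B(x_0+tv,r)|\, dv \;=\; \frac{1}{t^d}\int_{\R^d} |B(0,r) \cap B(w,r)|\, dw \;=\; \frac{c(d,r)}{t^d}
\end{equation*}
with $c(d,r)>0$, valid for $t>0$. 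Combined with the trivial bound for $t$ in a bounded interval, this yields $\Leb\{\tau>t\} \geqsim 1/(1+t)^d$.

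To conclude, I would apply Theorem~\ref{thmabstract} to this measure lower bound; the construction of the non-negative $C^\infty$ initial datum $f_0$ in the proof of that theorem together with the above estimate produces the claimed polynomial decay. The only real work is the measure estimate itself, i.e.\ exhibiting an explicit sizeable subset of $\{\tau>t\}$: once the ball $B(x_0,r)$ in the complement of $\overline{p_x(\omega)}$ has been located, the convex-segment argument above makes the computation essentially immediate. The exponent $d/2$ in the final bound reflects the fact that the $\LL^2$ lower bound produced by Theorem~\ref{thmabstract} is of square-root type in the underlying Lebesgue measure of the non-absorbed set, so the estimate $\Leb\{\tau>t\} \geqsim 1/(1+t)^d$ translates into $\|f(t)-Pf_0\|_{\LL^2} \geqsim 1/(1+t)^{d/2}$.
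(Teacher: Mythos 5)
Your strategy is the same as the paper's: locate a ball around some $x_0\in\T^d\setminus\overline{p_x(\omega)}$ (the paper uses $B(x_0,\delta/2)$ with $\delta=\mathrm{dist}(x_0,\overline{p_x(\omega)})$), bound from below the measure of the set of slow trajectories that never leave a neighbourhood of $x_0$, and invoke Theorem~\ref{thmabstract}. Two points deserve attention.

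First, a fixable slip in the measure estimate. On the torus the condition ``$x\in B(x_0,r)$ and $x-tv\in B(x_0,r)$'' does \emph{not} imply that the trajectory $\{x-sv\}_{s\in[0,t]}$ stays in $B(x_0,r)$: for $|v|$ large the orbit can wrap around $\T^d$ and re-enter the ball after crossing $p_x(\omega)$, so $E_t$ as you define it is not contained in $\{\tau>t\}$ (and its genuine measure on the torus is not what your substitution computes). Convexity applies to the \emph{lifted} Euclidean segment, so you must add the restriction $t|v|<2r$ — which is in fact what your Fubini computation silently assumes, since $B(0,r)\cap B(w,r)=\emptyset$ for $|w|\geq 2r$ in $\R^d$. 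With that restriction the argument is correct and gives $\Leb\{\tau>t\}\geqsim 1/(1+t)^d$.

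Second, the genuine gap is the passage from the measure bound to the $\LL^2$ bound. Theorem~\ref{thmabstract} as stated is \emph{linear}: $\Leb\{\tau>t\}\geqsim\varphi(t)$ yields $\|f(t)-Pf_0\|_{\LL^2}\geq C\varphi(t)$. Applied literally to your $\varphi(t)=1/(1+t)^d$ it only gives the exponent $d$, not $d/2$. Your final paragraph instead asserts that the theorem's output is ``of square-root type'' in the measure; that is indeed the mechanism that produces $d/2$ (on the un-collided set the solution is pure transport, and the $\LL^2$ norm of the surviving mass is comparable to the square root of the weighted measure of that set), but it is not what Theorem~\ref{thmabstract} says, and as written you are using an unproven strengthening of it. To be fair, the paper's own one-line proof is imprecise at exactly this spot, in the dual way: it claims the ``straightforward'' bound $\Leb\{(x,v)\in B(x_0,\delta/2)\times B(0,1),\ \tau(x,v)>t\}\geqsim 1/(1+t)^{d/2}$ and applies the theorem literally; but as a statement about Lebesgue measure this is too strong (already for $d=1$ that set is contained in $B(x_0,\delta/2)\times B(0,1/t)$ up to constants, so its measure is $O(1/t)$, not $\gtrsim 1/\sqrt{t}$). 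Your measure computation is the correct one; the step you wave at in the last paragraph — extracting the square root when passing to the $\LL^2$ norm — is precisely the step that would have to be made rigorous by reopening the proof of Theorem~\ref{thmabstract}.
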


\begin{proof}[Proof of Corollary \ref{coro-lower-free}]
Take $x_0 \in \T^d \setminus \overline{p_x(\omega)}$. Let $\delta := \text{dist}(x_0, \overline{p_x(\omega)})$. 
Consider  $U:=B(x_0,\delta/2)\times B(0,1)$; here $\tau(x,v) := \inf\{ t \geq 0, x-tv \in p_x(\omega)\}$.
Then the crucial point is the straightforward lower bound
$$
\Leb \{ (x,v) \in B(x_0,\delta/2)\times B(0,1), \, \tau(x,v) >t\} \geqsim 1/(1+t)^{d/2}
$$
and we can thus apply Theorem~\ref{thmabstract}.
\end{proof}

Combining with Bernard-Salvarani's theorem which concerns the case with trapped trajectories \cite{BS2}, that we recall below, one may deduce that the ``worst'' lower bound in the free transport case is due to trapped trajectories, and not to low velocities.

\begin{thm}[Bernard-Salvarani \cite{BS2}]
Let $k$ a collision kernel belonging to the class {\bf E3''} and $V=0$. Assume that there is $(x,v) \in \T^d \times \R^d$ such that for all $t\in \R^+$, $x+tv \notin \omega_x$. Then there exists a non-negative initial datum $f_0$ of $C^\infty$ class and $C>0$ such that for any $t \geq 0$, denoting by $f(t)$ the solution of the linear Boltzmann equation with initial datum $f_0$,
$$
\left\|f(t) - \left(\int_{\T^d \times \R^d} f_0 \, dv \,dx\right)  \Mc(v)\right\|_{\LL^2} \geq C/(1+t).
$$ 
\end{thm}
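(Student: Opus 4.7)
The plan is to adapt the strategy of Bernard--Salvarani~\cite{BS2} by constructing an explicit smooth non-negative $f_0$ whose evolution remains far from the Maxwellian equilibrium along the trapped trajectory. Since $V=0$, the Maxwellian $\Mc$ is itself stationary for~\eqref{B}, so I take $f_0 = \Mc + \varepsilon \chi$ with $\chi \in C_c^\infty(\T^d\times\R^d)$ a non-negative bump supported in a small neighborhood of the trapped point $(x_0,v_0)$, and $\varepsilon > 0$ small enough that $f_0 \geq 0$. Then $M := \int_{\T^d \times \R^d} f_0\,dv\,dx = 1 + \varepsilon \int \chi$, so $Pf_0 = M\Mc$ and the target is a lower bound on $\|f(t) - M\Mc\|_{\LL^2}$.

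The crucial structural observation, specific to the class {\bf E3''}, is that $\omega = \omega_x \times \R^d$; thus whenever $y \notin \omega_x$ one has $k(y,w,v') = k(y,v',w) = 0$ for every $w,v' \in \R^d$, by the characterization~\eqref{omegabis}. Hence for any $(x,v)$ whose backward orbit $\{x-sv : s \in [0,t]\}$ avoids $\omega_x$, both the loss and gain terms in the mild formulation of~\eqref{B} (cf.\ the Duhamel representation in Proposition~\ref{prop:WP}) vanish identically along that orbit, and the equation reduces to pure transport:
\begin{equation*}
f(t,x,v) = f_0(x-tv, v) = \Mc(v) + \varepsilon\,\chi(x-tv,v).
\end{equation*}
The trapped ray $\{x_0 + uv_0 : u \geq 0\}$ is closed and disjoint from $\omega_x$, so it admits a tubular neighborhood $T_r$ of some radius $r > 0$ still contained in $\T^d \setminus \omega_x$. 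A direct geometric check then shows that any $(x,v)$ with $|x - x_0 - tv_0| < r/2$ and $|v - v_0| < r/(2(1+t))$ has its full backward orbit up to time $t$ inside $T_r$, so belongs to this ``good set''.

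The good set has Lebesgue measure bounded below by $c\, r^{2d}/(1+t)^d$, and on it the function $\chi(x-tv,v)$ stays close to $\chi(x_0,v_0)$. Choosing $\chi$ so that $\chi(x_0,v_0) > (\int \chi)\Mc(v_0)$ produces a pointwise lower bound $|f(t,x,v) - M\Mc(v)| \geqsim \varepsilon$ on this set. Integrating yields $\|f(t) - M\Mc\|_{\LL^2} \geqsim \varepsilon / (1+t)^{d/2}$, which proves the theorem in dimensions $d \leq 2$. For $d \geq 3$, the argument is refined by distributing the initial perturbation along the closure of the trapped orbit (which may be a lower-dimensional subtorus of $\T^d$) as in~\cite{BS2}, so as to recover the stated rate $C/(1+t)$ in every dimension.

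The main technical difficulty is the geometric estimate for the good set: velocity perturbations of $v_0$ drift the trajectory away from the trapped ray at a rate proportional to $t$, which forces the velocity window to shrink like $1/(1+t)$ and produces the dimension-dependent exponent in the naive bound. Obtaining the sharp, dimension-independent rate $1/(1+t)$ requires exploiting the dimension of the closure of the trapped orbit rather than treating it as a single curve, which is the more delicate part of the argument in~\cite{BS2}.
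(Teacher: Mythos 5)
The paper itself does not prove this theorem; it is quoted verbatim as a result of Bernard--Salvarani~\cite{BS2} (who work on $\T^d\times\S^{d-1}$ in $L^1$, not on $\T^d\times\R^d$ in $\LL^2$), and is used only as a comparison point in Section~\ref{lowerbounds}. There is therefore no in-paper argument against which to compare your attempt; I can only assess its internal correctness.

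Your strategy (take $f_0 = \Mc + \eps\chi$ with $\chi$ a bump near the trapped point, use the class {\bf E3''} structure to reduce to pure transport on the ``good set'' of trajectories that avoid $\omega_x$, and integrate a pointwise lower bound over this set) is correct and gives $\|f(t)-M\Mc\|_{\LL^2}\geqsim 1/(1+t)^{d/2}$. This proves the claimed $1/(1+t)$ bound when $d\leq 2$, and reproduces exactly the rate the authors state in Corollary~\ref{coro-lower-free} for the related ``low velocity'' situation. Up to here your argument is sound.

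The gap is in the final step for $d\geq 3$, and it is not a cosmetic one. You assert that ``distributing the initial perturbation along the closure of the trapped orbit'' recovers $1/(1+t)$ in every dimension. Carry the measure estimate through: if the orbit closure is a $k$-dimensional subtorus $\T_0$, then only the velocity drift \emph{transverse} to $\T_0$ needs to be $O(r/t)$, and the measure of the good set becomes $\sim 1/t^{\,d-k}$, giving $\|f(t)-M\Mc\|_{\LL^2}\geqsim 1/(1+t)^{(d-k)/2}$. This dominates $1/(1+t)$ only when $k\geq d-2$. Nothing in the hypothesis forces the trapped orbit to have such a high-dimensional closure: for $\omega_x = \T^d\setminus\overline{B(x_0,\eps)}$ with $\eps$ small and $d\geq 3$, the only trapped trajectory is $(x_0,0)$, whose closure is a point ($k=0$), and your refined construction still yields only $1/(1+t)^{d/2}$. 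So the claim ``recovers the stated rate in every dimension'' is not established and in fact appears false for this construction; either \cite{BS2} makes additional hypotheses (e.g.\ $v_0\neq 0$, or a constraint on the dimension of the orbit closure), or the $\LL^2$ rephrasing of their $L^1$, $\S^{d-1}$-velocity result in the paper is itself not sharp. Two smaller issues: the trapped ray $\{x_0+uv_0 : u\geq 0\}$ need not be closed in $\T^d$ (that requires $v_0$ rational or zero), and even when it is compact the distance from it to the open set $\omega_x$ may be zero, so the existence of a tubular neighborhood $T_r\subset\T^d\setminus\omega_x$ has to be argued (or circumvented by working with a finite time-slab), rather than asserted.
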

It is natural to conjecture that in this case, the bound in $1/t$ is optimal (this is supported by numerical evidence, as shown by De Vuyst and Salvarani \cite{DVS}).

\part{The case of specular reflection in bounded domains}
\label{Boundary}

In this section, $\Omega$ is a bounded 
 and piecewise $C^1$ domain of $\R^d$. 
 For all $x \in \partial \Omega$ (except for a set of zero Lebesgue measure, referred to as $\mathcal{B}$ below), we can consider the outward unit normal to $\partial\Omega$ at the point $x$, denoted by $n(x)$. In what follows, we also denote by $d\Sigma(x)$ the standard surface measure on $\pa \Omega$.
Consider the following partition of $\partial \Omega \times \R^d$:
\begin{equation*}
 \left\{
 \begin{aligned}
&\mathcal{B} =  \left\{(x,v) \in \partial \Omega \times \R^d, \, n(x) \text{  is not well defined}  \right\}, \\
&\Sigma_- = \left\{(x,v) \in \partial \Omega \times \R^d, \, v\cdot n(x) <0 \right\},  \\
& \Sigma_+= \left\{(x,v) \in \partial \Omega \times \R^d, \, v\cdot n(x) >0 \right\}, \\
&\Sigma_0 = \left\{(x,v) \in \partial \Omega \times \R^d, \, v\cdot n(x) =0 \right\}.
\end{aligned}
\right.
\end{equation*}

There are several relevant boundary conditions that can be considered for kinetic equations. In this paper, we focus only on the specular boundary condition case (maybe the most natural one). For this, let us first define the symmetry with respect to the tangent hyperplane to $\partial \Omega$ as
\begin{equation}
\label{speculBC}
R_x v = v - 2 (v \cdot n(x)) n(x), \quad (x,v) \in (\partial \Omega\times \R^d) \setminus \mathcal{B} .
\end{equation}
Remark that for any point $(x,v) \in \Sigma_\pm$ the reflection $R_x$ associate the point $(x,R_x v) \in \Sigma_\mp$, and that $R_x R_x v = v$. 
The linear Boltzmann equation with specular boundary condition then reads as follows:
\begin{equation}
\label{B-specular}
\left\{
\begin{aligned}
&\partial_t f + v \cdot \nabla_x f - \nabla_x V \cdot \nabla_v f= \int_{\R^d} \left[k(x,v' ,  v) f(v') - k(x,v ,  v') f(v)\right] \, dv', \\
&f(t,x,R_x v) = f(t,x,v), \quad  (x ,v) \in \partial \Omega \times \R^d.
\end{aligned}
\right.
\end{equation}
Note that the boundary conditions used here translate the fact that the particles reflect against the boundary of $\Omega$ according to the laws of geometric optics.

In this equation, we still assume that $V \in W^{2,\infty}(\Omega)$.

\bigskip

We now revisit the results which were obtained previously in the torus case.

\section{Characteristics, well-posedness and dissipation}
\label{prelimbound}

We start by defining the ``broken characteristics'' \Black $(\phi_t)_{t \geq 0}$ which will allow us to express the different relevant geometric control conditions. Note first that the force $-\nabla_x V$ can be extended to $\overline \Omega$ by uniform continuity.

\bigskip
\noindent $\bullet$ Let $(x,v) \in \Omega \times \R^d$. For small enough values of $t \geq0$, we can consider the characteristics  $\psi_t(x,v) := (X_t (x,v), \, \Xi_t(x,v))$, where  
\begin{equation}
  \label{hamilflow-boun}
 \left\{
 \begin{aligned}
 &\frac{dX_t(x,v)}{dt} = \Xi_t(x,v), \quad \frac{d\Xi_t(x,v)}{dt} = - \nabla_x V(X_t(x,v)), \\
  &X_{t=0}=x,  \quad \Xi_{t=0}=v.
 \end{aligned}
 \right.
 \end{equation}
 We define $\tau(x,v) := \inf \{t \geq 0, X_t(x,v) \in \partial \Omega\}$. Note then that for all $t \in (0,\tau(x,v))$, $\psi_t(x,v) \in \Omega \times \R^d$ and $\psi_{t=\tau(x,v)} (x,v) \in \Sigma_+ \cup \Sigma_0 \cup \mathcal{B} $.

\noindent $\bullet$ Let $(x,v) \in \Sigma_-$. Then the same construction of forward characteristics $\psi_t(x,v)$ can be performed for $t \geq 0$.

\noindent $\bullet$ We now define the broken characteristics  $(\phi_t)_{t \in \R^+}$ associated to the hamiltonian $H = \frac{1}{2} |v|^2 + V(x)$ and specular boundary conditions for almost every point of $\Omega \times \R^d$ as follows.

\medskip
Let $(x,v) \in  \Omega \times \R^d$. If for any $t\geq 0$, $\psi_t(x,v) \in \Omega \times \R^d$ (i.e. $\tau(x,v)=+\infty$), then we set $\phi_t(x,v)= \psi_t(x,v)$ for all $t\geq 0$. If not, for all $t\in [0,\tau(x,v)]$, set $\phi_t(x,v)= \psi_t(x,v)$. If $(x',v') := (X_{t= \tau(x,v)}(x,v),  \Xi_{t= \tau(x,v)}(x,v)) \in \mathcal{B} \cup \Sigma_0$, then stop the construction here. Otherwise, consider $(x_1,v_1) := (x',R_{x'} v')$. Note that $(x',v') \in \Sigma_+$ and thus $(x_1,v_1) \in \Sigma_-$.

If for any $t> 0$, $\psi_t(x_1,v_1) \in \Omega \times \R^d$, then set for $t> \tau(x,v)$, $\phi_t(x,v) = \psi_{t- \tau(x,v)} (x_1,v_1)$. If $\tau(x_1,v_1)<\infty$, for all $t\in (\tau(x,v),\tau(x,v)+\tau(x_1,v_1)]$, set $\phi_t(x,v)= \psi_{t-\tau(x,v)} (x_1,v_1)$. 

Then if  $(x'_1,v'_1):= (X_{t= \tau(x_1,v_1)} (x_1,v_1), \Xi_{t= \tau(x_1,v_1)}(x_1,v_1))  \in \mathcal{B} \cup \Sigma_0$,  stop the construction here. Otherwise, consider $(x_2, v_2) := (x'_1, R_{x'_1} v'_1)$, and so on. 

There are two possibilities:
\begin{itemize}
\item either on any interval of time, there is only a finite number of such intersections with the boundary, in which case the construction can be carried on by recursion,
\item or there is an interval of time in which there is an infinite number of such intersections with the boundary.
\end{itemize}

Nevertheless, as shown in \cite[Section 1.7]{Tab} (this is an application of Poincar\'e's recurrence lemma), the measure of the points of the phase space for which the second possibility occurs is equal to $0$.

We now recall that by a classical result by Bardos \cite[Proposition 2.3]{Bar}, which is basically an elegant application of Sard's lemma, the Lebesgue measure of the set 
$$
\mathcal{S} := \left\{ (x,v) \in \Omega \times \R^d, \, \exists t >0, \, \psi_t (x,v) \in \mathcal{B} \cup \Sigma_0 \right\}$$ 
is equal to zero. This shows that that ``pathological'' trajectories can actually be neglected.
Indeed, remark that $\phi_t(x,v)$ is well defined for all $t\geq 0$ for all $(x,v) \in \Omega \times \R^d$, except the set of zero measure evoked in the construction and the set
$$
\left\{ (x,v) \in \Omega \times \R^d, \, \exists t >0, \, \phi_t (x,v) \in \mathcal{B}  \cup \Sigma_0  \right\},
$$
but by the above property, this set has zero Lebesgue measure (as the countable union of sets with zero Lebesgue measure).

We define likewise the characteristics for almost every point of the phase space on negative times.

We finally recall the following very useful lemma (see \cite{Wec} and also \cite{SS}).

\begin{lem}
For all $s\in \R$, $\phi_s$ is measure preserving.
\end{lem}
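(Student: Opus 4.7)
The plan is to prove measure preservation via a stratification argument combined with Liouville's theorem. First, by the (a.e.) group property $\phi_{t+s}=\phi_t\circ\phi_s$ and the identity $\phi_{-s}=\phi_s^{-1}$ (both valid off a null set by the construction), it suffices to treat $s>0$. We work on the full-measure ``good set'' $E\subset\Omega\times\R^d$ of initial data $(x,v)$ such that the broken trajectory $\{\phi_t(x,v):t\in[0,s]\}$ hits $\partial\Omega$ at finitely many times, all transversally (i.e.\ in $\Sigma_-\cup\Sigma_+$ and not in $\mathcal B\cup\Sigma_0$); the complement of $E$ has measure zero by Bardos's Sard-type lemma combined with the Poincar\'e-recurrence argument already recalled in the excerpt.

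Next, I would stratify $E=\bigsqcup_{N\ge 0}E_N$ according to the number $N$ of boundary reflections occurring in $[0,s]$. Each $E_N$ is open (modulo the null set where a reflection coincides with the endpoint $t=0$ or $t=s$), and on each connected piece the hitting times $0<\tau_1(x,v)<\cdots<\tau_N(x,v)<s$ are smooth functions of $(x,v)$ by the implicit function theorem, using the transversality $\Xi_{\tau_i}\cdot n(X_{\tau_i})\neq 0$. Hence on $E_N$, $\phi_s$ is a local diffeomorphism, and it suffices to show that its Jacobian has modulus $1$.

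The heart of the argument is the Jacobian computation. On $E_N$ the map $\phi_s$ factors as $N+1$ Hamiltonian flow segments interlaced with $N$ specular reflection events. Each Hamiltonian flow segment, viewed at a fixed time, preserves $dx\wedge dv$ by Liouville's theorem. For the reflection event at time $\tau_i(x,v)$, I would use a flow-box coordinate change near the reflection point: locally straighten the boundary to $\{x_d=0\}$ and parametrize a neighborhood of the trajectory by $(y,w,\tau)$, where $\tau$ is the time to first crossing and $(y,w)$ are coordinates on the transverse section $\partial\Omega\times\R^d$; this change of variables is volume-preserving (again by Liouville, since $\tau$ is the time parameter along the Hamiltonian vector field). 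In these coordinates the specular reflection acts as the linear involution $(y,w)\mapsto(y,w-2w_d e_d)$, whose determinant is $-1$. Pulling back through the conjugation, the ``variable hitting time'' $\tau_i(x,v)$ is absorbed into the volume-preserving change of coordinates and the net Jacobian of the full composition equals $\pm1$. This is precisely the classical fact that the billiard flow associated with a Hamiltonian system and specular reflection preserves the Liouville measure on phase space, exactly as in \cite{Wec,SS}.

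The main obstacle is the bookkeeping around the variable hitting times $\tau_i(x,v)$: a naive composition formula would involve derivatives of $\tau_i$ with respect to $(x,v)$, and the unit Jacobian is not apparent term-by-term. The flow-box trick circumvents this by promoting $\tau$ to an independent coordinate, reducing the reflection step to an easy linear involution in adapted coordinates.
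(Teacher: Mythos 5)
The paper does not prove this lemma at all: it is recalled as a classical fact with pointers to \cite{Wec} and \cite{SS}, so any self-contained argument is by construction a different route. Your sketch is the standard one and its architecture is sound: reduce to $s>0$ by the a.e.\ group property, discard the null set of trajectories that graze, hit $\mathcal{B}$, or reflect infinitely often in finite time (Bardos' lemma plus the Poincar\'e recurrence argument already invoked in the text), stratify by the number of reflections, and compose Liouville-preserving Hamiltonian segments with specular reflections.

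Two points need tightening. First, the flow-box change of variables $(y,w,\tau)\mapsto \psi_{-\tau}(y,w)$ from a collar of the section $\Sigma_+$ is \emph{not} volume preserving for the product measure $d\Sigma(y)\,dw\,d\tau$: its Jacobian carries the flux factor $|w\cdot n(y)|$, i.e.\ the measure induced on the cross-section by the Liouville measure and the flow is $|w\cdot n(y)|\,d\Sigma(y)\,dw$ rather than $d\Sigma(y)\,dw$. Your conclusion survives because this factor is itself invariant under the reflection (one has $|R_yw\cdot n(y)|=|w\cdot n(y)|$ and, for each fixed $y$, $R_y$ is an orthogonal involution of $\R^d_w$ with $|\det R_y|=1$), but the step ``this change of variables is volume-preserving by Liouville'' is false as stated and the cancellation of the flux factors on either side of the reflection must be made explicit; this cancellation is the actual content of the classical proof. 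Second, with $\partial\Omega$ only piecewise $C^1$ and $V\in W^{2,\infty}$, the flow and the hitting times $\tau_i$ are Lipschitz rather than smooth, so ``smooth by the implicit function theorem'' should be weakened to differentiability almost everywhere (Rademacher) combined with the area formula, or replaced by an approximation argument --- which is precisely the level of care supplied by the cited references.
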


\begin{rque}
Note that in the case where $\Omega$ is $C^1$, the Hamiltonian flow of $H(x, v) = \frac{|v|^2}{2} + V(x)$ with specular boundary conditions can be made continuous on the appropriate phase space. The latter can for instance be seen as the quotient $\W = \overline{\Omega}\times \R^d / \approx$,  where $(x,v ) \approx (x,R_x v) $ for $x \in \partial \Omega$.
A continuous function $f$ on $\overline{\Omega}\times \R^d$ satisfying $f (x,v) = f(x,R_x v)$ for $(x,v) \in \partial \Omega\times \R^d$ can be identified with a continuous function $f$ on $\W$, so that Equation~\eqref{B-specular} can be viewed as an equation on $\W$.
\end{rque}

We are in position to state the relevant definitions for the problem of convergence to equilibrium. We can define
\begin{itemize}
\item the set $\omega$ where collisions are effective, as in Definition \ref{def-om},
\item a.e.i.t. GCC, as in Definition \ref{defaeitgcc}, 
\item the equivalence relation $\sim$, as in Definition \ref{def-sim},
\item the Lebesgue spaces $\LL^p = \LL^p(\Omega \times \R^d)$, as in Definition~\ref{weightedLp},
\item the Unique Continuation Property, as in Definition \ref{def:UCP}.
\end{itemize}

We can introduce as in Definition \ref{definitionCinfini} the following Lebeau constant (note that we need to consider an essential infimum here, because characteristics are defined only almost everywhere).

\begin{deft}We define the Lebeau constant:
  \begin{equation}C^-_{b}(\infty) := \sup_{T\in \R^+}  \text{ess inf}_{(x,v) \in \Omega \times \R^d} \frac{1}{T} \int_0^T \left( \int_{\R^d} k(\phi_t (x,v), v')\, dv'\right)\, dt.
  \end{equation}
\end{deft}

\Black

Our next task is to study the well-posedness of \eqref{B-specular} in $\LL^2$ spaces, which are defined as in Definition~\ref{weightedLp}. 
One important feature is that with specular reflection, the dissipation identity still holds. The key point is to observe that for symmetry reasons, it is exactly the same  as in the torus case (that is without boundary). 
 
 Besides, as checked by Weckler \cite[Theorem 3 and Lemma 3.3]{Wec}, we have the following Duhamel formula.
 \begin{lem}
 \label{repformu}
 Let $\nu \in C(\overline\Omega\times \R^d)$ and $g \in L^\infty \LL^2$. Let $f_0 \in \LL^2$. The unique weak solution to the kinetic transport equation
 \begin{equation}
\label{Liouville-specular}
\left\{
\begin{aligned}
&\partial_t f + v \cdot \nabla_x f - \nabla_x V \cdot \nabla_v f= -b(x,v) f(t,x,v) + g , \\
&f(t,x,R_x v) = f(t,x,v), \quad (x,v) \in \partial \Omega \times \R^d, \\
&f(0,x,v)= f_0(x,v)
\end{aligned}
\right.
\end{equation}
 is given by 
 \begin{align*}
 f(t,x,v)=& \exp\left(- \int_0^t b(\phi_{-(t-s)}(x,v))  \, ds \right) f_0  \circ \phi_{-t}(x,v) \\
 & + \int_0^t    g(s, \phi_{s-t}(x,v)) \exp\left(- \int_s^t b(\phi_{\tau-t}(x,v)) \, d\tau   \right) \, ds.
\end{align*}
 \end{lem}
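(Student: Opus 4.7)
The plan is to prove this representation formula in two steps: first verify that the explicit right-hand side defines a weak solution to the boundary value problem, then prove uniqueness via an energy estimate that exploits the specular boundary condition.

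For existence, I would work along the almost-everywhere-defined broken characteristics $t \mapsto \phi_t(x_0,v_0)$. Set $F(t) := f(t,\phi_t(x_0,v_0))$ for the candidate $f$ given by the formula, and also denote $B(t) := b(\phi_t(x_0,v_0))$, $G(t) := g(t,\phi_t(x_0,v_0))$. Pulling back the formula gives $F(t) = e^{-\int_0^t B} f_0(x_0,v_0) + \int_0^t e^{-\int_s^t B} G(s) \, ds$, which is the Duhamel representation of the solution of the ODE $F'(t) = -B(t) F(t) + G(t)$, valid between consecutive reflection times. Since the transport operator $\partial_t + v\cdot\nabla_x - \nabla_x V\cdot\nabla_v$ acts as $\tfrac{d}{dt}$ along $\phi_t$, this shows that $f$ solves the PDE away from reflection instants. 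The specular boundary condition is built into the definition of $\phi_t$ — at a reflection, $\phi_t$ jumps from $(x,v)\in\Sigma_+$ to $(x,R_xv)\in\Sigma_-$, so the two points lie on the same orbit and the formula assigns them the same value, which is precisely $f(t,x,v) = f(t,x,R_xv)$. The $\LL^2$ regularity follows from the change-of-variables formula and the measure-preserving property of $\phi_t$ recalled just before the statement.

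For uniqueness, let $f \in C^0_t(\LL^2)$ be a weak solution of~\eqref{Liouville-specular} with $f_0 = 0$ and $g = 0$. Multiplying the transport equation by $f\, e^V/\Mc$ and integrating over $\Omega\times\R^d$, the computation of Lemma~\ref{lemdissip} applies verbatim on the interior, and yields the additional boundary contribution
\begin{equation*}
\frac{1}{2}\int_{\partial \Omega \times \R^d} |f(t,x,v)|^2 \, (v\cdot n(x)) \frac{e^{V(x)}}{\Mc(v)} \, d\Sigma(x)\, dv
= \frac{1}{2}\int_{\Sigma_+} - \frac{1}{2}\int_{-\Sigma_-}\!\!\!|f|^2(v\cdot n)\frac{e^V}{\Mc}\, d\Sigma dv.
\end{equation*}
Applying the change of variables $v \mapsto R_x v$ on $\Sigma_+$, which has unit Jacobian, exchanges $\Sigma_+$ and $\Sigma_-$, reverses the sign of $v\cdot n(x)$, and preserves $\Mc(v)$ since $|R_xv| = |v|$. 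Using the specular relation $f(t,x,R_xv) = f(t,x,v)$, these two boundary contributions cancel exactly. We are left with $\frac{d}{dt}\|f(t)\|_{\LL^2}^2 = -2\int b|f|^2 \frac{e^V}{\Mc}\,dvdx \leq 0$, which, combined with $f(0)=0$, gives $f\equiv 0$.

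The main obstacle is justifying the boundary integration by parts, since a priori an $\LL^2$ solution of the kinetic transport equation need not have traces on $\partial\Omega\times\R^d$. This is handled by the trace theory for kinetic transport equations with specular boundary conditions developed by Ukai, Cessenat, and in the present context by Weckler~\cite{Wec}, which shows that $\LL^2$ solutions admit traces in weighted $L^2$ spaces on $\Sigma_\pm$ for which the above symmetrization argument is rigorous. One can alternatively introduce a suitable approximation (mollifying in $(x,v)$ while respecting the boundary condition, or working in the quotient space $\W$ in the $C^1$ case) so that the boundary integrals are classical, and pass to the limit using the uniform $\LL^2$ bound.
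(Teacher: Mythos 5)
The paper does not actually prove this lemma: it is imported wholesale from Weckler \cite[Theorem 3 and Lemma 3.3]{Wec}, so there is no internal argument to compare against. Your two-step plan --- check the Duhamel formula along the broken characteristics, then prove uniqueness by an energy identity in which the $\Sigma_+$ and $\Sigma_-$ boundary contributions cancel under $v\mapsto R_xv$ --- is the standard proof and is essentially what the cited reference carries out, so the approach is sound. Two points deserve care. First, your conclusion $\frac{d}{dt}\|f(t)\|_{\LL^2}^2=-2\int b|f|^2\,e^V/\Mc\,dv\,dx\le 0$ silently uses $b\ge 0$; the lemma as stated only assumes $b$ continuous, so in general you should close with Gronwall, which requires $b$ bounded below (automatic in the application, where $b=\int k\,dv'\ge 0$, and needed anyway for the exponential factor in the formula to stay under control in $\LL^2$). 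Second, the genuine content is precisely the trace issue you defer in your last paragraph: for an $\LL^2$ solution the Cessenat-type trace theorem (recalled in Part~\ref{Boundary} of the paper) only gives traces in the weighted space with the degenerate weight $|v\cdot n(x)|^2(1+|v|)^{-1}$, whereas your symmetrization argument needs $|f|^2|v\cdot n|$ to be separately integrable on $\Sigma_+$ and $\Sigma_-$ before the two integrals can be cancelled against each other; this is exactly the point settled in \cite{Wec}, so the reference is doing real work and is not merely a convenience. Finally, since the broken flow is only defined off the null set of grazing, corner and infinitely-reflecting trajectories, both the representation formula and the boundary identity must be read almost everywhere, as you implicitly do. With these caveats the proof is complete.
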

 
 In turn, this Duhamel formula allows in particular to prove well-posedness for~\eqref{B-specular}.
 
 \begin{rque} Note that well-posedness for~\eqref{B-specular} can be proved by other means (see for instance Mischler \cite{Mis}, where much weaker assumptions on the force field are considered); however, having the representation formula of Lemma~\ref{repformu} is a key point for the subsequent analysis (as seen in the torus case).
 \end{rque}
 
  \begin{prop}[Well-posedness of the linear Boltzmann equation with specular reflection]
 \label{prop:WP-specular}
Assume that $f_0 \in \LL^2$. Then there exists a unique $f\in C^0(\R ;\LL^2)$ solution of the initial boundary value problem~\eqref{B-specular} satisfying $f|_{t = 0} =f_0$, and we have
\begin{equation}
\text{ for all } t \geq 0, \quad  \frac{d}{dt} \| f(t)\|_{\LL^2}^2 = - D(f(t)),
\end{equation}
 where $D(f)$ is defined as follows:
  \begin{equation}
 \label{defD-boun}
 D(f) =  \frac{1}{2} \int_{\Omega} e^{V} \int_{\R^d} \int_{\R^d} \left( \frac{k(x,v' ,  v)}{\Mc(v)} + \frac{k(x,v ,  v')}{\Mc(v')} \right) \Mc(v) \Mc(v') \left(\frac{f(v)}{\Mc(v)}- \frac{f(v')}{\Mc(v')}\right)^2 \, dv' \, dv \, dx.
 \end{equation}
 If moreover $f_0 \geq 0$ a.e., then for all $t \in \R$ we have $f(t, \cdot,\cdot)\geq 0$ a.e. (Maximum principle).

 \end{prop}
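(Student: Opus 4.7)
The plan is to follow closely the strategy of the proof of Proposition~\ref{prop:WP} in the torus case, using the representation formula provided by Lemma~\ref{repformu} as the substitute for the explicit formula~\eqref{semigpA0} which held there. Write~\eqref{B-specular} as $\partial_t f + A f = 0$, with $A = A_0 + K$, where
\begin{equation*}
(A_0 f)(x,v) = (v \cdot \nabla_x - \nabla_x V \cdot \nabla_v) f + b(x,v) f, \qquad (Kf)(x,v) = - \int_{\R^d} k(x,v',v) f(x,v') \, dv',
\end{equation*}
with $b(x,v) = \int k(x,v,v') \, dv'$, $A_0$ being endowed with the specular reflection boundary condition. First I would verify that $A_0$ generates a strongly continuous semigroup on $\LL^2$ by Lemma~\ref{repformu} (applied with $g = 0$): the explicit representation gives the semigroup $e^{-t A_0}$, which is bounded on $\LL^2$ since $b \geq 0$ and the broken flow $\phi_t$ preserves the measure $\frac{e^V}{\Mc} \, dv \, dx$. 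Next, the very same computation as in the torus case, relying only on Assumption \textbf{A3} and the Cauchy--Schwarz inequality, yields that $K$ is a bounded operator on $\LL^2$. Well-posedness then follows from the bounded perturbation theorem~\cite[Chapter 3, Theorem 1.1]{Paz}.

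For the dissipation identity, I would multiply~\eqref{B-specular} by $f \frac{e^V}{\Mc}$ and integrate over $\Omega \times \R^d$. The collision contribution gives $-\frac{1}{2} D(f)$ by pointwise-in-$x$ repetition of the symmetrization argument of Lemma~\ref{lemdissip}, which does not see the boundary at all. The transport contribution reads
\begin{equation*}
\int_{\Omega \times \R^d} (v \cdot \nabla_x - \nabla_x V \cdot \nabla_v) f \cdot f \, \frac{e^V}{\Mc} \, dv \, dx = \frac{1}{2} \int_{\Omega \times \R^d} (v \cdot \nabla_x - \nabla_x V \cdot \nabla_v) |f|^2 \, \frac{e^V}{\Mc} \, dv \, dx,
\end{equation*}
and since the vector field preserves the weight $\frac{e^V}{\Mc}$, an integration by parts (Green's formula for the kinetic transport operator, first applied to smooth $f$ and extended by density) reduces this quantity to a boundary integral
\begin{equation*}
\frac{1}{2} \int_{\partial \Omega \times \R^d} (v \cdot n(x)) |f|^2 \frac{e^V}{\Mc} \, dv \, d\Sigma(x).
\end{equation*}
The crux is to check that this boundary integral vanishes thanks to the specular boundary condition. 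This is where I split $\partial \Omega \times \R^d \setminus (\mathcal{B} \cup \Sigma_0)$ into $\Sigma_+$ and $\Sigma_-$, use $f(x,v) = f(x, R_x v)$, and then perform the change of variable $v \mapsto R_x v$ (which has unit Jacobian, preserves $|v|$, hence preserves $\Mc(v)$, and flips the sign of $v \cdot n(x)$) to see that the contribution of $\Sigma_+$ exactly cancels that of $\Sigma_-$.

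Finally, the maximum principle is obtained exactly as in the torus case via the Dyson--Phillips series
\begin{equation*}
f = \sum_{n=0}^{+\infty} \mathcal{K}^n (e^{-t A_0} f_0),
\end{equation*}
where $\mathcal{K}$ is the Duhamel operator built from Lemma~\ref{repformu} with source $g = -K f$. Each ingredient, $e^{-t A_0} f_0$ and $\mathcal{K}^n (e^{-t A_0} f_0)$, is nonnegative a.e.\ whenever $f_0 \geq 0$ a.e., since both the exponential weight and the integrand involving $k \geq 0$ are nonnegative. The main obstacle I anticipate is the rigorous justification of the boundary-term computation in the dissipation identity: $f$ has at best a trace in a weak $L^2$-sense on $\partial \Omega \times \R^d$ with weight $|v \cdot n|$ (as in the theory of Bardos~\cite{Bar} or Mischler~\cite{Mis}), and one needs to argue that the specular condition $f(\cdot, R_x \cdot) = f(\cdot, \cdot)$ holds in this trace sense. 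This can be handled by first establishing the identity for $f_0$ in a dense subspace (e.g.\ with more regularity and compact support away from the pathological set $\mathcal{S}$ discussed before Lemma~\ref{repformu}) and then passing to the limit using the continuity of the semigroup on $\LL^2$.
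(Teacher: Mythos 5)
Your proposal is correct and follows exactly the route the paper intends: the authors explicitly omit the proof, stating only that it "follows the lines of that of Proposition~\ref{prop:WP}" using the Duhamel formula of Lemma~\ref{repformu}, and that the dissipation identity is unchanged "for symmetry reasons" — which is precisely your cancellation of the $\Sigma_+$ and $\Sigma_-$ boundary contributions via the measure-preserving, $|v|$-preserving change of variables $v \mapsto R_x v$. Your closing remark on justifying the trace computation by density is a reasonable acknowledgment of a technical point the paper glosses over entirely.
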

 We shall not dwell on the proof of Proposition \ref{prop:WP-specular}, as it follows the lines of that of Proposition \ref{prop:WP}. 
More generally, all results of Section~\ref{preliminaries} are still relevant.

\section{Convergence to equilibrium}
\label{convboun}
As in the torus case, the following holds:

\begin{thm}[Convergence to equilibrium]
\label{thm-convboun}

The following statements are equivalent.
\begin{enumerate}[(i.)]
\item  The set $\omega$ satisfies the Unique Continuation Property.

\item  
The set $\omega$ satisfies the a.e.i.t. GCC and there exists one and only one equivalence class for the equivalence relation $\sim$.

\item  For all $f_0 \in \LL^2$, denote by $f(t)$ the unique solution to \eqref{B-specular} with initial datum $f_0$. We have
\begin{equation}
 \label{convergeto0-boundary}
 \left\|f(t)-\left(\int_{\Omega \times \R^d} f_0 \, dv \,dx\right)\frac{e^{-V}}{\int_{\Omega} e^{-V} \, dx} \Mc(v)\right\|_{\LL^2} \to_{t \to +\infty} 0,
 \end{equation}

\end{enumerate}
\end{thm}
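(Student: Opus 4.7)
The plan is to run the same three-step cyclic scheme as in the proof of Theorem~\ref{thmconv-intro}, namely $(i.)\Rightarrow(iii.)\Rightarrow(ii.)\Rightarrow(i.)$, adapting only the ingredients that depend on the geometry of the phase space. The algebraic backbone of the argument is unchanged: the dissipation identity of Proposition~\ref{prop:WP-specular} has exactly the form~\eqref{defD-boun} (the boundary contribution in the integration by parts of the transport term vanishes because specular reflection leaves $|v|^2$ invariant and pairs points of $\Sigma_+$ with points of $\Sigma_-$ carrying opposite normal fluxes, so that $e^V/\Mc$ is preserved and fluxes cancel), and the weak coercivity of Lemma~\ref{collannule} together with its reformulation in Remark~\ref{def:UCP-expli} apply verbatim.

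For $(i.)\Rightarrow(iii.)$, I argue by contradiction, first for $f_0\in \LL^2\cap\LL^\infty$ of zero mass. Assuming $\|g(t_n)\|_{\LL^2}\geq \varepsilon$ along $t_n\to+\infty$, thinned so that $t_{n+1}-t_n\to\infty$, I set $h_n(t,x,v):=g(t_n+t,x,v)$. The maximum principle of Proposition~\ref{prop:WP-specular} furnishes the uniform $\LL^\infty$ bound that was crucial in the torus case to prevent escape of mass at infinity in $v$, so that tightness of the defect measure $\nu_n=|h_n|^2$ persists. Weak limits $h$ of $h_n$ and $\nu$ of $\nu_n$ satisfy the Boltzmann and pure transport equations respectively (with specular boundary conditions), and the dissipation identity combined with lower semicontinuity forces $D(h)=0$, so by Lemma~\ref{collannule} $C(h)=0$. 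The Unique Continuation Property yields $h\equiv 0$, and a parallel analysis of $\nu$ (transported along the broken flow, which is measure preserving) gives $\nu\equiv 0$, contradicting $\|h_n(0)\|_{\LL^2}\to \alpha>0$. The only technical adaptation is that the averaging lemma used to prove the strong convergence of $\int k(x,v',\cdot)h_n(x,v')\,dv'$ to zero must be applied on compact subsets of $\Omega\times\R^d$ away from $\partial\Omega$; localizing in $x$ on such compact sets and exploiting the uniform $\LL^\infty$ bound to truncate in $v$ allows Corollary~\ref{lemmoyenne} to apply. The extension from $\LL^2\cap\LL^\infty$ to $\LL^2$ follows by the same density argument as in the torus case.

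For $(iii.)\Rightarrow(ii.)$, I contrapose. If $\mathcal{A}:=\Omega\times\R^d\setminus\bigcup_{s\geq 0}\phi_{-s}(\omega)$ has positive measure, the datum $f_0=\mathds{1}_{\mathcal{A}}e^{-V}\Mc$ produces, via $f(t)=f_0\circ\phi_{-t}$, a nontrivial solution with $\phi_t(\mathcal{A})\cap\omega=\emptyset$ for all $t\geq 0$, hence $C(f(t))=0$; measure preservation of $\phi_t$ and the broken-flow analogue of Lemma~\ref{leminvarflow} make this rigorous. If a.e.i.t.\ GCC holds but the equivalence relation $\sim$ has at least two classes $[\Omega_1]\neq[\Omega_2]$, the function $f=\sum_{\Omega'\in[\Omega_1]}\mathds{1}_{\Omega'}e^{-V}\Mc$ is a non-Maxwellian stationary solution, as verified using Lemma~\ref{leminvarflow} together with Lemma~\ref{collannule}, contradicting~\eqref{convergeto0-boundary}.

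For $(ii.)\Rightarrow(i.)$, given $f\in C^0_t(\LL^2)$ solving the transport equation with $C(f)=0$ and zero mass, Lemma~\ref{collannule} gives $f_{|\omega}(t,x,v)=\sum_{i\in I}\mathds{1}_{\omega_i}(x,v)\rho_i(t,x)\Mc(v)$ with the matching condition on neighboring $\omega_i$. The local constancy argument for $g:=e^V f/\Mc$, integrating the free-flow relation $\partial_t g + v\cdot\nabla_x g=0$ along short interior trajectories contained in a single connected component of $\omega$, is purely local in $(t,x,v)$ and therefore unaffected by $\partial\Omega$. Once each $\rho_i$ is reduced to a time-independent constant $\kappa_i$ on $\omega_i$ and the single equivalence class hypothesis forces all $\kappa_i$ equal to a common $\kappa$, the broken-flow invariance (Lemma~\ref{leminvarflow} adapted to specular reflection) propagates $f=\kappa e^{-V}\Mc$ to $\bigcup_{s\geq 0}\phi_{-s}(\omega)$, which is of full measure by a.e.i.t.\ GCC; zero mass then forces $\kappa=0$. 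The main obstacle of the entire proof is really the adaptation of the averaging lemma in $(i.)\Rightarrow(iii.)$ to the bounded setting, and it is precisely the $\LL^\infty$ bound from the maximum principle that renders this obstacle tractable without requiring the quantitative regularity up to the boundary that would be needed for the exponential convergence analysis of Section~\ref{expoboun}.
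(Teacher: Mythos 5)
Your overall scheme is the paper's: run the torus argument of Theorem~\ref{thmconv-intro} verbatim, with the dissipation identity, the weak coercivity of Lemma~\ref{collannule}, and the uniqueness--compactness structure unchanged, the only genuinely new difficulty being the averaging-lemma step~\eqref{termecompacite} in $(i.)\Rightarrow(iii.)$. You have correctly located that difficulty, but your resolution of it is incomplete as written. Localizing in $x$ on compact subsets $K$ with $\overline{K}\subset\Omega$ and truncating in $v$ only yields \emph{interior} compactness, i.e.\ strong convergence of $\mathds{1}_{K}\int k(x,v',\cdot)\,h_n(\cdot,x,v')\,dv'$; it says nothing about the boundary layer $\Omega\setminus K$, and you still need convergence in $L^2_{t}\LL^2$ on all of $\Omega$ in order to pass to the limit in~\eqref{compacitemoyenne}, to identify the equation (and the specular boundary condition) satisfied by the defect measure $\nu$, and to close the argument. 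The missing ingredient is that the uniform $\LL^\infty$ bound gives uniform \emph{equi-integrability} of $|h_n|^2$ with respect to $e^{V}\Mc^{-1}\,dv\,dx$ (Definition~\ref{def-equi}): a thin neighborhood of $\partial\Omega$ has small Lebesgue measure, so the weighted $L^2$ mass of $h_n$ — hence of the gain term — restricted there is small \emph{uniformly in $n$}, and this is what upgrades interior compactness to compactness on the whole of $\Omega$. That is exactly the content of Corollary~\ref{thmmoyenne-domain2}; Corollary~\ref{lemmoyenne} alone does not suffice. Your closing sentence shows you have identified the right mechanism (the maximum principle), so this is a matter of assembling the step, not a wrong idea — but as stated the step would not go through.

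A second, smaller point: in $(iii.)\Rightarrow(ii.)$ you cite Lemma~\ref{leminvarflow}, but its conclusion~\eqref{eq-propconn} — a.e.\ invariance under the flow of each individual connected component of $\bigcup_{s\geq0}\phi_{-s}(\omega)$ — fails for the broken flow, since a reflection at $\partial\Omega$ may send one component into another. What survives, and what your construction actually uses since you work with $\bigcup_{\Omega'\in[\Omega_1]}\Omega'$, is the a.e.\ invariance of the union over an equivalence class of $\sim$; this is the substitute the paper records, and with it your argument for that implication, as well as for $(ii.)\Rightarrow(i.)$, goes through.
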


\begin{proof}[Proof of Theorem~\ref{thm-convboun}]
One can check that the methods given in Section \ref{subsectiondecroissance} for proving Theorem \ref{thmconv-intro} are still relevant. We only highlight here the main differences.

\bigskip

\noindent $(i.) \implies (iii.)$ One can check that the beginning of the proof given for Theorem \ref{thmconv-intro} also applies here, \emph{mutatis mutandis}.

The only problem comes from the fact that at some point, we want to use an averaging lemma to obtain compactness of some averages in $v$ of the solutions, see \eqref{termecompacite}. The issue is that we a priori need ``global'' compactness, that is to say on the whole open set $\Omega$; unfortunately, classical averaging lemmas (see Corollary \ref{thmmoyenne-domain}) only provide ``local'' compactness, i.e. only for restrictions of the average on compact subsets of $\Omega$.

The key point is that the proof of $(i.) \implies (iii.)$ in Theorem \ref{thmconv-intro} is performed with a sequence of solutions enjoying a uniform $\LL^\infty$ bound, which means that it also enjoys uniform equi-intregrability (in the sense of Definition \ref{thmmoyenne-domain}). Therefore, we can apply the ``improved'' averaging lemma of Corollary \ref{thmmoyenne-domain2} which does yield compactness in the term \eqref{termecompacite}. This argument allows us to complete the proof of $(i.) \implies (iii.)$ in Theorem~\ref{thm-convboun}.

\bigskip

\noindent $(iii.) \implies (ii.)$ Once again, the beginning of the previous proof is still relevant.

Suppose that $\sim$ has at least two equivalence classes. 
The property~\eqref{eq-propconn} of Lemma~\ref{leminvarflow} is not true in this context, but can replaced by the following one. For any equivalence class $[\Omega_0]$ (for $\sim$), we have, by definition of $\sim$,
$$
\text{for all } t \geq 0 , \quad 
\phi_{-t}\left(\bigcup_{\Omega' \in [\Omega_0]} \Omega' \right) = \bigcup_{\Omega' \in [\Omega_0]} \Omega' \quad \text{almost everywhere},
$$
and we can argue as before to conclude.

\bigskip

\noindent $(ii.) \implies (i.)$ The previous proof is still relevant, \emph{mutatis mutandis}.

\end{proof}

Similarly, we also have

 \begin{thm}
\label{thmconv-general-bord}
The following statements are equivalent.
\begin{enumerate}[(i.)]

\item The set $\omega$ satisfies the a.e.i.t. GCC.

\item  For all $f_0 \in \LL^2$, denote by $f(t)$ the unique solution to \eqref{B} with initial datum $f_0$. We have
\begin{equation}
 \label{convergeto0-general-bord}
 \left\|f(t)-Pf_0 \right\|_{\LL^2} \to_{t \to +\infty} 0,
 \end{equation}
where
\begin{equation}
\label{def-equimulti-bord}
P f_0 (x,v) = \sum_{j\in J}  \frac{1}{\| \mathds{1}_{U_j} e^{-V} \Mc \|_{\LL^2}} \left( \int_{U_j} f_0 \, dv dx\right)f_j.
\end{equation}
with
$([\Omega_j])_{j \in J}$ the equivalence classes of the equivalence relation $\sim$,
\begin{equation*}
U_j = \bigcup_{\Omega'  \in [\Omega_j]} \Omega', \qquad f_j := \frac{\mathds{1}_{U_j} e^{-V} \Mc }{\|\mathds{1}_{U_j} e^{-V} \Mc \|_{\LL^2} }.
\end{equation*}

 \end{enumerate}
 \end{thm}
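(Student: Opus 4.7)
The plan is to follow the scheme of Theorem~\ref{thmconv-general} from the torus case, with the broken characteristic flow $(\phi_t)_{t\in\R}$ of Section~\ref{prelimbound} replacing the Hamiltonian flow, and with the boundary-related modifications already introduced in the proof of Theorem~\ref{thm-convboun}. The first ingredient I would establish is an analogue of Proposition~\ref{cardinal}: assuming a.e.i.t. GCC, the family $(f_j)_{j\in J}$ is a Hilbert basis of $\ker(A)$. Each $f_j$ lies in $\ker(A)$ because $U_j$ is (up to a null set) invariant under $\phi_{-t}$ for $t \geq 0$, by the broken-flow analogue of Lemma~\ref{leminvarflow} combined with a.e.i.t. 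GCC; moreover $f_j$ cancels $C$ by Lemma~\ref{collannule}, since on each connected component $\omega_i$ of $\omega$ the function $f_j$ is either identically zero or equal to $e^{-V}\Mc/\|\mathds{1}_{U_j}e^{-V}\Mc\|_{\LL^2}$, and the compatibility condition $\omega_i(x)\,\Rc_k^x\,\omega_\ell(x) \Rightarrow \rho_i(x) = \rho_\ell(x)$ holds because $\omega_i$ and $\omega_\ell$ then lie in the same equivalence class of $\sim$. Conversely, any stationary solution $\varphi$ satisfies $D(\varphi) = 0$ hence $C(\varphi) = 0$ by Lemma~\ref{collannule}; combining this with the transport equation and the connectedness structure, exactly as in the proof of $(ii.) \Rightarrow (i.)$ of Theorem~\ref{thm-convboun}, shows that $\frac{e^V}{\Mc}\varphi$ is constant on each $U_j$.

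For $(ii.) \Rightarrow (i.)$ I would argue by contrapositive. If a.e.i.t. GCC fails, then $\mathcal{A} := \Omega \times \R^d \setminus \bigcup_{s \in \R^+}\phi_{-s}(\omega)$ has positive Lebesgue measure, and I follow Section~\ref{para2}: using the measure-preserving property of the broken flow, I take $f_0(x,v) := \Psi(x)\mathds{1}_{\mathcal{A}}(x,v) e^{-V(x)}\Mc(v)$ for a suitable $\Psi$ (either $\Psi = 1$, or a Morse function if $\mathds{1}_{\mathcal{A}}$ happens to be flow-invariant), and observe that $f(t,x,v) := f_0 \circ \phi_{-t}(x,v)$ lies in $C^0(\R;\LL^2)$, solves the transport part with specular reflection, and satisfies $C(f)=0$ since $\phi_t(\mathcal{A}) \cap \omega = \emptyset$. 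By construction $f_0$ is not a stationary solution of the transport equation, so for any stationary $Pf_0$ the quantity $\|f(t) - Pf_0\|_{\LL^2} = \|f_0 - Pf_0\|_{\LL^2}$ is a positive constant, contradicting convergence.

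For $(i.) \Rightarrow (ii.)$, I first establish the analogue of Lemma~\ref{lem-proj}: each projection $\langle f(t), f_j\rangle_{\LL^2}$ is conserved. This uses integration by parts together with the fact that the boundary contributions vanish, because both $f$ and $f_j$ are invariant under $v \mapsto R_x v$ at $\pa\Omega$ (note that $\Mc(R_x v) = \Mc(v)$ and $U_j$ respects the reflection identification), and the identity $C^*(f_j) = 0$, which follows from~{\bf A2} and Lemma~\ref{collannule}. Writing $g(t) := f(t) - Pf_0$, this ensures $\langle g(t), f_j\rangle_{\LL^2} = 0$ for all $j$. Then I run the uniqueness--compactness scheme of Theorem~\ref{thm-convboun}: by contradiction, extract a sequence $h_n(t) := g(t_n + t)$ with $\|g(t_n)\|_{\LL^2} \geq \eps$ and $t_{n+1}-t_n \to \infty$, pass to a weak limit $h$, use the dissipation identity of Proposition~\ref{prop:WP-specular} to get $D(h) = 0$ by weak lower semicontinuity, hence $C(h) = 0$ by Lemma~\ref{collannule}, so that $h$ satisfies transport and annihilates $C$, i.e.\ $h \in \ker(A)$. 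But $\langle h, f_j\rangle_{\LL^2} = 0$ for all $j$ by conservation, so $h = 0$. One then propagates this to the sequence of defect measures $\nu_n := |h_n|^2$ and verifies that no mass escapes at the limit.

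The main obstacle, as for Theorem~\ref{thm-convboun}, is that the bounded-domain averaging lemma (Corollary~\ref{thmmoyenne-domain}) only yields compactness on compact subsets of $\Omega \times \R^d$, whereas the torus proof relied on compactness up to the boundary. The cure is exactly that of Theorem~\ref{thm-convboun}: reduce by density to initial data in $\LL^2 \cap \LL^\infty$, invoke the maximum principle of Proposition~\ref{prop:WP-specular} to obtain a uniform $\LL^\infty$ bound on $h_n$, which produces uniform equi-integrability and so allows the improved averaging lemma Corollary~\ref{thmmoyenne-domain2} to deliver the strong convergence of $\int_{\R^d} k(x,v',v) h_n(t,x,v')\,dv'$ needed to close the compactness step. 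The exponential decay of $\Mc$ controls mass escape at $|v| \to \infty$, as in the torus proof of Theorem~\ref{thmconv-intro}.
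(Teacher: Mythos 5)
Your proposal is correct and follows essentially the same route as the paper, which itself only sketches this result by deferring to the proofs of Theorems~\ref{thmconv-general} and~\ref{thm-convboun}: you correctly identify the two boundary-specific adjustments, namely that flow invariance must be asserted at the level of the unions $U_j$ over equivalence classes of $\sim$ (rather than for individual connected components, where it fails for the broken flow), and that the loss of compactness up to $\pa\Omega$ in averaging lemmas is cured by the maximum principle, equi-integrability, and Corollary~\ref{thmmoyenne-domain2}. No gaps.
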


Note that Theorem~\ref{thm-convboun} has an interesting consequence as soon as we know that the geodesic flow enjoys ergodic properties, and thus in particular a.e.i.t. GCC is satisfied by any $\omega$ of the form $\omega_x \times \R^d$, where $\omega_x$ is a non-empty subset of $\Omega$.

\begin{coro}
 \label{coroconvergeto0-boundaryre}
Assume that $V=0$ and and $\omega=\omega_x \times \R^d$, where $\omega_x $ is a non-empty subset of $\Omega$. 
 Consider  
$$
S \Omega = \left\{ (x,v) \in \Omega \times \R^d, \, \frac12 |v|^2 = 1 \right\},
$$
and assume that the dynamics $(\phi_t)_{t \geq 0}$ defined on $S\Omega$ is ergodic.

Then for all $f_0 \in \LL^2$, denoting by $f(t)$ the unique solution to \eqref{B-specular} with initial datum $f_0$, we have
\begin{equation}
 \label{convergeto0-boundaryre}
 \left\|f(t)-\left(\int_{\Omega \times \R^d} f_0 \, dv \,dx\right)\frac{1}{|\Omega|} \Mc(v)\right\|_{\LL^2} \to_{t \to +\infty} 0,
 \end{equation}
\end{coro}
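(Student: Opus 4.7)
The plan is to invoke Theorem~\ref{thm-convboun} by verifying its condition (ii.): namely, that $\omega$ satisfies the a.e.i.t.\ Geometric Control Condition and that the equivalence relation $\sim$ has only one equivalence class. Since $V=0$, the Hamiltonian reduces to $H(x,v)=\frac12|v|^2$, each energy shell $\mathcal{S}_R := \{H=R\}$ is invariant under the (broken) flow $\phi_t$, and the Lebesgue measure on $\Omega\times\R^d$ disintegrates (in polar velocity coordinates) as an integral over $R>0$ of the Liouville measures on the shells $\mathcal{S}_R$.

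First, I would transfer the ergodicity from $S\Omega$ to every positive energy shell. The dilation $\Phi_\lambda(x,v):=(x,\lambda v)$ is a bijection between $S\Omega$ and $\mathcal{S}_{\lambda^2/2}$ mapping Liouville measure to Liouville measure, and the flow satisfies $\phi_t \circ \Phi_\lambda = \Phi_\lambda \circ \phi_{\lambda t}$ (broken trajectories are just spatial trajectories traversed at speed $\lambda$, reflecting off $\partial\Omega$ in the same way). Hence ergodicity of $(\phi_t)_{t\geq 0}$ on $S\Omega$ is equivalent to ergodicity of $(\phi_t)_{t\geq 0}$ on each $\mathcal{S}_R$, $R>0$, with respect to its normalized Liouville measure.

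Next, I would use Birkhoff's ergodic theorem on each shell. Since $\omega=\omega_x\times\R^d$ with $\omega_x$ open and non-empty, the intersection $\omega\cap\mathcal{S}_R$ has positive Liouville measure on $\mathcal{S}_R$ for every $R>0$. By Birkhoff, for each $R>0$ and for almost every $(x,v)\in\mathcal{S}_R$, the forward trajectory $\{\phi_t(x,v)\}_{t\geq 0}$ visits $\omega$ in positive density; in particular it visits $\omega$ at some time $t\geq 0$. Integrating in $R$ via the disintegration, this holds for almost every $(x,v)\in\Omega\times\R^d$, which is precisely the a.e.i.t.\ GCC. The same ergodicity argument shows that for any two connected components $\omega_1,\omega_2\in\CC(\omega)$, fixing any $R>0$ for which both $\omega_i\cap\mathcal{S}_R$ have positive measure, for almost every $(x,v)\in\omega_1\cap\mathcal{S}_R$ there exists $s\in\R$ with $\phi_s(x,v)\in\omega_2$, so $\omega_1 \Rc_\phi \omega_2$. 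Thus all elements of $\CC(\omega)$ lie in a single $\Bumpeq$-equivalence class, and by Lemma~\ref{equiv-equiv} the relation $\sim$ has a unique class as well. Applying Theorem~\ref{thm-convboun} then yields \eqref{convergeto0-boundaryre} (noting that in $\Omega$ bounded with $V=0$, the Maxwellian equilibrium simplifies to $\frac{1}{|\Omega|}\Mc(v)$).

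The only real subtlety is the bookkeeping around the scaling argument and the disintegration across energy shells (including handling the measure-zero set where broken characteristics are not well defined, which was already excluded in Section~\ref{prelimbound}), but no deeper ingredient is needed: once ergodicity is transferred shell by shell, both the a.e.i.t.\ GCC and the single-class property follow immediately from Birkhoff's theorem and the openness of $\omega_x$.
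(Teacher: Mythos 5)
Your proposal is correct and follows essentially the same route as the paper, which deduces the corollary from Theorem~\ref{thm-convboun} by noting that homogeneity of the free flow transfers ergodicity from the unit energy shell to every shell, whence a.e.i.t.\ GCC for $\omega=\omega_x\times\R^d$. You also correctly supply the single-$\sim$-class verification via $\Rc_\phi$ that the paper leaves implicit; the only blemish is a harmless indexing slip (with $S\Omega=\{\tfrac12|v|^2=1\}$, the dilation $\Phi_\lambda$ maps onto $\{H=\lambda^2\}$, not $\{H=\lambda^2/2\}$).
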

Examples of domains $\Omega$ where this corollary applies are the Bunimovich stadium, the Sinai billiard or the interior of a cardioid. Note that the homogeneity of the flow implies that ergodicity on the unit sphere is equivalent to ergodicity on any sphere bundle over $\Omega$.
Note that the conclusion of Corollary~\ref{coroconvergeto0-boundaryre} remains valid in the more general case where  $\omega  = \omega_x \times \omega_v$ where $\omega_v$ is an open set in $\R^d$ such that $\omega_v \cap S(0,R) \neq \emptyset$ for all $R>0$ (where $S(0,R)\subset \R^d$ is sphere centered in $0$ of radius $R$).

\section{Exponential convergence to equilibrium}
\label{expoboun}

In this paragraph (and here only), we consider a more restricted class of collision kernels. Precisely, we assume that the collision kernel $k$ belongs to the subclass {\bf E2*} of {\bf E2}, which we define below (note that we have slightly changed the original notations of {\bf E2}).

\bigskip

\noindent {\bf E2*. ``Factorized'' collision kernels} Let $k$ be a collision kernel verifying {\bf A1}--{\bf A3}. We suppose that there exist 
$k^* \in C^0(\overline{\Omega} \times \R^d \times \R^d)$, $\sigma \in C^0(\overline{\Omega})$,  and $\lambda_0>0$ such that 
\begin{itemize}
\item for all $(x, v,v') \in \Omega \times \R^d \times \R^d$,
\begin{equation}
\label{bornek-improv}
k(x,v,v')= \sigma(x) {k}^*(x,v,v')\Mc(v'),
\quad \text{with} \quad 
{{k}^*(x,v,  v') } + {{k}^*(x,v',  v) } \geq \lambda_0.
\end{equation}
\item we have
\begin{equation*}
(x,v) \mapsto  \int k^*(x,v,v')\Mc(v') \,dv'  \in L^\infty(\Omega \times \R^d) 
\end{equation*}
\end{itemize}

In this situation, the set $\omega$ (where the collisions are effective) is of the form $\omega_x \times \R^d$, where 
$$\omega_x= \{x \in \Omega, \, \sigma(x) >0\}.$$ 
For the sake of readability, in this section, we shall write in the following $\om$ instead of $\om_x$. To (slightly) simplify the statements, we shall assume here that $\omega$ is connected. 

\Black

For collision kernels in {\bf E2*}, we have the improvement of Lemma~\ref{cerci1}:

 \begin{lem}
 \label{cerci2}
For any $f \in \LL^2$, we have
  \begin{equation}
 D(f) \geq  \lambda_0 \left\| \sqrt{\sigma(x)}  ( f- \rho_f \Mc(v)) \right\|_{\LL^2}^2
 \end{equation}
   \end{lem}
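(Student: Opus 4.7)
The plan is a direct computation, exploiting the factorized structure of collision kernels in the class {\bf E2*} in the expression~\eqref{defD-boun} for the dissipation, and then recognizing the resulting integrand as the $v$-variance of $f/\Mc$.

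First, using $k(x,v,v')=\sigma(x) k^*(x,v,v')\Mc(v')$, one observes that
$$
\frac{k(x,v',v)}{\Mc(v)}+\frac{k(x,v,v')}{\Mc(v')}= \sigma(x)\bigl(k^*(x,v',v)+k^*(x,v,v')\bigr)\geq \sigma(x) \lambda_0 ,
$$
by the lower bound in~\eqref{bornek-improv}. Plugging this into~\eqref{defD-boun} yields
$$
D(f) \geq \frac{\lambda_0}{2} \int_{\Omega} \sigma(x)\, e^{V(x)} \int_{\R^d}\int_{\R^d} \Mc(v)\Mc(v') \left(\frac{f(x,v)}{\Mc(v)}-\frac{f(x,v')}{\Mc(v')}\right)^2\, dv'\, dv\, dx.
$$

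The second step is to compute the double velocity integral pointwise in $x$. Setting $g(x,v):=f(x,v)/\Mc(v)$ and using $\int_{\R^d}\Mc=1$, expansion of the square gives
$$
\frac{1}{2}\int_{\R^d}\int_{\R^d}\Mc(v)\Mc(v')\bigl(g(v)-g(v')\bigr)^2 dv'dv=\int_{\R^d}\Mc(v) g(v)^2 dv-\left(\int_{\R^d}\Mc(v) g(v) dv\right)^{\!2}.
$$
Denoting the macroscopic density by $\rho_f(x):=\int_{\R^d} f(x,v)\,dv=\int_{\R^d}\Mc(v) g(x,v)\,dv$, the right-hand side is precisely the $v$-variance
$$
\int_{\R^d}\Mc(v)\bigl(g(x,v)-\rho_f(x)\bigr)^2 dv = \int_{\R^d}\frac{|f(x,v)-\rho_f(x)\Mc(v)|^2}{\Mc(v)}\, dv.
$$

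Inserting this identity in the previous lower bound and comparing with the definition of the $\LL^2$-norm yields
$$
D(f)\geq \lambda_0 \int_{\Omega} \sigma(x) e^{V(x)} \int_{\R^d} \frac{|f-\rho_f \Mc|^2}{\Mc(v)}\, dv\, dx = \lambda_0 \left\|\sqrt{\sigma}(f-\rho_f\Mc)\right\|_{\LL^2}^2,
$$
which is the claimed inequality. There is no substantial obstacle here; the only subtlety is to correctly identify the quadratic expression in velocities with the $\LL^2$-distance of $f$ to its local Maxwellian projection $\rho_f\Mc$, which is what makes this a genuine quantitative coercivity estimate, as opposed to the merely weak coercivity used in the torus setting.
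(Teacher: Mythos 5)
Your proof is correct and follows essentially the same route as the paper: both lower-bound the symmetrized kernel $\frac{k(x,v',v)}{\Mc(v)}+\frac{k(x,v,v')}{\Mc(v')}$ by $\sigma(x)\lambda_0$ via \eqref{bornek-improv} and then identify the remaining double velocity integral with the squared $\LL^2$-distance of $f$ to $\rho_f\Mc$. The only (cosmetic) difference is that you use the exact variance expansion of $\frac12\int\!\!\int \Mc\Mc'(g-g')^2$ where the paper invokes Jensen's inequality; your exact identity is the sharper of the two and cleanly recovers the stated constant $\lambda_0$.
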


 It is much stronger than what we have used in the case without boundary.

 \begin{proof}[Proof of Lemma \ref{cerci2}]
By \eqref{defD} and by symmetry in $v$ and $v'$, using~\eqref{bornek-improv}, we have:
  \begin{equation}
 D(f) \geq  {\lambda_0} \int_{\Omega} \sigma(x)  e^{V} \int_{\R^d} \int_{\R^d}    \Mc \Mc' \left(\frac{f}{\Mc}- \frac{f'}{\Mc'}\right)^2 \, dv' \, dv \, dx.
 \end{equation}
By Jensen's inequality it follows that
  \begin{align*}
 D(f) &\geq   {\lambda_0}\int_{\Omega}\sigma(x)   e^{V} \int_{\R^d} \Mc(v)\left(\int_{\R^d}  \Mc(v') \left(\frac{f(v)}{\Mc(v)}- \frac{f(v')}{\Mc(v')}\right) \, dv'\right)^2 dv \, dx \\
 &=   {\lambda_0}\int_{\Omega} \sigma(x)  e^{V} \int_{\R^d}      \frac{1}{\Mc(v)}   \left(f- \rho_f \, \Mc(v) \right)^2 \,  dv \, dx\\
 &=  {\lambda_0} \left\| \sqrt{\sigma(x)}  ( f- \rho_f \Mc(v)) \right\|_{\LL^2}^2,
 \end{align*}
which proves our  claim.

 \end{proof}

The main result of this section is the following theorem.

\begin{thm}[Exponential convergence to equilibrium]
\label{thmexpo-specular}
Let $k$ be a collision kernel in the class {\bf E2*}.
Let $\Omega$ be a bounded and piecewise $C^1$ domain. Assume that $\omega$ is connected.  Assume also that there exists  a neighborhood $\mathcal{V}$ of $\partial \omega \cap \partial \Omega$ in $\R^d$ such that $\partial \omega $ is of class $C^2$ in $\mathcal{V}$.   

Consider the following two statements:
\begin{enumerate}[(a.)]
\item $C^-_{b}(\infty) > 0$. 
\item There exists $C>0, \gamma>0$ such that for any $f_0 \in \LL^2(\Omega \times \R^d)$, the unique solution to \eqref{B-specular} with initial datum $f_0$ satisfies for all $t\geq 0$
\begin{multline}
\label{decexpo-specular} 
\left\|f(t)- \left(\int_{\Omega \times \R^d} f_0 \, dv \,dx\right) \frac{e^{-V}}{\int_{\Omega} e^{-V} \, dx} \Mc(v)\right\|_{\LL^2} \\
 \leq C e^{-\gamma t} \left\|f_0-\left(\int_{\Omega \times \R^d} f_0 \, dv \,dx\right) \frac{e^{-V}}{\int_{\Omega} e^{-V} \, dx} \Mc(v)\right\|_{\LL^2}.
\end{multline}
\end{enumerate}
Then $(a.)$ implies $(b.)$.
\end{thm}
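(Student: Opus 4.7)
The plan is to adapt the proof of Theorem \ref{thmexpo-intro}, $(a.) \Rightarrow (b.)$, to the specular-reflection setting and to control the new difficulty coming from the boundary. By the bounded-domain analogue of Lemma \ref{lemfondamental} together with conservation of mass, proving (b.) amounts to establishing an observability inequality: there exist $T, K > 0$ such that for every $f_0 \in \LL^2_0 := \{ f \in \LL^2 : \int f\, dv\, dx = 0\}$, the solution $f$ of \eqref{B-specular} satisfies $K \int_0^T D(f)\, dt \geq \|f_0\|_{\LL^2}^2$. Since $C^-_b(\infty) > 0$, for $T_0$ large enough the pair $(\omega, T_0)$ satisfies GCC with respect to the broken hamiltonian flow, which yields the specular-boundary analogue of Proposition \ref{remUCP}: any $C^0_t(\LL^2)$ solution of the free transport equation with specular boundary conditions and $C(f) = 0$ on an interval of length at least $T_0$ must be a constant Maxwellian.

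Arguing by contradiction, one builds a sequence $(g_{0,n}) \subset \LL^2_0$ with $\|g_{0,n}\|_{\LL^2} = 1$ and $\int_0^n D(g_n)\, dt \to 0$, where $g_n$ is the solution of \eqref{B-specular} with initial datum $g_{0,n}$. The dissipation identity of Proposition \ref{prop:WP-specular} bounds $(g_n)$ in $L^\infty_t \LL^2$; up to extraction, $g_n \rightharpoonup g$ weakly in $L^2_{t, loc} \LL^2$. Weak lower semi-continuity of $D$ and Lemma \ref{collannule} force $C(g) = 0$, and the unique continuation above together with the zero-mean constraint give $g = 0$. Next, following the torus case, I would study the defect measure $\nu_n := |g_n|^2$, which satisfies a transport equation with absorption $-2 b \nu_n$ and source $S_n := 2 G_n g_n$, where $G_n(x, v) := \int k(x, v', v) g_n(v')\, dv'$. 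Applying Lemma \ref{repformu}, integrating against $e^V/\Mc$, and using the measure-preserving change of variable $(x, v) \mapsto \phi_{-t}(x, v)$, the initial-data contribution to $\|\nu_n(t)\|_{\LL^1}$ is bounded by $e^{-t C^-_b(\infty)/2}$ for $t$ large enough, which is as small as wished. Hence everything reduces to the strong convergence $G_n \to 0$ in $L^2_{t, loc} \LL^2$.

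This strong convergence is the core difficulty. Away from $\partial \Omega$, classical averaging lemmas (Corollary \ref{thmmoyenne-domain}) deliver compactness; the problem is to rule out concentration of the velocity averages at $\partial \Omega$. Two ingredients are essential. First, the factorized form $k(x,v,v') = \sigma(x) k^*(x, v, v') \Mc(v')$ of the class {\bf E2*} combined with the strong coercivity of Lemma \ref{cerci2} gives $\sqrt{\sigma}(g_n - \rho_{g_n} \Mc) \to 0$ in $L^2_t \LL^2$, where $\rho_{g_n}(x) := \int g_n\, dv$. Writing
$$
G_n(x, v) = \sigma(x) \Mc(v) \rho_{g_n}(x) \int k^*(x, v', v) \Mc(v')\, dv' + \sigma(x) \Mc(v) \int k^*(x, v', v) \bigl[g_n(v') - \rho_{g_n}(x) \Mc(v')\bigr]\, dv',
$$
the second summand tends to $0$ in $L^2_t \LL^2$ by Jensen's inequality, the $L^\infty$ bound on $\int k^* \Mc\, dv'$ from {\bf E2*}, and the boundedness of $\sigma$; everything thus reduces to the strong compactness of $\sigma \rho_{g_n}$ in $L^2_{t, loc} L^2(\Omega)$ up to $\partial \omega_x$.

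Second, to obtain this compactness of $\rho_{g_n}$ up to $\partial \omega_x$, I would adapt the argument of Guo \cite{Guo}. Approximating the initial data in $\LL^\infty$ and invoking the maximum principle of Proposition \ref{prop:WP-specular}, the sequence $(g_n)$ enjoys a uniform $\LL^\infty$ bound and is therefore uniformly equi-integrable, so that the improved averaging lemma of Corollary \ref{thmmoyenne-domain2} becomes available. The $C^2$ regularity of $\partial \omega$ near $\partial \Omega \cap \partial \omega$ then allows a local straightening of the boundary and a specular extension of $g_n$ across $\partial \Omega \cap \partial \omega_x$ onto a slightly larger open set, where the extended velocity averages can be controlled by interior averaging. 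Carrying out this extension-straightening step rigorously while preserving both the transport structure and the coercivity estimate is the hard part of the plan; this is where the regularity assumption on $\partial \omega$ near $\partial \Omega$ crucially enters.
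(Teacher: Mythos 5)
The architecture of your proposal is correct and matches the paper's: reduce to an observability inequality via Lemma~\ref{lemfondamental}, argue by contradiction, control the initial-data contribution to the defect measure via the change of variable and $C^-_b(\infty)>0$, and identify the crux as proving $\int k(x,v',v)g_n(v')\,dv'\to 0$ strongly in $L^2_{t,\mathrm{loc}}\LL^2$ up to $\pa\Omega$, via the factorized form of {\bf E2*} and the strong coercivity Lemma~\ref{cerci2}. However, the mechanism you propose for that crux has a genuine gap.

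You write that ``approximating the initial data in $\LL^\infty$ and invoking the maximum principle\ldots the sequence $(g_n)$ enjoys a uniform $\LL^\infty$ bound and is therefore uniformly equi-integrable.'' This is false. In the contradiction argument the sequence $(g_{0,n})$ is only normalized by $\|g_{0,n}\|_{\LL^2}=1$; there is no a priori $\LL^\infty$ control, and replacing each $g_{0,n}$ by an $\LL^\infty$ approximant (which the $\LL^2$-continuity of the dissipation functional does permit) gives $\LL^\infty$ norms that may blow up with $n$. The maximum principle only yields $\|g_n(t)\|_{\LL^\infty}\leq\|g_{0,n}\|_{\LL^\infty}$, which is useless without a uniform bound on the right-hand side. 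Consequently the equi-integrability hypothesis of Corollary~\ref{thmmoyenne-domain2} is not available, and your proposed route to boundary compactness collapses. Note the contrast with Theorem~\ref{thm-convboun}: there one proves convergence for each \emph{fixed} $f_0\in\LL^2\cap\LL^\infty$ (so the $\LL^\infty$ bound is the one coming from the datum) and then approximates; that scheme does not carry over to a uniform observability constant.

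The other part of your plan for the crux --- ``local straightening of the boundary and a specular extension of $g_n$ across $\pa\Omega\cap\pa\omega_x$'' --- is not what Guo's argument does, and it is unclear how to implement it while preserving both the transport structure and the coercivity. The paper's implementation of Guo's idea is entirely $\LL^2$-based and does not extend anything across $\pa\Omega$: after reducing by Cauchy--Schwarz to proving $\|\mathds{1}_{\tilde\omega\setminus\tilde\omega_\eps}\sqrt{\sigma}\,g_n\|_{L^2(0,T_1;\LL^2)}\to 0$ for a thin strip $\tilde\omega\setminus\tilde\omega_\eps$ of width $\sim\eps^4$, one (i) discards high velocities ($|v|>\eps^{-m}$) and grazing directions ($|\tilde n(x)\cdot v|\leq\eps$) using only the uniform $\LL^2$ bound and Lemma~\ref{cerci2} (Lemma~\ref{lem:grandesvitessesourasantes}); (ii) observes that a trajectory with $|v|\lesssim\eps^{-m}$, $2m<1$, and $|\tilde n\cdot v|\gtrsim\eps$ exits the strip after time $\eps^2$ (Lemma~\ref{lem:carac}); (iii) propagates the remaining mass backward or forward for time $\eps^2$ into the interior, using Cessenat's trace theorem on $\pa\tilde\omega_\eps$ and the interior averaging lemma together with Lemma~\ref{cerci2} to control the boundary and collision contributions (Lemma~\ref{lem:reste}). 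This characteristic-tracking step is the idea missing from your proposal; it is what makes the argument work without any $\LL^\infty$ information on the sequence.
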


\begin{figure}[H]
\label{domain-ghost}
  \begin{center}
    \input{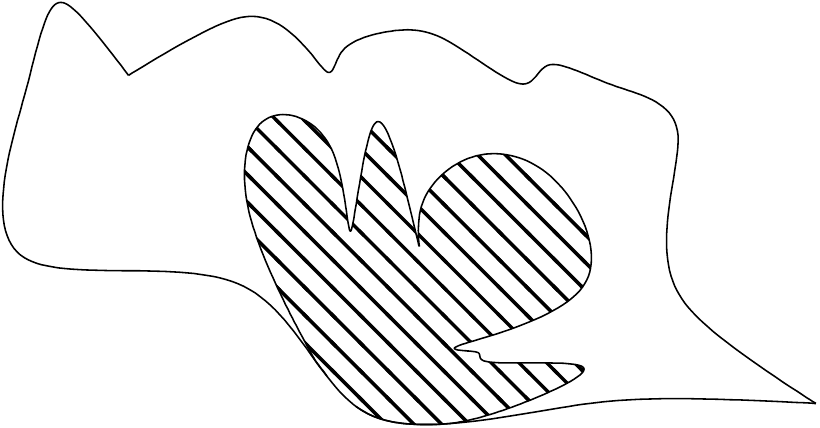_t} 
    \caption{Illustration of the regularity assumption on $\omega$ near $\pa\omega\cap \pa\Omega$}
  \end{center}
\end{figure}

If $C^-_{b}(\infty) > 0$, we first remark that the analogue of Proposition~\ref{remUCP} holds in this setting:
 \begin{prop}
 Assume that $C^-_{b}(\infty) > 0$. Let $T>0$ such that
 $$
 \text{ess inf}_{(x,v) \in \Omega \times \R^d} \frac{1}{T} \int_0^T \left( \int_{\R^d} k(\phi_t (x,v), v')\, dv'\right)\, dt >0.
 $$
If $f \in C^0_t(\LL^2)$ is a solution to
 \begin{equation*}
 \left\{
 \begin{aligned}
&\partial_t f + v \cdot \nabla_x f - \nabla_x V \cdot \nabla_v f= 0, \\
&f(t,x,R_x v) = f(t,x,v), \quad R_x v = v - 2 (v \cdot n(x)) n(x), \quad x \in \partial \Omega, v \in \R^d, \\
&f = \rho(t,x) \Mc(v) \text{  on  }  I \times \omega,
\end{aligned}
\right.
\end{equation*}
where $I$ is an interval of length larger than $T$, then $f= \left(\int_{\Omega \times \R^d} f \, dv \,dx\right) \frac{e^{-V}}{\int_{\Omega} e^{-V} \, dx} \Mc(v)$.
 \end{prop}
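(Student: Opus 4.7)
The plan is to reduce the statement to a unique continuation for the pure transport equation with specular boundary condition, after observing that the local Maxwellian structure forced by $f=\rho(t,x)\Mc(v)$ on $\omega$ propagates along the broken flow thanks to $C^-_b(\infty)>0$.

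First, I would work with the weighted function $g:= e^V f/\Mc$. Since the equilibrium weight $e^{-V}\Mc$ is killed by the transport operator, $g$ satisfies
\begin{equation*}
\partial_t g + v\cdot \nabla_x g - \nabla_x V \cdot \nabla_v g = 0 \text{ on } I\times \Omega\times \R^d, \quad g(t,x,R_x v)=g(t,x,v) \text{ on } \partial\Omega.
\end{equation*}
Since the collision kernel is in the class \textbf{E2*}, we have $\omega=\omega_x\times\R^d$, and on $I\times \omega$ the assumption $f=\rho(t,x)\Mc(v)$ translates into $g(t,x,v)=\rho(t,x)e^{V(x)}$, which is independent of $v$.

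Second, I would use this $v$-independence of $g$ on $I\times\omega$ to derive that $g$ is locally constant on $\omega$. Formally, differentiating the transport equation in $v_j$ on $\omega_x\times\R^d$ gives $\partial_{x_j} g = 0$ on $\omega$, and then the transport equation itself forces $\partial_t g=0$. Rigorously, one tests the transport equation against $\partial_{v_j}\varphi$ with $\varphi\in C_c^\infty(I\times\omega)$ and uses that $g$ does not depend on $v$ on $\omega$ to obtain that $\partial_{x_j}g=0$ as a distribution on $\omega$. Hence $g$ is locally constant in $(t,x)$ on $I\times\omega_x$. Since $\omega$ is assumed connected (inheriting the hypothesis of Theorem~\ref{thmexpo-specular}), there is a constant $c\in\R$ with $g\equiv c$ on $I\times\omega$.

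Third, I would propagate this constant to the whole phase space via the broken characteristics. The function $g\in C^0_t(\LL^2)$ satisfies the kinetic transport equation with specular reflection; by Lemma~\ref{repformu} applied with $b\equiv 0$, $g$ is constant along the broken flow $\phi_t$, that is $g(t+s,\phi_s(x,v))=g(t,x,v)$ for almost every $(x,v)$ and every $s\in\R$ such that both sides make sense. The assumption $C^-_b(\infty)>0$ gives some $T>0$ with the essinf of $\frac{1}{T}\int_0^T b(\phi_s(x,v))\,ds$ strictly positive, so that for almost every $(x,v)$ the set $\{s\in [0,T]\,:\, \phi_s(x,v)\in\omega\}$ has positive Lebesgue measure; in particular there exists such an $s$. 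Picking $t_0\in I$ with $t_0+T\in I$ (possible since $|I|>T$), we get for a.e.\ $(x,v)$ some $s\in[0,T]$ with
\begin{equation*}
g(t_0,x,v)=g(t_0+s,\phi_s(x,v))=c,
\end{equation*}
so that $g(t_0,\cdot)\equiv c$ almost everywhere on $\Omega\times\R^d$. Hence $f(t_0,x,v)=c\,e^{-V(x)}\Mc(v)$, and by the uniqueness statement in Proposition~\ref{prop:WP-specular} applied to both $f$ and the stationary solution $c\,e^{-V}\Mc$, we conclude $f(t,x,v)=c\,e^{-V(x)}\Mc(v)$ for all $t$. The value of $c$ is then pinned down by mass conservation, $c=\bigl(\int_{\Omega\times\R^d} f\,dv\,dx\bigr)/\int_\Omega e^{-V}\,dx$, which gives exactly the claimed formula.

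The main obstacle I anticipate is the rigorous handling of the "differentiate in $v$" step, since $g$ is only $L^2$ a priori; this is dealt with through an integration-by-parts against $v$-smooth test functions supported in $\omega$, using that $g$ is $v$-independent there. A second, milder subtlety is that the broken flow $\phi_t$ is defined only outside a set of zero Lebesgue measure (grazing trajectories and those undergoing infinitely many reflections in finite time as recalled in Section~\ref{prelimbound}); but since we argue almost everywhere with $L^2$ functions and $\phi_t$ preserves Lebesgue measure, this is harmless for the characteristic propagation argument.
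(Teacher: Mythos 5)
Your proof is correct, and its overall architecture coincides with the paper's (the paper only sketches this boundary statement by pointing to the torus analogue, Proposition~\ref{remUCP}, whose proof is the argument for $(ii.)\Rightarrow(i.)$ in Theorem~\ref{thmconv-intro}): pass to $g=e^Vf/\Mc$, show $g$ is a single constant on $I\times\omega$, then propagate that constant along the broken flow using the finite-time control supplied by $C^-_b(\infty)>0$, and pin the constant by mass conservation. The one step where you genuinely diverge is the local-constancy argument on $\omega$. The paper integrates the free transport equation for $g$ along short-time characteristics inside a flow box $B(x,\delta)\times B(v,\delta)\subset\tilde\omega$ and produces the open set $U_x$ on which $g(0,\cdot)$ is constant; you instead test the equation against $\partial_{v_j}\varphi$, use the commutator identity $T\partial_{v_j}\varphi=\partial_{v_j}(T\varphi)-\partial_{x_j}\varphi$ and the fact that $\partial_{v_j}g=0$ in $\mathcal D'(I\times\omega)$ to get $\partial_{x_j}g=0$, and then $\partial_tg=0$ from the equation itself. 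Your version is cleaner here and gives constancy in $t$ for free (the paper needs a separate small argument that the $\kappa_i(t)$ are time-independent); the paper's characteristics argument is the one that survives when $\omega$ is not of product form and has several components, but in the present Section the class {\bf E2*} and the connectedness of $\omega$ are in force, so nothing is lost.

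Two minor points you should tighten. First, the final uniqueness step should not invoke Proposition~\ref{prop:WP-specular} (that is well-posedness for the Boltzmann equation with collisions); what you need is uniqueness for the pure transport equation with specular reflection, which is exactly Lemma~\ref{repformu} with $b\equiv 0$ and zero source: $f(t)=f(t_0)\circ\phi_{-(t-t_0)}$, and $e^{-V}\Mc=(2\pi)^{-d/2}e^{-H}$ is invariant under $\phi$. Second, in the propagation step, "there exists $s$ with $\phi_s(x,v)\in\omega$'' is not quite enough, since $g=c$ only holds a.e.\ on $I\times\omega$; you need, as you hint, that the set of such $s$ has \emph{positive measure} (which the ess-inf hypothesis gives, since $b>0$ exactly on $\omega$) combined with Fubini and the measure-preservation of $\phi_s$ to guarantee that for a.e.\ $(x,v)$ some admissible $s$ lands in the full-measure set where $g=c$. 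Both are routine, but worth writing out.
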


As already pointed out in the proof of Theorem~\ref{thm-convboun}, the difficulty in proving Theorem~\ref{thmexpo-specular} comes  from the fact that the regularity provided by the classical averaging lemmas is a priori not valid up to the boundary.  This is the main reason why we have not been able to prove neither that $(a.)$ and $(b.)$ are equivalent, nor that the equivalence holds under the far less restrictive assumptions of Theorem~\ref{thmexpo-intro}. Nevertheless, it seems natural to conjecture that this is indeed the case. 

\medskip
In order to prove $(a.)$ implies $(b.)$, we shall overcome this difficulty by adapting some arguments due to Guo in \cite{Guo}, which forces us to make a regularity assumption on $\omega$ near $\partial \Omega$.

Note that this regularity assumption is for instance satisfied as long as $\partial \Omega$ is $C^2$ and $\sigma$ is positive on the whole boundary $\partial \Omega$.

The regularity assumption is also automatically satisfied in the case where $\overline{\omega} \subset \Omega$. As a matter of fact, if it is the case and if $\partial \Omega$ is $C^1$ then we recover the exact analogue of Theorem~\ref{thmexpo-intro}.

The paper \cite{Guo} concerns the decay of classical Boltzmann equations (that is with collisions ``everywhere''), set in bounded domains and a similar issue has to be faced at some point.
Here, we provide a slight generalization of the analysis by handling non zero potentials $V$ (in \cite{Guo}, the dynamics is dictated by free transport); furthermore, we have to modify  Guo's strategy since collisions are only effective in $\omega$ in our framework.  Loosely speaking, we will show that there can not be concentration of mass near the boundary $\pa\omega \cap \pa\Omega$. Note also that this point of the proof actually does not depend on the boundary condition chosen for the kinetic equation.

\begin{proof}[Proof of Theorem \ref{thmexpo-specular}]Assume that $C^-_{b}(\infty) > 0$. In order to show that $(b.)$ holds, the beginning of the proof is the same as that given for Theorem \ref{thmexpo-intro}, $(a.) \implies (b.)$. We keep the same notations as those of that proof. 
The only difference appears in the justification of the compactness property \eqref{eq-compact}, which we shall perform now.
To summarize, the property which remains to be shown is the following one. Given a sequence $(g_n)$ of solutions of \eqref{B-specular} in  $C^0_t([0,\infty[;\LL^2)$, satisfying
\begin{equation}
\label{assumptions-gn}
\begin{aligned}
&\sup_{t\geq 0} \| g_n(t) \|_{\LL^2}\leq 1, 
&g_n \rightharpoonup 0 \text{  weakly}-\star \text{ in  } L^\infty_t \LL^2, \\
& \int_0^{T_1} D(g_n) \, dt \to 0  ,
\end{aligned}
\end{equation}
prove that
\begin{equation}
\int k(x,v',v) g_n(t,x,v') \, dv' \to 0, \text{  strongly in  } L^2(0,T_1;\LL^2).
\end{equation}

First remark that as in the torus case, by \eqref{assumptions-gn}, Lemma~\ref{cerci2} and the averaging lemma of Corollary~\ref{thmmoyenne-domain}, for all compact sets $K \subset \Omega$, we have
$$
 \left\|\mathds{1}_{K}(x) \int k(x,v',v) g_n(t,x,v') \, dv' \right\|_{L^2(0,T_1;\LL^2)} \to 0,
$$
We shall refer to this property as \emph{interior compactness}.

\bigskip

According to the assumption on $\Omega$ and $\omega$, there exists an open subset $\tilde\omega$, of class $C^2$, included in $\omega$ such that $\pa \tilde\omega \cap \pa \Omega = \pa\omega \cap \pa\Omega$. 

We can write $\tilde\omega = \{x \in \R^d, \, \eta(x)<0\}$, where $\eta$ is a $C^2$ function such that  
\begin{equation}
\label{def-tilde-n}\tilde n(x) := \frac{\na_x \eta(x)}{|\na_x \eta(x)|}
\end{equation}
is well defined on a neighborhood of $\pa \tilde\omega = \{x \in \R^d, \, \eta(x)=0\}$.

We denote 
\begin{equation}
\tilde\omega_{\eps} =\{x \in \tilde\omega, \, \eta(x) < - \eps^4\}.
\end{equation}

\begin{figure}[H]
\label{domain-ghost-zoom}
  \begin{center}
    \input{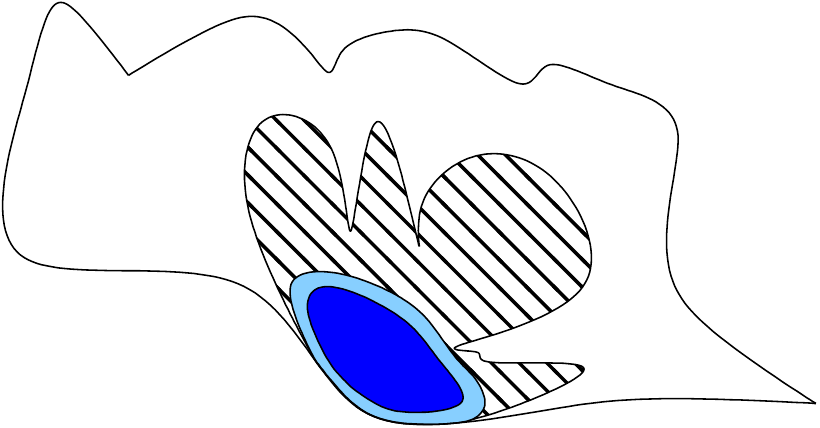_t} 
    \caption{The open sets $\tilde\omega$ and $\tilde\omega_\eps$}
  \end{center}
\end{figure}

Then, according to the decomposition
\begin{multline*}
\left\|\int k(x,v',v) g_n(t,x,v') \, dv' \right\|_{\LL^2}^2\\
= \left\| \mathds{1}_{\tilde\omega} \int k(x,v',v) g_n(t,x,v') \, dv' \right\|_{\LL^2}^2  + \left\|\mathds{1}_{\omega \setminus \tilde\omega} \int k(x,v',v) g_n(t,x,v') \, dv' \right\|_{\LL^2}^2,
\end{multline*}
and since by interior compactness (as $\overline{\omega\setminus\tilde\omega} \subset \Omega$),
$$
 \left\|\mathds{1}_{\omega \setminus \tilde\omega} \int k(x,v',v) g_n(t,x,v') \, dv' \right\|_{L^2(0,T_1;\LL^2)} \to 0,
$$
it only remains to prove 
$$
 \left\| \mathds{1}_{\tilde\omega} \int k(x,v',v) g_n(t,x,v') \, dv' \right\|_{\LL^2}  \to 0 \text{  strongly in  } L^2(0,T_1). 
$$
We also have the decomposition
\begin{multline*}
 \left\| \mathds{1}_{\tilde\omega} \int k(x,v',v) g_n(t,x,v') \, dv' \right\|_{\LL^2}^2\\
= \left\| \mathds{1}_{\tilde\omega_\eps} \int k(x,v',v) g_n(t,x,v') \, dv' \right\|_{\LL^2}^2  + \left\|\mathds{1}_{\tilde\omega \setminus \tilde\omega_\eps} \int k(x,v',v) g_n(t,x,v') \, dv' \right\|_{\LL^2}^2  .
\end{multline*}
Again according to interior compactness  (as $\overline{\tilde\omega\setminus\tilde\omega_\eps} \subset \Omega$), we only have to prove that
\begin{equation}
\label{pregoal}
 \left\| \mathds{1}_{\tilde\omega \setminus \tilde\omega_\eps} \int k(x,v',v) g_n(t,x,v') \, dv' \right\|_{\LL^2}  \to 0 \text{  strongly in  } L^2(0,T_1). 
\end{equation}
Using~\eqref{bornek-improv} and the Cauchy-Schwarz inequality, we have

\begin{align*}
\Big\| \mathds{1}_{\tilde\omega \setminus \tilde\omega_\eps} &\int k(x,v',v) g_n(t,x,v') \, dv' \Big\|_{\LL^2} \\
& = \left\| \mathds{1}_{\tilde\omega \setminus \tilde\omega_\eps} \int \sigma(x) \Mc(v) {k}^*(x,v',v) g_n(t,x,v') \, dv' \right\|_{\LL^2}\\
& \leq \| \sqrt{\sigma}\|_\infty  \left[\int  \mathds{1}_{\tilde\omega \setminus \tilde\omega_\eps}\sigma(x) \Mc(v)\left(\int |k^*|^2(x,v',v) \Mc(v') \, dv'\right) \right.\\
& \qquad \times \left. \left(\int \frac{|g_n|^2(t,x,v')}{\Mc(v')} \, dv'\right) {e^{V}} \, \, dv  dx \right]^{1/2} \\
& \leq \| \sqrt{\sigma}\|_\infty  \left\| \int |k^*|^2(x,v',v) \Mc(v) \Mc(v') \, dv' \, dv \right\|_{L^\infty(\Omega)}^{1/2} \\
& \qquad \times \left[\int  \mathds{1}_{\tilde\omega \setminus \tilde\omega_\eps} \sigma(x)  {|g_n|^2(t,x,v')} \frac{e^{V}}{\Mc(v')}  \,dv'\, dx \right]^{1/2}
\end{align*}

Therefore, in order to prove~\eqref{pregoal}, this is sufficient to prove that
\begin{equation}
\label{goal}
\| \mathds{1}_{\tilde\omega\setminus \tilde\omega_\eps} \sqrt{\sigma(x)}  g_n \|_{L^2(0,T_1; \LL^2)} \to 0.
\end{equation}
The following is dedicated to the proof of this convergence.

\bigskip

Let $m>0$ satisfying $2m<1$. We have the following lemma, adapted from Guo \cite[Lemma 9]{Guo}, which shows that the contribution of high velocities and ``grazing'' trajectories is negligible.

\begin{lem}
\label{lem:grandesvitessesourasantes}
There exist $\eps_0>0$, $C>0$ and a nonnegative function $\varphi(n)$ going to $0$ as $n$ goes to $+\infty$, such that for any $n \in \N$ and any $\eps\in (0,\eps_0)$,
\begin{equation}
\int_0^{T_1} \int_{\tilde\omega \setminus \tilde\omega_\eps  \times \R^d } \mathds{1}_{\{ |v| > \e^{-m} \text{ or } |\tilde{n}(x)\cdot v| \leq\eps\}}(x,v) \sigma(x) |g_n (s,x,v) |^2 \frac{e^V}{\Mc(v)}\, dv \, dx \,ds \leq C \eps + \varphi(n).
\end{equation}
\end{lem}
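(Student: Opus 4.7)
\medskip

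The plan is to use the orthogonal (in $\LL^2$) splitting $g_n = (g_n - \rho_{g_n}\Mc) + \rho_{g_n}\Mc$, where $\rho_{g_n}(t,x) := \int_{\R^d} g_n(t,x,v)\,dv$ denotes the macroscopic density. This yields the pointwise bound $|g_n|^2 \leq 2|g_n - \rho_{g_n}\Mc|^2 + 2|\rho_{g_n}|^2\, \Mc(v)^2$, so that the integral in the statement splits into a \emph{microscopic} contribution and a \emph{macroscopic} contribution, which I estimate separately. All along, I will freely use $\|\sigma\|_\infty < \infty$, which follows from the continuity of $\sigma$ on $\overline{\Omega}$.

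For the microscopic contribution, I would enlarge the domain of integration to all of $(0,T_1)\times\Omega\times\R^d$ (dropping the indicator of $\tilde\omega\setminus\tilde\omega_\eps$ and of the set $B_\eps(x) := \{|v|>\eps^{-m}\}\cup\{|\tilde n(x)\cdot v|\leq\eps\}$). By Lemma~\ref{cerci2}, the resulting quantity is bounded by $2\lambda_0^{-1}\int_0^{T_1} D(g_n)\,dt$, which tends to $0$ as $n\to\infty$ by the last assumption in~\eqref{assumptions-gn}. This defines a nonnegative sequence $\varphi(n)\to 0$, \emph{independent} of $\eps$, and does not contribute to the $C\eps$ part of the bound.

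For the macroscopic contribution, the Cauchy--Schwarz inequality gives $|\rho_{g_n}(t,x)|^2 \leq \int_{\R^d}|g_n|^2/\Mc\,dv$, so that $\int_0^{T_1}\int_\Omega |\rho_{g_n}|^2 e^V\,dx\,dt \leq T_1\, \|g_n\|_{L^\infty_t\LL^2}^2\leq T_1$. After pulling $\|\sigma\|_\infty$ out, the macroscopic contribution is bounded by $C\,T_1\, \sup_{x}\int_{B_\eps(x)}\Mc(v)\,dv$. The large-velocity piece $\int_{|v|>\eps^{-m}}\Mc\,dv$ decays like $e^{-\eps^{-2m}/2}$, hence is $o(\eps)$ as $\eps\to 0$ since $m>0$. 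The grazing piece $\int_{|\tilde n(x)\cdot v|\leq\eps}\Mc(v)\,dv$ reduces, after an orthogonal change of variables aligning one coordinate axis with $\tilde n(x)$, to a one-dimensional integral of the standard Gaussian over $[-\eps,\eps]$, which is $\leq \sqrt{2/\pi}\,\eps$ uniformly in $x$. This yields the desired $C\eps$ bound, for $\eps \in (0,\eps_0)$ with $\eps_0$ small enough.

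The only subtle point is the uniformity in $x$ of the grazing slab estimate for $x$ ranging in the neighborhood of $\pa\tilde\omega\cap\pa\Omega$ involved in $\tilde\omega\setminus\tilde\omega_\eps$: this is precisely why $\tilde n$ was constructed in~\eqref{def-tilde-n} as a well-defined continuous unit vector field on a neighborhood of $\pa\tilde\omega$, using the $C^2$-regularity assumption imposed at the beginning of the proof of Theorem~\ref{thmexpo-specular}. Notice, finally, that the smallness of the Lebesgue measure of $\tilde\omega\setminus\tilde\omega_\eps$ is not needed in this step; its role will appear later in the argument.
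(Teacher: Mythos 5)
Your proof is correct and follows essentially the same route as the paper: the same splitting of $g_n$ into its macroscopic part $\rho_{g_n}\Mc$ and the fluctuation $g_n-\rho_{g_n}\Mc$, with the fluctuation controlled by Lemma~\ref{cerci2} and the dissipation bound in~\eqref{assumptions-gn} (yielding $\varphi(n)$), and the macroscopic part controlled by the uniform Gaussian measure estimate $\sup_x\int_{\{|v|>\eps^{-m}\text{ or }|\tilde n(x)\cdot v|\leq\eps\}}\Mc(v)\,dv\leq C\eps$ (yielding $C\eps$). No gaps.
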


We define now the cut-off functions
\begin{align*}
\chi_+(x,v) \: &= \: \mathds{1}_{\tilde\omega\setminus \tilde\omega_\eps}(x) \, \mathds{1}_{\{|v| \leq \e^{-m}, \, \tilde{n}(x)\cdot v >\eps\}} (x,v) \, \sqrt{\sigma(x)}, \\
\chi_-(x,v) \: &= \: \mathds{1}_{\tilde\omega\setminus \tilde\omega_\eps}(x) \,  \mathds{1}_{\{|v| \leq \e^{-m}, \, \tilde{n}(x)\cdot v <-\eps\}} (x,v)  \, \sqrt{\sigma(x)}.
\end{align*}
Let $u \in [\eps,T_1-\eps]$. Let $X(t,u,x,v)$ and $\Xi(t,u,x,v)$ be the solution to
\begin{equation}
\label{caraccarac}
\left\{
\begin{aligned}
&\frac{dX}{dt}(t,u,x,v) = \Xi(t,u,x,v), \\
&\frac{d\Xi}{dt}(t,u,x,v) = -\nabla_x V(X(t,u,x,v)),
\end{aligned}
\right.
\end{equation}
with $X(u,u,x,v)= x, \, \Xi(u,u,x,v)=v$.
We define for $t \in [\eps,s]$,
\begin{equation}
\label{chiplus}
\tilde{\chi}_+(t,x,v) =  \chi_+\Big(X(s,t,x,v), \,  \Xi(s,t,x,v)\Big),
\end{equation}
and for $t\in [s,1-\eps]$,
\begin{equation}
\label{chimoins}
\tilde{\chi}_- (t,x,v) = \chi_-\Big(X(s,t,x,v), \, \Xi(s,t,x,v)\Big), 
\end{equation}
which are built in order to satisfy the transport equation
\begin{equation*}
\pa_t \tilde\chi_\pm + v \cdot \na_x \tilde\chi_\pm - \na_x V \cdot \na_v \tilde\chi_\pm = 0, \quad \tilde\chi_\pm(s,x,v) = \chi_\pm(x,v).
\end{equation*}

The following lemma is an adaptation of \cite[Lemma 10]{Guo}, with an additional potential $V$.
\begin{lem}
\label{lem:carac}
There exists $\eps_0>0$ such that if $\eps \in (0,\eps_0)$, the following statements hold. 
\begin{enumerate}
\item For $t \in [s-\eps^2,s]$, if $\tilde\chi_+(t,x,v) \neq 0$, then $\tilde{n}(x)\cdot v>\eps/2$ and $|v|<2 \eps^{-m}$. 

Moreover, $\tilde{\chi}_+(s-\eps^2,x,v) =0$ for $x \in \tilde\omega\setminus \tilde\omega_\eps$.
\item For $t \in [s,s+ \eps^2]$, if $\tilde\chi_-(t,x,v) \neq 0$, then $\tilde{n}(x)\cdot v<-\eps/2$ and $|v|<2 \eps^{-m}$. 

Moreover, $\tilde{\chi}_-(s+\eps^2,x,v) =0$ for $x \in \tilde\omega\setminus \tilde\omega_\eps$.
\end{enumerate}
\end{lem}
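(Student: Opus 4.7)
Plan for the proof of Lemma \ref{lem:carac}.

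The plan is to unwind the definition \eqref{chiplus} and use the fact that over the short time interval of length $\eps^2$, the characteristics \eqref{caraccarac} barely deform. By definition $\tilde\chi_+(t,x,v)\neq 0$ forces
$X(s,t,x,v)\in\tilde\omega\setminus\tilde\omega_\eps$, $|\Xi(s,t,x,v)|\leq\eps^{-m}$ and $\tilde n(X(s,t,x,v))\cdot\Xi(s,t,x,v)>\eps$. Since $\nabla V\in L^\infty(\Omega)$, integrating \eqref{caraccarac} yields, for $|t-s|\leq\eps^2$,
$$
|\Xi(s,t,x,v)-v|\leq\|\nabla V\|_{L^\infty}\eps^2,\qquad |X(s,t,x,v)-x|\leq(\eps^{-m}+\|\nabla V\|_{L^\infty}\eps^2)\eps^2.
$$
The first bound immediately gives $|v|\leq\eps^{-m}+O(\eps^2)<2\eps^{-m}$ for $\eps$ small. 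For the direction, $\tilde n$ (hence $\na\eta/|\na\eta|$) is continuous on a neighborhood of $\pa\tilde\omega$ by the $C^2$ regularity assumption on $\tilde\omega$; together with the displacement bound $O(\eps^{2-m})$, this gives $|\tilde n(x)-\tilde n(X(s,t,x,v))|=O(\eps^{2-m})$, and combined with the velocity bound,
$$
\tilde n(x)\cdot v\geq \tilde n(X(s,t,x,v))\cdot\Xi(s,t,x,v)-C\eps^{2-m}\eps^{-m}-C\eps^2>\eps-C\eps^{2(1-m)}>\eps/2
$$
for $\eps$ small (using $2m<1$). This proves the first assertion in (1).

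For the moreover part, suppose $x\in\tilde\omega\setminus\tilde\omega_\eps$, so $-\eps^4\leq\eta(x)<0$, and assume for contradiction that $\tilde\chi_+(s-\eps^2,x,v)\neq 0$. By what we just proved, throughout the interval $[s-\eps^2,s]$ the characteristic $\tau\mapsto X(\tau,s-\eps^2,x,v)$ stays in an $O(\eps^{2-m})$-neighborhood of $x$, where both $|\nabla\eta|$ is bounded below (say by a constant $c_0>0$) and $\tilde n$ is continuous. Applying the previous direction estimate at each intermediate time shows that along this characteristic,
$$
\frac{d}{d\tau}\eta\big(X(\tau,s-\eps^2,x,v)\big)=\nabla\eta\big(X(\tau,\cdot)\big)\cdot\Xi(\tau,\cdot)\geq \tfrac{c_0\eps}{2}
$$
for all $\tau\in[s-\eps^2,s]$, provided $\eps$ is small. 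Integrating from $s-\eps^2$ to $s$,
$$
\eta\big(X(s,s-\eps^2,x,v)\big)\geq\eta(x)+\tfrac{c_0\eps^3}{2}\geq -\eps^4+\tfrac{c_0\eps^3}{2}>0
$$
for $\eps$ small enough. This forces $X(s,s-\eps^2,x,v)\notin\tilde\omega$, contradicting the fact that $\tilde\chi_+(s-\eps^2,x,v)\neq 0$ requires this point to lie in $\tilde\omega\setminus\tilde\omega_\eps$.

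Assertion (2) is proved identically by reversing time: the cutoff $\chi_-$ selects velocities with $\tilde n(x)\cdot v<-\eps$, and propagating backward in time from $t=s+\eps^2$ (equivalently, integrating along characteristics in the opposite direction) yields the symmetric statement. The only mild obstacle is to keep the implicit constants uniform in $(x,v)$; this is granted by the global $L^\infty$ bound on $\nabla V$ and by the uniform continuity of $\tilde n$ and the uniform lower bound on $|\nabla\eta|$ on the relevant (compact) neighborhood of $\partial\tilde\omega$, itself ensured by the $C^2$ regularity assumption.
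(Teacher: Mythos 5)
Your proof is correct. For the first assertion of each item you use exactly the paper's decomposition: the main term $\tilde n(X(s,t,x,v))\cdot\Xi(s,t,x,v)>\eps$ from the definition of $\chi_+$, an $O(\eps^2)$ error from the velocity drift, and an $O(\eps^{2-2m})$ error from the variation of $\tilde n$ over a displacement of size $O(\eps^{2-m})$, all absorbed using $2m<1$; one small wording fix: you need $\tilde n=\na\eta/|\na\eta|$ to be \emph{Lipschitz} (not merely continuous) near $\pa\tilde\omega$ to get the quantitative bound $|\tilde n(x)-\tilde n(X)|=O(\eps^{2-m})$, which is what the $C^2$ regularity of $\eta$ plus the lower bound on $|\na\eta|$ actually provide, and what the paper uses via a mean-value estimate on $\na\tilde n$. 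For the ``moreover'' assertion your route differs from the paper's: the paper performs a one-shot second-order Taylor expansion of $\eta$ at $X(s,s-\eps^2,x,v)$ together with an explicit Taylor formula for the displacement $x-X(s,s-\eps^2,x,v)$, controlling separately the linear term ($\gtrsim\eps^3$), the potential contribution ($O(\eps^4)$) and the Hessian term ($O(\eps^{2(2-m)})$); you instead integrate the differential inequality $\frac{d}{d\tau}\eta(X(\tau,s-\eps^2,x,v))=\na\eta\cdot\Xi\geq c_0\eps/2$ over the interval of length $\eps^2$. Your version is legitimate (the pointwise lower bound on the derivative at intermediate times follows either from the first assertion applied at time $\tau$ — using that $\tilde\chi_+$ is constant along the flow, a point worth stating explicitly — or from a direct perturbation of the time-$s$ value), it avoids the Hessian term altogether, and it yields the geometrically natural conclusion: since the normal velocity is outgoing, $\eta$ increases by $\gtrsim\eps^3\gg\eps^4$ and the trajectory exits $\tilde\omega$, contradicting $X(s,s-\eps^2,x,v)\in\tilde\omega\setminus\tilde\omega_\eps$. (The paper's displayed formula for $x-X(s,s-\eps^2,x,v)$ carries the opposite sign on its leading term and therefore concludes $\eta(X(s,s-\eps^2,x,v))<-\eps^4$; either inequality contradicts membership in the shell $\{-\eps^4\leq\eta<0\}$, so the lemma is unaffected, but your sign is the correct one.)
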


For the sake of readability, we postpone the proofs of Lemmas~\ref{lem:grandesvitessesourasantes} and~\ref{lem:carac} to the end of the section.

\bigskip

We finally have the last adaptation from \cite[Lemma 11]{Guo}.

\begin{lem}
\label{lem:reste}
There exists $\eps_1>0$, $C>0$ such that for all $\eps \in (0,\eps_1)$, there exists a function $\Psi_\eps : \N \to \R^+$ satisfying $\lim_{n \to +\infty} \Psi_\eps(n) = 0$ such that
\begin{equation}
\int_0^{T_1} [\| \chi_+ g_n(s) \|_{\LL^2} ^2 + \| \chi_- g_n(s) \|_{\LL^2} ^2] \, ds \leq C \eps + \Psi_\eps(n).
\end{equation}
\end{lem}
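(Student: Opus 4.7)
The plan is to test $\tilde\chi_+^2 g_n$ (resp.\ $\tilde\chi_-^2 g_n$) against the evolution, exploit that $\tilde\chi_+$ solves the free transport equation~\eqref{chiplus} (resp.\ $\tilde\chi_-$), and integrate over the short window $[s-\eps^2,s]$ (resp.\ $[s,s+\eps^2]$); I treat the $\chi_+$ case, the other being entirely symmetric. Setting $h_n(t,x,v):=\tilde\chi_+(t,x,v) g_n(t,x,v)$ for $t\in[s-\eps^2,s]$, the Boltzmann equation~\eqref{B-specular} combined with~\eqref{chiplus} gives
\begin{equation*}
\partial_t h_n + v\cdot \nabla_x h_n - \nabla_x V\cdot \nabla_v h_n = \tilde\chi_+ \, C(g_n).
\end{equation*}
Multiplying by $h_n\, \frac{e^V}{\Mc}$ and integrating on $\Omega\times\R^d$, using that $(v\cdot \nabla_x - \nabla_x V\cdot \nabla_v)\frac{e^V}{\Mc}=0$, produces an energy identity involving the boundary term
\begin{equation*}
B(t):=\int_{\partial\Omega\times\R^d} h_n^2\, \frac{e^V}{\Mc}\,(v\cdot n(x))\, d\Sigma(x)\, dv.
\end{equation*}
Integrating in $t$ on $[s-\eps^2,s]$ and using $h_n(s)=\chi_+ g_n(s)$, then integrating in $s$ on $[0,T_1]$, reduces matters to estimating three contributions.

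For the initial-datum term, Lemma~\ref{lem:carac}(1) yields $\tilde\chi_+(s-\eps^2,x,v)=0$ for $x\in \tilde\omega\setminus\tilde\omega_\eps$, while the velocity bound in Lemma~\ref{lem:carac}(1) confines the support of $\tilde\chi_+(s-\eps^2,\cdot)$ to $\overline{\tilde\omega_\eps}\times B(0,2\eps^{-m})$. The $C^2$-regularity assumption on $\partial\omega$ near $\partial\omega\cap\partial\Omega$ ensures $\overline{\tilde\omega_\eps}\subset\subset \Omega$ for $\eps$ small. Since $\int_0^{T_1}D(g_n)\,dt\to 0$ by the hypotheses~\eqref{assumptions-gn}, Lemma~\ref{cerci2} yields $\sqrt{\sigma}(g_n-\rho_{g_n}\Mc)\to 0$ strongly in $L^2(0,T_1;\LL^2)$, and the averaging lemma (Corollary~\ref{thmmoyenne-domain}) applied to $g_n\rightharpoonup 0$ gives $\rho_{g_n}\to 0$ strongly in $L^2(0,T_1;L^2_{loc}(\Omega))$. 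Hence $\sqrt{\sigma}\, g_n\to 0$ strongly in $L^2(0,T_1;\LL^2(\overline{\tilde\omega_\eps}\times \R^d))$, and in particular $\int_0^{T_1}\|h_n(s-\eps^2)\|_{\LL^2}^2\,ds\to 0$ as $n\to\infty$; this produces the $\Psi_\eps(n)$ part. For the collision term, the $\LL^2\to\LL^2$ boundedness of $C$ inherited from~{\bf A3} (see the proof of Proposition~\ref{prop:WP}), together with $\|\tilde\chi_+\|_{L^\infty}\leq\|\sqrt{\sigma}\|_{L^\infty}$ and $\sup_t\|g_n(t)\|_{\LL^2}\leq 1$, gives via Cauchy--Schwarz a bound of order $\eps^2$ for each fixed $s$, hence a total $\leq C T_1\eps^2\leq C\eps$ after integrating in $s\in[0,T_1]$.

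The main obstacle is the boundary contribution $-\int_0^{T_1}\int_{s-\eps^2}^s B(t)\,dt\,ds$. Because $\tilde\chi_+$ is built from the free flow~\eqref{caraccarac}, it does \emph{not} satisfy the specular-reflection symmetry $\tilde\chi_+(t,x,R_xv)=\tilde\chi_+(t,x,v)$, and the cancellation $\int_{\partial\Omega\times\R^d} f^2(v\cdot n)\,d\Sigma\,dv=0$ enjoyed by the trace of $g_n$ does not transfer to $h_n$. The plan is to exploit the support properties of $\tilde\chi_+$ on $\partial\Omega$: for $\eps$ small the outgoing free flow cannot re-enter $\Omega$ and reach $\tilde\omega$ within time $\eps^2$, so $\tilde\chi_+(t,x,v)=0$ at outgoing points $v\cdot n(x)>0$, whence $-B(t)=\int_{v\cdot n<0}\tilde\chi_+^2 g_n^2\, |v\cdot n|\, \frac{e^V}{\Mc}\,d\Sigma\,dv$; by Lemma~\ref{lem:carac}(1), the remaining support is moreover restricted to a neighborhood of $\partial\omega\cap\partial\Omega$ of width $O(\eps^{2-m})$ and to velocities satisfying $|v|\leq 2\eps^{-m}$ and $\tilde n(x)\cdot v\geq \eps/2$. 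Controlling this incoming trace via a Green-type identity that converts it into an interior thin-shell integral of $\sigma g_n^2$ plus collision terms, and then invoking the near-boundary mass estimate of Lemma~\ref{lem:grandesvitessesourasantes} together with the dissipation bound, yields $\int_0^{T_1}\int_{s-\eps^2}^s |B(t)|\,dt\,ds\leq C\eps+\varphi(n)$. Summing the three contributions, and handling the $\chi_-$ case symmetrically on $[s,s+\eps^2]$ via Lemma~\ref{lem:carac}(2), completes the proof.
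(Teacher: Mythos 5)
Your skeleton (an energy identity for $\tilde\chi_\pm g_n$ over a window of length $\eps^2$, Lemma~\ref{lem:carac} to kill the initial datum, a crude bound for the collision term) matches the paper's, but the step you yourself flag as ``the main obstacle'' is where the argument breaks, and the fix is not the one you sketch. By integrating the identity over all of $\Omega\times\R^d$ you are left with the trace of $g_n$ on $\partial\Omega$ weighted by $\tilde\chi_+^2\,(v\cdot n)$, and there is no mechanism to make it small: Cessenat's trace theorem controls a trace by $\|g_n\|_{L^2}+\|C(g_n)\|_{L^2}$ on the underlying domain, and $\|g_n\|_{L^2((0,T_1)\times\Omega\times\R^d)}$ does \emph{not} tend to $0$ (only weakly), while interior compactness --- the only source of strong smallness for $g_n$ itself --- is unavailable up to $\partial\Omega$; that is precisely the difficulty this whole section is designed to circumvent. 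Worse, your proposed reduction of the incoming $\partial\Omega$-trace to ``an interior thin-shell integral of $\sigma g_n^2$'' is circular, since $\int_0^{T_1}\|\mathds{1}_{\tilde\omega\setminus\tilde\omega_\eps}\sqrt{\sigma}\, g_n\|_{\LL^2}^2\,ds$ is exactly the quantity~\eqref{goal} that Lemma~\ref{lem:reste} is meant to bound. A secondary gap: Lemma~\ref{lem:carac} only asserts $\tilde\chi_+(s-\eps^2,x,v)=0$ for $x\in\tilde\omega\setminus\tilde\omega_\eps$; it does \emph{not} confine the support of $\tilde\chi_+(s-\eps^2,\cdot)$ to $\overline{\tilde\omega_\eps}$, because points of $\Omega\setminus\tilde\omega$ (some arbitrarily close to $\partial\Omega$ near $\partial\omega\cap\partial\Omega$) may be carried into $\tilde\omega\setminus\tilde\omega_\eps$ by the flow within time $\eps^2$, and for those the contradiction argument in the proof of Lemma~\ref{lem:carac} (which uses $\eta(x)<0$) does not apply.

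Both problems disappear with a different choice of integration domain: integrate the identity over the thin shell $(\tilde\omega\setminus\tilde\omega_\eps)\times\R^d$ only. Then (i) the initial-datum term vanishes identically by Lemma~\ref{lem:carac}, with no compactness needed; (ii) the boundary of the shell splits into the outer surface $\gamma=\pa\tilde\omega$ (which carries the problematic piece of $\pa\Omega$) and the inner surface $\gamma_\eps=\pa\tilde\omega_\eps$; on the incoming part $\gamma^-$ of the outer surface the integrand vanishes because $\tilde\chi_+\neq0$ forces $\tilde n(x)\cdot v>\eps/2$, and the outgoing part $\gamma^+$ enters the identity with a favorable sign and is simply discarded; (iii) the only surviving boundary term lives on $\gamma_\eps^-$, a surface compactly contained in $\Omega$, where Cessenat's trace theorem applied on $\tilde\omega_\eps$, combined with interior compactness ($\|g_n\|_{L^2((0,T_1);\LL^2(\tilde\omega_\eps\times\R^d))}\to0$ strongly) and the dissipation bound (which gives $\|C(g_n)\|_{L^2}\to0$ via Lemma~\ref{cerci2} and the factorized structure \textbf{E2*}), yields the $\Psi_\eps(n)$ contribution. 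Your $O(\eps^2)$ bound for the collision term per fixed $s$ is fine and even slightly simpler than the paper's, but the rest of the argument needs to be redone on the shell.
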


\begin{proof}[Proof of Lemma \ref{lem:reste}]Let $s \in [\eps,T_1-\eps]$. By construction of $\tilde\chi_\pm(t,x,v)$ (see \eqref{chiplus} and \eqref{chimoins}), we have
\begin{equation}
\label{eqchign}
\pa_t (\tilde\chi_\pm g_n)+ v \cdot \na_x (\tilde \chi_\pm g_n) - \na_x V \cdot \na_v (\tilde\chi_\pm g_n) = \tilde\chi_\pm C(g_n)
\end{equation}
We deal with $\tilde\chi_+$. We multiply \eqref{eqchign} by $\tilde\chi_+ g_n \frac{e^V}{\Mc(v)}$ and integrate on $[s-\eps^2, s] \times \tilde\omega \setminus \tilde\omega_\eps \times \R^d$ to get
\begin{align*}
\| \chi_+ g_n(s) \mathds{1}_{\tilde\omega \setminus \tilde\omega_\eps \times \R^d} \|_{\LL^2} ^2 -
\| \tilde{\chi}_+ g_n (s-\eps^2)\mathds{1}_{\tilde\omega \setminus \tilde\omega_\eps \times \R^d} \|_{\LL^2}^2,\\
+ A \:
=  \: 2 \int_{s-\eps^2}^s \langle  \mathds{1}_{\tilde\omega \setminus \tilde\omega_\eps \times \R^d} \tilde\chi_+ C(g_n), g_n
\rangle_{\LL^2} \, du
\end{align*}
where $A$ denotes the contributions from the boundary of $\tilde\omega \setminus \tilde\omega_\eps$,
\begin{equation}
A:=\int_{s-\eps^2}^s \int_{\pa(\tilde\omega \setminus \tilde\omega_\eps)\times \R^d}  v\cdot N(x) (\tilde\chi_+(t,x,v) g_n(t,x,v))^2 \frac{e^V}{\Mc(v)} \, dv d\Sigma_\eps(x) dt.
\end{equation}
Here $N(x)$ is the outer normal on $\pa(\tilde\omega \setminus \tilde\omega_\eps)$ and $d\Sigma_\eps(x)$ is the surface measure on $\pa(\tilde\omega \setminus \tilde\omega_\eps)$. 

By Lemma \ref{lem:carac}, if $x \in \tilde\omega \setminus \tilde\omega_\eps$, then $\tilde\chi_+(s-\eps^2, x,v) = 0$. We deduce that 
$$
\| \tilde{\chi}_+ g_n (s-\eps^2)\mathds{1}_{\tilde\omega \setminus \tilde\omega_\eps \times \R^d} \|_{\LL^2}^2=0.
$$

Now, we partition $\pa(\tilde\omega \setminus \tilde\omega_\eps) \times \R^d$ as follows
$$
\pa(\tilde\omega \setminus \tilde\omega_\eps) \times \R^d= \gamma^- \cup \gamma^+ \cup \gamma^-_\e \cup \gamma^+_\e,
$$
where 
\begin{equation*}
\left\{
\begin{aligned}
&\gamma = \pa \tilde\omega, \\
&\gamma^-= \{ (x,v) \in \gamma\times \R^d, \, \eta(x) = 0, \,N(x)\cdot v < 0\},\\
&\gamma^+= \{ (x,v) \in \gamma\times \R^d, \, \eta(x) = 0, \,N(x)\cdot v\geq 0\},\\
&\gamma_\eps = \pa \tilde\omega_\eps, \\
&\gamma_\eps^-= \{ (x,v) \in \gamma_\eps\times \R^d, \, \eta(x) = -\eps^4, \,N(x)\cdot v <0\},\\
&\gamma_\eps^+= \{ (x,v) \in \gamma_\eps\times \R^d, \, \eta(x) = -\eps^4,\, N(x)\cdot v \geq 0\}.
\end{aligned}
\right.
\end{equation*}
Therefore we obtain
\begin{align*}
A \: = \int_{s-\eps^2}^s \int_{\gamma^+}  |v\cdot N(x)| (\tilde\chi_+(t,x,v) g_n(t,x,v))^2 \frac{e^V}{\Mc(v)} \, dv d\Sigma_\eps(x)
 \\
+ \int_{s-\eps^2}^s \int_{\gamma^+_\eps}  |v\cdot N(x)|  (\tilde\chi_+(t,x,v) g_n(t,x,v))^2 \frac{e^V}{\Mc(v)} \, dv d\Sigma_\eps(x)
 \\
 -\int_{s-\eps^2}^s \int_{ \gamma^-}  |v\cdot N(x)|  (\tilde\chi_+(t,x,v) g_n(t,x,v))^2 \frac{e^V}{\Mc(v)} \, dv d\Sigma_\eps(x)
\\
- \int_{s-\eps^2}^s \int_{ \gamma^-_\eps}  |v\cdot N(x)|  (\tilde\chi_+(t,x,v) g_n(t,x,v))^2 \frac{e^V}{\Mc(v)} \, dv d\Sigma_\eps(x).
\end{align*}
Recall the definition of $\tilde{n}$ in \eqref{def-tilde-n}. Observe that on $\gamma$, we have $N(x)=\tilde{n}(x)$, while on $\gamma_\eps$, we have $N(x)=-\tilde{n}(x)$.

Using Lemma \ref{lem:carac}, for all $t \in [s-\eps^2,s]$, if $\tilde{\chi}(t,x,v) \neq 0$, then $\tilde{n}(x)\cdot v >0$. Therefore,
$$
\int_{s-\eps^2}^s \int_{ \gamma^-}  |v\cdot N(x)|  (\tilde\chi_+(t,x,v) g_n(t,x,v))^2 \frac{e^V}{\Mc(v)} \, dv d\Sigma_\eps(x)=0.
$$
We thus obtain the bound
\begin{align}
\| \chi_+ g_n(s) \|_{\LL^2} ^2  &\leq \int_{s-\eps^2}^s \int_{ \gamma^-_\eps}  |v\cdot N(x)|  (\tilde\chi_+(t,x,v) g_n(t,x,v))^2 \frac{e^V}{\Mc(v)} \, dv d\Sigma_\eps(x) dt  \nonumber\\
&+ 2 \int_{s-\eps^2}^s \langle  \mathds{1}_{\tilde\omega \setminus \tilde\omega_\eps \times \R^d} \tilde\chi_+ C(g_n), g_n
\rangle_{\LL^2} \, du.
\end{align}
Similarly, for $\tilde\chi_-$,  we multiply \eqref{eqchign} by $\tilde\chi_- g_n \frac{e^V}{\Mc(v)}$ and integrate on $[s,s+\eps^2] \times \tilde\omega \setminus \tilde\omega_\eps \times \R^d$ to get
\begin{align}
\| \chi_- g_n(s) \|_{\LL^2} ^2  &\leq \int_{s}^{s+\eps^2}\int_{ \gamma^+_\eps}  |v\cdot N(x)|  (\tilde\chi_-(t,x,v) g_n(t,x,v))^2 \frac{e^V}{\Mc(v)} \, dv d\Sigma_\eps(x) dt \nonumber \\&- 2 \int_s^{s+\eps^2} \langle  \mathds{1}_{\tilde\omega \setminus \tilde\omega_\eps \times \R^d} \tilde\chi_- C(g_n), g_n
\rangle_{\LL^2} \, du.
\end{align}
According to Lemma \ref{lem:carac} and the definitions of $\chi_\pm$ and $\tilde\chi_\pm$, we have
\begin{align*}
\int_{s-\eps^2}^s \int_{ \gamma^-_\eps}  &|v\cdot N(x)|  (\tilde\chi_+(t,x,v) g_n(t,x,v))^2 \frac{e^V}{\Mc(v)} \, dv d\Sigma_\eps(x) dt \\
&\leq \|\sigma\|_\infty \int_{s-\eps^2}^s \int_{ \gamma_\eps}  |v\cdot N(x)|  (\mathds{1}_{\{|v| \leq 2 \eps^{-m}, \, \tilde{n}(x)\cdot v >\eps/2\}} (x,v)  g_n(t,x,v))^2 \frac{e^V}{\Mc(v)} \, dv d\Sigma_\eps(x) dt \\
&\leq \|\sigma\|_\infty \int_{s-\eps^2}^{s+\eps^2} \int_{ \gamma_\eps}  |v\cdot \tilde{n}(x)|  (\mathds{1}_{\{|v| \leq 2 \eps^{-m}, \, |\tilde{n}(x)\cdot v |>\eps/2\}} (x,v)  g_n(t,x,v))^2 \frac{e^V}{\Mc(v)} \, dv d\Sigma_\eps(x) dt,
\end{align*}
and likewise
\begin{align*}
\int_s^{s+\eps^2} \int_{ \gamma^+_\eps}  &|v\cdot N(x)|  (\tilde\chi_-(t,x,v) g_n(t,x,v))^2 \frac{e^V}{\Mc(v)} \, dv d\Sigma_\eps(x) dt \\
&\leq \|\sigma\|_\infty \int_{s-\eps^2}^{s+\eps^2} \int_{ \gamma_\eps}  |v\cdot \tilde{n}(x)|  (\mathds{1}_{\{|v| \leq 2 \eps^{-m}, \, |\tilde{n}(x)\cdot v| >\eps/2\}} (x,v)  g_n(t,x,v))^2 \frac{e^V}{\Mc(v)} \, dv d\Sigma_\eps(x) dt.
\end{align*}
We thus have to study
\begin{align*}
&\int_{s-\eps^2}^{s+\eps^2} \int_{ \gamma_\eps}  |v\cdot \tilde{n}(x)|  (\mathds{1}_{\{|v| \leq 2 \eps^{-m}, \, |\tilde{n}(x)\cdot v| >\eps/2\}} (x,v)  g_n(t,x,v))^2 \frac{e^V}{\Mc(v)} \, dv d\Sigma_\eps(x) dt \\
&\leq \int_{s-\eps^2}^{s+\eps^2}  \int_{ \gamma_\eps} (\mathds{1}_{\{|v| \leq 2 \eps^{-m}, \, |\tilde{n}(x)\cdot v| >\eps/2\}} (x,v)  g_n(t,x,v))^2 \frac{(\tilde{n}(x)\cdot v)^2}{1+|v|} \frac{4(1+2\eps^{-m})}{\eps^2}  \frac{e^V}{\Mc(v)}  \, dv d\Sigma_\eps(x) du\\
&\leq  C_\eps \int_{s-\eps^2}^{s+\eps^2}  \int_{ \gamma_\eps} g_n^2(u)  \frac{e^V}{\Mc(v)} \frac{(N(x)\cdot v)^2}{1+|v|} \, dv d\Sigma_\eps(x) du,
\end{align*}
with $C_\eps =  \frac{4(1+2\eps^{-m})}{\eps^2}$.

We recall below Cessenat's trace theorem (see \cite{Ces} or \cite[Proposition B.2]{SR}):

\begin{lem}
Let $U$ be a $C^1$ domain. Let $T_1, T_2>0$. There exists a constant $C>0$ such that, for any function $f \in L^2(T_1,T_2;L^2(U \times \R^d))$ satisfying $(\pa_t + v \cdot \na_x - \na_x V \cdot \na_v) f \in L^2(T_1,T_2;L^2(U \times \R^d))$, we have
\begin{multline*}
\| f\vert_{\pa U} \|_{L^2(T_1,T_2; L^2( |v \cdot n(x)|^2 (1+ |v|)^{-1} d\Sigma(x) dv))} \\
\leq C \left( \|f \|_{L^2(T_1,T_2;L^2(U \times \R^d))} + 
 \|(\pa_t + v \cdot \na_x - \na_x V \cdot \na_v) f \|_{L^2(T_1,T_2;L^2(U \times \R^d))} \right),
\end{multline*}
where $d\Sigma$ denotes the surface measure and $n$ is the outward unit normal on $\pa U$.
\end{lem}

This allows us to obtain the control:
\begin{align*}
\int_{s-\eps^2}^{s+\eps^2}  \int_{ \gamma_\eps} g_n^2(u)  \frac{e^V}{\Mc(v)} \frac{(N(x)\cdot v)^2}{1+|v|}  \, dv d\Sigma_\eps(x) du \leq K_\eps\int_{s-\eps^2}^{s+\eps^2} [\|g_n(u) \|^2_{\LL^2( \tilde\omega_\eps\times \R^d)} + \|C(g_n)(u) \|^2_{\LL^2( \tilde\omega_\eps\times \R^d)}] \, du.
\end{align*}

By interior compactness, since $\overline{\tilde\omega_\eps} \subset \Omega$, we know that 
$$ \int_{s-\eps^2}^{s+\eps^2} \|g_n(u) \|^2_{\LL^2( \tilde\omega_\eps\times \R^d)} \, du \to 0,$$ 
as $n \to + \infty$.

On the other hand, using the assumption {\bf A2} on the collision kernel, we have
\begin{align*}
 \int_{s-\eps^2}^{s+\eps^2} \|C(g_n)(u) \|^2_{\LL^2} \, du&=  \int_{s-\eps^2}^{s+\eps^2} \| C(g_n- \rho_{g_n} \Mc(v))(u) \|^2_{\LL^2} \, du \nonumber\\
& \leq  E_1 +E_2, 
\end{align*}
where
\begin{align*}
 E_1 &:= \int_{s-\eps^2}^{s+\eps^2} \left\| \left( \int k(x,v,v') \, dv'\right) [g_n - \rho_{g_n} \Mc(v)] \right\|_{\LL^2}^2 \, du ,\\
 E_2 &:= \int_{s-\eps^2}^{s+\eps^2}  \left\|   \int k(x,v',v)  [g_n(v') - \rho_{g_n} \Mc(v')]  \, dv' \right\|_{\LL^2}^2 \, du .
\end{align*}
We first study $E_1$. Using~\eqref{bornek-improv}, we have
\begin{align*}
 E_1 &= \int_{s-\eps^2}^{s+\eps^2} \left\| \left( \int \sigma (x) k^*(x,v,v') \Mc(v') \, dv'\right) [g_n - \rho_{g_n} \Mc(v)] \right\|_{\LL^2}^2 \, du ,\\
&\leq  \left(\sup_{(x,v) \in \Omega \times \R^d} \int k^*(x,v,v')\Mc(v') \,dv'\right)^2  \| \sqrt\sigma\|_\infty \int_{s-\eps^2}^{s+\eps^2} \left\| \sqrt{\sigma} [g_n - \rho_{g_n} \Mc(v)] \right\|_{\LL^2}^2 \, du.
\end{align*}
We argue likewise for $E_2$ and obtain a similar bound.

Therefore, using Lemma \ref{cerci2} and the dissipation bound in \eqref{assumptions-gn}, we deduce:
\begin{align}
 \int_{s-\eps^2}^{s+\eps^2} \|C(g_n)(u) \|^2_{\LL^2} \, du
& \leq  C \int_{s-\eps^2}^{s+\eps^2} \|\sqrt{\sigma(x)} (g_n- \rho_{g_n} \Mc(v)) \|^2_{\LL^2} \, du
\leq C \int_0^{T_1} D(g_n(t)) dt\nonumber \\
& \leq  \Psi_1(n), \label{bouboun}
\end{align}
with $\Psi_1(n)$ a function tending to $0$ as $n$ goes to $+\infty$. 

\Black

Moreover, we have the rough bound, obtained by Cauchy-Schwarz inequality:
\begin{align*}
 \int_{s-\eps^2}^s \langle  \mathds{1}_{\tilde\omega \setminus \tilde\omega_\eps \times \R^d} \tilde\chi_+ C(g_n), g_n \rangle_{\LL^2} \, du &-  \int_s^{s+\eps^2} \langle  \mathds{1}_{\tilde\omega \setminus \tilde\omega_\eps \times \R^d} \tilde\chi_- C(g_n), g_n
\rangle_{\LL^2} \, du \\
&\leq   \int_{s-\eps^2}^{s+\eps^2} \| C(g_n)\|_{\LL^2}  \| g_n\|_{\LL^2} \,du \\
&\leq  \sup_{t\geq 0} \| g_n\|_{\LL^2}  \int_{s-\eps^2}^{s+\eps^2} \| C(g_n)\|_{\LL^2}  \,du \\
& \leq \int_{s-\eps^2}^{s+\eps^2} \| C(g_n)\|_{\LL^2}  \,du,
\end{align*}
where we have used~\eqref{assumptions-gn} on the last line. We can use again \eqref{bouboun} to bound this by $\Psi_1(n)$.
\bigskip

Summarizing ($\eps$ being fixed), we have proved the existence of $\Psi_\eps(n)$ tending to $0$ as $n$ tends to infinity such that
$$
\int_\eps^{T_1-\eps} [\| \chi_- g_n(s) \|_{\LL^2} ^2 + \| \chi_- g_n(s) \|_{\LL^2} ^2] \, ds \leq  \Psi_\eps(n).
$$
We also have, by~\eqref{assumptions-gn},
\begin{align*}
\int_0^\eps [\| \chi_- g_n(s) \|_{\LL^2} ^2 + \| \chi_- g_n(s) \|_{\LL^2} ^2] \, ds &\leq 2 \|\sigma\|_{\infty} \eps, \\
\int_{T_1 - \eps}^{T_1} [\| \chi_- g_n(s) \|_{\LL^2} ^2 + \| \chi_- g_n(s) \|_{\LL^2} ^2] \, ds  &\leq  2 \|\sigma\|_{\infty} \eps.
\end{align*}
which yields the claimed result of Lemma~\ref{lem:reste}.
\end{proof}

\bigskip

\noindent {\bf End of the proof of Theorem~\ref{thmexpo-specular}.} We are now ready to prove \eqref{goal}, which is a consequence of Lemmas \ref{lem:grandesvitessesourasantes} and \ref{lem:reste}.
Indeed, we can write
\begin{align*}
&\| \mathds{1}_{\tilde\omega \setminus \tilde\omega_\eps} \sqrt{\sigma}  g_n \|_{L^2(0,T_1; \LL^2)} ^2 \\
&=\| \mathds{1}_{\{\tilde\omega \setminus \tilde\omega_\eps}\mathds{1}_{|v| > \e^{-m} \text{ or } |\tilde{n}(x)\cdot v| \leq\eps\}}  \sqrt{\sigma}  g_n \|_{L^2(0,T_1;\LL^2)} ^2 + \| \mathds{1}_{\{\tilde\omega \setminus \tilde\omega_\eps}\mathds{1}_{|v| \leq \e^{-m} \text{ and } |\tilde{n}(x)\cdot v| >\eps\}}  \sqrt{\sigma}  g_n \|_{L^2(0,T_1; \LL^2)} ^2 \\
&=\| \mathds{1}_{\tilde\omega \setminus \tilde\omega_\eps}\mathds{1}_{\{|v| > \e^{-m} \text{ or } |\tilde{n}(x)\cdot v| \leq\eps\}}  \sqrt{\sigma}  g_n \|_{L^2(0,T_1;\LL^2)} ^2 + \|\chi_+   g_n \|_{L^2(0,T_1; \LL^2)} ^2 + \|\chi_-   g_n \|_{L^2(0,T_1; \LL^2)} ^2 \\ 
&\leq   C \eps + \varphi(n) + \Psi_\eps(n).
\end{align*}
Let $\delta>0$. Fix $\eps>0$ small enough such that $\eps<\eps_0$ and $C\eps < \delta/2$. Once this parameter $\eps$ is fixed, choose $n_0$ large enough such that for all $n \geq n_0$, $ \varphi(n) +  \Psi_\eps(n) \leq \delta/2$. Then for all $n \geq n_0$, we have $\| \mathds{1}_{\tilde\omega \setminus \tilde\omega_\eps} \sqrt{\sigma} g_n \|_{L^2(0,T_1; \LL^2)} ^2 \leq \delta$, which proves the convergence to $0$.

Therefore, this concludes the proof of Theorem~\ref{thmexpo-specular}.
\end{proof}

Now, it only remains to prove Lemmas~\ref{lem:grandesvitessesourasantes} and~\ref{lem:carac} for the proof of Theorem~\ref{thmexpo-specular} to be complete. This is the aim of the next two sections.

\subsection{Proof of Lemma \ref{lem:grandesvitessesourasantes}}

First take $\eps_0$ small enough so that $\tilde{n}(x)$ is well defined for $x \in \tilde\omega \setminus \tilde\omega_{\eps_0}$.
We write the decomposition
\begin{align*}
\int_0^{T_1} \int_{\tilde\omega \setminus \tilde\omega_\eps \times \R^d} \mathds{1}_{\{|v| > \e^{-m} \text{ or } |\tilde{n}(x)\cdot v| \leq\eps\}} \sigma(x) |g_n (s,x,v) |^2 \frac{e^V}{\Mc(v)} \, dv \, dx \,ds 
\leq A_1 + A_2,
\end{align*}
with
\begin{align*}
A_1&:=  2\|\sigma\|_{L^\infty(\Omega)} \int_0^{T_1} \int_{\tilde\omega \setminus \tilde\omega_\eps\times \R^d} \mathds{1}_{\{|v| > \e^{-m} \text{ or } |\tilde{n}(x)\cdot v| \leq\eps\}}  |\rho_{g_n} (s,x) |^2 e^V \Mc(v) \, dv \, dx \,ds,\\
A_2&:= 2 \int_0^{T_1} \int_{\tilde\omega \setminus \tilde\omega_\eps\times \R^d} \mathds{1}_{\{|v| > \e^{-m} \text{ or } |\tilde{n}(x)\cdot v| \leq\eps\}}  \sigma(x)  |g_n (s,x,v)- \rho_{g_n}(s,x) \Mc(v) |^2 \frac{e^V}{\Mc(v)} \, dv \, dx \,ds,
\end{align*}
where 
$$\rho_{g_n} := \int g_n \, dv.$$
For $A_1$, we use the uniform bound $\sup_{t \geq 0}\| g_n \|_{\LL^2} \leq 1$ and Cauchy-Schwarz inequality to obtain
\begin{align*}
A_1 &\leq  2 \|\sigma\|_{L^\infty(\Omega)} \int_0^{T_1} \left(\int_\Omega  |\rho_{g_n}|^2 e^{V} \, dx \right) dt \sup_{x \in \tilde\omega \setminus \tilde\omega_\eps } \int_{\{|v| > \e^{-m} \text{ or } |\tilde{n}(x)\cdot v| \leq\eps\}}  \Mc(v) \, dv \\
&\leq 2 \|\sigma\|_{L^\infty(\Omega)} \int_0^{T_1} \left(\int_\Omega  \left(\int \frac{|{g_n(x,v')}|^2}{\Mc(v')} \, dv' \right)\left(\int \Mc(v') \, dv' \right) e^{V(x)} \, dx \right) dt \\
& \qquad\qquad\qquad \times \sup_{x \in \tilde\omega \setminus \tilde\omega_\eps } \int_{\{|v| > \e^{-m} \text{ or } |\tilde{n}(x)\cdot v| \leq\eps\}}  \Mc(v) \, dv \\
&\leq 2T_1 \|\sigma\|_{L^\infty(\Omega)} \left(\sup_{t \geq 0} \| g_n \|_{\LL^2} \right)^2 \sup_{x \in \tilde\omega \setminus \tilde\omega_\eps } \int_{\{|v| > \e^{-m} \text{ or } |\tilde{n}(x)\cdot v| \leq\eps\}}  \Mc(v) \, dv \\
&\leq 2T_1  \|\sigma\|_{L^\infty(\Omega)} \sup_{x \in \tilde\omega \setminus \tilde\omega_\eps} \int_{\{|v| > \e^{-m} \text{ or } |\tilde{n}(x)\cdot v| \leq\eps\}}  \Mc(v) \, dv \
\end{align*}
One can check that there exists $C>0$ independent of $\eps$ such that for all $x \in \tilde\omega \setminus \tilde\omega_\eps$, we have
$$
\int_{\{|v| > \e^{-m} \text{ or } |\tilde{n}(x)\cdot v| \leq\eps \}}  \Mc(v) \, dv  \leq C \eps,
$$
so that we have
$$
A_1 \leq 2 C T_1\|\sigma\|_{L^\infty(\Omega)} \eps.
$$
On the other hand, we have the rough bound
$$
A_2 \leq 2\int_0^{T_1} \int_{\Omega} \int_{\R^d}  \sigma(x) |g_n (s,x,v)- \rho_n(s,x) \Mc(v) |^2 \frac{e^V}{\Mc(v)} \, dv \, dx \,ds ,
$$
which also goes to $0$ as $n$ goes to infinity, thanks to Lemma \ref{cerci2} and the dissipation bound in \eqref{assumptions-gn}. This concludes the proof of Lemma~\ref{lem:grandesvitessesourasantes}.

\subsection{Proof of Lemma \ref{lem:carac}}

First take $\eps_0$ small enough such that $\tilde{n}(x)$ is well defined on $\{x \in \tilde\omega, \,  -\sqrt{\eps_0}<\eta(x) <  \sqrt{\eps_0} \}$.

We only prove Item $(1)$ (the treatment of Item $(2)$ being identical). 

For the first statement of Item $(1)$, assume that $s,t,x,v$ are such that $\tilde\chi_+(t,x,v) \neq 0$ and $t \in [s-\eps^2 ,s]$.
We have
\begin{align}
\label{B1B2B3}
& \tilde{n}(x) \cdot v = B_1 +B_2 +B_3, \quad \text{with} \quad
 B_1 = \tilde{n} (X(s,t,x,v)) \cdot \Xi(s,t,x,v)  ,\\
 & B_2 = - \tilde{n} (X(s,t,x,v)) \cdot (\Xi(t,s,x,v)- v) ,
\quad B_3 = - (\tilde{n} (X(s,t,x,v))-\tilde{n}(x))\cdot v . \nonumber
\end{align}
Since $\tilde\chi_+(t,x,v) \neq 0$, we have $B_1 \geq \eps$.
For $B_2$ and $B_3$, using~\eqref{caraccarac}
, we obtain the estimates
\begin{align*}
|B_2 |&\leq  |\Xi(s,t,x,v)- v| \leq |t-s| \|\na_x V\|_\infty \leq C \eps^2, \\
|B_3 |&\leq |\tilde{n} (X(s,t,x,v))-\tilde{n}(x)| |v|  \\
&\leq |\na \tilde{n}(\overline{x})||X(s,t,x,v))-x| |v|,
\end{align*}
where $\overline{x}$ belongs to the segment $[x,X(s,t,x,v)]$. Since $\tilde\chi_+(t,x,v) \neq 0$, we have $|\Xi(s,t,x,v)|\leq \eps^{-m}$.
We deduce, using again~\eqref{caraccarac}, that
\begin{align}
\nonumber |X(s,t,x,v))-x|  &\leq \int_t^s |\Xi (s,u,x,v)| \, du \\
\nonumber&\leq |t-s| |\Xi(s,t,x,v)| + \frac{|t-s|^2}{2} \| \na_x V\|_\infty\\
\nonumber&\leq \eps^2 \eps^{-m} + \frac{ \| \na_x V\|_\infty}{2} \eps^4 \\
\label{utileX}&\leq 2\eps^2 \eps^{-m},
\end{align}
for $\eps< \eps_0$ with $\eps_0$ small enough, as $m>0$.

Thus, we infer that $|\overline{x}-X(s,t,x,v)| \leq |{x}-X(s,t,x,v)|\leq 2 \eps^2 \eps^{-m} < \eps$, for $\eps< \eps_0$ with $\eps_0$ small enough, as $2m<1$. Moreover, since $\tilde\chi_+(t,x,v) \neq 0$, we have $X(s,t,x,v) \in \tilde\omega\setminus \tilde\omega_\eps$ and thus $\eta(\overline{x}) \in   (-\sqrt{\eps_0}, \sqrt{\eps_0})$.
Therefore, there is a constant $C>0$ (depending only on $\eps_0$ and $\eta$) such that $|\na \tilde{n}(\overline{x})|\leq C$. 
We deduce that 
\begin{align*}
|B_3 |&\leq C \eps^2 \eps^{-2m}.
\end{align*}
Coming back to~\eqref{B1B2B3}, this yields
\begin{equation}
\tilde{n}(x) \cdot v \geq \eps - C \eps^{2(1-m)} - C \eps^2 \geq \eps/2.
\end{equation}
for $\eps< \eps_0$ with $\eps_0$ small enough, since $2m<1$.

Likewise using the decomposition
$$
v = \Xi(s,t,x,v) + [v- \Xi(s,t,x,v)],
$$
we prove that $|v|<2 \eps^{-m}$ for $\eps< \eps_0$ with $\eps_0$ small enough.

\bigskip

Now let us prove the second statement of Item $(1)$. Let $x \in \tilde\omega \setminus \tilde\omega_\eps$. We argue by contradiction. Assuming that $\tilde{\chi}_+(s-\eps^2,x,v)>0$, we have  
\begin{equation}
\label{contraeq}
\begin{array}{c}
-\eps^4 < \eta(X(s,s-\eps^2,x,v)) < 0, \quad |\Xi(s,s-\eps^2,x,v)|<\eps^{-m}, \\ 
\text{and} \quad 
\tilde{n}(X(s,s-\eps^2,x,v)) \cdot \Xi(s,s-\eps^2,x,v) >\eps.
\end{array}
\end{equation}
As before, we deduce that $|v|<2 \eps^{-m}$.

We write the Taylor-Lagrange formula
\begin{align*}
\eta(X(s,s-\eps^2,x,v))
& = \eta(x) - \na \eta(X(s,s-\eps^2,x,v)) \cdot (x-X(s,s-\eps^2,x,v)) \\
&\quad  - (\na^2 \eta(\overline{x}) [x-X(s,s-\eps^2,x,v)],x-X(s,s-\eps^2,x,v)),
\end{align*}
where $\overline{x}$ belongs to the segment $[x,X(s,s-\eps^2,x,v)]$.  

The Taylor formula, together with~\eqref{caraccarac} yields
$$
x-X(s,s-\eps^2,x,v) = \eps^2 \Xi(s,s-\eps^2,x,v)  -\int_{s-\eps^2}^s (s-w) \na_x V(X(s,w,x,v)) \, dw.
$$
As a consequence, we have
\begin{equation*}
 -  \na \eta(X(s,s-\eps^2,x,v)) \cdot (x-X(s,s-\eps^2,x,v)) = D_1 +D_2,
\end{equation*}
where
\begin{align*}
D_1 &:= -\eps^2 \na \eta(X(s,s-\eps^2,x,v)) \cdot  \Xi(s,s-\eps^2,x,v) ,\\
D_2 &:=    \int_{s-\eps^2}^s  (s-w)  \na_x V(X(s,w,x,v))  \cdot  \na \eta(X(s,s-\eps^2,x,v)) \, dw.
\end{align*}
Recalling the definition of $\tilde{n}$ in~\eqref{def-tilde-n}, there is a constant $C>1$ (depending  only on $\eps_0$ and $\eta$) such that 
$$1/C\leq|\na \eta|(X(s,s-\eps^2,x,v)) \leq C, \quad |\na^2 \eta(\overline{x})|\leq C.$$
We thus have
\begin{align*}
D_1 &= -\eps^2 |\na \eta(X(s,s-\eps^2,x,v))| \tilde{n}(X(s,s-\eps^2,x,v)) \cdot \Xi(s,s-\eps^2,x,v) \\
&\leq -{\eps^3} |\na \eta(X(s,s-\eps^2,x,v))| \\ 
&\leq -1/C \eps^3,
\end{align*}
together with
$$
|D_2| \leq C \eps^4.
$$
Furthermore,  using the second equation of~\eqref{contraeq} and arguing as for~\eqref{utileX}, we obtain the bound
$$
(\na^2 \eta(\overline{x}) [X(s,s-\eps^2,x,v)-x],X(s,s-\eps^2,x,v)-x) \leq C [ \eps^2 \eps^{-m}  + \eps^4]^2 \leq C \eps^{2(2-m)}.
$$
We deduce that
$$
\eta(X(s,s-\eps^2,x,v))  < 0 -1/C  \eps^3 + {C \eps^4 + \eps^{2(2-m)}} < - \eps^4,
$$
using again $2m<1$ and taking $\eps$ small enough. This is a contradiction with the first equation of~\eqref{contraeq}. This concludes the proof of Lemma~\ref{lem:carac}.

\section{About other boundary conditions}
\label{otherboun}
\noindent {\bf 1.}
We could have considered slightly more general reflection laws of the form
\begin{equation}
\label{eq-refl}
f(t,x,T_x v) = f(t,x,v), \quad \text{for  } (x,v) \in \Sigma_+,
\end{equation}
for any family of transformations $(T_x)_{x \in \partial\Omega}$ such that $T_x: p_v(\Sigma_+) \to p_v(\Sigma_-)$ (here $p_v$ denotes the projection on the $v$ space) satisfies $\|T_x(v)\|=\|v\|$, where $\| \cdot \|$ denotes the standard euclidian norm.
The only thing to do is to modify accordingly the definition of characteristics. This includes for instance the (sometimes used) \emph{bounce-back} boundary condition:
\begin{equation}
\label{eq-bounce}
f(t,x,v) = f(t,x,-v), \quad \text{for } (x,v) \in \partial \Omega \times \R^d.
\end{equation}

\bigskip

\noindent {\bf 2.} Another important class of boundary conditions for kinetic equations is given by the:

\noindent $\bullet$ \emph{Diffusive boundary condition} (or Maxwellian diffusion), which reads:
\begin{equation}
f(t,x, v) = \frac{\int_{v'\cdot n(x) >0} f(t,x,v') \, v' \cdot n(x) \, dv'}{\int_{v'\cdot n(x) <0}\Mc_w(v') |v' \cdot n(x)| \, dv'} \Mc_w(v), \quad (x,v) \in \Sigma_-.
\end{equation}
where $\Mc_w(v) $ is some Maxwellian distribution characterizing the state of the wall (depending on its temperature).

\bigskip

We restrict ourselves to the case $\Mc_w(v) = \Mc(v)$ (which means that we consider that the wall has reached a global equilibrium compatible with the linear Boltzmann equation). Then, contrary to the specular reflection case, there is a non-trivial contribution of the boundary in the dissipation identity.

  \begin{lem}
 \label{lemdissip-diffusive}Let $f \in C^0_t(\LL^2)$ be a solution to \eqref{B} with diffusive boundary conditions. The following identity holds, for all $t\geq 0$:
 \begin{equation}
 \label{eqdissipation-diffusive}
 \frac{d}{dt}  \|f(t)\|_{\LL^2}^2 = - \tilde{D}(f),
 \end{equation}
 with:
 \begin{equation}
  \tilde{D}(f) = D(f) +  \int_{v\cdot n(x)>0} \left( f-  \frac{\int_{v'\cdot n(x) >0} f(t,x,v') \, v' \cdot n(x) \, dv'}{\int_{v'\cdot n(x) <0}\Mc(v') |v' \cdot n(x)| \, dv'} \Mc(v)\right)^2 v \cdot n(x) \frac{e^{V(x)}}{\Mc(v)} \, dv \, d\Sigma.
 \end{equation}
 where $D(f)$ is defined in \eqref{defD-boun} and $d\Sigma$ denotes the surface measure on $\partial \Omega$.
 
 \end{lem}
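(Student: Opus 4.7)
The plan is to mimic the computation of Lemma~\ref{lemdissip}, multiplying~\eqref{B} by $f \frac{e^V}{\Mc}$ and integrating over $\Omega \times \R^d$. The collision contribution is purely algebraic and local in $x$, so the same symmetrization argument used in Lemma~\ref{lemdissip} produces $-\frac{1}{2}D(f)$ (with $D$ now given by~\eqref{defD-boun}), without any interaction with the boundary condition. The entire novelty compared to the specular case will therefore come from the transport term, which now yields a non-zero boundary flux that must be matched precisely with the additional surface contribution in $\tilde D(f)$.

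For the transport term, I would first write
\[
\int_{\Omega \times \R^d} \left(v \cdot \nabla_x f - \nabla_x V \cdot \nabla_v f \right) f \, \frac{e^V}{\Mc} \, dv \, dx = \frac{1}{2} \int_{\Omega \times \R^d} \left( v \cdot \nabla_x - \nabla_x V \cdot \nabla_v \right) |f|^2 \, \frac{e^V}{\Mc} \, dv \, dx,
\]
and use once more the key identity $(v \cdot \nabla_x - \nabla_x V \cdot \nabla_v)(e^V/\Mc) = 0$ to kill the interior contribution after integration by parts. What remains is the boundary term
\[
\frac{1}{2} \int_{\partial \Omega \times \R^d} |f|^2 \, v \cdot n(x) \, \frac{e^{V}}{\Mc} \, dv \, d\Sigma(x),
\]
which I would split along $\Sigma_+$ and $\Sigma_-$.

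The heart of the computation, and the only slightly delicate step, is then to rewrite this boundary term using the Maxwell diffusion condition. Setting
\[
\mu(t,x) := \frac{\int_{v' \cdot n(x) > 0} f(t,x,v') \, v' \cdot n(x) \, dv'}{\int_{v' \cdot n(x) < 0} \Mc(v') \, |v' \cdot n(x)| \, dv'},
\]
so that $f|_{\Sigma_-} = \mu \Mc$, I would decompose $f = (f - \mu\Mc) + \mu\Mc$ on $\Sigma_+$. Expanding the square produces three pieces: the squared ``fluctuation'' $|f - \mu\Mc|^2$, a cross term, and a term proportional to $\mu^2 \Mc^2$. Using the defining identity $\int_{\Sigma_+} f \, v \cdot n \, dv = \mu \int_{\Sigma_-} \Mc \, |v \cdot n| \, dv$, together with the elementary symmetry $\int_{v\cdot n > 0} \Mc(v) \, v\cdot n \, dv = \int_{v\cdot n < 0} \Mc(v) \, |v\cdot n| \, dv$ of the Maxwellian, the cross term will vanish identically. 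The two $\mu^2 \Mc^2$ contributions coming from $\Sigma_+$ and $\Sigma_-$ cancel each other by the same symmetry of $\Mc$. Only the fluctuation term survives, giving exactly the boundary contribution appearing in $\tilde D(f)$.

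The main (mild) obstacle is bookkeeping in this last step: one must be careful with the sign of $v \cdot n$ on $\Sigma_-$ and with the fact that the normalization factor in $\mu$ is precisely what is needed to make the cross term vanish; any other Wall Maxwellian $\Mc_w \neq \Mc$ would break the cancellation of the $\mu^2\Mc^2$ pieces, which is why the computation is carried out under the assumption $\Mc_w = \Mc$. Once the cancellations are verified, combining with the collision dissipation $-\frac{1}{2}D(f)$ and multiplying by $2$ yields~\eqref{eqdissipation-diffusive}.
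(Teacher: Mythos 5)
Your proposal is correct and is precisely the natural computation (the paper states this lemma without proof): the collision part is handled exactly as in Lemma~\ref{lemdissip}, and the boundary flux $\frac12\int_{\partial\Omega\times\R^d}|f|^2\,v\cdot n\,\frac{e^V}{\Mc}\,dv\,d\Sigma$ reduces to the fluctuation term via the decomposition $f=(f-\mu\Mc)+\mu\Mc$ on $\Sigma_+$, the defining identity of $\mu$, and the evenness of $\Mc$, which together kill the cross term and make the two $\mu^2\Mc^2$ contributions cancel. All the cancellations you identify check out, including the observation that $\Mc_w=\Mc$ is what makes the $\Sigma_\pm$ pieces balance.
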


It is also possible to study 

\bigskip

\noindent $\bullet$ \emph{A combination of specular and diffusive boundary conditions}. Let $\alpha \in (0,1)$.
\begin{equation}
f(t,x, v) = \alpha \frac{\int_{v'\cdot n(x) >0} f(t,x,v') \, v' \cdot n(x) \, dv'}{\int_{v'\cdot n(x) <0}\Mc_w(v') |v' \cdot n(x)| \, dv'} \Mc_w(v) + (1- \alpha) f(t,x,R_x v) 
\quad (x,v) \in \Sigma_-.
\end{equation}

\bigskip

The last two boundary conditions are very relevant from the physical point of view.
As such, they are worth being studied. We leave this problem for future studies.

  \part{Other geometrical situations: manifolds and compact phase spaces}
  \label{part3}
  
\section{The case of a general compact Riemannian manifold}
\label{sec:manifold}

Let $(M,g)$ be a smooth compact connected $d$-dimensional Riemannian manifold (without boundary). In local coordinates, the metric $g$ is a symmetric positive definite matrix such that for all $x \in M$  and $u, w \in T_x M$, we have 
$$
(u,w)_{g(x)} = g_{i,j}(x) u^i w^j ,
$$
where the Einstein summation notations are used. This provides a canonical identification between the tangent bundle $TM$ and the cotangent bundle $T^*M$ {\em via} the following formula. For any vector $u \in T_x M$ there exists a unique covector $\eta \in T^*_x M$ satisfying   
$$
\langle \eta , w \rangle_{T^*_x M , T_x M} = (u,w)_{g(x)}, \quad \text{ for all } w \in  T_x M .
$$
In local coordinates, we have 
$$
\eta_i = g_{i,j}(x) u^j .
$$
We can define an inner product on $T^*_x M$ using the above identification, denoted by $(\cdot,\cdot)_{g^{-1}(x)}$. In local coordinates, we have 
$$
(\eta,\xi)_{g^{-1}(x)} = g^{i,j}(x) \eta_i \xi_j, \quad \text{ where } g^{i,j}(x) = (g(x)^{-1})^{i,j} . 
$$
For all $x \in M$ and all $\eta \in T^*_x M$, we denote by $|\eta|_x = (\eta,\eta)_{g^{-1}(x)}^{\frac12}$ the associated norm.
Let $d\V(x)$ be the canonical Riemannian measure on $M$. In local charts this reads
$$
d\V(x) = \sqrt{|\det(g(x))|} dx_1  \cdots dx_d.
$$
The cotangent bundle $T^*M$ is canonically endowed with a symplectic 2-form $\omega$ (in local charts, $\omega = \sum_{j=1}^d dx_j \wedge d\xi_j$). Let $\omega^d$ be the canonical symplectic volume form on $T^*M$ and by a slight abuse of notation $d \omega^d$ the associated {\em normalized} measure on $T^*M$ (see for instance \cite[p.274]{Hor}). In local coordinates, we have 
$$
d \omega^d = d \xi_1 \cdots d \xi_d \, d x_1 \cdots d x_d.
$$
The canonical projection $\pi : T^*M \to M$ is measurable from $(T^*M , d\omega^d)$ to $(M, d\V)$. For $f \in L^1 (T^*M , d\omega^d)$, we define $\pi_* f \in  L^1 (M , d\V)$ by
$$
\int_M \varphi(x) (\pi_* f)(x) d\V(x) = \int_{T^* M} \varphi \circ \pi (x, \xi) f(x, \xi) d\omega^d(x, \xi), \quad \text{ for all } \varphi \in C^0(M) .  
$$
In local charts, we have 
$$
(\pi_* f)(x) =  \frac{1}{\sqrt{\det(g(x))}}\int_{\R^d} f(x, \xi)  d\xi_1 \cdots d\xi_d .
$$

Note also that we have the following desintegration formula
$$
\int_{T^*M} f(x, \xi) d\omega^d(x, \xi) = \int_M d\V(x) \int_{T^*_x M} f(x, \xi)dm_x(\xi) ,
$$
where the measure $dm_x$ on $T^*_xM$ is given in local charts by
$$
dm_x = \frac{1}{\sqrt{\det(g(x))}}  d\xi_1 \cdots d\xi_d .
$$

Let $V \in W^{2, \infty}(M)$ and define on $T^*M$ the hamiltonian
$$
H(x, \xi) = \frac12|\xi|_x^2 +  V(x) , \quad x \in M , \quad  \xi \in T^*_x M .
$$
We define the associated Hamilton vector field $X_H$, given in local coordinates by
$$
X_H = \nabla_{\xi}H \cdot \nabla_x - \nabla_{x} H \cdot \na_\xi .
$$
Using the $2$-form $\omega$, we can also define the Poisson bracket $\{\cdot , \cdot\}$, see again \cite[p.271]{Hor}. We have $X_H f = \{H, f\}$.

We denote by $\Lambda = \{(x, \xi, \xi'), x \in M, (\xi, \xi') \in T^*_x M \times T^*_x M\}$ the vector bundle over $M$ whose fiber above $x$ is $T^*_x M \times T^*_x M$. 

\bigskip
With these notations, the Boltzmann equation on $T^*M$ can be written as follows, for $(t,x, \xi) \in \R \times T^*M$, 
\begin{align}
\label{Bmanifold}
\pa_t f (t,x, \xi)+ X_H f (t,x, \xi)= 
\int_{T^*_x M} \left[k(x,\xi',\xi) f(t,x,\xi') - k(x,\xi,\xi') f(t,x,\xi)\right] \, dm_x(\xi') 
.
\end{align}

We recover the key properties of the usual linear Boltzmann collision operator on flat spaces. We have, for all $x \in M$,
\begin{equation*} 
 \int_{T^*_x M}  \int_{T^*_x M} \left[k(x,\xi',\xi) f(x,\xi') - k(x,\xi,\xi') f(x,\xi)\right] \, dm_x(\xi') dm_x(\xi) = 0 .
\end{equation*}
Besides, 
$$ 
 \int_{T^* M} (X_H f) (x, \xi) d\omega^d (x, \xi) = 0 ,
$$
since $(X_H f) (x, \xi) d\omega^d$ is an exact form (since $X_H$ is hamiltonian). As a consequence, the mass is conserved: any solution $f$ of \eqref{Bmanifold} satisfies
\begin{equation}
\text{ for all } t \geq 0, \quad \frac{d}{dt} \int_{T^* M} f(t,x,\xi) \, d\omega^d (x, \xi) =0.
\end{equation}
Consider now the (generalized) Maxwellian distribution:
\begin{equation}
 \Mc(x, \xi) := \frac{1}{(2\pi)^{d/2}}e^{-\frac{|\xi|_x^2}{2}}. 
 \end{equation}
Note that for all $x \in M$, $\int_{T^*_x M}  \frac{1}{(2\pi)^{d/2}}e^{-\frac{|\xi|_x^2}{2}} \, dm_x (\xi) = 1$.
As usual, we make the following assumptions on the collision kernel $k$. 

\bigskip

\noindent {\bf A1.} The collision kernel $k \in C^0(\Lambda)$, is nonnegative.

\noindent {\bf A2.} We assume that the Maxwellian cancels the collision operator, that is: 
\begin{equation}
 \int_{T^*_x M}  \left[k(x,\xi' ,  \xi)  \Mc(x, \xi') - k(x,\xi ,  \xi')  \Mc(x, \xi)\right] \, dm_x(\xi')  = 0, \quad \text{for all  } (x,\xi) \in T^*M.
\end{equation}

\noindent {\bf A3.} We assume that
 \begin{equation*}
x \mapsto \int_{T_x^*M \times T_x^*M} k^2(x,\xi',\xi) \frac{\Mc(x,\xi')}{\Mc(x,\xi)} \, dm_x(\xi') dm_x(\xi) \in L^\infty(M). 
\end{equation*}

We can define the characteristics in this Riemannian setting as follows.

\begin{deft}
\label{def-caracmanifold}
Given $(x, \xi) \in T^*M$, we define the hamiltonian flow associated to $H$ by $s \mapsto \phi_s(x , \xi) \in T^*M $: 
\begin{equation}
  \label{eq: geodesic flow}
  \frac{d}{ds}\phi_s(x, \xi) = X_{H} \big(\phi_s(x, \xi)\big), \quad \phi_0(x, \xi) = (x, \xi)  \in T^*M, 
\end{equation}
The characteristics associated to $H$ are the integral curves of this flow.
\end{deft}
Note also for any function $g$ defined on $\R$, $g \circ H$ is preserved along these integral curves, as 
$$
 \left(\frac{d}{ds}g \circ H \circ \phi_s\right)|_{s = s_0} = X_H(g \circ H)(\phi_{s_0}) = \{H,g \circ H\}(\phi_{s_0}) =0 .
$$
In particular, this holds for the function $\frac{e^V}{\Mc} = \frac{1}{(2\pi)^{d/2}} e^{H}$.

With this definition of characteristics, we can then properly define
\begin{itemize}
\item the set $\omega$ where collisions are effective, as in Definition \ref{def-om},
\item $C^-(\infty)$, as in Definition \ref{definitionCinfini},
\item a.e.i.t. GCC, as in Definition \ref{defaeitgcc}, 
\item the Unique Continuation Property, as in Definition \ref{def:UCP},
\item the {\em generalized} Unique Continuation Property, as in Definition \ref{def:UCPgene},
\item the equivalence relations $\sim$ and $\Bumpeq$, as in Definitions \ref{def-sim} and \ref{def-sim-o},
\item the sets $U_j$ as in~\eqref{def-Uj}.
\end{itemize}

Note that in this Riemannian setting, the classes of collision operators {\bf E1}, {\bf E2}, {\bf E3} still make sense, up to some obvious adaptations. 

\bigskip

Let us now introduce the relevant weighted Lebesgue spaces.
   \begin{deft}[Weighted $L^p$ spaces]
We define the Banach spaces $\LL^2$ and $\LL^\infty$ by 
 \begin{equation*}
 \begin{aligned}
 &\LL^2 (T^*M):= \Big\{f \in L^2_{loc}(T^*M), \, \int_{T^*M} | f|^2 \frac{e^V}{\Mc} \, d\omega^d < + \infty \Big\} , 
 \quad \|f\|_{\LL^2} = \left(\int_{T^*M} | f|^2 \frac{e^V}{\Mc} \, d\omega^d \right)^{1/2}.
 \\
 & \LL^\infty(T^*M) : =  \Big\{f \in L^1_{loc}(T^*M), \, \sup_{T^*M} | f| \frac{e^V}{\Mc}  < + \infty \Big\} , 
 \quad \|f\|_{\LL^\infty} = \sup_{T^*M} | f| \frac{e^V}{\Mc} 
  \end{aligned}
  \end{equation*}
 The space $\LL^2$ is a Hilbert space endowed with the inner product
 $$
 \langle f, g \rangle_{\LL^2} := \int_{T^*M} f \, g \frac{e^{V}}{\Mc} \, d\omega^d.
 $$
 \end{deft}
 
 As usual, we have the following well-posedness result for the Boltzmann equation \eqref{Bmanifold}.
  \begin{prop}[Well-posedness of the linear Boltzmann equation]
 \label{prop:WP-manifold}
Assume that $f_0 \in \LL^2$. Then there exists a unique $f\in C^0(\R ;\LL^2)$ solution of~\eqref{Bmanifold} satisfying $f|_{t = 0} =f_0$, and we have
\begin{equation}
\text{ for all } t \geq 0, \quad  \frac{d}{dt} \| f(t)\|_{\LL^2}^2 = - D(f(t)),
\end{equation}
   where
 \begin{multline*}
 D(f) =  \frac{1}{2} \int_{M} e^{V(x)} \int_{T^*_x M}  \int_{T^*_x M}  \left( \frac{k(x,\xi' ,  \xi)}{\Mc(x,\xi)} + \frac{k(x,\xi ,  \xi')}{\Mc(x,\xi')} \right) \\ \times \Mc(x,\xi) \Mc(x,\xi') \left(\frac{f(x,\xi)}{\Mc(x,\xi)}- \frac{f(x,\xi')}{\Mc(x,\xi'))}\right)^2 \, dm_x (\xi) \,dm_x(\xi') \, d\V(x).
 \end{multline*}
 If moreover $f_0 \geq 0$ a.e., then for all $t \in \R$ we have $f(t, \cdot,\cdot)\geq 0$ a.e. (Maximum principle).

 \end{prop}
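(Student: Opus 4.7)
The plan is to mirror the proof of Proposition~\ref{prop:WP} in the torus case, adapting each step to the Riemannian setting. The main new ingredients are: (i) the Liouville measure $d\omega^d$ is invariant under the Hamiltonian flow $\phi_t$; (ii) the weight $\frac{e^V}{\Mc} = \frac{1}{(2\pi)^{d/2}} e^H$ is constant along the flow since $\{H, H\} = 0$; (iii) integration against $d\omega^d$ of $X_H f$ vanishes because $X_H$ is Hamiltonian and $\omega^d$ is closed.

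First I would write $A = A_0 + K$ with
\begin{align*}
(A_0 f)(x,\xi) &= X_H f(x,\xi) + b(x,\xi) f(x,\xi), \quad b(x,\xi) := \int_{T_x^*M} k(x,\xi,\xi')\, dm_x(\xi'), \\
(K f)(x,\xi) &= - \int_{T_x^*M} k(x,\xi',\xi) f(x,\xi')\, dm_x(\xi'),
\end{align*}
with the natural domain. The operator $A_0$ generates a strongly continuous group on $\LL^2$, explicitly given by
\begin{equation*}
(e^{-tA_0} u)(x,\xi) = \exp\!\left(- \int_0^t b\big(\phi_{-(t-s)}(x,\xi)\big)\, ds \right) u\big(\phi_{-t}(x,\xi)\big).
\end{equation*}
The $\LL^2$ isometry-up-to-weight property of $u \mapsto u \circ \phi_{-t}$ follows from the two invariance facts above: the flow preserves $d\omega^d$ (Liouville's theorem) and preserves $\frac{e^V}{\Mc}$, which together yield $\| u \circ \phi_{-t}\|_{\LL^2} = \| u \|_{\LL^2}$. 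Boundedness of $K$ on $\LL^2$ follows from Cauchy--Schwarz in the fiber $T^*_x M$ together with assumption {\bf A3}, exactly as in the torus case:
\begin{equation*}
\|K\|_{\LL^2 \to \LL^2} \leq \left( \sup_{x \in M} \int_{T^*_x M \times T^*_x M} k^2(x,\xi',\xi) \frac{\Mc(x,\xi')}{\Mc(x,\xi)}\, dm_x(\xi')\, dm_x(\xi) \right)^{1/2}.
\end{equation*}
The Hille--Yosida/bounded perturbation theorem (\cite[Chapter 3, Theorem 1.1]{Paz}) then gives well-posedness.

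Next I would derive the dissipation identity. Multiplying~\eqref{Bmanifold} by $f \frac{e^V}{\Mc}$ and integrating against $d\omega^d$, the transport contribution is
\begin{equation*}
\int_{T^*M} (X_H f)\, f \, \frac{e^V}{\Mc}\, d\omega^d = \frac{1}{2}\int_{T^*M} X_H(f^2) \frac{e^V}{\Mc}\, d\omega^d = 0,
\end{equation*}
using that $X_H \left( \frac{e^V}{\Mc}\right) = 0$ and that $X_H(g)\, d\omega^d$ integrates to zero on the closed manifold $T^*M$ (as $X_H$ is Hamiltonian: $X_H(g) \omega^d = d(g \iota_{X_H}\omega^d/(d-1)!)$ up to constants, an exact form). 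For the collision contribution, I would repeat, fiberwise in $x$, the symmetrization argument of Lemma~\ref{lemdissip} (coming from~\cite{DGP}): splitting the gain term and using {\bf A2} for the loss term, I get the announced formula for $D(f)$ in terms of a double integral against $dm_x(\xi)dm_x(\xi')d\V(x)$. The only change from the flat case is bookkeeping of the fiber measures $dm_x$ and the base measure $d\V$, which come from the desintegration $d\omega^d = dm_x(\xi)\, d\V(x)$ recalled before the statement.

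Finally, for the maximum principle, I would use the Duhamel representation
\begin{equation*}
f = \sum_{n=0}^{+\infty} \mathcal{K}^n \big( e^{-t A_0} f_0 \big),
\end{equation*}
with $\mathcal{K}$ the natural analogue of the operator in the proof of Proposition~\ref{prop:WP}, now defined using $\phi_t$ on $T^*M$ and the fiber measure $dm_x$. Since $k \geq 0$ and the exponential weight in $e^{-tA_0}$ is nonnegative, each iterate preserves nonnegativity, so $f_0 \geq 0$ a.e. implies $f(t) \geq 0$ a.e.~for all $t \geq 0$. I expect the main (minor) obstacle to be checking carefully that the manipulations of integration by parts on $T^*M$ are valid on $\LL^2$ (as opposed to smooth compactly supported functions), which is handled by a standard density argument using the strong continuity of $e^{-tA_0}$ together with the bound on $K$.
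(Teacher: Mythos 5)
Your proposal is correct and follows exactly the route the paper intends: the paper gives no separate proof of Proposition~\ref{prop:WP-manifold}, stating only that the results of Section~\ref{preliminaries} carry over ``up to obvious adaptations,'' and your adaptation (the decomposition $A=A_0+K$ with the explicit flow semigroup, invariance of $d\omega^d$ and of $e^V/\Mc=\frac{1}{(2\pi)^{d/2}}e^{H}$ under $\phi_t$, boundedness of $K$ via {\bf A3}, the fiberwise symmetrization for the dissipation identity, and the Duhamel series for the maximum principle) is precisely the intended argument.
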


 More generally, all results of Section~\ref{preliminaries} (up to obvious adaptations) are still relevant.

The crucial point we have to check now concerns velocity averaging lemmas for kinetic transport equations on a Riemannian manifold.
\begin{lem}
\label{lem-moyenne-variete}
Let $H$ be defined as above, and $X_H$ the associated vector field. Let $T>0$ and $\Psi \in C^\infty_c(T^*M)$. There exists $C>0$ a constant such that the following holds. For any $f,g \in L^2((0,T)\times T^*M)$ satisfying
$$
\partial_t f + X_H f = g,
$$ 
we have
$$
\| \pi_*(f \Psi) \|_{H^{1/4}((0,T) \times M)} \leq C (\| f|_{t=0}\|_{L^2((0,T) \times T^*M)} +  \|g\|_{L^2((0,T) \times T^*M)} ).
$$
i.e. 
$$
\left\| \int_{T_x^*M}f \Psi dm_x \right\|_{H^{1/4}((0,T) \times M)} \leq C (\| f|_{t=0}\|_{L^2((0,T) \times T^*M)} +  \|g\|_{L^2((0,T) \times T^*M)}).
$$
\end{lem}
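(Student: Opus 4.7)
The plan is to reduce to a standard averaging lemma in flat space via a partition of unity, and then treat the extra terms coming from the non-trivial geometry as a perturbation of the classical kinetic transport equation.

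First, I would cover $\pi(\supp \Psi) \subset M$ by finitely many coordinate charts $(U_\alpha, \kappa_\alpha)$ and choose a smooth partition of unity $(\chi_\alpha)$ on $M$ subordinate to this cover. By linearity of the equation and the triangle inequality, it suffices to bound $\|\pi_*(f \chi_\alpha \Psi)\|_{H^{1/4}}$ for each $\alpha$, i.e., we may assume that $\Psi$ is supported in a single coordinate chart $U \subset \R^d$ times a bounded set $K \subset \R^d$ in the $\xi$-fiber (bounded since $\Psi \in C^\infty_c(T^*M)$). In these local coordinates, using $\pi_\xi H = g^{-1}(x)\xi$ and $\partial_{x_i} H = \frac{1}{2}(\partial_{x_i} g^{jk})\xi_j\xi_k + \partial_{x_i}V$, the equation becomes
\begin{equation*}
\partial_t f + a(x,\xi) \cdot \nabla_x f + b(x,\xi)\cdot\nabla_\xi f = g, \quad a(x,\xi) = g^{-1}(x)\xi, \quad b(x,\xi) = -\nabla_x H(x,\xi),
\end{equation*}
with $a, b$ smooth on $U \times K$ and $a(x,\cdot)$ linear with uniformly invertible Jacobian $g^{-1}(x)$.

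Second, the issue is that the $\xi$-derivative term $b\cdot \nabla_\xi f$ is problematic for averaging lemmas since we only assume $f \in L^2$. I would therefore rewrite
\begin{equation*}
b(x,\xi)\cdot\nabla_\xi f = \nabla_\xi \cdot (b(x,\xi) f) - (\nabla_\xi \cdot b)(x,\xi) f,
\end{equation*}
absorb the second term (which is smooth in $\xi$ and $L^2$ in $(t,x,\xi)$ on $U \times K$) into the source $g$, and view the first as the $\xi$-divergence of an $L^2$ function. Thus $f$ satisfies, on $U \times K$, an equation of the form
\begin{equation*}
\partial_t f + a(x,\xi)\cdot \nabla_x f = \tilde g + \nabla_\xi \cdot G,
\end{equation*}
with $\tilde g, G \in L^2((0,T) \times U \times K)$, controlled by $\|f\|_{L^2} + \|g\|_{L^2}$ (up to initial/terminal cut-offs in time, handled via multiplication by a smooth temporal cut-off and using the equation to express $\partial_t (\chi(t) f)$).

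Third, I would apply a classical variable-coefficient averaging lemma (of DiPerna–Lions–Meyer / Bouchut type, which the paper recalls in Appendix~\ref{section-AL}) to the previous equation. The required non-degeneracy condition reads: for fixed $x \in \pi(\supp \chi_\alpha \Psi)$ and $(\tau,\eta) \in \R \times \R^d$ with $\tau^2 + |\eta|^2 = 1$,
\begin{equation*}
\Leb\bigl\{\xi \in K :\ |\tau + a(x,\xi)\cdot \eta| \leq \delta \bigr\} \lesssim \delta,
\end{equation*}
which holds uniformly in $x$ because $\xi \mapsto g^{-1}(x)\xi$ is a linear isomorphism with eigenvalues bounded above and below on the compact set $\pi(\supp \chi_\alpha \Psi)$. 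The gain of $1/2$ derivative for an $L^2$ right-hand side, combined with the standard $1/2$ loss when the source contains a $\xi$-divergence of $L^2$ data, produces $H^{1/4}$ regularity on the average $\pi_*(f \chi_\alpha \Psi)$ in the $(t,x)$ variables.

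The main technical obstacle is thus not the averaging lemma itself (the non-degeneracy being automatic from the uniform positivity of $g^{-1}$) but rather checking that the localization in a chart and the integration by parts in $\xi$ produce only source terms whose $L^2$ norm is controlled by the data $\|f|_{t=0}\|_{L^2}$ and $\|g\|_{L^2}$. This is essentially bookkeeping: the dependence of $a$ and $b$ on $x$ is smooth and stays on the support of $\Psi$, so after multiplying by $\chi_\alpha \Psi$ and commuting with $\partial_t + X_H$, one only generates smooth multiplicative factors. Summing over $\alpha$ yields the stated estimate.
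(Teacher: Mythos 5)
Your overall strategy is sound, but it diverges from the paper's proof at the key step, and the tool you invoke there is not actually available in the form you claim. The paper does not localize and keep the variable coefficient $a(x,\xi)=g^{-1}(x)\xi$; instead it performs the global change of variables $\overline{f}(t,x,v)=f(t,x,g_{\cdot,j}v^j)$, i.e.\ it passes from momenta to velocities. This turns the transport term $g^{i,j}(x)\xi_j\pa_{x_i}$ \emph{exactly} into the constant-coefficient operator $v\cdot\na_x$, at the price of a modified force term $\Gamma^i_{j,k}(x)v^jv^k\pa_{v_i}+\pa_{x_i}V\pa_{v_i}$ (the Christoffel symbols). One then applies the constant-coefficient averaging lemma of Theorem~\ref{thmmoyenne} with $s=0$, $m=1$ (the force term being written, as you do, as a $v$-divergence of an $L^2$ function plus a bounded multiple of $f$), which yields $\alpha=\frac{1-s}{2(1+m)}=1/4$. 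Your arithmetic ($1/2$ gain halved by one $v$-derivative in the source) is consistent with this.

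The gap is in your third step. Theorem~\ref{thmmoyenne} in Appendix~\ref{section-AL} — and the DiPerna--Lions--Meyer results it records — concern $\pa_t f+v\cdot\na_x f$ (or, in Appendix~\ref{Other}, $a(v)\cdot\na_x$ with $a$ independent of $x$). A quantitative $H^{1/4}$ averaging lemma for an $x$-dependent symbol $\tau+a(x,\xi)\cdot\eta$ is \emph{not} classical: the Fourier-multiplier proof breaks down because the symbol no longer commutes with localization in $\eta$, and the known substitutes (freezing coefficients, commutator or microlocal-defect-measure arguments) either require extra work or give only compactness rather than a Sobolev gain. So as written, your proof rests on a lemma you have not established. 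The good news is that in this specific situation $a(x,\xi)$ is \emph{linear} in $\xi$ with uniformly invertible matrix $g^{-1}(x)$, and the clean way to prove the variable-coefficient statement you need is precisely the paper's change of variables, which reduces it to the constant-coefficient case. If you replace your appeal to a ``classical variable-coefficient averaging lemma'' by this substitution (which can be done globally, making your partition of unity unnecessary except for writing the charts), the rest of your argument — the integration by parts in $\xi$, the absorption of $(\na_\xi\cdot b)f$ into the source, the bookkeeping of the smooth cut-offs — goes through and matches the paper's proof.
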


\begin{rque}
Assuming that $V$ is smooth enough, we may obtain the optimal Sobolev regularity $H^{1/2}$ (instead of $H^{1/4}$), see Remark \ref{rk-smooth}.
\end{rque}

\begin{proof}[Proof of Lemma \ref{lem-moyenne-variete}]
In local charts, we have
$$
 \pi_*(f \Psi)(x,\xi) = \int_{\R^d} {f}(x,\xi) \Psi(x,\xi) \frac{1}{\sqrt{\det(g(x))}} \, d\xi.
$$
and $f$ satisfies the kinetic equation 
$$
\pa_t f + g^{i,j}(x) \xi_j \pa_{x_i} f - \left(\frac12 \pa_{x_i} g^{j,k}(x) \xi_j \xi_k + \pa_{x_i} V(x) \right) \pa_{\xi_i} f = g.
$$
We use the change of variables $\overline{f}(t,x,v^i) = {f}(t,x,g_{i,j}v^j)$ (we define as well $\overline{g}$ and $\overline{\Psi}$), which satisfies  the equation 
$$
\pa_t \overline{f} + v^j \pa_{x_i} \overline{f} - \left(\Gamma^i_{j,k}(x)v^j v^k+ \pa_{x_i} V(x) \right) \pa_{v_i} \overline{f} = \overline{g},
$$
where $\Gamma^i_{j,k}(x) = \frac12 g^{i \ell}(x)\left(\pa_{x_j}g_{k \ell}(x) + \pa_{x_k}g_{j \ell}(x) - \pa_{x_\ell}g_{j k}(x) \right)$ are the Christoffel symbols. 
Using a classical averaging lemma (see \eqref{avlemmaRd} in Theorem~\ref{thmmoyenne} in Appendix~\ref{section-AL} with $m=1$ and $s=0$), we deduce that
$$
\left\|\int_{\R^d} \overline{f} \, \overline{\Psi}  \sqrt{\det(g(x))} \, dv \right\|_{H^{1/4}((0,T) \times \R^d \times \R^d)} \leq C (\| \overline{f}|_{t=0}\|_{L^2((0,T) \times \R^d\times \R^d)} +  \|\overline{g}\|_{L^2((0,T) \times\R^d\times \R^d)} ) .
$$
Going back to the original variables, we deduce that 
$$
\left\|\int_{\R^d} {f} \, {\Psi}  \frac{1}{\sqrt{\det(g(x))}} \, dv \right\|_{H^{1/4}((0,T) \times \R^d \times \R^d)} \leq C (\| {f}|_{t=0}\|_{L^2((0,T) \times\R^d\times \R^d)} +  \|{g}\|_{L^2((0,T) \times\R^d\times \R^d)} ),
$$
which proves our claim.
\end{proof}

Equipped with this tool (more generally the analogues of all averaging lemmas of Appendix~\ref{section-AL} can be obtained as well), we have the following analogue of the general convergence result of Theorem~\ref{thmconv-general} (which includes Theorems \ref{thmconv-intro} and \ref{thmconvgene-intro}). The same proof applies with only minor adaptations. Recall that the sets $(U_j)_{j \in J}$ are defined in~\eqref{def-Uj}.

   \begin{thm}
\label{thmconv-manifold}
The following statements are equivalent.
\begin{enumerate}[(i.)]
\item  The set $\omega$ satisfies the {\bf generalized} Unique Continuation Property.
\item The set $\omega$ satisfies the a.e.i.t. GCC.
\item  For all $f_0 \in \LL^2$, denote by $f(t)$ the unique solution to \eqref{Bmanifold} with initial datum $f_0$. We have
\begin{equation}
 \left\|f(t)-Pf_0 \right\|_{\LL^2} \to_{t \to +\infty} 0,
 \end{equation}
where
\begin{equation}
P f_0 (x,v) = \sum_{j\in J}  \frac{1}{\| \mathds{1}_{U_j} e^{-V} \Mc \|_{\LL^2}} \left( \int_{U_j} f_0 \, d\omega^d \right)f_j.
\end{equation}
and the $(U_j)_{j \in J}$ are defined in~\eqref{def-Uj} and the $f_j = \frac{ \mathds{1}_{U_j} e^{-V} \Mc}{\| \mathds{1}_{U_j} e^{-V} \Mc \|_{\LL^2}}$.
\item  For all $f_0 \in \LL^2$, denote by $f(t)$ the unique solution to \eqref{Bmanifold} with initial datum $f_0$. We have
\begin{equation}
 \left\|f(t)-Pf_0 \right\|_{\LL^2} \to_{t \to +\infty} 0,
 \end{equation}
 where $Pf_0$ is a stationary solution of~\eqref{Bmanifold}.
 \end{enumerate}
 \end{thm}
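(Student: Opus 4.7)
The plan is to transfer the proof of Theorem~\ref{thmconv-general} to the Riemannian setting, relying on the fact that the three key ingredients used in the torus case extend to $T^*M$: the dissipation identity (provided by Proposition~\ref{prop:WP-manifold}), the weak coercivity property of Lemma~\ref{collannule} (whose proof is purely local and uses only the kernel structure), and a velocity averaging lemma (Lemma~\ref{lem-moyenne-variete}). Since $(iii.) \Rightarrow (iv.)$ is immediate, I would establish the chain $(iv.) \Rightarrow (ii.) \Rightarrow (i.) \Rightarrow (iii.)$, together with the analogue of Proposition~\ref{cardinal} describing the space of stationary solutions.

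First I would prove the analogue of Proposition~\ref{cardinal}: using the flow invariance $\phi_{-t}(U_j) = U_j$ a.e. (which follows from Lemma~\ref{leminvarflow}, whose proof only uses measure preservation of $\phi_t$, valid here since $X_H$ is hamiltonian) and applying Lemma~\ref{collannule} component-wise on the partition of $\omega$, one verifies that each $f_j = \mathds{1}_{U_j} e^{-V}\Mc / \|\mathds{1}_{U_j} e^{-V}\Mc\|_{\LL^2}$ is a stationary solution, and conversely any stationary $\varphi$ has $D(\varphi) = 0$ hence $C(\varphi) = 0$, forcing $e^V \varphi/\Mc$ to be locally constant on $\bigcup_{s \geq 0}\phi_{-s}(\omega)$ by the transport-equation argument. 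For $(iv.) \Rightarrow (ii.)$, I would argue by contradiction exactly as in Section~\ref{para2}: if $\Ac := T^*M \setminus \bigcup_{s \geq 0}\phi_{-s}(\omega)$ has positive measure, one sets $f_0 = \Psi \, \mathds{1}_\Ac \, e^{-V}\Mc$ where $\Psi \in C^\infty(M)$ is a Morse function (which exists on any smooth compact manifold) when $\mathds{1}_\Ac$ itself is flow-invariant, and $\Psi = 1$ otherwise; then $f(t) = f_0 \circ \phi_{-t}$ is a non-stationary solution with $C(f) = 0$ whose $\LL^2$-distance to any putative limit is constant and non-zero.

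The main work is $(i.) \Rightarrow (iii.)$, which I would carry out by the contradiction/defect measure argument of Section~\ref{sec:conv}. Assume there is $g_0 \in \LL^2 \cap \LL^\infty$ with $\int_{U_j} g_0 \, d\omega^d = 0$ for all $j \in J$ such that the associated solution does not decay, and extract times $t_n \to \infty$ with $t_{n+1}-t_n \to \infty$. The shifted functions $h_n(t) = g(t_n+t)$ are uniformly bounded in $L^\infty_t \LL^2 \cap L^\infty_t \LL^\infty$ and satisfy $\int_0^T D(h_n)\, dt \to 0$ for every $T$. Up to extraction, $h_n \rightharpoonup h$ weakly, and by lower semicontinuity of $D$ combined with Lemma~\ref{collannule}, $h$ satisfies both the transport equation $\pa_t h + X_H h = 0$ and $C(h) = 0$; the generalized UCP then forces $h = \sum_j \langle h, f_j\rangle_{\LL^2} f_j$, and the conservation laws $\frac{d}{dt}\int_{U_j} f \, d\omega^d = 0$ (the analogue of Lemma~\ref{lem-proj}, proved by testing against $f_j$ and using that $X_H f_j = 0$ and $C^*(f_j) = 0$) applied to $g$ yield $h = 0$. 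Then I would study $\nu_n = |h_n|^2$, write the Duhamel-type equation for $\nu_n$, and apply Lemma~\ref{lem-moyenne-variete} with a cut-off $\Psi \in C^\infty_c(T^*M)$ to obtain strong convergence of localized velocity averages $\int k(x,\xi',\xi) h_n(\xi')\,dm_x(\xi') \to 0$, giving $\nu$ satisfying a pure transport equation with $\nu|_{\omega} = 0$; the UCP again yields $\nu = 0$, contradicting $\|h_n(0)\|_{\LL^2} \geq \varepsilon$ once tightness at infinity in the fibers is verified. The implication $(ii.) \Rightarrow (i.)$ proceeds as in the torus case: the transport equation along connected components of $\omega$, combined with a.e.i.t. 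GCC and the structure of equivalence classes, propagates the Maxwellian form from $\omega$ to a full-measure set.

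The principal obstacle is controlling tightness at infinity in the non-compact fibers $T_x^*M$, which is the manifold analogue of the ``no loss of mass at infinity'' step~\eqref{nolossofmass}. This is handled as before using the $\LL^\infty$ bound from the maximum principle together with the Gaussian decay of $\Mc(x,\xi) = (2\pi)^{-d/2} e^{-|\xi|_x^2/2}$ in the fiber, uniform in $x \in M$ since $M$ is compact: one bounds $\int_{T^*M} \nu_n \mathds{1}_{\{|\xi|_x \geq R\}} \frac{e^V}{\Mc}\, d\omega^d \leq \|g_0\|_{\LL^\infty}^2 \int_M e^{-V} d\V \cdot \sup_x \int_{|\xi|_x \geq R}\Mc(x,\xi)\, dm_x(\xi)$, which tends to $0$ as $R \to \infty$. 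A secondary technical point is the truncation used with Lemma~\ref{lem-moyenne-variete}, but since averaging only needs to be applied on compact pieces of $T^*M$ (outside of which the contribution is controlled by the Maxwellian tail), this is a standard cut-off argument; the $H^{1/4}$ gain in position-time combined with the uniform bound in $\xi$ suffices to extract the required strong convergence.
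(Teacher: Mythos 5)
Your proposal is correct and follows essentially the same route as the paper: the paper itself only states that the proof of Theorem~\ref{thmconv-general} ``applies with only minor adaptations,'' the one genuinely new ingredient being the averaging lemma on manifolds (Lemma~\ref{lem-moyenne-variete}), and your write-up is precisely a careful verification of those adaptations (flow invariance, weak coercivity, conservation laws, defect measures, and the fiberwise tightness estimate via the maximum principle and the Gaussian decay of $\Mc$ uniformly in $x$ by compactness of $M$). Nothing in your argument deviates from, or adds a gap to, the intended proof.
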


We obtain as well the analogue of Theorem~\ref{thmexpo-intro}. As in the torus case, we make the additional technical assumption:
\noindent{\bf A3'.}
Assume that there exists a continuous function $\varphi(x,\xi):= \Theta \circ H(x,\xi)$, with $\varphi \geq 1$, such that for all $(x,\xi) \in T^*M$, we have
$$
\int_{T^*_xM} k(x,\xi,\xi’) \, dm_x(\xi’) \leq \varphi(x,\xi)
$$
and 
$$
 \sup_{x \in M } \int_{T^*_xM \times T^*_xM} k^2(x,\xi’,\xi) \frac{\Mc(\xi’)}{\Mc(\xi)}  \left(\frac{\varphi(x,\xi)}{\varphi(x,\xi')}-1\right)^2 \, dm_x(\xi) dm_x(\xi’) <+\infty
$$

\begin{thm}[Exponential convergence to equilibrium]
\label{thmexpo-manifold}
Assume that the collision kernel satisfies {\bf A3'}. The following statements are equivalent:
\begin{enumerate}[(a.)]
\item $C^-(\infty) > 0$. 
\item There exists $C>0, \gamma>0$ such that for any $f_0 \in \LL^2$, the unique solution to \eqref{Bmanifold} with initial datum $f_0$ satisfies for all $t\geq 0$
\begin{multline}
\label{decexpo-manifold} 
\left\|f(t)-\left( \int_{T^*M} f_0 \, d\omega^d \right) \frac{e^{-V(x)}}{\int_{M} e^{-V(x)} \, d\V(x)}  \Mc\right\|_{\LL^2}  \\
\leq C e^{-\gamma t}\left\|f_0-\left( \int_{T^*M} f_0 \, d\omega^d \right) \frac{e^{-V(x)}}{\int_{M} e^{-V(x)} \, d\V(x)}  \Mc\right\|_{\LL^2} .
\end{multline}
\item There exists $C>0, \gamma>0$ such that for any $f_0 \in \LL^2$, there exists $Pf_0$ a stationary solution of~\eqref{Bmanifold} such that the unique solution to \eqref{Bmanifold} with initial datum $f_0$ satisfies for all $t \geq 0$,
\begin{equation}
\label{decexpogene-manifold} 
\left\|f(t)-Pf_0\right\|_{\LL^2} 
\leq C e^{-\gamma t} \left\|f_0-Pf_0\right\|_{\LL^2}.
\end{equation}
\end{enumerate}
\end{thm}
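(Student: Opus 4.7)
The plan is to follow verbatim the strategy used in Section~\ref{expocon} for Theorem~\ref{thmexpo-intro}, substituting the torus-specific averaging lemma by the manifold version provided in Lemma~\ref{lem-moyenne-variete} and using the well-posedness, dissipation and weak coercivity statements that have already been reduced to the present setting. First I would show the equivalence $(b.) \Leftrightarrow (c.)$ is immediate and reduce $(a.) \Rightarrow (b.)$ to an observability inequality via (the manifold analogue of) Lemma~\ref{lemfondamental}: it suffices to prove that there exist $T,K>0$ such that for all $f_0 \in \LL^2_0 := \{f \in \LL^2, \ \int_{T^*M} f \, d\omega^d = 0\}$, the solution $f(t)$ of~\eqref{Bmanifold} satisfies $K \int_0^T D(f) \, dt \geq \|f_0\|_{\LL^2}^2$.

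For the implication $(a.) \Rightarrow (b.)$, I would argue by contradiction: denying the observability inequality yields a sequence $(g_n)$ of solutions with $\|g_{0,n}\|_{\LL^2}=1$ and $\int_0^n D(g_n)\, dt \to 0$. Extracting a weak limit $g$ in $L^2_{t,loc} \LL^2$, the weak coercivity property (which is purely local and hence holds in the manifold setting) together with weak lower semi-continuity of $D$ yields $C(g)=0$, so $g$ solves the free transport equation with vanishing collision term. Using the manifold version of Proposition~\ref{remUCP} (valid here because $C^-(\infty)>0$ implies GCC, which in turn implies the Unique Continuation Property on a large enough time interval), we conclude $g$ is a stationary Maxwellian, and conservation of mass plus $g_n \in \LL^2_0$ forces $g=0$. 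Then I would analyze the defect measure $\nu_n = |g_n|^2$, write Duhamel's formula, and use the change of variable $\phi_{s-t}(x,\xi) \mapsto (x,\xi)$ which is measure-preserving for $d\omega^d$ (the hamiltonian flow preserves the symplectic volume form, cf.\ Liouville's theorem). The definition of $C^-(\infty)$ then yields exponential-in-$t$ decay of the $\nu_{0,n}$-contribution. The cross term is handled by the averaging lemma of Lemma~\ref{lem-moyenne-variete}: since $g_n \rightharpoonup 0$ and the collision gain operator is regularizing in $x$ after velocity integration, $\int k(x,\xi',\xi) g_n(x,\xi') \, dm_x(\xi') \to 0$ strongly in $L^2_{t,loc}\LL^2$. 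Combining these two estimates produces $\|\nu_n(T_1)\|_{\LL^1} \leq 1/2$ for $T_1$ large enough and $n$ large, which contradicts the dissipation bound $\|\nu_n(T_1)\|_{\LL^1} = 1 - \int_0^{T_1} D(g_n) \to 1$.

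For the converse $(b.) \Rightarrow (a.)$, I would again proceed by contradiction: if $C^-(\infty)=0$, pick $(x_0, \xi_0) \in T^*M$ such that $\int_0^T \int_{T^*_{\phi_t(x_0,\xi_0)} M} k(\phi_t(x_0,\xi_0), \xi') \, dm_{\pi\phi_t(x_0,\xi_0)}(\xi') \, dt < \varepsilon/2$. Using a local chart around $(x_0, \xi_0)$ and the identification of $T^*_xM$ with $\R^d$ via the metric, I would construct a sequence of initial data $g_{0,n}$ concentrating like a Dirac mass at $(x_0, \xi_0)$ (with zero mean, using an appropriate radial cutoff $\chi$ satisfying $\int \chi(r) r^{d-1} dr = 0$), normalized in $\LL^2$. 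The key weighted $\Lb^2$ estimate of Lemma~\ref{lemtech} transfers verbatim to the manifold case under assumption {\bf A3'}, since $\varphi = \Theta \circ H$ is preserved by the flow. The rest of the argument is identical: the averaging lemma forces the gain term contribution to vanish, and the Duhamel formula for the defect measure combined with the concentration of $\nu_n(0)$ at $(x_0,\xi_0)$ yields $\int_0^T D(g_n) \, dt < \varepsilon$, contradicting observability. Finally, $(c.) \Rightarrow (a.)$ follows exactly as in the torus case: by Theorem~\ref{thmconv-manifold}, $Pf_0$ has the explicit form~\eqref{def-equimulti}, and one replays the $(b.) \Rightarrow (a.)$ argument in the space $\LL^2_{00} := \{f \in \LL^2, \ \int_{U_j} f \, d\omega^d = 0 \text{ for all } j \in J\}$, taking $n$ large enough so that $\supp g_{0,n} \subset U_j$ for a single $j$.

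The main obstacle is the averaging lemma, since classical velocity averaging is formulated on $\R^d$ with constant-coefficient transport; this is precisely resolved by Lemma~\ref{lem-moyenne-variete}, which via local charts and the change of variable $v^i = g^{i,j}\xi_j$ reduces the problem to the $\R^d$ setting at the price of Christoffel-symbol quadratic terms in $v$ in the force field. Once this regularization is available, the non-compactness of the velocity fibers $T^*_xM$ is dealt with exactly as in the torus case through the Maxwellian weights in $\LL^2$.
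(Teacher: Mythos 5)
Your proposal is correct and follows essentially the same route as the paper, which itself only asserts that the torus proof of Theorem~\ref{thmexpo-intro} carries over once the averaging lemma is replaced by Lemma~\ref{lem-moyenne-variete}; you correctly identify that lemma (together with Liouville's theorem for the change of variables in Duhamel's formula and the flow-invariance of $\varphi=\Theta\circ H$ in Lemma~\ref{lemtech}) as the only genuinely new ingredients. One small slip: $(b.)\Leftrightarrow(c.)$ is not ``immediate'' --- only $(b.)\Rightarrow(c.)$ is --- but this is harmless since you supply the $(c.)\Rightarrow(a.)$ argument via $\LL^2_{00}$ separately, exactly as the paper does in the torus case.
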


As a particular case of Theorem~\ref{thmconv-manifold}, we have the following corollary.
\begin{coro}
Assume that $V=0$ and $\omega=T^*\omega_x$, where $\omega_x$ is a non-empty open subset of $M$. 
Suppose that the dynamics associated to $(\phi_t)_{t \geq 0}$ on 
$$
S^* M = \left\{ (x,\xi) \in T^*M, \, \frac12 |\xi|_x^2 = 1 \right\},
$$
is ergodic. 
Then for all $f_0 \in \LL^2$, denoting by $f(t)$ the unique solution to \eqref{Bmanifold} with initial datum $f_0$, we have
\begin{equation}
 \label{convergeto0-manifold-re}
 \left\|f(t)-\left(\int_{T^*M} f_0 \, d\omega^d\right)\frac{1}{|\V(M)|} \Mc(v)\right\|_{\LL^2} \to_{t \to +\infty} 0,
 \end{equation}
\end{coro}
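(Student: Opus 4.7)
The plan is to invoke Theorem~\ref{thmconv-manifold} and verify two facts: (a) $\omega = T^*\omega_x$ satisfies the a.e.i.t. GCC, and (b) the only stationary solutions of~\eqref{Bmanifold} are scalar multiples of $\Mc$. Granted (b), Proposition~\ref{cardinal} (in its manifold version, valid by the remark that all results of the preliminary section extend to the Riemannian setting) yields that the equivalence relation $\Bumpeq$ has a single class, so that the formula for $Pf_0$ in Theorem~\ref{thmconv-manifold}~(iii.) collapses, using $V=0$, to $\left(\int_{T^*M} f_0\, d\omega^d\right)\frac{1}{|\V(M)|}\Mc$ (the normalization $\|\Mc\|_{\LL^2}^2 = \int_{T^*M}\Mc\, d\omega^d = |\V(M)|$ gives precisely the stated constant).

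For step (a), I would exploit the homogeneity of $H(x,\xi) = \frac12|\xi|_x^2$ in $\xi$. The rescaling $(x,\xi)\mapsto (x,R\xi)$ conjugates the flow $\phi_t$ on $S^*M$ to the time-rescaled flow on the energy sphere $S^*_R M := \{H = R^2/2\}$; hence the assumed ergodicity of $\phi_t$ on $S^*M$ transfers to every $S^*_R M$, $R>0$. The set $T^*\omega_x \cap S^*_R M$ is open and non-empty (every fiber $T^*_x M$ meets $S^*_R M$ when $x\in\omega_x$), hence has positive Liouville measure in $S^*_R M$. Birkhoff's ergodic theorem applied to its indicator shows that almost every point of $S^*_R M$ has a forward orbit entering $T^*\omega_x$. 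Disintegrating $d\omega^d$ along the fibration $H : T^*M \setminus \{\xi=0\}\to(0,\infty)$ (and using that $\{\xi=0\}$ has measure zero) upgrades this to the a.e.i.t. GCC on all of $T^*M$.

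For step (b), let $f \in \LL^2$ solve $X_H f = 0$ and $C(f) = 0$, and set $g := f/\Mc$. Since $\Mc = (2\pi)^{-d/2} e^{-H}$, we have $X_H \Mc = \{H,\Mc\} = 0$, and thus $X_H g = 0$, i.e.\ $g$ is $\phi_t$-invariant. On the one hand, Lemma~\ref{collannule} applied to $C(f)=0$ shows that on each connected component $\omega_i$ of $\omega = T^*\omega_x$ (each being of the form $T^*\omega_x^{(i)}$ for a connected component $\omega_x^{(i)}$ of $\omega_x$), the function $g$ depends only on $x$: $g|_{\omega_i}(x,\xi) = \rho_i(x)$ for $x\in\omega_x^{(i)}$ and all $\xi \in T^*_x M$. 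On the other hand, applying Birkhoff's ergodic theorem on $(S^*_R M,\phi_t)$ to the flow-invariant function $g|_{S^*_R M}$ yields that $g$ is a.e.\ constant on $S^*_R M$ for almost every $R>0$; hence $g(x,\xi) = h(|\xi|_x)$ almost everywhere on $T^*M$. Combining both descriptions on $\omega_i$: for fixed $x\in\omega_x^{(i)}$, $h(|\xi|_x)=\rho_i(x)$ for all $\xi\in T^*_x M$; as $|\xi|_x$ ranges over $[0,\infty)$, $h$ must be constant, hence so are all $\rho_i$ and they coincide. This forces $g \equiv c$ a.e., and mass conservation fixes $c = \frac{1}{|\V(M)|}\int_{T^*M} f_0\, d\omega^d$.

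The main obstacle is step (b): reconciling the two structural descriptions of $g$ — the $x$-dependence on $\omega$ produced by Lemma~\ref{collannule} (which reflects the local non-degeneracy of the collision kernel in velocity) and the radial $|\xi|_x$-dependence produced by ergodicity on each energy sphere — to conclude that $g$ is globally constant. A secondary technical point is the fibration/Fubini argument in step (a), needed to transfer ergodic information from the measure-zero slices $S^*_R M \subset T^*M$ to an almost-everywhere statement on the full phase space.
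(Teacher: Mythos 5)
Your proof is correct. Step (a) — transferring ergodicity from $S^*M$ to every energy sphere by homogeneity of the geodesic flow, and then upgrading the sphere-by-sphere Birkhoff statement to an a.e.i.t.\ GCC on all of $T^*M$ by disintegrating the Liouville measure over the energy levels — is exactly what the paper does (it is the content of the remark following the corollary, and of Lemma~\ref{lemUCP1} in the torus case). Where you genuinely diverge is in establishing that there is a single equivalence class. The paper's route is purely topological: a single connected component $T^*\omega_x^{(0)}$ of $\omega$ already satisfies a.e.i.t.\ GCC, and Proposition~\ref{aeitgccconnected}, $(i)\Rightarrow(iii)$, then forces $\bigcup_{s\geq 0}\phi_{-s}(\omega)$ to be connected, hence a single class for $\sim$, with no reference at all to the structure of the collision operator. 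You instead compute $\ker(A)$ directly, playing the fiber-wise constancy of $f/\Mc$ on $\omega$ given by Lemma~\ref{collannule} against the sphere-wise constancy given by ergodicity, and then invoke Proposition~\ref{cardinal} to translate $\dim\ker(A)=1$ back into a single equivalence class. Both arguments are sound. The paper's is shorter once Proposition~\ref{aeitgccconnected} is granted — though, as stated, that proposition is formulated for $\T^d$ or open subsets of $\R^d$, so one must note that its purely topological proof carries over to $T^*M$; your version avoids that citation entirely, is self-contained in the manifold setting, and has the advantage of making explicit \emph{why} the equilibrium must be simultaneously fiber-constant and radially constant, i.e.\ a multiple of $\Mc$. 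The measure-theoretic bookkeeping in your step (b) (a.e.\ in $x$, a.e.\ in $\xi$, a.e.\ in $R$) is slightly informal but raises no real difficulty.
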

Note that if the dynamics of $(\phi_t)_{t \geq 0}$ is ergodic on $S^* M$, then it is also ergodic on cosphere bundles of any positive radius (since for $V=0$, the flow is homogeneous of degree one).

Classical examples of Riemannian manifolds satisfying this dynamical assumption are given by compact Riemannian manifolds with negative curvature.

\section{The case of compact phase spaces}
\label{compactPS}

Instead of studying the linear Boltzmann equation on the ``whole'' phase space $T^*M$ or $\Omega \times \R^d$, it is possible to consider this equation set on the ``reduced'' phase spaces
\begin{align*}
B_H^*M &= \{(x, \xi) \in T^*M, H(x, \xi) \leq R\},\quad 
\quad
 S_H^*M = \{(x, \xi) \in T^*M , H(x, \xi) = R\}, \\
 &\text{or} \quad  \mathcal{R}_H^*M = \{(x, \xi) \in T^*M , R\leq H(x, \xi) \leq R'\},
\end{align*}
for $R'>R>0$, or (with similar definitions) on $B_H^*\Omega$, $S_H^*\Omega$ or $ \mathcal{R}_H^*\Omega$. Note that by continuity, the potential $V$ is always bounded from below (and above), so that  $B_H^*M$, $ S_H^*M$ and  $\mathcal{R}_H^*M$ (as well as $B_H^*\Omega$, $S_H^*\Omega$ and  $\mathcal{R}_H^*\Omega$) are compact. The linear Boltzmann equation~\eqref{B} is well-posed in $L^2(B_H^*M)$ (resp. $L^2(B_H^*\Omega)$), $L^2(S_H^*M)$ (resp. $L^2(B_H^*\Omega)$) or $L^2(\mathcal{R}_H^*M)$ (resp. $L^2(\mathcal{R}_H^*\Omega)$), in particular because the hamiltonian is preserved by the dynamics.
The case of $S_H^*M$ is for instance relevant for the equations of radiative transfer or neutronics.

\bigskip

The analogues of Theorems \ref{thmconv-intro}, \ref{thmexpo-intro}, \ref{thmconv-general} still hold in this framework, replacing the former phase space by $B_H$, $S_H$ or $\mathcal{R}_H$ in the various geometric conditions. For the sake of conciseness, we do not write these results again. All proofs remain valid, with some simplifications, since the phase space is now compact. Note that the fact that $C^-(\infty)>0$ is equivalent to GCC in this compact case.

  %%%%%%%%%%%%%%%%%%%%%%%
  %Appendix
  %%%%%%%%%%%%%%%%%%%%%%%

\part*{Appendices}
\appendix

\section{A stabilization criterion}
\label{stabob}
We provide here a characterization of exponential decay for dissipative evolution equations. The following lemma is very classical and we reproduce it here for the convenience of the reader.
 \begin{lem}
 \label{lemfondamental}
 Consider the evolution equation
 \begin{equation}
 \label{pde}
 \left\{
 \begin{aligned}
 \partial_t f +  L f = 0, \\
 f_{|t=0} = f_0,
 \end{aligned}
 \right.
 \end{equation}
 assumed to be:
 \begin{itemize}
 \item globally wellposed in some functional space $X$ in the sense that for any $f_0 \in X$, there is a unique $f \in C^0_t(X)$ solution to \eqref{pde},
 \item invariant by translation in time, in the sense that if $f \in C^0_t(X)$ is the solution of \eqref{pde}, then for all $t_0\geq 0$, $g(t) := f(t+t_0)$ is the unique solution of
   \begin{equation}
 \left\{
 \begin{aligned}
 \partial_t g +  L g = 0, \\
g_{|t=0} = f_{|t=t_0}.
 \end{aligned}
 \right.
 \end{equation}
 \end{itemize}
  Let $E(f)$ and $D(f)$ be two non-negative functionals defined for all $f \in X$, and such that if $f$ is a solution to \eqref{pde},  
  \begin{equation}
  \label{eqidentity}
\text{ for all } t \geq 0, \quad  \frac{d}{dt} E(f(t)) = - D(f(t)).
 \end{equation}
 
 Then, the following two properties are equivalent:
 \begin{enumerate}
\item  There exist $C,\gamma>0$ such that for all $f(0) \in X$, the associated solution $f$ to \eqref{pde} satisfies
 \begin{equation}
 \label{eqfonda1}
 \text{ for all } t \geq 0, \quad E(f(t))\leq C e^{-\gamma t} E(f(0)).
 \end{equation}
 
 \item There exists $T>0$ and $K>0$ such that for all $f(0) \in X$, the associated solution $f$ to \eqref{pde} satisfies
  \begin{equation}
  \label{eqfonda2}
K \int_0^T D(f(t)) \, dt \geq  E(f(0)).
 \end{equation}
 
 \end{enumerate}
 \end{lem}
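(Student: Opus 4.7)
The proof consists in exploiting the dissipation identity~\eqref{eqidentity} together with the time translation invariance, and does not require any further structure of $L$.

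For the direction $(1) \implies (2)$, I would integrate the identity~\eqref{eqidentity} between $0$ and $T$ to obtain
$$
\int_0^T D(f(t)) \, dt = E(f(0)) - E(f(T)) \geq \big(1 - Ce^{-\gamma T}\big) E(f(0)),
$$
using the exponential bound~\eqref{eqfonda1}. Choosing $T>0$ large enough so that $Ce^{-\gamma T} \leq 1/2$ yields~\eqref{eqfonda2} with $K=2$.

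For the reverse direction $(2) \implies (1)$, the starting point is again the dissipation identity integrated on $[0,T]$, combined with~\eqref{eqfonda2}, which yields
$$
E(f(T)) = E(f(0)) - \int_0^T D(f(t))\,dt \leq \left(1 - \frac{1}{K}\right) E(f(0)).
$$
Note that since $E \geq 0$ and $D \geq 0$, the observability inequality~\eqref{eqfonda2} forces $K \geq 1$, so that $\eta := 1 - 1/K \in [0,1)$. The key step is then to iterate this estimate using the time translation invariance: applying the contraction $E(f(T)) \leq \eta \, E(f(0))$ to the shifted solution $g(\cdot) = f(\cdot + (n-1)T)$ and proceeding by induction gives $E(f(nT)) \leq \eta^n E(f(0))$ for every integer $n \geq 0$. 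Finally, since $D \geq 0$ implies that $t \mapsto E(f(t))$ is non-increasing, for any $t \geq 0$ one writes $t = nT + s$ with $n = \lfloor t/T \rfloor$ and $s \in [0,T)$, so that
$$
E(f(t)) \leq E(f(nT)) \leq \eta^n E(f(0)) \leq \frac{1}{\eta} e^{-\gamma t} E(f(0)),
$$
with $\gamma := -\ln(\eta)/T > 0$, giving~\eqref{eqfonda1}.

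No real obstacle is expected here: the result is essentially a semigroup-contraction argument, and the only subtle points are to verify that $K \geq 1$ is automatic (so the contraction factor $\eta$ makes sense) and to handle the trivial limiting case $K=1$ separately (in which $E(f(T)) = 0$ and the exponential decay is immediate by translation invariance).
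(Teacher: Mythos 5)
Your proof is correct and is essentially the same argument as the one given in the paper: integrate the dissipation identity and pick $T$ so that $Ce^{-\gamma T}\leq 1/2$ for $(1)\implies(2)$, and for $(2)\implies(1)$ note $K\geq 1$, iterate the one-step contraction $E(f(T))\leq(1-1/K)E(f(0))$ using time translation invariance, treat $K=1$ separately, and interpolate between the grid times $nT$ using monotonicity of $E(f(\cdot))$. The constants you obtain ($\gamma=-\ln(1-1/K)/T$ and the prefactor $(1-1/K)^{-1}$) coincide with those in the paper.
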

 For the sake of completeness, we provide a short proof of this lemma.
  
 \begin{proof}[Proof of Lemma \ref{lemfondamental}] 
 \noindent $(1) \Rightarrow (2)$ Assume that $(1)$ holds. Let $T_0>0$ such that $C e^{-\gamma T_0} = \frac{1}{2}$. Then, after integrating \eqref{eqidentity} betwen $0$ and $T_0$, we have:
 $$
 E(f(T_0))- E(f(0)) = - \int_0^{T_0}  D(f(t)) \, dt,
 $$
 so that, by \eqref{eqfonda1},
 $$
 \int_0^{T_0}  D(f(t)) \, dt \geq E(f(0)) -  C e^{-\gamma T_0} E(f(0)) = \frac{1}{2} E(f(0)),
 $$
 and we can therefore take $T =T_0$ and $C=2$ in \eqref{eqfonda2}.
 
 \bigskip
 
  \noindent $(2) \Rightarrow (1)$ Assume that $(2)$ holds. Here (and only here), we need the property of invariance by time translations for~\eqref{pde}. By \eqref{eqidentity} and  \eqref{eqfonda2}, we have
  $$
  E(f(T))  \leq \left(1-\frac{1}{K} \right) E(f(0)).
  $$
  Note that the assumption $E(f) \geq 0$ implies in particular that $K\geq 1$. We may assume that $K>1$. Indeed, for $K=1$, we have $E(f(t)) = 0$ for all $t \geq T$ so that for any $\gamma>0$, there exists $C>0$ such that (1) holds.
  By invariance by translation in time of~\eqref{pde}, one likewise obtains
   $$
  E(f(2T))  \leq \left(1-\frac{1}{K} \right) E(f(T)).
  $$
  Thus, by a straightforward induction, for any $k \in \N$, we have the bound:
     $$
  E(f(kT))  \leq \left(1-\frac{1}{K} \right)^k E(f(0)).
  $$
  Defining $\gamma_0 := \frac{- \log\left(1-\frac{1}{K} \right)}{T}>0$ and $C_0 := \left(1-\frac{1}{K} \right)^{-1} = e^{\gamma_0 T}>0$, we can now check that 
   \begin{equation}
 \text{ for all } t \geq 0, \quad E(f(t))\leq C_0 e^{-\gamma_0 t} E(f(0)).
 \end{equation}
 Indeed, let $t\geq 0$ and $k \in \N$ such that $t \in [kT, (k+1) T[$; since $E(f(\cdot))$ is decreasing (see~\eqref{eqidentity}), we have
 $$
 E(f(t)) \leq   E(f(kT))   \leq \left(1-\frac{1}{K} \right)^k E(f(0)) = e^{-\gamma_0 k T} E(f(0)) \leq C_0 e^{-\gamma_0 t} E(f(0)),
 $$
which concludes the proof.
 \end{proof}

 \section{Velocity averaging lemmas}
 \label{section-AL}

Velocity averaging lemmas play an important role in many proofs of this paper. In this appendix, we recall some classical results and also state the versions precisely adapted to our needs.

Kinetic transport equations are hyperbolic partial differential equations and as it can be seen from Duhamel's formula, there is propagation of potential singularities at initial time and/or from a source in the equations. Thus there is no hope that the solution of a kinetic equation becomes more regular than the initial condition.
 
 It was nevertheless observed by Golse, Perthame and Sentis \cite{GPS} that the averages in velocity of the solution of a kinetic transport equation enjoy extra regularity/compactness properties (see also the independent paper of Agoshkov \cite{Ago}).
 We refer to the by now classical paper of  Golse, Lions, Perthame, Sentis \cite{GLPS}, DiPerna, Lions \cite{DPL1}, DiPerna, Lions, Meyer \cite{DPLM}, B\'{e}zard \cite{B94} for quantitative estimates of this compactness property in various settings of increasing complexity.

We also refer to the review paper of Jabin \cite{Jab} and to the recent work of Ars\'enio and Saint-Raymond~\cite{ASR}.

 \subsection{Velocity averaging lemmas in $\R^d$}

We start by recalling classical averaging lemmas in the whole space $\R^d$. There are also versions of these lemmas for $p\in (1,\infty)$, but we stick to the case $p=2$, which is sufficient for our needs.

  \begin{thm}[Kinetic averaging lemma \cite{GLPS,DPL1,DPLM,B94}]
  \label{thmmoyenne}
 Let $s\in [0,1)$ and $m \in \R^+$.
  \begin{enumerate}
 \item For any $T>0$ and any {\em bounded} open sets $\Omega_x ,\Omega_v \subset \R^d$, there exists a constant $C>0$ such that for all $\Psi \in C^{\infty}_c(\mathbb{R}^d)$ supported in $\Omega_v$ and all $f,g \in  L^2_{loc}(\R \times \R^d \times \R^d)$ satisfying
  $$
  \partial_t f + v \cdot \nabla_x f = (1- \Delta_{t,x})^{s/2}(1- \Delta_{v})^{m/2} g,
  $$ 
we have
\begin{equation}
\label{avlemmaRd}
\| \rho_{\Psi} \|_{H^{\alpha}([0,T] \times \Omega_x)} \leq C \left(\| f \|_{L^2([0,T] \times \Omega_x \times \Omega_v )}  + \| g \|_{L^2([0,T] \times \Omega_x \times \Omega_v )}\right),
\end{equation}
where $\rho_{\Psi}(t,x):=\int_{\R^d} f(t,x,v)\Psi(v)dv$ and $\alpha= {\frac{(1-s)}{2(1+m)}}$. 
 \item Let $T>0$ and $(f_n)_{n\in \N}$ and $(g_n)_{n\in \N}$ be two sequences of $L^2(0,T ; L^2_{loc}( \R^d \times \R^d))$ such that the following holds
 $$
 \partial_t f_n + v \cdot \nabla_x f_n = (1- \Delta_{t,x})^{s/2}(1- \Delta_{v})^{m/2} g_n,
 $$ 
 with $s \in [0,1), m \geq 0$.
Assume that for any {\em bounded} open sets $\Omega_x ,\Omega_v \subset \R^d$, there exists $C_1>0$, such that for all $n \in \N$,
 \begin{equation}
 \label{ass2.0}
\| f_n \|_{L^2((0,T) \times \Omega_x \times \Omega_v)}  + \| g_n \|_{L^2((0,T) \times \Omega_x \times \Omega_v)} \leq C_1.
 \end{equation}
Then, for any $\Psi \in C^{\infty}_c(\mathbb{R}^d)$, the sequence $(\rho_{\Psi,n})_{n\in \N}$ defined for $n \in \N$ by $$\rho_{\Psi,n}(t,x):=\int_{\R^d} f_n(t,x,v)\Psi(v)dv$$ is relatively compact in 
$L^2(0,T ; L^2_{loc}( \R^d))$.
\end{enumerate}
  \end{thm}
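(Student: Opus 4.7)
The plan is to reproduce the by-now-classical Fourier proof of velocity averaging, originating in \cite{GLPS} for $s=m=0$ and extended to general $s \in [0,1)$, $m \geq 0$ in \cite{DPL1,DPLM,B94}. The central idea is to Fourier transform in $(t,x)$, to split the $v$-integration according to whether the transport symbol $\tau+\xi\cdot v$ is resonant or not, and to exploit the smallness of the resonant set when $|\xi|$ is large.

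For part (1), I would first reduce to $f,g$ compactly supported on $\R\times\R^d\times\R^d$ by cutoff: multiplying by $\chi(t)\chi_1(x)\chi_2(v)$ with $\chi_2 \equiv 1$ on $\supp \Psi$, the transport equation is preserved modulo an additional source controlled in $L^2$ by the right hand side of \eqref{avlemmaRd}. After Fourier transform in $(t,x)$ with dual variables $(\tau,\xi)$, the equation reads
\begin{equation*}
i(\tau + \xi\cdot v)\, \hat f(\tau,\xi,v) = (1+\tau^2+|\xi|^2)^{s/2} (1-\Delta_v)^{m/2} \hat g(\tau,\xi,v),
\end{equation*}
so that $\widehat{\rho_\Psi}(\tau,\xi) = \int_{\R^d} \hat f(\tau,\xi,v)\Psi(v)\,dv$. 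Fix a small parameter $\delta=\delta(\tau,\xi)>0$ and a smooth cutoff $\theta$ equal to $1$ near $0$ with support in $[-1,1]$. Split
\begin{equation*}
\widehat{\rho_\Psi} = \int \theta\!\left(\tfrac{\tau+\xi\cdot v}{\delta}\right) \hat f\, \Psi\, dv \;+\; \int \left(1-\theta\!\left(\tfrac{\tau+\xi\cdot v}{\delta}\right)\right) \hat f\, \Psi\, dv.
\end{equation*}
On the resonant piece, I would bound $|\hat f \Psi|$ by Cauchy-Schwarz using that the Lebesgue measure of $\{v\in \supp\Psi : |\tau+\xi\cdot v|\le \delta\}$ is $O(\delta/|\xi|)$, giving control of order $(\delta/|\xi|)^{1/2} \|\hat f(\tau,\xi,\cdot)\|_{L^2_v}$. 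On the non-resonant piece, solve algebraically and distribute the factor $(1-\Delta_v)^{m/2}$ onto $\Psi$ and the smooth multiplier $\frac{1-\theta(\cdot/\delta)}{i(\tau+\xi\cdot v)}$ via integration by parts in $v$, each derivative costing at most $\delta^{-1}|\xi|$ in size; this yields a bound of order $\delta^{-(1+m)}(1+\tau^2+|\xi|^2)^{s/2} \|\hat g(\tau,\xi,\cdot)\|_{L^2_v}$ on the right. Optimizing $\delta = |\xi|^{-(1-s)/(1+m)}$ and integrating $(1+\tau^2+|\xi|^2)^{\alpha}|\widehat{\rho_\Psi}|^2$ over $(\tau,\xi)$ with $\alpha = (1-s)/(2(1+m))$ produces \eqref{avlemmaRd} by Plancherel.

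For part (2), the uniform bound \eqref{ass2.0} combined with the quantitative estimate of part (1), applied after multiplication by a cutoff $\chi_1\in C^\infty_c(\Omega_x)$, gives that $\chi_1\rho_{\Psi,n}$ is uniformly bounded in $H^{\alpha}((0,T)\times\R^d)$. Rellich's compact embedding $H^\alpha \hookrightarrow\!\!\hookrightarrow L^2$ on compactly supported functions then gives relative compactness in $L^2((0,T)\times \Omega_x)$, and the conclusion follows by a diagonal extraction over an exhaustion of $\R^d$ by bounded open sets.

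The main technical obstacle lies in part (1), in the interplay between the loss $(1-\Delta_{t,x})^{s/2}$ on the source and the velocity derivatives $(1-\Delta_v)^{m/2}$: extracting the sharp exponent $\alpha = (1-s)/(2(1+m))$ requires either a careful Littlewood--Paley decomposition in $(\tau,\xi)$ with $\delta$ chosen dyadically, or the more refined multiplier techniques of \cite{DPLM,B94}, and one must track how many $v$-derivatives can be integrated by parts without spoiling the gain in $|\xi|$. Since this is a classical and well-documented result, the paper is likely to simply invoke \cite{GLPS,DPL1,DPLM,B94} rather than redo the Fourier computation in detail.
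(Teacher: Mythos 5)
The paper does not prove Theorem~\ref{thmmoyenne}: it is stated as a classical result and the proof is delegated entirely to the cited references \cite{GLPS,DPL1,DPLM,B94}, so there is no in-paper argument to compare against. Your sketch is the standard Fourier-multiplier proof of velocity averaging, and the overall architecture (cutoff reduction, Fourier transform in $(t,x)$, resonant/non-resonant splitting in $\tau+\xi\cdot v$, optimization in $\delta$, then Rellich plus diagonal extraction for part (2)) is exactly the right route.

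One concrete issue: your quantitative bookkeeping in the non-resonant estimate is internally inconsistent and, as written, does not produce $\alpha=\frac{1-s}{2(1+m)}$. With $\delta$ measuring the size of the symbol $|\tau+\xi\cdot v|$ (as your cutoff $\theta\bigl(\frac{\tau+\xi\cdot v}{\delta}\bigr)$ dictates), the resonant slab in $\supp\Psi$ has measure $O(\delta/|\xi|)$, the non-resonant multiplier $\frac{1-\theta((\tau+\xi\cdot v)/\delta)}{\tau+\xi\cdot v}\Psi(v)$ has $L^2_v$ norm $O((\delta|\xi|)^{-1/2})$ (not merely sup norm $\delta^{-1}$), and each $v$-derivative from $(1-\Delta_v)^{m/2}$ costs a factor $|\xi|/\delta$. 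The correct balance is therefore
\begin{equation*}
\Bigl(\tfrac{\delta}{|\xi|}\Bigr)^{1/2}\;\sim\;\Bigl(\tfrac{|\xi|}{\delta}\Bigr)^{m}(\delta|\xi|)^{-1/2}\,\langle(\tau,\xi)\rangle^{s},
\end{equation*}
which gives $\delta\sim|\xi|^{(m+s)/(1+m)}$ and the gain $(\delta/|\xi|)^{1/2}=|\xi|^{-(1-s)/(2(1+m))}$, i.e. the claimed $\alpha$. Your stated bound $\delta^{-(1+m)}\langle(\tau,\xi)\rangle^{s}$ drops the $|\xi|^{m}$ coming from the derivatives and replaces the $L^2_v$ norm by the sup norm, and your choice $\delta=|\xi|^{-(1-s)/(1+m)}$ is the width of the slab in $v$, not in the symbol (they differ by a factor $|\xi|$); mixing the two conventions would yield a wrong exponent if carried through literally. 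Two further points to flag: fractional $m$ requires interpolation (or fractional integration by parts) rather than literal repeated integration by parts, and the regime $|\tau|\gg|\xi|\,\sup_{\supp\Psi}|v|$, where the symbol is elliptic, must be treated separately before optimizing in $\delta$. All of this is standard and is precisely what the cited references carry out.
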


\begin{rque}
\label{rk-smooth}
In the main part of the paper, we apply this averaging lemma to the Boltzmann equation \eqref{B} by writing it under the form 
$$
\partial_t f + v \cdot \nabla_x f = \nabla_x V \cdot \nabla_v f +  \int_{\R^d} \left[k(x,v' ,  v) f(v') - k(x,v ,  v') f(v)\right] \, dv' .
$$
To this end, we consider the case $s=0, m=1$ in Theorem~\ref{thmmoyenne}. This implies that the averages in $v$ belong to the Sobolev space $H^{1/4}$.

Nevertheless, assuming that the potential $V$ is smooth enough, we can also use the approach of Berthelin-Junca \cite{BJ} to obtain the optimal Sobolev space $H^{1/2}$ for these averages (which is not needed in this paper). 
\end{rque}

\begin{rque}
We also have a version of these lemmas for kinetic transport equations set in general Riemannian manifolds, see Lemma \ref{lem-moyenne-variete}.
\end{rque}

We now state the result as needed in the main part of this work.  
Assuming an extra uniform integrability, we can deduce some compactness on moments of $f$ without having to consider compactly supported test functions in $v$. This is the purpose of the next result, which is actually the version of averaging lemmas used most of the time in this work.

\begin{coro}
\label{lemmoyenne}
 Let $\Omega_x$ be a bounded open set of $\R^d$, $T>0$, and $(f_n)_{n\in \N}$, $(g_n)_{n\in \N}$ be two sequences of $L^2(0,T ; L^2_{loc}( \Omega_x \times \R^d))$ satisfying
 $
 \partial_t f_n + v \cdot \nabla_x f_n = (1- \Delta_{v})^{m/2} g_n,
 $ 
 for some $ m \geq 0$.
 Suppose that there exists $V \in L^\infty$ such that for any {bounded} open set $\Omega_v \subset \R^d$, there exists $C_0>0$ such that, for any $n \in \N$, 
 \begin{equation}
 \label{eqmoybound1}
\text{ for all } t\geq 0,\quad \| f_n\|_{\LL^2(\Omega_x \times \R^d)}^2  :=  \int_{\Omega_x} \int_{\R^d} |f_n|^2 \,  \frac{e^{V(x)}}{\Mc(v)} \, dv \, dx \leq C_0, \qquad  \| g_n \|_{L^2((0,T) \times \Omega_x \times \Omega_v)} \leq C_0 .
 \end{equation}
Assume moreover that there is $f \in L^\infty(0,T ;L^2(\Omega_x \times \R^d))$ such that $f_n \rightharpoonup f$ weakly$-\star$ in $L^\infty(0,T ; \LL^2(\Omega_x \times \R^d))$. Consider  $\rho_{n}(t,x):=\int_{\R^d}  f_n(t,x,v) \,dv$. 
 Then up to a subsequence, we have
\begin{equation}
\label{eqconclu1}
\rho_n \Mc(v) \rightarrow \left(\int_{\R^d} f \, dv\right) \Mc(v) , \quad \text{strongly in  } L^2(0,T ;\LL^2(\Omega_x \times \R^d)) ,
\end{equation} 
and for any continuous kernel $k(\cdot,\cdot,\cdot): \R^d \times \R^d \times \R^d \to \R$ satisfying {\bf A3}, we have
\begin{equation}
\label{eqconclu2}
\int_{\R^d} k(x,v',v) f_n(t,x,v') \, dv'  \rightarrow \int_{\R^d} k(x,v',v) f(t,x,v') \, dv'  , \quad \text{strongly in  } L^2(0,T ; \LL^2(\Omega_x \times \R^d)) .
\end{equation}
\end{coro}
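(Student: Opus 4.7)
The plan is to combine the classical averaging lemma of Theorem~\ref{thmmoyenne} (Part 2) with a velocity cutoff in the $v'$-variable, using the Gaussian weight $e^V/\Mc$ to control the tails at infinity in $v'$. By linearity (after replacing $(f_n,g_n)$ by $(f_n-f, g_n - g)$, where $g \in L^2$ is a weak limit of a subsequence of $g_n$), we may reduce to the case where $f=0$ and $f_n \rightharpoonup 0$ weakly-$\star$ in $L^\infty(0,T;\LL^2)$. Throughout, fix a cutoff $\chi_R \in C^\infty_c(\R^d)$ with $\chi_R \equiv 1$ on $B(0,R)$, $\supp \chi_R \subset B(0,2R)$, and $0 \leq \chi_R \leq 1$.

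For the first conclusion \eqref{eqconclu1}, I would split
\begin{equation*}
\rho_n = \int \chi_R(v') f_n\, dv' + \int (1-\chi_R(v')) f_n\, dv' =: \rho_n^R + \tilde \rho_n^R.
\end{equation*}
For the tail, Cauchy--Schwarz gives $|\tilde \rho_n^R|^2 \leq (\int (1-\chi_R)^2 \Mc\, dv')(\int |f_n|^2/\Mc\, dv')$, so
\begin{equation*}
\|\tilde \rho_n^R \Mc\|_{\LL^2}^2 \leq \Big(\int (1-\chi_R)^2 \Mc\, dv'\Big) \|f_n\|_{\LL^2(\Omega_x \times \R^d)}^2,
\end{equation*}
which is small as $R \to \infty$ uniformly in $n$ and $t$. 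For the main piece $\rho_n^R$, the hypothesis \eqref{eqmoybound1} guarantees that $(f_n)$ and $(g_n)$ satisfy the bounds of Theorem~\ref{thmmoyenne} on $\Omega_x \times B(0,2R)$; since $\chi_R \in C^\infty_c$, the compactness conclusion of Part~2 yields $\rho_n^R \to 0$ strongly in $L^2((0,T) \times \Omega_x)$, and multiplying by $\Mc(v)$ and using $V \in L^\infty$ concludes the first convergence.

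For \eqref{eqconclu2}, I would split the kernel integral the same way into $K_n^R := \int \chi_R k f_n\, dv'$ and $\tilde K_n^R := \int (1-\chi_R) k f_n\, dv'$. Cauchy--Schwarz and Assumption~{\bf A3} give
\begin{equation*}
\|\tilde K_n^R\|_{\LL^2}^2 \leq \int_{\Omega_x} e^V \Phi_R(x) \Big(\int \frac{|f_n|^2}{\Mc(v')}\, dv'\Big)\, dx, \quad \Phi_R(x) := \int\!\!\int (1-\chi_R(v'))^2 \frac{k^2 \Mc(v')}{\Mc(v)}\, dv' dv,
\end{equation*}
where $\|\Phi_R\|_{L^\infty(\Omega_x)} \leq \|{\bf A3}\|_{L^\infty}$ and $\Phi_R(x) \to 0$ pointwise by dominated convergence. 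For the main part $K_n^R$, the test function $(x,v',v) \mapsto \chi_R(v') k(x,v',v)$ is continuous with compact support in $v'$; a standard tensor-product density argument approximates it uniformly on compacts of $(x,v)$ by sums $\sum_i a_i(x) b_i(v') c_i(v)$ with $b_i$ compactly supported, so that the averaging lemma applied to each $\int b_i f_n dv'$ yields $K_n^R \to 0$ strongly in $L^2((0,T); L^2_{loc}(x,v))$; an additional $v$-tail estimate analogous to the one above, combined with the $L^\infty$ bound on $V$ and Assumption~{\bf A3}, upgrades this to strong convergence in the weighted $L^2((0,T);\LL^2(\Omega_x \times \R^d))$.

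The main obstacle I anticipate is producing the tail estimate for $\tilde K_n^R$ uniformly in $n$: Assumption~{\bf A3} gives only an $L^\infty$-in-$x$ bound for $\Phi_R$ and pointwise (not uniform) decay to $0$, while the integrand contains the $n$-dependent quantity $\int|f_n|^2/\Mc(v')\,dv'$, which is only bounded in $L^1(e^V dx)$. The natural route is to combine Egorov's theorem (ensuring that $\Phi_R \to 0$ uniformly outside a set of arbitrarily small measure in $\Omega_x$) with an equi-integrability property of $\int |f_n|^2/\Mc(v')\, dv'$; the latter needs to be extracted from the weighted $\LL^2$ bound on $f_n$, possibly together with the transport equation satisfied by $f_n$. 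Once this is resolved, assembling the three estimates (bulk strong convergence, $v'$-tail, and $v$-tail) completes the proof.
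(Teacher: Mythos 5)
Your treatment of \eqref{eqconclu1} is correct and is essentially the paper's: the same cutoff decomposition, the same Cauchy--Schwarz tail bound by $\bigl(\int (1-\chi_R)^2\Mc\,dv'\bigr)\,\|f_n\|_{\LL^2}^2$, and Theorem~\ref{thmmoyenne} for the truncated average (the paper keeps $f$ and bounds the extra term $\int f(1-\Psi_R)\,dv$ by Fatou rather than reducing to $f=0$, but that is immaterial).

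For \eqref{eqconclu2}, however, the obstacle you flag at the end is a genuine gap, and the fix you suggest is not available. Your tail term requires $\sup_n\int_{\Omega_x}e^V\Phi_R(x)\,h_n(t,x)\,dx\to 0$ with $h_n=\int|f_n|^2/\Mc(v')\,dv'$ only bounded in $L^1(e^V dx)$ and $\Phi_R\to 0$ only pointwise with an $L^\infty$ bound; this can fail if $h_n$ concentrates where $\Phi_{R_n}$ stays large. Equi-integrability of $h_n$ does \emph{not} follow from \eqref{eqmoybound1} or from the transport equation --- indeed the paper's Corollary~\ref{thmmoyenne-domain2} has to \emph{assume} equi-integrability of $|f_n|^2$ as an extra hypothesis precisely because it is not implied. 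The paper's proof avoids the joint $(x,v)$-estimate altogether by a two-step structure: (i) for each \emph{fixed} $v$, it proves $\int k(x,v',v)f_n\,dv'\to\int k(x,v',v)f\,dv'$ strongly in $L^2(0,T;L^2_x)$, where the $v'$-tail is made small by an additional Cauchy--Schwarz on the kernel itself, $\int(1-\Psi_R)^2 k\Mc(v')\,dv'\le\bigl(\int(1-\Psi_R)^2k^2\Mc(v')\,dv'\bigr)^{1/2}\bigl(\int(1-\Psi_R)^2\Mc(v')\,dv'\bigr)^{1/2}$, so that the smallness comes from the $x$-independent Gaussian tail while the $x$-dependence sits in a factor that is merely bounded via {\bf A3}; (ii) it then integrates in $v$ by dominated convergence, the $n$-independent dominating function $v\mapsto C_0T\sup_{x}\int k^2(x,v',v)\Mc(v')/\Mc(v)\,dv'$ being in $L^1(dv)$ by {\bf A3}. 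Finally, merely continuous kernels are handled by mollifying $\tilde k$ in $(v,v')$ with respect to $\Mc(v)\Mc(v')\,dv\,dv'$ (so that the approximation error is controlled in the weighted $L^2$ norm globally in velocity), rather than by a tensor-product approximation uniform on compacts, which would leave you with the same uncontrolled tails. You should restructure your proof of \eqref{eqconclu2} along these lines.
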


 \begin{proof}[Proof of Corollary \ref{lemmoyenne}]
 First note that by Fatou's lemma, $f$ satisfies:
   \begin{equation}
   \label{eqmoybound2}
\text{ for all } t\geq 0, \quad \|f\|_{\LL^2}^2 = \int_{\Omega_x} \int_{\R^d} |f|^2 \,  \frac{e^{V(x)}}{\Mc(v)} \, dv \, dx \leq C_0.
 \end{equation}
 
 Since $\rho_n$ does not depend on $v$, proving \eqref{eqconclu1} is equivalent to show that:
 \begin{equation}
\rho_n e^{V} \rightarrow \int_{\R^d} f \, dv \, e^{V}, \quad \text{strongly in  } L^2((0,T) \times \Omega_x).
\end{equation}
  Let $\Psi \in C^{\infty}_c(\R)$ such that $\Psi = 1$ in a neighborhood of $0$, and define $\Psi_R(v) = \Psi(\frac{|v|}{R})$, $v\in \R^d$. 
 
 By Theorem \ref{thmmoyenne}, we can assume, up to a subsequence, that 
  \begin{equation}
  \label{convcutoff}
 \rho_{\Psi_R,n} := \int_{\R^d} f_n \Psi_R \, dv  \rightarrow  \int_{\R^d} f \Psi_R \, dv, \quad \text{strongly in  } L^2((0,T) \times \Omega_x).
  \end{equation}
  Let $\eps >0$. We can write the decomposition:
 \begin{align*}
&  \rho_n - \int_{\R^d} f \, dv =  A_1 + A_2 +A_3 , \quad \text{with }\\
& A_1 = \left( \rho_{\Psi_R,n} - \int_{\R^d} f \Psi_R \, dv \right) , \quad
A_2 = \int_{\R^d} f_n (1-\Psi_R) \, dv , \quad
A_3 = -  \int_{\R^d} f (1-\Psi_R) \, dv .
 \end{align*}
 By Cauchy-Schwarz inequality,  
 using \eqref{eqmoybound1}, we have for all $n \in \N$ and all $t \in (0,T)$, 
  \begin{align*}
 \left\|  \int_{\R^d} f_n (1-\Psi_R) \, dv \,e^{V} \right \|_{L^2(\Omega_x)}^2 &\leq  \int  \left(\int_{\R^d} |f_n|^2 \frac{1}{\Mc(v)}  \, dv\right)  \left(\int_{\R^d} (1-\Psi_R)^{2}{\Mc(v)}  \, dv\right) e^{2V(x)} \, dx\\
 &\leq C_0  \|e^V\|_{L^\infty(\Omega_x)} \left( \int_{\R^d} (1-\Psi_R)^{2}{\Mc(v)}  \, dv \right).
 \end{align*}
As a consequence, there exists $R_0>0$ large enough such that for all $R\geq R_0$ and all $n \in \N$, we have
$$
\|e^V A_2\|^2_{L^2((0,T) \times \Omega_x)} \leq \frac{\eps}{3} .
$$
Likewise, we use \eqref{eqmoybound2} to  get  for all $n \in \N$, 
 $$
 \| e^{V} A_3 \|_{L^2((0,T) \times \Omega_x)}^2 \leq \frac{\eps}{3}.
 $$
 Using \eqref{convcutoff} ($R$ is now fixed), there is $N\geq 0$ such that for any $n\geq N$, 
 $$
\left\|  \left(  \rho_{\Psi_R,n} - \int_{\R^d} f \Psi_R \, dv\right)  e^{V}\right\|_{L^2((0,T)\times \Omega_x)} \leq \eps/3,
 $$
 from which we infer that 
 \begin{equation}
 \left\|  \left(  \rho_n - \int_{\R^d} f \, dv\right) e^{V}\right\|_{L^2((0,T)\times \Omega_x)} \leq \eps
 \end{equation}
 and this concludes the proof of \eqref{eqconclu1} . 
 
 \bigskip

 For the proof of \eqref{eqconclu2}, let us first assume for a while that $k$ is smooth (namely for all $x$, $k(x, \cdot, \cdot)$ belongs to the $C^\infty$ class).
 We first have to be careful about the integration in the velocity variable. The convergence in~\eqref{eqconclu2} results from the following two facts:
 \begin{itemize}
 \item For all $v \in \R^d$, we have the following  convergence
 $$
 \int_{\R^d} k(x,v',v) f_n(t,x,v') \, dv'  \rightarrow \int_{\R^d} k(x,v',v) f(t,x,v') \, dv'  \quad \text{strongly in  } L^2(0,T;L^2_{x}(\Omega_x)).
 $$
 This follows from a truncation argument and Theorem \ref{thmmoyenne}, exactly as for $\rho_n$. Keeping the same notations, the only difference is that we have to study 
 \begin{multline*}
 \int_{\R^d} (1-\Psi_R)^{2}k(x,v',v){\Mc(v')}  \, dv' \\
 \leq  \left( \int_{\R^d} (1-\Psi_R)^{2}k^2(x,v',v) {\Mc(v')}  \, dv' \right)^{1/2}  \left( \int_{\R^d} (1-\Psi_R)^{2}{\Mc(v')}  \, dv' \right)^{1/2},
 \end{multline*}
which is, using {\bf A3}, small for $R$ large enough.
 \item By the Cauchy-Schwarz inequality and the bound~\eqref{eqmoybound1}, we have
 \begin{align*}
  &\int_0^T \int_{\Omega_x}  \left(\int_{\R^d} k(x,v',v) (f_n-f)(t,x,v') \, dv' \right)^2 \frac{e^V}{\Mc(v)}\, dx \, dt \\
& \leq \int_0^T  \left(\sup_{x \in \Omega_x}  \int_{\R^d} k^2(x,v',v) \frac{\Mc(v')}{\Mc(v)} \, dv'   \right)  \int_{\Omega_x}\left(\int_{\R^d} \frac{|f_n-f|^2(t,x,v')}{\Mc(v')}\, dv' \right) e^V \, dx  \, dt \\
 & \leq   C_0 T  \sup_{x \in \Omega_x}  \int_{\R^d} k^2(x,v',v) \frac{\Mc(v')}{\Mc(v)} \, dv' ,
  \end{align*}
 which is independent of $n$ and in $L^1(dv)$, since by {\bf A3}, we have
 $$
 \sup_{x \in \Omega_x}  \int_{\R^d \times \R^d} k^2(x,v',v) \frac{\Mc(v')}{\Mc(v)} \, dv' \, dv < +\infty.
 $$
 \end{itemize}
 Hence, by Lebesgue dominated convergence theorem, we deduce \eqref{eqconclu2}.

We now use an approximation argument to handle the general case, i.e. when $k$ is only assumed to be continuous. Consider $(\phi_\delta)_{\delta>0}$ a family of mollifiers in $C^\infty_c(\R^d \times \R^{d})$ for the measure $ \Mc(v)\Mc(v') \, dv' dv$. We set for all $x,v,v'$
$$
\tilde{k}_\delta(x,v,v') =\left(\tilde{k}(x,\cdot,\cdot) \star \phi_\delta(\cdot,\cdot)\right)(v,v'), \quad k_\delta(x,v,v')= \tilde{k}_\delta(x,v,v') \Mc(v').
$$
We use the following classical properties of mollifiers: 
\begin{itemize}
\item for all $x$, $k_\delta(x, \cdot,\cdot)$ is in the $C^\infty$ class;
\item we have for all $\delta>0$
\begin{equation}
\label{prop1mol}
\sup_{x \in \Omega_x} \| \tilde k-  k_\delta \|_{L^2(\Mc(v)\Mc(v') \, dv' dv)}  \to_{\delta\to0} 0.
\end{equation}
\end{itemize}
Let $\eps>0$. We write the decomposition
$$
\left\|\int_{\R^d} k(x,v',v) f_n(t,x,v') \, dv'  - \int_{\R^d} k(x,v',v) f(t,x,v') \, dv' \right\|_{\LL^2}^2
 \leq 2A_1 + 2 A_2 ,
$$
with
\begin{align*}
A_1 &= \left\|\int_{\R^d} k_\delta(x,v',v) f_n(t,x,v') \, dv'  - \int_{\R^d} k_\delta(x,v',v) f(t,x,v') \, dv' \right\|^2_{\LL^2}, \\
A_2 &=  \left\|\int_{\R^d} (k-k_\delta)(x,v',v) (f_n-f)(t,x,v') \, dv' \right\|^2_{\LL^2}.
\end{align*}
We estimate $A_2$ as follows, using~\eqref{prop1mol}
\begin{align*}
A_2 &= \int_{\R^d} \Mc(v)  \int_{\Omega_x}  \left(\int_{\R^d} (\tilde{k}-\tilde{k}_\delta) (x,v',v) (f_n-f)(t,x,v') \, dv' \right)^2 e^V \, dx   \, dv \\
    & \leq    \sup_{x\in \Omega_x} \int_{\R^d\times \R^d} |\tilde{k}-\tilde{k}_\delta|^2 (x,v',v) \Mc(v')\Mc(v) \, dv' \, dv  \left( \int_{\Omega_x} \int_{\R^d}  \frac{|f_n-f|^2(t,x,v')}{\Mc(v')} e^V   \, dv' \, dx \right) \\
    &\leq 4 C_0^2   \sup_{x\in \Omega_x} \int_{\R^d\times \R^d} |\tilde{k}-\tilde{k}_\delta|^2 (x,v',v) \Mc(v')\Mc(v) \, dv' \, dv
\end{align*}
Using~\eqref{prop1mol}, we fix $\delta>0$ small enough so that for all $n\in \N$,
$$
A_2 \leq \big(\eps/(4T)\big)^{1/2}.
$$
and thus for all $n\in \N$, we have
$$
\| A_2 \|^2_{L^2(0,T)} \leq \eps/4.
$$
For $A_1$, we use the above analysis in the smooth case to deduce that we can take $N$ large enough to get for all $n\geq N$,
$$
\| A_1 \|^2_{L^2(0,T)}  \leq \eps/4.
$$
Finally, we have proven that for any $\eps>0$, there is $N$ such that for all $n\geq N$,
$$
\left\|\int_{\R^d} k(x,v',v) f_n(t,x,v') \, dv'  - \int_{\R^d} k(x,v',v) f(t,x,v') \, dv' \right\|_{L^2(0,T;\LL^2)}^2 \leq \eps,
$$
which concludes the proof of the convergence.

 \end{proof}

\subsection{Velocity averaging lemmas in open sets with boundary}
We now consider the case of equations set in open sets of $\R^d$ with boundary.

A first result is the following localized averaging lemma, which shows the interior regularity of velocity averages. It is obtained from the whole space case after a standard localization procedure and does not depend on the prescribed boundary conditions.

\begin{coro}
\label{thmmoyenne-domain}
Let $\Omega$ be an open set of $\R^d$ and $m\geq 0$. Let $f,g \in  L^2_{loc}(\R^+, L^2(\Omega \times \R^d))$ satisfying
\begin{align}
&\partial_t f + v \cdot \nabla_x f  = (1- \Delta_{v})^{m/2}g, \quad (x,v) \in \Omega \times \R^d. 
\end{align}

 Then for all $\psi \in C^{1}_c(\Omega \times  \mathbb{R}^d)$,  and all $T>0$, there exists $C>0$, such that 
 $$
 \rho_{\psi}(t,x):=\int_{\R^d} f(t,x,v)\psi(x,v)dv
 $$
satisfies
\begin{equation}
\| \rho_{\psi} \|_{H^{\frac{1}{2(1+m)}}((0,T) \times \Omega)} \leq C \left(\| f \|_{L^2(0,T; L^2(\Omega \times \R^d))}  + \| g \|_{L^2(0,T ;L^2(\Omega \times \R^d))}\right).
\end{equation}
  \end{coro}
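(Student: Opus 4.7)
The result will follow from Theorem~\ref{thmmoyenne} by a standard localization argument. The plan is: reduce to the whole-space setting via an $x$-cutoff of $f$, and dispose of the $x$-dependence of the test function $\psi$ by expanding it in a Fourier series in $x$.

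First I would pick $\chi \in C^\infty_c(\Omega)$ equal to $1$ on a neighborhood of the $x$-projection of $\supp \psi$, and set $F := \chi f$. Extended by zero outside $\supp \chi$, $F$ belongs to $L^2_{loc}(\R^+; L^2(\R^d \times \R^d))$, coincides with $f$ on the support of $\psi$, and a direct computation gives
\begin{equation*}
\partial_t F + v \cdot \nabla_x F = (1-\Delta_v)^{m/2}(\chi g) + (v \cdot \nabla_x \chi) f \quad \text{on } \R \times \R^d \times \R^d.
\end{equation*}
Choosing $\Omega_v \supset \supp_v \psi$ bounded, the second term can be written on $\R \times \R^d \times \Omega_v$ as $(1-\Delta_v)^{m/2} \tilde g$ with $\tilde g \in L^2$ by the $L^2$-boundedness of $(1-\Delta_v)^{-m/2}$. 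Since Theorem~\ref{thmmoyenne} only involves the $L^2$ norms of the data on bounded sets $\Omega_x \times \Omega_v$, this rewriting suffices to put the equation in the required form.

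Next, since $\rho_\psi(t,x) = \int F(t,x,v) \psi(x,v) dv$ and $\psi$ depends on both $x$ and $v$ (whereas Theorem~\ref{thmmoyenne} is stated for test functions in $v$ only), I would expand $\psi$ as a Fourier series in $x$ on a torus $\R^d/(L\Z)^d$ with $L$ large enough to contain $\supp_x \psi$:
\begin{equation*}
\psi(x,v) = \sum_{k \in \Z^d} e^{2\pi i k \cdot x/L} \psi_k(v), \qquad \psi_k \in C^1_c(\Omega_v),
\end{equation*}
which yields
\begin{equation*}
\rho_\psi(t,x) = \sum_{k \in \Z^d} e^{2\pi i k \cdot x/L} \int F(t,x,v) \psi_k(v)\, dv.
\end{equation*}
Theorem~\ref{thmmoyenne} applied to $F$ with test function $\psi_k$ and $s=0$ then bounds each summand in $H^{1/(2(1+m))}((0,T)\times \Omega_x)$ by $C \|\psi_k\|_{L^\infty}(\|F\|_{L^2} + \|\tilde g\|_{L^2})$; multiplication by the oscillating factor $e^{2\pi i k \cdot x/L}$ costs at most $(1+|k|)^{1/(2(1+m))}$ in that norm.

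The main obstacle will be the convergence of the Fourier series in $H^{1/(2(1+m))}$: from the $C^1$ regularity of $\psi$ alone, the coefficients $\psi_k$ only decay like $|k|^{-1}$ in $L^\infty$, which is insufficient in dimension $d \geq 2$. This can be handled either by first smoothing $\psi$ in $x$ (which, combined with the $C^1_c$ bound, gives arbitrary polynomial decay of $\|\psi_k\|_{L^\infty}$ at the cost of terms which are uniformly small and can be absorbed), or by a direct interpolation argument using that $\alpha = \frac{1}{2(1+m)} < \frac{1}{2}$ is well below the available smoothness of $\psi$. Once convergence is secured, the control of $\|F\|_{L^2}$ by $\|f\|_{L^2(0,T;L^2(\Omega\times\R^d))}$ on $\Omega_x\times\Omega_v$ (and similarly for $\tilde g$ by $\|g\|_{L^2}$) yields the claimed estimate.
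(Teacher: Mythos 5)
Your localization by $\chi(x)$, exploiting that $\chi$ commutes with $(1-\Delta_v)^{m/2}$, is the right first step. The Fourier decomposition of $\psi$ in $x$, however, has a genuine gap that your proposed fixes do not close.

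First, the arithmetic is off: the obstruction is not restricted to $d\geq 2$. Since $\alpha=\tfrac{1}{2(1+m)}>0$ we have $\alpha-1>-1$, so $\sum_{k\in\Z^d}(1+|k|)^{\alpha-1}$ diverges for every $d\geq 1$. More seriously, the constant in Theorem~\ref{thmmoyenne} is not of the form $C\|\Psi\|_{L^\infty}$ once $m>0$: to make sense of the distributional source $(1-\Delta_v)^{m/2}g$ against $\Psi$, one must transfer $v$-derivatives onto the test function, so the constant involves $v$-derivatives of $\Psi$ up to order $m$. The $x$-Fourier coefficients $\psi_k$ of a merely $C^1_c$ function $\psi$ do not decay at all in these stronger $v$-norms (decay of $\partial_v\psi_k$ in $k$ would require mixed second derivatives $\partial_{x_i}\partial_{v_j}\psi$ of $\psi$). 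So the series fails term by term, not only at the summation stage. The smoothing fix is circular: after replacing $\psi$ by a smooth $\psi^\epsilon$ you still have to estimate $\|\rho_{\psi-\psi^\epsilon}\|_{H^\alpha}$, and $\psi-\psi^\epsilon$ is again only $C^1$.

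There is also an imprecision in absorbing the term $(v\cdot\nabla_x\chi)f$ into $(1-\Delta_v)^{m/2}\tilde g$: the operator $(1-\Delta_v)^{-m/2}$ is nonlocal in $v$, while $(v\cdot\nabla_x\chi)f$ is not globally $L^2$ in $v$ (the unbounded linear weight $|v|$ clashes with the absence of any $v$-moment assumption on $f$), so this is not the one-line consequence of $L^2$-boundedness you state; it can be repaired using the exponential decay of the Bessel potential, but the argument is needed.

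A cleaner route avoids the Fourier series entirely: take $\tilde F:=\psi(x,v)f$, extended by zero, so that $\psi$ itself serves as the cutoff. Then $(v\cdot\nabla_x\psi)f$ is supported in the compact $v$-support of $\psi$, hence $|v|$ is bounded there and this term is honestly $L^2$ with no moment issue. The remaining source $\psi\,(1-\Delta_v)^{m/2}g$ equals $(1-\Delta_v)^{m/2}(\psi g)$ plus a commutator of strictly lower order in $v$; for $m\leq 1$ and $\psi\in W^{1,\infty}$ the commutator is $L^2$-bounded (this is precisely the regime used in the paper, see Remark~\ref{rk-smooth}), and one then applies Theorem~\ref{thmmoyenne} with a single fixed smooth $v$-cutoff $\Psi\equiv 1$ near $\supp_v\psi$, with no series to sum.
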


We now formulate another compactness result with an additional uniform equi-integrability assumption on the sequence. Note here that only compactness is obtained, not uniform regularity up to the boundary, and that we again do not use the boundary conditions.

 The problem of finding uniform regularity up to the boundary (without the equi-integrability assumption) seems difficult and is clearly beyond the scope of this paper.
For this question, the precise boundary condition satisfied by the solution of the kinetic transport equation should play a key role (whereas it does not play any role in the results of this section).

\begin{deft}[Uniform equi-integrability]
\label{def-equi}
Let $d\mu$ be a positive measure on the phase space $\Omega \times \R^d$. We say that a sequence $(g_n)_{n \in \N}$ of $L^1(d\mu)$ is equi-integrable (with respect to $d\mu$) if for any $\eps>0$, there exists $\delta >0$ such that for any measurable subset $A \subset \Omega \times \R^d$ satisfying $\mu(A) \leq \delta$, we have
\begin{equation}
\sup_{n \in \N} \int_{A} |f_n| d \mu \leq \eps.
\end{equation}
\end{deft}

\begin{coro}
\label{thmmoyenne-domain2}
Let $\Omega$ be an open subset of $\R^d$ and $V \in L^\infty(\Omega)$. Fix $T>0$ and $m \geq 0$.
Let $(f_n)_{n\in \N}$ and $(g_n)_{n\in \N}$ be two sequences of $L^2(0,T ; L^2(\Omega \times \R^d))$ satisfying in $D'(\R^+ \times \Omega \times \R^d)$ the equation
 $$
 \partial_t f_n + v \cdot \nabla_x f_n = (1- \Delta_{v})^{m/2} g_n .
 $$ 
Assume that for all open sets $\Omega_x \subset \Omega, \Omega_v \subset \R^d$ such that $\overline{\Omega_x} \subset \Omega$, there exists $C_1>0$, such that for all $n \in \N$,
 \begin{equation}
 \label{ass1-boun}
\| f_n \|_{L^2((0,T) \times \Omega_x \times \Omega_v)}  + \| g_n \|_{L^2((0,T) \times \Omega_x \times \Omega_v)} \leq C_1
 \end{equation}
and that for any $n \in \N$, 
 \begin{equation}
 \label{eqmoybound1-boun}
\sup_{t\in (0,T)} \|f_n\|_{\LL^2}^2  = \sup_{t\in (0,T)} \int_{\Omega} \int_{\R^d} |f_n|^2 \,  \frac{e^{V(x)}}{\Mc(v)} \, dv \, dx \leq C_0.
 \end{equation}
 Assume in addition that the sequence $(|f_n|^2)_{n\in \N}$ is equi-integrable with respect to the measure $d \mu:=e^{V}/\Mc \, dv dx$.
 Consider  $\rho_{n}(t,x):=\int_{\R^d}  f_n(t,x,v) \,dv$. Suppose that $f_n \rightharpoonup f$ weakly$-\star$ in $L^\infty(0,T; \LL^2(\Omega \times \R^d))$. 
 Then up to a subsequence, we have
\begin{equation}
\label{eqconclu1-boun}
\rho_n \Mc(v) \rightarrow \left(\int_{\R^d} f \, dv\right) \Mc(v) , \quad \text{strongly in  } L^2(0,T; \LL^2(\Omega \times \R^d))
\end{equation}
and for any continuous kernel $k(\cdot,\cdot,\cdot): \overline{\Omega}\times \R^d \times \R^d \to \R$ satisfying {\bf A3}, we have
\begin{equation}
\label{eqconclu2-boun}
\int_{\R^d} k(x,v',v) f_n(t,x,v') \, dv'  \rightarrow \int_{\R^d} k(x,v',v) f(t,x,v') \, dv'  , \quad \text{strongly in  } L^2(0,T;\LL^2(\Omega \times \R^d)).
\end{equation}

\end{coro}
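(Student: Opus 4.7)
The plan is to adapt the strategy used in the proof of Corollary~\ref{lemmoyenne}, the new difficulty being that the averaging lemma at our disposal (Corollary~\ref{thmmoyenne-domain}) only provides \emph{interior} regularity of velocity averages; the loss of control near $\partial\Omega$ must be compensated by the equi-integrability assumption. Since $\int_{\R^d}\Mc\,dv=1$ and $V\in L^\infty(\Omega)$, we have
\[
\|\rho_n\Mc - \rho\Mc\|_{\LL^2}^2 = \int_\Omega |\rho_n - \rho|^2 e^V\,dx \leq \|e^V\|_\infty\|\rho_n - \rho\|_{L^2(\Omega)}^2,
\]
with $\rho = \int f\,dv$, so that~\eqref{eqconclu1-boun} reduces to strong $L^2((0,T)\times\Omega)$ convergence of $\rho_n$ to $\rho$.

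The central step is a triple decomposition involving a velocity cutoff $\Psi_R(v)=\Psi(v/R)$ with $\Psi\in C^\infty_c(\R^d)$, $\Psi\equiv 1$ on $B(0,1)$, together with a spatial cutoff $\chi\in C^\infty_c(\Omega)$, $0\leq\chi\leq 1$:
\[
\rho_n - \rho = \int(f_n-f)\chi\Psi_R\,dv + \int(f_n-f)(1-\chi)\Psi_R\,dv + \int(f_n-f)(1-\Psi_R)\,dv.
\]
Given $\eps>0$, the parameters will be chosen in the order $R$, then $\chi$, then $n$. For the tail term (last integral), Cauchy-Schwarz with weight $\Mc$ and the bound~\eqref{eqmoybound1-boun} give control by $C_0\int(1-\Psi_R)^2\Mc\,dv$, which is made $\leq\eps$ by taking $R$ large (the corresponding bound for $f$ following from Fatou). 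For the boundary term (middle integral), the same Cauchy-Schwarz step yields
\[
\int_0^T\!\!\int_\Omega\left|\int(f_n-f)(1-\chi)\Psi_R\,dv\right|^2 e^V\,dx\,dt \leq \int_0^T\!\!\int_{\{1-\chi>0\}\times B(0,2R)}|f_n-f|^2\,d\mu\,dt,
\]
with $d\mu = e^V\Mc^{-1}dv\,dx$. Since $R$ is now fixed, the $\mu$-measure of the integration domain is \emph{finite} and proportional to $|\{1-\chi>0\}|$. Choosing $\chi$ so that this Lebesgue measure is sufficiently small, equi-integrability of $(|f_n|^2)$ with respect to $d\mu$ (together with the analogous absolute continuity for $f\in L^\infty(0,T;\LL^2)$ coming from Fatou) makes this term $\leq\eps$. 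The interior term finally uses $\psi(x,v)=\chi(x)\Psi_R(v)\in C^1_c(\Omega\times\R^d)$: by Corollary~\ref{thmmoyenne-domain} together with~\eqref{ass1-boun}, the sequence $\rho_{\psi,n}$ is uniformly bounded in $H^{1/(2(1+m))}((0,T)\times\Omega)$, hence relatively compact in $L^2((0,T)\times\Omega)$ by Rellich-Kondrachov on the compact support of $\chi$; the weak convergence $f_n\rightharpoonup f$ identifies the limit, so this last term goes to $0$ in $L^2((0,T)\times\Omega)$ along some subsequence.

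For~\eqref{eqconclu2-boun}, I will first treat the case of a smooth kernel $k$. Applying the same triple-cutoff argument to the sequence $x\mapsto\int k(x,v',v)f_n(\cdot,v')\,dv'$ gives, for each fixed $v$, strong $L^2((0,T)\times\Omega)$ convergence to the corresponding limit. A uniform-in-$n$ dominating function in the variable $v$ is then provided by Cauchy--Schwarz and assumption~{\bf A3} through the estimate
\[
\left\|\int k(\cdot,v',v)(f_n-f)\,dv'\right\|_{L^2(0,T;\LL^2)}^2 \leq TC_0\sup_{x\in\Omega}\int_{\R^d} k^2(x,v',v)\frac{\Mc(v')}{\Mc(v)}\,dv',
\]
whose right-hand side is, by~{\bf A3}, an $L^1(dv)$ function. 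Lebesgue's dominated convergence theorem then upgrades the pointwise-in-$v$ convergence to convergence in $L^2(0,T;\LL^2(\Omega\times\R^d))$. The general case of a merely continuous $k$ satisfying~{\bf A3} is reduced to the smooth case by the same mollification-in-$(v,v')$ argument as at the end of the proof of Corollary~\ref{lemmoyenne}.

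The main obstacle lies in the ordering and interplay of parameters in the triple decomposition: since $d\mu$ is not finite along whole fibers in $v$, equi-integrability cannot be invoked before first truncating large velocities; $R$ must be fixed first, and only then can the boundary strip $\{1-\chi>0\}\times B(0,2R)$ be arranged to have small $\mu$-measure. Once this ordering is correctly set up, the remaining arguments are essentially routine adaptations of those used in the proof of Corollary~\ref{lemmoyenne}.
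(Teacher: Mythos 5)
Your proof is correct and follows the same overall strategy as the paper's: localize in $x$ with a spatial cutoff, use the interior averaging lemma on the compactly supported part, and absorb the boundary layer with the equi-integrability hypothesis. There is, however, a genuine improvement in your version worth flagging. The paper's own proof uses a two-way decomposition $\rho_n-\rho = (\rho_n-\rho)\Phi_k + (\rho_n-\rho)(1-\Phi_k)$ and treats the boundary piece $B_2$ by the estimate
$$
\|\rho_n(1-\Phi_k)\Mc\|_{\LL^2}^2 \leq \int_{\supp(1-\Phi_k)\times\R^d}|f_n|^2\,\frac{e^V}{\Mc}\,dv\,dx,
$$
then invokes Definition~\ref{def-equi} using only that $\Leb(\supp(1-\Phi_k))\to 0$. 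But the set $\supp(1-\Phi_k)\times\R^d$ has \emph{infinite} $\mu$-measure for $d\mu=e^V\Mc^{-1}\,dv\,dx$, since $\int_{\R^d}\Mc^{-1}\,dv=\infty$; so Definition~\ref{def-equi}, which requires $\mu(A)\leq\delta$, does not apply directly to that set. Your ordering of the three cutoffs fixes exactly this: you first truncate in $v$ to $B(0,2R)$ using only the uniform $\LL^2$ bound and the integrability of $\Mc$, and only then shrink the spatial strip $\{1-\chi>0\}$; the set $\{1-\chi>0\}\times B(0,2R)$ has finite $\mu$-measure tending to $0$, so Definition~\ref{def-equi} can legitimately be invoked. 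Your additional velocity cutoff plays the role of unrolling the argument of Corollary~\ref{lemmoyenne}, which the paper applies as a black box to $\Phi_k f_n$ in the treatment of $B_1$, but the explicit ordering it imposes is what makes the equi-integrability step literally valid. The remaining parts (interior compactness via Corollary~\ref{thmmoyenne-domain} and Rellich on $\supp\chi$, domination in $v$ via \textbf{A3} for the kernel statement, mollification for merely continuous $k$) match the paper's treatment.
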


 \begin{proof}[Proof of Corollary \ref{thmmoyenne-domain2}]
 The result follows from a localization and approximation argument. Consider $(\Phi_k(x))_{k \in \N}$ a sequence of smooth approximations of unity in $\Omega$, such that for any $k \in \N$, there exists $C_k>0$ with
 $$
 \| \Phi_k \|_{L^{\infty}} \leq 1 , \quad \| \Phi_k \|_{W^{1,\infty}} \leq C_k.
 $$
 Fix $T>0$.  Take $\eps>0$ and write the decomposition
 $$
 \rho_n - \rho = (\rho_n -\rho)  \Phi_k  + (\rho_n- \rho) (1-\Phi_k) =: B_1 + B_2.
 $$
For $B_2$, we write
 \begin{align*}
\| \rho_n (1- \Phi_k ) \Mc(v)\|_{\LL^2(\Omega \times \R^d))}^2 &\leq \int |f_n|^2 (1-\Phi_k)^2 \frac{e^V}{\Mc} \, dv dx, \\
&\leq  \int_{\supp(1-\Phi_k)\times \R^d} |f_n|^2  \frac{e^V}{\Mc} \, dv dx.
\end{align*}
 Using the fact that $\Leb (\supp(1-\Phi_k)) \to 0$ and the equiintegrability of $|f_n|^2$, we can consider  $k$ large enough so that for all $n\in \N$,
 $$
  \| B_2\|_{ L^2(0,T;\LL^2(\Omega \times \R^d))} \leq \eps/2.
 $$
Once $k$ is fixed, consider $\tilde{f}_n^k = \Phi_k f_n$, which satisfies the transport equation
 $$\partial_t \tilde{f}_n^k + v \cdot \nabla_x \tilde{f}_n^k = (1- \Delta_{v})^{m/2} (\Phi_k g_n) + v \cdot \nabla_x \Phi_k  f_n,
 $$
 For $B_1$, we can apply Corollary \ref{lemmoyenne} and take $n$ large enough to ensure
  $$
  \| B_1\|_{ L^2(0,T;\LL^2(\Omega \times \R^d))} \leq \eps/2,
 $$
  which concludes the first part of the proof. 
 
 The rest of the proposition is proved exactly as for Corollary \ref{lemmoyenne}.

 \end{proof}

 \section{Reformulation of some geometric properties}
 \label{GCCother}

\subsection{Proof of Lemma~\ref{equiv-equiv}}
 \label{prooflemequiv}
We first define another convenient equivalence relation.
 \begin{deft}
 \label{def-sim-phi}
 Given $\omega_1$ and $\omega_2$ two connected components of $\omega$, we say that $\omega_1 \bumpeq  \omega_2$ if  there is $N \in \N$ and $N$ connected components $(\omega^{i})_{1\leq i \leq N}$ of $\omega$ such that 
 \begin{itemize}
 \item we have $\omega_1 \Rc_{\phi} \, \omega^{(1)}$,
 \item for all $1\leq i \leq N-1$, we have $\omega^{(i)}  \Rc_{\phi} \, \omega^{(i+1)}$,
  \item we have $\omega^{(N)}  \Rc_{\phi} \, \omega_2$.
\end{itemize}
  The relation $\bumpeq$ is an equivalence relation on the set of connected components of $\omega$. For $\omega_1$ a connected component of $\omega$,  we denote its equivalence class for $\bumpeq$ by $\{\omega_1\}$.
 \end{deft}

Then, the proof of Lemma~\ref{equiv-equiv} relies on the following lemma.

\begin{lem}
\label{lem-classeq}
Let $\Omega_0$ be a connected component of $\bigcup_{s \in \R^+}\phi_{-s}(\omega)$ and let $(\omega_\ell)_{\ell \in L}$ be the connected components of $\omega$ such that for all $\ell \in L$, there exists $t\geq 0$ with $\phi_{-t} (\omega_\ell) \cap \Omega_0 \neq \emptyset$. Then, for all $\ell,\ell' \in L$, we have $\omega_\ell \bumpeq \omega_{\ell'}$.
\end{lem}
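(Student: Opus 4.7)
}
My approach is to express $\Omega_0$ as a disjoint union of open sets indexed by the $\bumpeq$-equivalence classes inside $L$, and then to use connectedness of $\Omega_0$ to force there to be only one such class.

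First, I would establish that $\phi_{-s}(\omega_\ell) \subset \Omega_0$ for every $\ell \in L$ and every $s \geq 0$. For $s=0$: pick $t \geq 0$ and $x \in \phi_{-t}(\omega_\ell) \cap \Omega_0$ provided by the hypothesis, and consider the continuous path $u \mapsto \phi_u(x)$, $u \in [0,t]$, which joins $x \in \Omega_0$ to $\phi_t(x) \in \omega_\ell$. Since $\phi_u(x) = \phi_{-(t-u)}(\phi_t(x))$ with $\phi_t(x) \in \omega$ and $t-u \geq 0$, the whole path lies in $\tilde\Omega := \bigcup_{s \geq 0} \phi_{-s}(\omega)$, hence $\omega_\ell \subset \Omega_0$ by connectedness of $\omega_\ell$ together with the connected-component property of $\Omega_0$ in $\tilde\Omega$. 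For $s > 0$ and $y \in \omega_\ell$, the path $u \mapsto \phi_{-u}(y)$, $u \in [0,s]$, stays in $\tilde\Omega$ (as $\phi_{-u}(y) \in \phi_{-u}(\omega)$) and joins $y \in \omega_\ell \subset \Omega_0$ to $\phi_{-s}(y)$, so again $\phi_{-s}(\omega_\ell) \subset \Omega_0$. Conversely, any $x \in \Omega_0 \subset \tilde\Omega$ satisfies $\phi_s(x) \in \omega$ for some $s \geq 0$, and if $\omega_{\ell}$ is the connected component of $\omega$ containing $\phi_s(x)$, then $x \in \phi_{-s}(\omega_{\ell}) \cap \Omega_0 \ne \emptyset$ witnesses $\ell \in L$. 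Hence
$$
\Omega_0 = \bigcup_{\ell \in L}\bigcup_{s \geq 0} \phi_{-s}(\omega_\ell).
$$

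Second, I partition $L$ into its $\bumpeq$-equivalence classes $(\Lambda_j)_{j \in J}$ and set
$$
U_j := \bigcup_{\ell \in \Lambda_j} \bigcup_{s \geq 0} \phi_{-s}(\omega_\ell), \qquad j \in J,
$$
each of which is open (as a union of images of open sets by the homeomorphisms $\phi_{-s}$) and nonempty, with $\Omega_0 = \bigcup_j U_j$ by step~1. The core step is pairwise disjointness: if $x \in U_j \cap U_{j'}$ there exist $s,s' \geq 0$, $\omega_a \in \Lambda_j$, $\omega_b \in \Lambda_{j'}$ with $\phi_s(x) \in \omega_a$ and $\phi_{s'}(x) \in \omega_b$. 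Assuming WLOG $s \leq s'$,
$$
\phi_{s'-s}(\omega_a) \ni \phi_{s'-s}(\phi_s(x)) = \phi_{s'}(x) \in \omega_b,
$$
so $\omega_a \, \Rc_\phi \, \omega_b$, whence $\omega_a \bumpeq \omega_b$ and $j = j'$.

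Finally, since $\Omega_0$ is connected and is a disjoint union of nonempty open sets $(U_j)_{j \in J}$, $J$ must be a singleton, which is exactly the statement that all $(\omega_\ell)_{\ell \in L}$ lie in a single $\bumpeq$-equivalence class. The main obstacle is the disjointness step: it rests on the simple but key observation that two connected components of $\omega$ meeting the same forward $\phi$-orbit are automatically $\Rc_\phi$-related. The remainder is a routine connectedness argument, whose only technical ingredient is the backward invariance of $\tilde\Omega$ and of its connected components provided by Lemma~\ref{leminvarflow}.
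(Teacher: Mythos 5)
Your proof is correct and follows essentially the same strategy as the paper's: decompose $\Omega_0$ into open sets indexed by the $\bumpeq$-classes, observe that two components of $\omega$ meeting a common forward orbit are $\Rc_\phi$-related (which gives disjointness), and conclude by connectedness of $\Omega_0$. Your write-up is somewhat more explicit than the paper's (which contrasts a single class against its complement and leaves the covering and disjointness checks implicit), but the underlying argument is identical.
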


\begin{proof}[Proof of Lemma~\ref{lem-classeq}]
Assume that there exist at least two equivalence classes for $\bumpeq$ among the $\omega_\ell$, $\ell \in L$. Let $\ell_0 \in L$ and consider $\{\omega_{\ell_0}\}$ the equivalence class of $\omega_{\ell_0}$ for $\bumpeq$. 
Defining
$$
U_1 := \bigcup_{U \in \CC(\omega), \,  U \in \{\omega_{\ell_0}\}} \bigcup_{t\geq 0} \phi_{-t} (U) \cap \Omega_0 \quad \text{ and } \quad U_2 := \bigcup_{U \in \CC(\omega), \, U \notin \{\omega_{\ell_0}\}}\bigcup_{t\geq 0} \phi_{-t} (U) \cap \Omega_0,
$$
we have by construction that $U_1, U_2 $ are two open non-empty subsets of $\Omega_0$ and that $U_1 \cup U_2 = \Omega_0$.

Let us check $U_1 \cap U_2 \neq \emptyset$: otherwise there would exist two connected components of $\omega$, $U_1 \in \{\omega_{\ell_0}\}$, $U_2 \notin \{\omega_{\ell_0}\}$ such that $U_1 \Rc_\phi U_2$, which is excluded by definition of the equivalence class.

This is a contradiction with the fact that $\Omega_0$ is connected.

\end{proof}

We are now in position to prove Lemma~\ref{equiv-equiv}.
\begin{proof}[Proof of Lemma~\ref{equiv-equiv}] 
Let us first prove that  $\omega_1 \Bumpeq \omega_2 \Longrightarrow \Psi(\omega_1) \sim \Psi(\omega_2)$. It suffices to prove that 
\begin{equation}
\label{equivequiv}
\Big(\omega_1 \Rc_k \,  \omega_2 \text{ or } \omega_1 \Rc_\phi \, \omega_2 \Big) \Longrightarrow \left(\Psi(\omega_1)  \Rc_k \Psi(\omega_2) \text{ or } \Psi(\omega_1) \Rc_\phi \Psi(\omega_2) \right) .
\end{equation}
The conclusion then follows from the iterative use of this argument.

If $\omega_1 \Rc_k \,  \omega_2$, then $\Psi(\omega_1)  \Rc_k \Psi(\omega_2)$. This follows from the fact that $\omega_j  \subset \Psi(\omega_j)$ and the definition of $\Rc_k$. Similarly, if $\omega_1 \Rc_\phi \,  \omega_2$, then $\Psi(\omega_1)  \Rc_\phi \, \Psi(\omega_2)$.

\medskip
Let us now prove that $\Psi(\omega_1) \sim \Psi(\omega_2) \Longrightarrow \omega_1 \Bumpeq \omega_2 $. 
According to Lemma~\ref{lem-classeq}, it is sufficient to prove that $\Omega^{(1)}, \Omega^{(2)} $ being two given connected components of $\bigcup_{t\geq 0} \phi_{-t} (\omega)$,
\begin{equation}
\label{equivequivbis}
\Omega^{(1)} \Rc_k \, \Omega^{(2)} 
\Longrightarrow 
\begin{array}{l}
\text{there exits two connected components } \omega_1^* \text{ and } \omega_2^* \text{ of } \omega \\
\text{such that }
 \omega_1^* \Rc_k \, \omega_2^*  \text{ and } \omega_1^* \subset \Omega^{(1)} , \, \omega_2^* \subset \Omega^{(2)} .
\end{array}
\end{equation}
The conclusion then follows from an iterative use of this argument. 

By definition of $\Rc_k$, there exist $(x,v_1,v_2) \in \T^d \times \R^d \times \R^d$ with $(x,v_1) \in\Omega^{(1)}$ and $(x,v_2) \in \Omega^{(2)}$ such that $k(x,v_1,v_2)>0$ or $k(x,v_2,v_1)>0$.

Note in particular that this implies  $(x,v_1) ,(x,v_2) \in  \omega$. Denoting by $\omega_1^*$ (resp.  $\omega_2^*$)  the connected component of $\omega$ such that $(x,v_1) \in \omega_1^*$ (resp. $(x,v_2) \in \omega_2^*$), we hence have $\omega_1^* \Rc_k \, \omega_2^*$. The conclusion of~\eqref{equivequivbis} follows from the fact that $\omega_j^* \subset\Omega^{(j)}$, for $j=1,2$ and the definition of $\Rc_k$.
\end{proof}

\subsection{Almost everywhere geometric control conditions and connectedness}
\label{appaeitgccconnected}

\begin{prop}
\label{aeitgccconnected}
Here, $\Omega$ is either $\T^d$ or an open subset of $\R^d$.
Let $\omega \subset \Omega \times \R^d$ be an open subset. Consider the following geometric properties:
\begin{enumerate}[(i)]
\item There exists $\tilde{\omega} \subset \omega$, $\tilde{\omega}$ {\em connected} satisfying the a.e.i.t. Geometric Control Condition;
\item The set $\omega$ satisfies the a.e.i.t. GCC and for any connected components $(\omega_1,\omega_2)$ of $\omega$, there exists $(x_0,v_0) \in \omega_1$ and $s \in \R$ such that $\phi_s(x_0,v_0) \in \omega_2$;
\item The set $\omega$ satisfies the a.e.i.t. GCC and $\bigcup_{s\in \R^+}\phi_{-s}(\omega)$ is connected.
\end{enumerate}

Then $(i) \implies (iii)$ and $(ii) \implies (iii)$.

\end{prop}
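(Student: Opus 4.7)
The plan is to first establish the following key lemma: for any connected open subset $U$ of the phase space, the set $A(U) := \bigcup_{s\geq 0}\phi_{-s}(U)$ is connected. Both implications of the proposition then follow from this lemma plus simple topological considerations.

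To prove the key lemma, I would use a path-connectedness argument. Given $(x,v) \in A(U)$, there exists $s \geq 0$ with $\phi_s(x,v) \in U$. Consider the continuous path $\gamma: [0,s] \to A(U)$ defined by
\begin{equation*}
\gamma(u) = \phi_{s-u}(x,v) = \phi_{-u}\big(\phi_s(x,v)\big), \quad u \in [0,s].
\end{equation*}
Then $\gamma(0) = \phi_s(x,v) \in U$ and $\gamma(s) = (x,v)$, and for each $u \in [0,s]$, we have $\phi_u(\gamma(u)) = \phi_s(x,v) \in U$, so $\gamma(u) \in \phi_{-u}(U) \subset A(U)$. Combined with the connectedness of $U$ itself (which is contained in $A(U)$), this shows any two points of $A(U)$ are path-connected through $U$.

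For (i) $\Longrightarrow$ (iii), I would apply the key lemma to $U = \tilde\omega$ to obtain connectedness of $A(\tilde\omega)$. Since $\tilde\omega$ satisfies a.e.i.t.\ GCC, $A(\tilde\omega)$ has full Lebesgue measure, hence is dense, so $\overline{A(\tilde\omega)} = \T^d \times \R^d$ (or $\Omega \times \R^d$). Thus $A(\tilde\omega) \subset A(\omega) \subset \overline{A(\tilde\omega)}$, and the classical fact that any set squeezed between a connected set and its closure is itself connected yields the connectedness of $A(\omega)$. The a.e.i.t.\ GCC for $\omega$ is immediate since $\tilde\omega \subset \omega$.

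For (ii) $\Longrightarrow$ (iii), I would decompose $\omega = \bigsqcup_{i\in I}\omega_i$ into connected components. By the key lemma each $A(\omega_i)$ is connected, so it suffices to show $A(\omega_i) \cap A(\omega_j) \neq \emptyset$ for all $i,j$, which makes the pairwise-intersecting union $A(\omega) = \bigcup_i A(\omega_i)$ connected. By hypothesis, there exist $(x_0,v_0) \in \omega_i$ and $s \in \R$ with $\phi_s(x_0,v_0) \in \omega_j$. If $s \geq 0$, then $(x_0,v_0) \in \omega_i \cap \phi_{-s}(\omega_j) \subset A(\omega_i) \cap A(\omega_j)$; if $s < 0$, then $\phi_s(x_0,v_0) \in \omega_j \cap \phi_{-|s|}(\omega_i) \subset A(\omega_j) \cap A(\omega_i)$, giving the desired intersection in either case.

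The main obstacle is a technicality in the bounded domain case: the broken characteristic flow fails to be continuous at reflection times, so the path $\gamma$ above has velocity jumps. This is overcome by noting that $\phi_s$ is continuous on an open full-measure subset of the phase space (the complement of the pathological trajectories hitting $\mathcal{B} \cup \Sigma_0$), and by working modulo the boundary identification $(x,v) \sim (x, R_xv)$ on $\partial\Omega$ for which the flow becomes continuous; equivalently, the path $\gamma$ can be decomposed into the finitely many continuous arcs between consecutive reflection times and glued through boundary points, which suffices for connectedness (though not path-connectedness in the strict sense) of $A(U)$ in $\Omega \times \R^d$.
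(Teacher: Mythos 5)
Your proof is correct and is built on the same key lemma as the paper's: for a connected open set $U$, the set $\bigcup_{s\geq 0}\phi_{-s}(U)$ is connected, proved by following the flow from a point back into $U$ and then travelling inside $U$. Where you diverge is in how you deduce the two implications from this lemma, and your versions are arguably cleaner. For $(i)\implies(iii)$ the paper decomposes $\bigcup_{s\geq 0}\phi_{-s}(\omega)$ into connected components, places the connected full-measure set $\bigcup_{s\geq 0}\phi_{-s}(\tilde\omega)$ inside one component, and kills the remaining components because a nonempty open set cannot be Lebesgue-null; you instead sandwich $\bigcup_{s\geq 0}\phi_{-s}(\omega)$ between the connected set $\bigcup_{s\geq 0}\phi_{-s}(\tilde\omega)$ and its closure (the whole phase space, by density of a full-measure set) and invoke the standard fact that such an intermediate set is connected --- both arguments are elementary and equivalent in substance. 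For $(ii)\implies(iii)$ the paper performs an explicit case-by-case gluing of paths depending on whether the two base points lie in the same component of $\omega$; you reduce to showing that the sets $\bigcup_{s\geq 0}\phi_{-s}(\omega_i)$ pairwise intersect (handling the sign of $s$ correctly) and use that a pairwise-intersecting union of connected sets is connected, which avoids the case analysis. Finally, you explicitly flag the discontinuity of the broken flow at reflections in the bounded-domain case, a point the paper's proof passes over silently; your proposed fix (working on the quotient phase space $(x,v)\approx(x,R_xv)$, or gluing the finitely many continuous arcs between reflections) is the right one, and connectedness, rather than path-connectedness, is indeed all that is needed.
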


\begin{proof}[Proof of Proposition \ref{aeitgccconnected}] Before starting the proof, let us remark that is $\omega$ is a connected open subset of $\Omega \times \R^d$, then $ \bigcup_{s\in \R^+}\phi_{-s}(\omega)$ is also a connected open subset. Indeed it is first an open subset of $\Omega \times \R^d$, and it is equivalent to show that it is path-connected. Let $y_1, y_2 \in \bigcup_{s\in \R^+}\phi_{-s}(\omega)$; there exists $s_1,s_2 \geq 0$ and $z_1, z_2 \in \omega$ such that $y_1 = \phi_{-s_1} (z_1)$ and $y_2 = \phi_{-s_2} (z_2)$.  Since $\omega$ is a connected open subset of $\Omega \times \R^d$, it is also path-connected and one can find a continuous path in $\omega$ between $z_1$ and $z_2$ in $\omega$. Using the application $\phi_{-s}$, we also get a continuous path between $y_1$ and $z_1$ in $\bigcup_{s\in \R^+}\phi_{-s}(\omega)$ (resp. between $y_2$ and $z_2$).

Gluing these paths together, this yields a continuous path in $\bigcup_{s\in \R^+}\phi_{-s}(\omega)$ between $y_1$ and $y_2$.

\bigskip

\noindent $\bullet$ $(i) \implies (iii)$. Since $\tilde{\omega} \subset \omega$, we have $\bigcup_{s\in \R^+}\phi_{-s}(\tilde\omega) \subset \bigcup_{s\in \R^+}\phi_{-s}(\omega)$. Denote by $(\Omega_i)_{i \in I}$ the connected components of $\bigcup_{s\in \R^+}\phi_{-s}(\omega)$ . The sets $\Omega_i$ are connected open sets so that the inclusion $\bigcup_{s\in \R^+}\phi_{-s}(\tilde\omega) \subset \bigcup_{s\in \R^+}\phi_{-s}(\omega)$ together with the connectedness of $\bigcup_{s\in \R^+}\phi_{-s}(\tilde\omega)$ yields the existence of $i_0 \in I$ such that $\bigcup_{s\in \R^+}\phi_{-s}(\tilde\omega) \subset \Omega_{i_0}$. Since $\bigcup_{s\in \R^+}\phi_{-s}(\tilde\omega)$ is of full measure, this is also the case for $\Omega_{i_0}$. As $\Omega_i$ is open, we obtain that $\Omega_i = \emptyset$ for $i \neq i_0$, so that $\Omega_{i_0} = \bigcup_{s\in \R^+}\phi_{-s}(\omega)$ is connected (and of full measure).

\bigskip

\noindent $\bullet$ $(ii) \implies (iii)$.  Let $y_1, y_2 \in \bigcup_{s\in \R^+}\phi_{-s}(\omega)$; there exists $s_1,s_2 \geq 0$ and $z_1, z_2 \in \omega$ such that $y_1 = \phi_{-s_1} (z_1)$ and $y_2 = \phi_{-s_2} (z_2)$. 
If $z_1, z_2$ belong to the same connected component $\tilde \omega$ of $\omega$, then since $\bigcup_{s\in \R^+}\phi_{-s}(\tilde\omega)$ is connected, one can find a continuous path between $z_1$ and $z_2$. 

If $z_1, z_2$ belong to two different connected components $\omega_1$ and $\omega_2$, apply $(i)$ to find, up to a permutation between the indices $1$ and $2$, $u \in \omega_1$ and $s \in \R^+$ such that $\phi_{-s}(u) \in \omega_2$. Then one can find a continuous path between $u$ and  $z_1$ in $\omega_1$, and another between  $\phi_{-s}(u)$ and $z_2$ in $\omega_2$. We conclude as in the previous sub case by gluing the paths together.

\end{proof}

Note in particular that $(i)-(ii)-(iii)$ hold as soon as $\omega$ is connected and satisfies a.e.i.t. GCC.

\section{Proof of Proposition \ref{CEXucp}}
\label{proofCEXucp}

In this section, we prove Proposition \ref{CEXucp}.

   \begin{proof}[Proof of Proposition \ref{CEXucp}]
Let $x_0 \in  \T^d \setminus \overline{p_x(\omega)} \neq \emptyset$ and take $\eta>0$ such that $B(x_0 , 2\eta) \cap \overline{p_x(\omega)} = \emptyset$.
Define the potential $V (x) :=  \frac{|x-x_0|^2}{2} \Psi(x)$, where $\Psi$ is a ``corrector'' to ensure $V\in C^\infty (\T^d)$, and such that $\Psi \equiv 1$ on $B(x_0 , 2\eta)$ (reduce $\eta$ if necessary). Denote $V_\eps = \eps V$ and notice that $\nabla V_\eps(x) = \eps (x-x_0)$ on $B(x_0 , 2\eta)$.
As a consequence, the hamiltonian flow $(\phi_t)_{t \in \R}$ associated to the vector field $v \cdot \nabla_x - \nabla_x V_\eps \cdot \nabla_v$ may be explicited in the set $B(x_0,2\eta) \times \R^d$: we have
 $$
 \phi_t(x,v) = \left(x_0 + (x-x_0) \cos(\sqrt{\eps} t) + \frac{v}{\sqrt{\eps}}\sin (\sqrt{\eps} t) , - (x-x_0) \sqrt{\eps} \sin(\sqrt{\eps} t) + v \cos (\sqrt{\eps} t) \right) ,
 $$
 as long as $\phi_t(x,v) \in B(x_0,2\eta) \times\R^d$. In particular, note that if $(x,v)\in B(x_0,\eta) \times B(0, \sqrt{\eps}\eta)$, then $\phi_t(x,v)$ remains in $B(x_0,2\eta) \times B(0, 2 \sqrt{\eps}\eta)$ for all $t \in \R^+$. This reads $\phi_t(B(x_0,\eta) \times B(0,\sqrt{\eps}\eta)) \subset  B(x_0,2\eta) \times B(0,2\sqrt{\eps}\eta)$, i.e. in particular $\phi_t(B(x_0,\eta) \times B(0,\sqrt{\eps}\eta)) \cap \omega = \emptyset$ for all $t \in \R^+$. This proves that a.e.i.t. GCC is not satisfied.
 \end{proof}

\begin{rque}
Notice that in the previous proof, to handle small potential, we consider small speeds, i.e. $v \in B(0,\sqrt{\eps}\eta)$. In the opposite direction, if one fixes the speeds in a large Hamiltonian sphere $v \in S_H(0,R)$ (note that with the particular potential used in the proof, on the set $\{\Psi=1\}$ we have $S_H(0,R) = S(0,R)$) for some $R>0$, then one can find a (large) potential (namely $R^2/\eta^2 V$ where $V$ is that of the previous proof) such that a.e.i.t. GCC fails.
\end{rque}

\section{Other linear Boltzmann type equations}
\label{Other}

The goal of this appendix is to show that the methods developed in Part \ref{LTB} can be adapted to handle other types of Boltzmann-like equations.

\subsection{Generalization to a wider class of kinetic transport equations}

Consider now the equation
\begin{equation}
\label{B-general}
\partial_t f + a(v) \cdot \nabla_x f - \nabla_x V \cdot \nabla_v f= \int_{\R^d} \left[k(x,v' ,  v) f(v') - k(x,v ,  v') f(v)\right] \, dv', 
\end{equation}
where $a(v) = \nabla_v A(v)$ with $A: \R^d \to \R$ is such that $\int_{\R^d} e^{-A(v)} \, dv<+\infty$.

For simplicity, we assume that \eqref{B-general} is set on $\T^d\times \R^d$ (but it is also possible to consider the case of bounded domains with specular reflection, as in Section \ref{Boundary}).

Assume that $a(v)$ satisfies a \emph{non degeneracy} property: there exists $\gamma \in (0,2)$ and $C>0$ such that, for all $\xi \in \S^{d-1}$, 
\begin{equation}
\label{nondege}
\Leb \left\{v \in \R^d, \, |a(v) \cdot \xi| \leq \eps  \right\} \leq C \eps^\gamma.
\end{equation}
This prevents concentrations of $a(v)$ in any direction of $\S^{d-1}$.

The hamiltonian associated to the transport equation is then the following:
\begin{equation}
H(x,v)= A(v) + V(x).
\end{equation}

Define the global Maxwellian associated to $a(v)$:
\begin{equation}
 \Mc_A(v) = C_A e^{-A(v)},
\end{equation}
with $C_A = 1/ \left( \int_{\R^d} e^{-A(v)} \, dv \right)$.

In addition to the usual assumption {\bf A1} on the collision kernel $k$, we shall assume the following (which replace {\bf A2} --{\bf A3}):

\noindent {\bf A2'.} We assume that $\Mc$ cancels the collision operator, that is
\begin{equation}
\label{M_Aannule}
\text{for all }  (x,v) \in \Omega \times \R^d, \quad  \int_{\R^d} \left[k(x,v' ,  v)  \Mc_A(v') - k(x,v ,  v')  \Mc_A(v)\right] \, dv'  = 0.
\end{equation}

\noindent {\bf A3'.} We assume that
\begin{equation}
\label{bornek-A}
\tilde{k}(x,v',v) := \frac{k(x,v' ,  v) }{\Mc_A(v)} \in L^\infty(\T^d \times \R^d \times \R^d).
\end{equation}

For $a(v)=v$ (for which $\gamma=1$ in \eqref{nondege} ), we recover the framework which has been already treated before.
One physically relevant case is 
$$a_{rel}(v):= \frac{v}{\sqrt{1+ |v|^2}},$$
 (for which we also have $\gamma=1$ in \eqref{nondege}), which allows to model relativistic transport. Note that in this case, we have $A_{rel}(v) :=\sqrt{1+|v|^2}$, and the related Maxwellian is then the so-called \emph{relativistic Maxwellian}:
\begin{equation}
  \Mc_{rel}(v) := C_{rel} e^{-\sqrt{1+|v|^2}},
\end{equation}
where $C_{rel}$ is a normalizing constant, so that $\int_{\R^d}  \Mc_{rel}(v) \, dv =1$.

\bigskip

Our aim in this paragraph is to show that the methods developed in Part \ref{LTB} are still relevant here. The characteristics of the equation are defined in the following way:
  \begin{deft}
  \label{def-carac-mod}
 Let $V \in  W^{2,\infty}_{loc} (\T^d)$. Let $(x_0,v_0) \in \T^d \times \R^d$. The characteristics  $\phi_t(x_0,v_0) := (X_t (x_0,v_0), \, \Xi_t(x_0,v_0))$ associated to the hamiltonian $H(x,v) =  A(v) + V(x)$ are defined as the solutions to the system:
 \begin{equation}
 \left\{
 \begin{aligned}
 &\frac{dX_t}{dt} = a(\Xi_t), \quad \frac{d\Xi_t}{dt} = - \nabla_x V(X_t), \\
  &X_{t=0}=x_0, \quad \Xi_{t=0}=v_0.
 \end{aligned}
 \right.
 \end{equation}
\end{deft}

With this definition of characteristics, we can then properly define
\begin{itemize}
\item the set $\omega$ where collisions are effective, as in Definition \ref{def-om},
\item the Unique Continuation Property, as in Definition \ref{def:UCP}
\item $C^-(\infty)$, as in Definition \ref{definitionCinfini},
\item a.e.i.t. GCC, as in Definition \ref{defaeitgcc}, 
\item the equivalence relation $\sim$, as in Definition \ref{def-sim}.
\end{itemize}

The next thing to do concerns the local well-posedness of \eqref{B-general} in some relevant weighted spaces, which we introduce below.
\begin{deft}[Weighted $L^p$ spaces]
 We define the Banach spaces $\LL^2_A$ and $\LL^\infty_A$ by 
 \begin{equation*}
 \begin{aligned}
  &\LL^2_A := \Big\{f \in L^1_{loc}(\T^d \times \R^d), \, \int_{\T^d \times \R^d} | f|^2 \frac{e^V}{ \Mc_{A}(v)} \, dv \, dx < + \infty \Big\} , \\
 &\qquad \qquad \qquad \|f\|_{\LL^2_A} = \left(\int_{\T^d \times \R^d} | f|^2 \frac{e^V}{ \Mc_{A}(v)} \, dv \, dx \right)^{1/2}, \\
 & \LL^\infty_A : =  \Big\{f \in L^1_{loc}(\T^d \times \R^d), \, \sup_{\T^d \times \R^d} | f| \frac{e^V}{ \Mc_A(v)}  < + \infty \Big\} , 
 \quad \|f\|_{\LL^\infty_A} = \sup_{\T^d \times \R^d} | f| \frac{e^V}{ \Mc_A(v)} 
  \end{aligned}
  \end{equation*}
 The space $\LL^2_A$ is a Hilbert space endowed with the inner product
 $$
 \langle f, g \rangle_{\LL^2_A} := \int_{\T^d \times \R^d} e^{V} \frac{f \, g}{ \Mc_A(v)} \, dv \, dx.
 $$
 \end{deft}
As usual, we have
  \begin{prop}[Well-posedness of the linear Boltzmann equation with modified transport]
 \label{prop:WP-mod}
Assume that $f_0 \in \LL^2_A$. Then there exists a unique $f\in C^0(\R ;\LL^2_A)$ solution of~\eqref{B-general} satisfying $f|_{t = 0} =f_0$, and we have
\begin{equation}
\text{ for all } t \geq 0, \quad  \frac{d}{dt} \| f(t)\|_{\LL^2_A}^2 = - D_A(f(t)),
\end{equation}
 where 
  \begin{equation}
 \label{defD-A}
 D_A(f) =  \frac{1}{2} \int_{\Omega} e^{V} \int_{\R^d} \int_{\R^d} \left( \frac{k(x,v' ,  v)}{ \Mc_A(v)} + \frac{k(x,v ,  v')}{ \Mc_A(v')} \right)  \Mc_A(v) \Mc_A(v') \left(\frac{f(v)}{ \Mc_A(v)}- \frac{f(v')}{ \Mc_A(v')}\right)^2 \, dv' \, dv \, dx.
 \end{equation}
  If moreover $f_0 \geq 0$ a.e., then for all $t \in \R$ we have $f(t, \cdot,\cdot)\geq 0$ a.e. (Maximum principle).
 \end{prop}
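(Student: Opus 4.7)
The plan is to follow closely the strategy used for Proposition~\ref{prop:WP}, the only points requiring attention being: $(i)$ that the transport vector field is now $(a(v), -\na_x V)$ rather than $(v, -\na_x V)$; $(ii)$ that the relevant Maxwellian weight is $\Mc_A$ instead of $\Mc$. I will first decompose the Boltzmann operator as $A = A_0 + K$, where
$$A_0 f (x,v) = (a(v)\cdot \na_x - \na_x V\cdot \na_v) f(x,v) + \left(\int_{\R^d} k(x,v,v')\,dv'\right) f(x,v), \quad K f(x,v) = -\int_{\R^d} k(x,v',v) f(x,v')\,dv',$$
defined on the natural domain
$$D(A_0) = \left\{f\in \LL^2_A, \, (a(v)\cdot\na_x - \na_x V\cdot \na_v)f\in \LL^2_A,\, \left(\int_{\R^d} k(\cdot,v')\,dv'\right)f \in \LL^2_A \right\}.$$

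Next, I would check that $A_0$ generates a strongly continuous group on $\LL^2_A$. The flow $(\phi_t)_{t\in\R}$ of Definition~\ref{def-carac-mod} is well-defined and measure-preserving (the vector field $(a(v),-\na_x V(x))$ is divergence-free in $(x,v)$), and one verifies the crucial identity
$$\bigl(a(v)\cdot\na_x - \na_x V\cdot\na_v\bigr)\left(\tfrac{e^{V(x)}}{\Mc_A(v)}\right) = e^{V(x)} e^{A(v)}\, C_A^{-1}\bigl(a(v)\cdot \na_x V - \na_x V\cdot \na_v A \bigr)=0,$$
using $a(v) = \na_v A(v)$. This shows that the weight $e^V/\Mc_A$ is invariant along characteristics, from which the explicit formula
$$e^{-tA_0} u = \exp\left(-\int_0^t \int_{\R^d} k(\phi_{-(t-s)}(x,v),v')\,dv'\,ds\right) u\circ \phi_{-t}$$
defines a $C^0$-group of isometries on $\LL^2_A$.

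To handle the perturbation $K$, I would use Cauchy--Schwarz together with Assumption~{\bf A3'}: writing $k(x,v',v)=\tilde k(x,v',v)\Mc_A(v)$,
$$\left| \int_{\R^d} k(x,v',v) f(x,v')\,dv'\right|^2 \leq \|\tilde k\|_{L^\infty}^2\, \Mc_A(v)^2 \int_{\R^d} \Mc_A(v')\,dv' \int_{\R^d} \frac{|f(x,v')|^2}{\Mc_A(v')}\,dv',$$
which readily yields $\|K\|_{\LL^2_A\to\LL^2_A}\leq \|\tilde k\|_{L^\infty}$. Invoking then \cite[Chapter 3, Theorem 1.1]{Paz} provides global well-posedness of the Cauchy problem. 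The maximum principle is obtained exactly as in Proposition~\ref{prop:WP} from the explicit Duhamel series $f = \sum_{n\geq 0} \mathcal{K}^n(e^{-tA_0}f_0)$, each term of which is nonnegative when $f_0\geq 0$ and $k\geq 0$.

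Finally, the dissipation identity follows by multiplying~\eqref{B-general} by $f\,e^V/\Mc_A(v)$ and integrating. The contribution of the transport term
$$\int_{\T^d\times\R^d}\bigl(a(v)\cdot \na_x - \na_x V\cdot\na_v\bigr)f\cdot f\, \frac{e^V}{\Mc_A(v)}\,dv\,dx = \frac12 \int_{\T^d\times\R^d} |f|^2 \bigl(a(v)\cdot \na_x - \na_x V\cdot\na_v\bigr)\!\left(\frac{e^V}{\Mc_A(v)}\right)dv\,dx$$
vanishes by the invariance property established above, and the contribution of the collision term is symmetrized exactly as in the proof of Lemma~\ref{lemdissip} (this symmetrization only uses Assumption~{\bf A2'}), yielding $\frac{d}{dt}\|f(t)\|_{\LL^2_A}^2 = -D_A(f(t))$ with $D_A$ given by~\eqref{defD-A}. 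No substantial new obstacle appears in this proof compared with Proposition~\ref{prop:WP}: the only subtle verification is the invariance $(a(v)\cdot\na_x - \na_x V\cdot\na_v)(e^V/\Mc_A) = 0$, which is where the assumption $a = \na_v A$ enters crucially.
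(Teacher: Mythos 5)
Your proof is correct and follows exactly the route the paper intends: the paper omits an explicit proof of Proposition~\ref{prop:WP-mod}, referring the reader to the arguments of Proposition~\ref{prop:WP} and Lemma~\ref{lemdissip}, and you have carried out precisely that adaptation, correctly isolating the one new verification, namely the invariance $(a(v)\cdot\na_x - \na_x V\cdot\na_v)(e^V/\Mc_A)=0$ coming from $a=\na_v A$. (A trivial slip: the integration by parts for the transport term should produce a minus sign in front of $\frac12\int |f|^2(a(v)\cdot\na_x-\na_x V\cdot\na_v)(e^V/\Mc_A)\,dv\,dx$, which is immaterial since that quantity vanishes.)
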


Then the analogues of Theorems \ref{thmconvgene-intro}, \ref{thmconv-intro} and \ref{thmexpo-intro} hold in this setting (with some obvious modifications); for the sake of conciseness, we omit these statements.

Such results can be proved  exactly as Theorems \ref{thmconvgene-intro},  \ref{thmconv-intro} and \ref{thmexpo-intro}.
The crucial additional ingredient is the fact that averaging lemmas for the operator $a(v) \cdot \na_x$ still hold, precisely when $a$ satisfies the non degeneracy condition \eqref{nondege}, see \cite{GLPS}. Note that in this case, the gain of regularity on averages depends on the index $\gamma$ in \eqref{nondege}, but in any case, this is always sufficient to optain compactness.

\subsection{Generalization to linearized BGK operators}
\label{sec-BGK}
Once again, for simplicity, we assume that $(x,v) \in \T^d \times \R^d$. Let $V \in W^{2,\infty}_{loc}(\T^d)$.
Let $\varphi: \R \to \R^+_*$ be a function in $L^\infty(\R)$ such that
$$
\int_{\T^d \times \R^d} \varphi\left( \frac{|v|^2}{2} + V(x) \right)\, dv \, dx < +\infty,
$$
Denote $F(x,v) = \varphi\left( \frac{|v|^2}{2} + V(x) \right)$ and $\rho_F(x) = \int F(x,v) \, dv$.

Let $\sigma \in C^0(\T^d)$ be a non-negative function. We study in this paragraph the following degenerate linearized BGK equation:
\begin{equation}
\label{BGK}
\partial_t f + v \cdot \nabla_x f - \nabla_x V \cdot \nabla_v f = \sigma(x) \left( \frac{\int_{\R^d} f\, dv}{\rho_F(x)} F(x,v) - f \right).
\end{equation}
with an initial condition $f_0$ at time $0$. The natural equilibrium is given by 
$$
(x,v) \mapsto \int_{\T^d \times \R^d} f_0 \, dv \, dx  \frac{F(x,v)}{\int_{\T^d \times \R^d} F(x,v) \, dv \, dx}. 
$$
The main feature of this equilibrium is that there is no separation of variables contrary to the Maxwellian case.

Our aim in this paragraph is again to show that the methods developed in Part \ref{LTB} are still relevant here.
For what concerns well-posedness, we introduce the relevant weighted $L^p$ spaces and have the usual result.
\begin{deft}[Weighted $L^p$ spaces]
We define the Banach spaces $\LL^2_{bgk}$ and $\LL^\infty_{bgk}$ by 
 \begin{equation*}
 \begin{aligned}
  &\LL^2_{bgk} := \Big\{f \in L^1_{loc}(\T^d \times \R^d), \, \int_{\T^d \times \R^d} | f|^2\frac{1}{F(x,v)} \, dv \, dx < + \infty \Big\} , \\
 &\qquad \qquad \qquad \|f\|_{\LL^2_{bgk}} = \left(\int_{\T^d \times \R^d} | f|^2 \frac{1}{F(x,v)}\, dv \, dx \right)^{1/2}, \\
 & \LL^\infty_{bgk} : =  \Big\{f \in L^1_{loc}(\T^d \times \R^d), \, \sup_{\T^d \times \R^d} | f| \frac{1}{F(x,v)}  < + \infty \Big\} , 
 \quad \|f\|_{\LL^\infty} = \sup_{\T^d \times \R^d} | f| \frac{1}{F(x,v)} 
  \end{aligned}
  \end{equation*}
 The space $\LL^2_{bgk}$ is a Hilbert space endowed with the inner product
 $$
 \langle f, g \rangle_{\LL^2_{bgk}} := \int_{\T^d \times \R^d}  \frac{f \, g}{F(x,v)} \, dv \, dx.
 $$
 \end{deft}

  \begin{prop}[Well-posedness of the linearized BGK equation]
 \label{prop:WP-BGK}
Assume that $f_0 \in \LL^2_{bgk}$. Then there exists a unique $f\in C^0(\R ;\LL^2_{bgk})$ solution of~\eqref{BGK} satisfying $f|_{t = 0} =f_0$, and we have
\begin{equation}
\text{ for all } t \geq 0, \quad  \frac{d}{dt} \| f(t)\|_{\LL^2_{bgk}}^2 = - {D}_{bgk}(f(t)),
\end{equation}
 where 
 $${D}_{bgk}(f)= \int_{\T^d} e^V\sigma(x) \int_{\R^d} \int_{\R^d} \frac{F(x,v) F(x,v')}{\rho_F(x)} \left( \frac{f(v)}{F(x,v)} - \frac{f(v')}{F(x,v')}\right)^2 \, dv' \, dv \, dx.$$ 
 If moreover $f_0 \geq 0$ a.e., then for all $t \in \R$ we have $f(t, \cdot,\cdot)\geq 0$ a.e. (Maximum principle).

 \end{prop}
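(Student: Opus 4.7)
The strategy is to mimic the proof of Proposition~\ref{prop:WP}, splitting the right-hand side of~\eqref{BGK} into a ``transport + loss'' part and a compact ``gain'' perturbation. Set $T f := v \cdot \na_x f - \na_x V \cdot \na_v f$ and decompose $-Tf + \sigma(x)(\rho_f F/\rho_F - f) = -A_0 f - K f$ with $A_0 f := Tf + \sigma(x) f$ and $K f := -\sigma(x) \frac{\rho_f(x)}{\rho_F(x)} F(x,v)$, on the domain $D(A_0) = \{f\in \LL^2_{bgk} , \, Tf \in \LL^2_{bgk}\}$ (the multiplication by $\sigma$ is bounded since $\sigma \in C^0(\T^d)$). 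Because $F(x,v) = \varphi(H(x,v))$ depends on $(x,v)$ only through the Hamiltonian, the weight $1/F$ is preserved along the flow $\phi_t$ of Definition~\ref{def-carac}, and $\phi_t$ preserves the Lebesgue measure. Combined with the obvious bound $e^{-\int_0^t \sigma(X_{-(t-s)}(x,v))\, ds} \leq 1$, this shows that the operator $A_0$ generates the strongly continuous contraction semigroup on $\LL^2_{bgk}$ given by
\begin{equation*}
e^{-tA_0} f_0 (x,v) = \exp\left(-\int_0^t \sigma(X_{-(t-s)}(x,v)) \, ds\right) f_0 \circ \phi_{-t}(x,v) .
\end{equation*}

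The key point is then to prove that $K$ is bounded on $\LL^2_{bgk}$. By Cauchy--Schwarz,
\begin{equation*}
\rho_f(x)^2 = \left(\int_{\R^d} \frac{f(x,v)}{\sqrt{F(x,v)}} \sqrt{F(x,v)}\, dv\right)^2 \leq \rho_F(x)  \int_{\R^d} \frac{|f(x,v)|^2}{F(x,v)}\, dv,
\end{equation*}
hence
\begin{equation*}
\|Kf\|^2_{\LL^2_{bgk}} = \int_{\T^d \times \R^d} \sigma(x)^2  \frac{\rho_f(x)^2}{\rho_F(x)^2} F(x,v) \, dv \, dx = \int_{\T^d} \sigma(x)^2 \frac{\rho_f(x)^2}{\rho_F(x)} \, dx \leq \|\sigma\|_\infty^2 \|f\|^2_{\LL^2_{bgk}} ,
\end{equation*}
so that $K \in \mathcal{L}(\LL^2_{bgk})$. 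The bounded perturbation theorem (e.g. \cite[Chapter 3, Theorem 1.1]{Paz}) then yields global well-posedness in $C^0(\R; \LL^2_{bgk})$ of the Cauchy problem for~\eqref{BGK}.

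For the dissipation identity, multiply~\eqref{BGK} by $f/F$ and integrate. The transport contribution vanishes: since $T(1/F)=0$, an integration by parts gives $\int (Tf) f / F \, dv \, dx = -\frac12 \int f^2 T(1/F) \, dv\, dx = 0$. For the collision contribution, expanding the square and using $\int F(x,v')\, dv' = \rho_F(x)$ yields
\begin{equation*}
\int_{\R^d} \int_{\R^d} \frac{F(x,v) F(x,v')}{\rho_F(x)} \left(\frac{f(v)}{F(x,v)} - \frac{f(v')}{F(x,v')}\right)^2 dv \, dv' = 2 \int_{\R^d} \frac{f^2}{F} \, dv - 2 \frac{\rho_f(x)^2}{\rho_F(x)} ,
\end{equation*}
which matches (up to the factor $2$) $-2 \langle C_{bgk}(f), f\rangle_{\LL^2_{bgk}}$, giving the announced expression of $D_{bgk}$.

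Finally, the maximum principle follows from the classical iterated Duhamel representation $f(t) = \sum_{n=0}^{+\infty} \mathcal{K}^n(e^{-tA_0} f_0)$, where $\mathcal{K}$ is the Duhamel integral operator associated with $-K$. Each application of $\mathcal{K}$ preserves positivity because $\sigma, F \geq 0$ and the exponential damping factor is positive; convergence in $\LL^2_{bgk}$ is guaranteed by the boundedness of $K$ proven above. The only step where care is needed, and hence the main technical point, is the Cauchy--Schwarz estimate bounding $K$: it crucially exploits the specific structure of the BGK gain term (the factorized form $\rho_f F/\rho_F$) and the fact that $F$ is integrable in $v$ with finite normalization $\rho_F(x)$, which plays here the role of the assumption {\bf A3} in the general Boltzmann setting.
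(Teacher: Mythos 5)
Your proof is correct and follows essentially the same semigroup/bounded-perturbation strategy that the paper uses for Proposition~\ref{prop:WP}: split into a transport-plus-loss generator $A_0$ (whose semigroup is explicit along the flow and contractive because $F=\varphi\circ H$ is preserved by $\phi_t$ and $\phi_t$ is measure preserving) and a bounded gain perturbation $K$, whose boundedness on $\LL^2_{bgk}$ you establish by the right Cauchy--Schwarz inequality exploiting $\int F\,dv=\rho_F$. One small remark: your (correct) energy computation produces the dissipation
$$
\int_{\T^d} \sigma(x) \int_{\R^d}\int_{\R^d} \frac{F(x,v)F(x,v')}{\rho_F(x)}\left(\frac{f(v)}{F(x,v)}-\frac{f(v')}{F(x,v')}\right)^2 dv'\,dv\,dx ,
$$
\emph{without} the factor $e^{V}$ that appears in the paper's displayed $D_{bgk}$. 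That factor is inconsistent with the paper's own definition of $\LL^2_{bgk}$ (whose weight is $1/F$ with no $e^V$), and looks like a typographical carry-over from the Maxwellian case, where the weight $e^V/\Mc$ splits into an $x$-factor and a $v$-factor; this splitting does not occur for $F(x,v)=\varphi(H(x,v))$. You might flag this explicitly rather than asserting that your expression ``matches the announced $D_{bgk}$.''
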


With the same geometric definitions of Section \ref{main}, we have the following results.
Note that the set $\omega$ where the collisions are effective is equal to $\omega_x \times \R^d$, where
$$
\omega_x := \{x \in \T^d, \, \sigma(x)>0\}.
$$

\begin{thm}[Convergence to equilibrium]
\label{thmconv-BGK}
 The following statements are equivalent.
\begin{enumerate}[(i.)]
\item The set $\omega$ satisfies the Unique Continuation Property.

\item The set $\omega$ satisfies the a.e.i.t. GCC and $\bigcup_{s\in \R^+}\phi_{-s}(\omega)$ is connected.

\item  For all $f_0 \in \LL^2_{bgk}$, denote by $f(t)$ the unique solution to \eqref{BGK} with initial datum $f_0$. We have
\begin{equation}
 \label{convergeto0-BGK}
\left\|f(t)-\left(\int_{\T^d \times \R^d} f_0 \, dv \,dx\right)\frac{F(x,v)}{\int_{\T^d \times \R^d} F(x,v) \, dv \,dx}\right\|_{\LL^2_{bgk}} \to_{t \to +\infty} 0,
 \end{equation}

\end{enumerate}

\end{thm}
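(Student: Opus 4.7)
The plan is to transpose the proof of Theorem~\ref{thmconv-intro} to the BGK setting, exploiting the fact that $\omega = \omega_x \times \R^d$ so that the structural-geometric condition ``one equivalence class for $\Bumpeq$'' automatically reduces to the simpler requirement that $\bigcup_{s\ge 0}\phi_{-s}(\omega)$ be connected: indeed, within a single $x$-slice with $\sigma(x)>0$, the BGK kernel connects all velocities, so the only obstruction to a single equivalence class comes from the flow-connectedness of preimages of $\omega$.

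First I would establish the BGK analogue of the weak coercivity lemma (Lemma~\ref{collannule}). A direct inspection of $D_{bgk}(f)$ shows that $D_{bgk}(f)=0$ forces $f(x,v)/F(x,v)$ to be $v$-independent almost everywhere on $\omega$, i.e.\ $f = \rho(x) F(x,v)$ on $\omega$ for some $\rho$; a short calculation (using that $\int F(x,v')\,dv' = \rho_F(x)$) shows that any such $f$ cancels $C_{bgk}$. This gives the equivalence $D_{bgk}(f)=0 \Longleftrightarrow C_{bgk}(f)=0 \Longleftrightarrow f = \rho(x)F \text{ on } \omega$, which plays the role of both Lemma~\ref{collannule} and Lemma~\ref{lemE3}. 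The implication $(ii)\Rightarrow(i)$ then follows exactly as in the proof of $(ii.)\Rightarrow(i.)$ of Theorem~\ref{thmconv-intro}: given $f\in C^0_t(\LL^2_{bgk})$ solving the transport equation with $C_{bgk}(f)=0$, the function $g := f/F$ satisfies $\partial_t g + v\cdot\nabla_x g - \nabla_x V\cdot\nabla_v g = 0$ on $\R^+\times\omega$ (since $F$ is a function of the Hamiltonian and hence invariant under the transport); the local geometric argument shows that $g$ is locally constant on each connected component of $\omega$, and the a.e.i.t.\ GCC propagates this constancy to $\bigcup_{s\ge 0}\phi_{-s}(\omega)$, which is of full measure. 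Connectedness of this set forces $f = \kappa F$ everywhere, and the mean-zero hypothesis yields $\kappa = 0$.

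For $(i)\Rightarrow(iii)$ I would run the uniqueness-compactness scheme of Section~\ref{sec:conv} verbatim. Assuming by contradiction the existence of an initial datum $g_0 \in \LL^2_{bgk}\cap\LL^\infty_{bgk}$ with vanishing spatial integral and a sequence $t_n\to\infty$ with $\|g(t_n)\|_{\LL^2_{bgk}} \ge \e$, one studies $h_n(t) := g(t+t_n)$, extracts a weak limit $h$ satisfying $\partial_t h + v\cdot\nabla_x h - \nabla_x V\cdot\nabla_v h = 0$ together with $D_{bgk}(h)=0$, and invokes the Unique Continuation Property to get $h = \alpha F / \int F$ with $\alpha = 0$ by mass conservation. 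The defect measure $\nu_n := |h_n|^2$ then converges weakly to a nonnegative $\nu$ which, after using the BGK equation and the averaging lemma of Corollary~\ref{lemmoyenne} (applied to the gain term $\sigma(x)\rho_{h_n}(x) F(x,v)/\rho_F(x)$, the relevant averages being exactly $\rho_{h_n}$), satisfies a pure transport equation and vanishes on $\R^+\times\omega$; one concludes $\nu\equiv 0$. Tightness at infinity in $v$ is ensured by the maximum principle bound $\|h_n(t)\|_{\LL^\infty_{bgk}} \le \|g_0\|_{\LL^\infty_{bgk}}$ combined with $\int F\,dv\,dx < \infty$. A standard approximation step extends the convergence to all $f_0 \in \LL^2_{bgk}$.

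Finally, $(iii)\Rightarrow(ii)$ is obtained by the same two explicit constructions as in Theorem~\ref{thmconv-intro}: if $\mathcal{A} := \T^d\times\R^d \setminus \bigcup_{s\ge 0}\phi_{-s}(\omega)$ has positive Lebesgue measure, then $f(t) := \mathds{1}_{\phi_t(\mathcal{A})} F$ is a non-equilibrium solution (using that $F$ is transport-invariant and that $C_{bgk}$ vanishes off $\omega$); if $\bigcup_{s\ge0}\phi_{-s}(\omega)$ splits into two connected components $\Omega_1, \Omega_2$, then $f := \mathds{1}_{\Omega_1} F$ is stationary and non-proportional to the global equilibrium, since by Lemma~\ref{leminvarflow} $\Omega_1$ is almost-everywhere invariant under $\phi_t$ and since $\mathds{1}_{\Omega_1} F$ still cancels $C_{bgk}$ (the BGK operator acting only in the $v$-fiber, and each $x$-slice of $\Omega_1$ is either full or empty in $v$). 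The main obstacle will be the averaging-lemma step in $(i)\Rightarrow(iii)$: one must check that the BGK-specific weight $F(x,v)$, which replaces the Maxwellian $\Mc(v)$ in the definition of $\LL^2_{bgk}$ and is no longer separated in $(x,v)$, does not prevent the application of Corollary~\ref{lemmoyenne}; this reduces to the uniform bound $\sup_x \int |\sigma(x) F(x,v)/\rho_F(x)|^2 F(x,v')/F(x,v)\,dv\,dv' < \infty$, which holds since $\varphi \in L^\infty$ and $\sigma \in C^0(\T^d)$.
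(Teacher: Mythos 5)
Your proposal is correct and follows essentially the same route as the paper, which itself only sketches this proof as a transposition of Theorem~\ref{thmconv-intro} and singles out the one genuinely new point — that $g=f/F$ still satisfies free transport on $\omega$ because $F$ is a function of the Hamiltonian — which you handle correctly. One minor imprecision: the claim that \emph{every} $x$-slice of $\Omega_1$ is full or empty in $v$ is false in general (for $x\notin\omega_x$ the slice of $\bigcup_{s}\phi_{-s}(\omega)$ can be partial), but this is harmless since $C_{bgk}$ vanishes there anyway and for $x\in\omega_x$ the slice $\{x\}\times\R^d$ is connected and contained in $\omega$, hence lies in a single component.
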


\begin{thm}[Exponential convergence to equilibrium]
\label{thmexpo-BGK}
The two following statements are equivalent:
\begin{enumerate}[(i.)]
\item $C^-(\infty) > 0$. 
\item There exists $C>0, \gamma>0$ such that for any $f_0 \in \LL^2(\T^d \times \R^d)$, the unique solution to \eqref{BGK} with initial datum $f_0$ satisfies
\begin{multline}
\label{decexpo-BGK} 
\left\|f(t)-\left(\int_{\T^d \times \R^d} f_0 \, dv \,dx \right) \frac{F(x,v)}{\int_{\T^d \times \R^d} F(x,v) \, dv \,dx} \right\|_{\LL^2} \\
\leq C e^{-\gamma t} \left\|f_0-\left(\int_{\T^d \times \R^d} f_0 \, dv \,dx \right)  \frac{F(x,v)}{\int_{\T^d \times \R^d} F(x,v) \, dv \,dx}\right\|_{\LL^2}.
\end{multline}
\end{enumerate}
\end{thm}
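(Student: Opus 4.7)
The plan is to mirror the strategy used for Theorem~\ref{thmexpo-intro}, adapted to the BGK collision operator $C_{bgk}(f) = \sigma(x)\bigl(\frac{\rho_f(x)}{\rho_F(x)}F(x,v) - f\bigr)$. First, since mass is conserved, I would restrict to the subspace $\LL^2_{bgk,0}$ of zero-mean data; by Theorem~\ref{thmconv-BGK}, it suffices to handle this case. Then, using the dissipation identity of Proposition~\ref{prop:WP-BGK} together with the abstract Lemma~\ref{lemfondamental}, the exponential decay $(ii.)$ is equivalent to an observability inequality: there exist $T, K>0$ such that $K\int_0^T D_{bgk}(f(t))\, dt \geq \|f_0\|_{\LL^2_{bgk}}^2$ for all $f_0\in \LL^2_{bgk,0}$. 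Note that the weak coercivity property still holds here: $D_{bgk}(f)=0$ forces $f(x,v)=\rho(x)F(x,v)$ on $\omega$, and a direct computation then shows $C_{bgk}(f)=0$.

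For $(i.)\Rightarrow (ii.)$, I would argue by contradiction as in Section~\ref{expocon}: take a sequence $g_{0,n}\in \LL^2_{bgk,0}$ normalized to unit $\LL^2_{bgk}$-norm with $\int_0^n D_{bgk}(g_n)\,dt < 1/n$, and study a weak limit $g$ in $L^2_{t,loc}\LL^2_{bgk}$. By weak lower semicontinuity and weak coercivity, $g$ is a solution of the free transport equation satisfying $C_{bgk}(g)=0$; applying the (already established in Theorem~\ref{thmconv-BGK}) Unique Continuation Property together with the mass-zero condition yields $g=0$. Then, introducing defect measures $\nu_n := |g_n|^2$ and writing the transport equation they satisfy along the lines of~\eqref{eq:nun}, one uses Duhamel's formula: the leading term $e^{-2\int_0^t \sigma(\phi_{s-t}(x,v))ds}\nu_{0,n}(\phi_{-t})$ is controlled by $e^{-tC^-(t)}\|\nu_{0,n}\|_{\LL^1_{bgk}}$, which decays since $C^-(\infty)>0$. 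The source term involves $\sigma(x)\rho_{g_n}(x)F(x,v)g_n/\rho_F(x)$, whose $L^1$ integrability is obtained through the averaging lemma of Corollary~\ref{lemmoyenne} applied to $\rho_{g_n}$ (since $g_n\rightharpoonup 0$), which shows $\rho_{g_n}\to 0$ strongly in $L^2_{t,loc}L^2_x$. Combined, for large $T_1$ we get $\|\nu_n(T_1)\|_{\LL^1_{bgk}}\leq 1/2$ for $n$ large, contradicting the identity $\|\nu_n(T_1)\|_{\LL^1_{bgk}}=1-\int_0^{T_1}D_{bgk}(g_n)\,dt\geq 3/4$.

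For $(ii.)\Rightarrow (i.)$ I would contradict observability. Assuming $C^-(\infty)=0$, for each $T, \varepsilon>0$ pick $(x_0,v_0)$ such that $\int_0^T \sigma(\phi_t(x_0,v_0))\,dt < \varepsilon/2$ (noting $\int k(x,v,v')\,dv' = \sigma(x)$ here). Construct concentrated zero-mean data $g_{0,n}$ localized near $(x_0,v_0)$, normalized in $\LL^2_{bgk,0}$, so that $g_{0,n}\rightharpoonup 0$ weakly and $\nu_{0,n}\rightharpoonup \delta_{(x_0,v_0)}$. By Duhamel and the averaging lemma applied to $\rho_{g_n}$ (which converges strongly to $0$), the remainder term in the representation of $\nu_n(t)$ is $o(1)$ in $L^1(0,T;\LL^1_{bgk})$, so $\int_0^T D_{bgk}(g_n)\,dt = 2\int_0^T\int \sigma(x)\nu_n(t,x,v)\frac{e^V}{F}\,dv\,dx\,dt$ essentially reduces, in the limit, to $\int_0^T \sigma(\phi_t(x_0,v_0))\,dt + o(1) < \varepsilon$, contradicting the observability inequality. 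One pleasant simplification over Theorem~\ref{thmexpo-intro} is that assumption \textbf{A3'} becomes automatic: the damping $\sigma\in C^0(\T^d)$ is bounded, and the gain kernel factorizes through the scalar $\rho_{g_n}$, so no weighted $\Lb^2$ estimate \`a la Lemma~\ref{lemtech} is required.

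The main obstacle will be the careful treatment of the defect measure limits in the non-Maxwellian stationary setting, since $F$ no longer has the tensorized form $e^{-V}\Mc$; in particular, verifying that the Unique Continuation Property (as given in Theorem~\ref{thmconv-BGK} $(i.)$) indeed applies to the weak limit $g$ after showing $C_{bgk}(g)=0$ requires checking that the connectedness assumption in (ii.) of Theorem~\ref{thmconv-BGK} is implied by $C^-(\infty)>0$, which follows from the fact that a positive Lebeau constant gives GCC and hence forces $\bigcup_s \phi_{-s}(\omega) = \T^d\times \R^d$. The other delicate point is ensuring that the equi-integrability and tightness at infinity in $v$ of $\nu_n$ are preserved; this uses $F$ (rather than $\Mc$) as the reference weight throughout and the fact that $F\in L^1$.
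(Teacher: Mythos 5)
Your proposal is correct and follows exactly the route the paper takes: the paper's own treatment of Theorem~\ref{thmexpo-BGK} consists precisely in transposing the proof of Theorem~\ref{thmexpo-intro} (reduction to an observability inequality via Lemma~\ref{lemfondamental}, the uniqueness--compactness argument with defect measures and Duhamel's formula for $(i.)\Rightarrow(ii.)$, and concentration on a weakly damped trajectory for $(ii.)\Rightarrow(i.)$), with the averaging lemma now applied to the scalar average $\rho_{g_n}$. Your two side observations are also consistent with the paper: the statement of Theorem~\ref{thmexpo-BGK} indeed drops assumption \textbf{A3'} because the damping $\sigma$ is bounded, and the only genuinely new point in the BGK setting is the non-tensorized equilibrium $F$, which the paper handles exactly as you indicate by checking that $f/F$ still solves free transport on $\omega$.
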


We shall not dwell on the proofs of Theorems \ref{thmconv-BGK} and \ref{thmexpo-BGK}, since they are very similar to those of Theorems \ref{thmconv-intro} and \ref{thmexpo-intro}. Indeed, we note that the structure of the equation \eqref{BGK} is similar to that of \eqref{B}, in the sense that the ``degenerate dissipative'' part is still made of a dissipative term plus a relatively compact term. This compactness, as usual, comes from averaging lemmas.

Let us just underline a crucial point in the proof $(ii.)$ implies $(i.)$ of Theorem \ref{thmconv-BGK}. This comes from the fact that  $F(x,v)$ does not separate the $x$ and $v$ variables, contrary to the Maxwellian equilibrium of \eqref{B} and thus we have to be careful. Let us check that the proof we gave in the Boltzmann case is still relevant (see the proof of $(ii.) \implies (i.)$ of Theorem \ref{thmconv-intro}).

 Let $f \in C^0_t(\LL^2_{bgk})$ be a solution to
 \begin{align}
\label{eqhypo1-BGK} \partial_t f + v \cdot \nabla_x f - \nabla_x V \cdot \nabla_v f= 0, \\
\label{eqhypo2-BGK} f = \rho(t,x) F(x,v) \text{  on  } \R^+ \times \omega.
\end{align}
Assume that $\int_{\T^d \times \R^d} f \, dv \,dx=0$. The goal is to show that $f=0$.

 To this purpose, as before, consider for $(t,x, v) \in \R^+ \times \omega$,  $g(t,x) := \frac{1}{F(x,v)} \, f$ (note that by \eqref{eqhypo2-BGK}, $g$ does not depend on $v$). 
 We have, for $(t,x, v) \in \R^+ \times \omega$:
 $$
 \partial_t g + v \cdot \nabla_x g =   \frac{1}{F(x,v)} \left[\partial_t f + v \cdot \nabla_x f - v\cdot  \nabla_x F \, \frac{f}{F}\right].
 $$
 Since $f$ satisfies \eqref{eqhypo1-BGK} and \eqref{eqhypo2-BGK}, 
 $$\partial_t f + v \cdot \nabla_x f  =  \nabla_x V \cdot \nabla_v f =\nabla_v F \cdot \nabla_x V \, \frac{f}{F}.
 $$
 By definition of $F$, we have
 $$
\nabla_x V \cdot  \nabla_v F  = v \cdot \nabla_x F,
 $$
 from which we deduce that $g$ satisfies the free transport equation on $\omega$:
 \begin{equation}
 \label{transg-BGK}
  \text{ for all } (t,x, v) \in \R^+ \times \omega, \quad \partial_t g + v \cdot \nabla_x g =0.
 \end{equation}
We then conclude as in the proof of $(ii.) \implies (i.)$ of Theorem \ref{thmconv-intro}, \emph{mutatis mutandis}.

 %%%%%%%%%%%%%%%%%%%%%%%%%%%%%%

  \bibliographystyle{plain}
\bibliography{HKL4}
 
 \end{document}